  \patchcmd{\@makeschapterhead}{\vspace*{50\p@}}{}{}{}%
\numberwithin{equation}{section}
\theoremstyle{plain}
\pgfplotsset{compat=1.17}
\newtheorem{thm}{Theorem}[section]
\newtheorem*{thm*}{Theorem}
\newtheorem{prop}[thm]{Proposition}
\newtheorem{cor}[thm]{Corollary}
\newtheorem{remark}[thm]{Remark}
\newtheorem{defn}[thm]{Definition}
\newtheorem{lemma}[thm]{Lemma}
\newtheorem{question}[thm]{Question}
\newcommand{\diff}{\mathrm{d}}
\newcommand{\eps}{\epsilon}
\newcommand{\Rthree}{\mathbb{R}^{3}}
\DeclareMathOperator{\sgn}{sgn}
\newcommand{\klo}{k_{\tiny{\text{lo}}}}
\newcommand{\khi}{k_{\tiny{\text{hi}}}}
\definecolor{indigo(web)}{rgb}{0.29, 0.0, 0.51}
\definecolor{islamicgreen}{rgb}{0.0, 0.56, 0.0}
\definecolor{ballblue}{rgb}{0.13, 0.67, 0.8}
\definecolor{alizarin}{rgb}{0.82, 0.1, 0.26}
\definecolor{goldenrod}{rgb}{0.85, 0.65, 0.13}
\definecolor{forest}{rgb}{0.13, 0.55, 0.13}
\definecolor{brandeisblue}{rgb}{0.0, 0.44, 1.0}	
\definecolor{cadmiumorange}{rgb}{0.93, 0.53, 0.18}
\definecolor{darkelectricblue}{rgb}{0.33, 0.41, 0.47}
\definecolor{brightmaroon}{rgb}{0.76, 0.13, 0.28}
\definecolor{darkcerulean}{rgb}{0.03, 0.27, 0.49}
\definecolor{darkviolet}{rgb}{0.58, 0.0, 0.83}
\definecolor{mauvelous}{rgb}{0.94, 0.6, 0.67}
\definecolor{hotpink}{rgb}{1.0, 0.41, 0.71}
\definecolor{darkspringgreen}{rgb}{0.09, 0.45, 0.27}
\definecolor{ceruleanblue}{rgb}{0.16, 0.32, 0.75}
\definecolor{mint}{rgb}{0.24, 0.71, 0.54}
\definecolor{moonstoneblue}{rgb}{0.45, 0.66, 0.76}
\definecolor{pistachio}{rgb}{0.58, 0.77, 0.45}
\definecolor{pinkpearl}{rgb}{0.91, 0.67, 0.81}
\definecolor{tiffanyblue}{rgb}{0.04, 0.73, 0.71}
\definecolor{lightseagreen}{rgb}{0.13, 0.7, 0.67}
\definecolor{darkgray}{rgb}{0.66, 0.66, 0.66}
\definecolor{denim}{rgb}{0.08, 0.38, 0.74}
\definecolor{blue(pigment)}{rgb}{0.2, 0.2, 0.6}
\definecolor{cobalt}{rgb}{0.0, 0.28, 0.67}
\definecolor{mikadoyellow}{rgb}{1.0, 0.77, 0.05}
\definecolor{caribbeangreen}{rgb}{0.0, 0.8, 0.6}
\definecolor{blush}{rgb}{0.87, 0.36, 0.51}
\definecolor{brinkpink}{rgb}{0.98, 0.38, 0.5}
  \patchcmd{\@makeschapterhead}{\vspace*{50\p@}}{}{}{}%
\title{Scattering for the Quartic Generalized Benjamin-Bona-Mahony Equation}
\author{A. George Morgan\footnote{Department of Mathematics,
University of Toronto,
40 St. George St., Room 6290,
Toronto, Ontario, CA,
M5S 2E4. Institutional email: 
adam.morgan@mail.utoronto.ca}
}
\date{}
\begin{document}

\maketitle

 \begin{abstract}
 \noindent 
The generalized Benjamin-Bona-Mahony equation (gBBM) is a model for nonlinear dispersive waves which, in the long-wave limit, is approximately equivalent to the generalized Korteweg-de Vries equation (gKdV). While the long-time behaviour of small solutions to gKdV is well-understood, the corresponding theory for gBBM has progressed little since the 1990s. Using a space-time resonance approach, I establish linear dispersive decay and scattering for small solutions to the quartic-nonlinear gBBM. To my knowledge, this result provides the first global-in-time pointwise estimates on small solutions to gBBM with a nonlinear power less than or equal to five. Owing to nonzero inflection points in the linearized gBBM dispersion relation, there exist isolated space-time resonances without null structure, but in the course of the proof I show these resonances do not obstruct scattering. 
  \end{abstract}








    \tableofcontents

\section{Introduction}
\noindent Consider the Cauchy problem for the \textbf{generalized Benjamin-Bona-Mahony equation (gBBM)}: for a fixed $p\in \mathbb{N}$ and $u_{0}(x)$ belonging to a class of suitably nice functions, we want to find $$u(t,x)\colon \left[0,\infty\right)\times \mathbb{R}\rightarrow\mathbb{R}$$ solving 
 \begin{equation}
 \label{eqn:gbbm_INTRO}
\left\{\begin{aligned}
u_{t} - u_{xxt} + \partial_{x}\left(u + u^{p+1}\right) &= 0 \phantom{u_{0}(x)}\quad \forall \ (t,x)\in \left(0,\infty\right)\times\mathbb{R}
\\
u\left(t=0,x\right) &= u_{0}(x)\phantom{0} \quad \forall \ x\in \mathbb{R}. 
\end{aligned}\right.
\end{equation}
The dispersive term $-u_{xxt}+u_{x}$ causes the solution to spread out over time and separate into its constituent Fourier modes, while the nonlinear term $\partial_{x}\left(u^{p+1}\right)$ causes the solution to steepen over time. Roughly speaking, the balance between these two competing effects ensures that smooth initial data gives rise to a smooth global-in-time solution. In particular, for rapidly decaying smooth solutions we have the energy conservation law
\begin{equation}
    \label{eqn:energy_conservation}
    \left\|u(t,x)\right\|_{H^1_{x}} = \left\|u_0(x)\right\|_{H^1_{x}} \quad \forall \ t\geq0. 
\end{equation}
\noindent For certain values of $p$, gBBM has been used as a model of gravity waves in shallow water \cite{BBM1972, Peregrin1966} and blood pulses in elastic arteries \cite{BonaCascaval2008, MDLP2019}. Essentially, any physical process that can be modelled by the much more well-studied \textbf{generalized Korteweg-de Vries equation} (gKdV)
 \begin{equation}
     \label{eqn:gkdv_INTRO}
     u_{t} + u_{xxx} +\partial_{x}\left(u+u^{p+1}\right) =0
 \end{equation}
 can also be modelled by gBBM, at least in the long-wave limit. Interestingly, the linearized gBBM (here called  \textbf{linBBM}) features finite speed of propagation while the linearized gKdV does not, implying that gBBM is in a sense more physical than gKdV. Here, we understand ``finite speed of propagation'' to mean ``bounded group velocity''; see figure \ref{fig:bbmCG}. Additionally, it's easy to see that gBBM lacks any scaling symmetry, which is quite rare among the PDEs of mathematical physics (the Klein-Gordon equation is another notable exception). In a nutshell, then, gBBM is worth investigating rigorously for its physical applications, its similarity to gKdV, and its unusual properties (linear finite speed of propagation and lack of scale-invariance). 
 \begin{figure}
\centering
\begin{tikzpicture}[scale=1]
\begin{axis}[xmin=-6, xmax=6, ymin=-0.4, ymax=1.05, samples=300, no markers,
      axis x line = middle,
      axis y line = middle,
    ylabel=\Large{$\omega'(\xi)$},
    xlabel=\Large{$\xi$},
    every axis x label/.style={
    at={(ticklabel* cs:1.1)},
},
every axis y label/.style={
    at={(ticklabel* cs:1.05)},
    anchor=south,
},
xticklabel style={yshift=0.5ex, anchor=south},
yticklabel style={xshift=-0.5ex, anchor=east},
yticklabels={,,,1},
xticklabels={,,,,,2,4,6},
]
  \addplot[indigo(web), ultra thick, domain=-7:7]  {(1-x*x)/((1+x*x)*(1+x*x))};
  \addlegendentry{BBM}
   \addplot[cadmiumorange, dotted, ultra thick, domain=-4:4]  {1-3*x*x};
     \addlegendentry{KdV}
\end{axis}
\end{tikzpicture}
\caption{Plot of the group velocity $\omega'(\xi)$ for the linearized BBM and the linearized KdV.} 
\label{fig:bbmCG}
\end{figure}
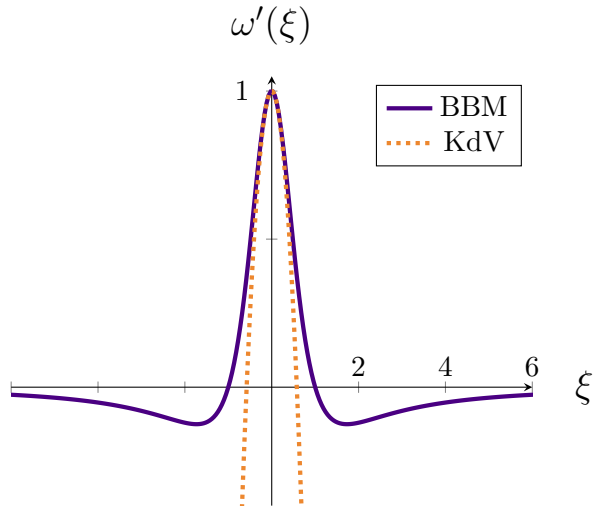
 \par Now, the most basic question one can ask about an initial-value problem is whether or not it is even well-posed; that is, whether or not a unique solution exists and depends continuously on the initial data. For \eqref{eqn:gbbm_INTRO}, the answer to this question is very well-understood: 
\begin{thm} 
If $u_{0}\in H^{1}_{x}\left(\mathbb{R}\right)\cap C^{2}_{x}\left(\mathbb{R}\right),$ then there exists a unique $u(t,x)$ satisfying \eqref{eqn:gbbm_INTRO}. Further, the initial-data-to-solution map
$$ u_{0} \in H^{1}_{x}\left(\mathbb{R}\right)\cap C^{2}_{x}\left(\mathbb{R}\right) \mapsto u(t,x) \in C\left(\left[0,\infty\right); H^{1}_{x}\left(\mathbb{R}\right)\cap C^{2}_{x}\left(\mathbb{R}\right)\right)$$
is continuous. 
\end{thm}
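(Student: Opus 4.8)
The plan is to recast \eqref{eqn:gbbm_INTRO} as an ordinary differential equation in a Banach space, solve it locally by a contraction-mapping (Picard--Lindel\"of) argument, and then upgrade the local solution to a global one using the conservation law \eqref{eqn:energy_conservation}. The starting point is that $1 - \partial_x^2$ is elliptic and boundedly invertible, so applying $(1-\partial_x^2)^{-1}$ to the equation in \eqref{eqn:gbbm_INTRO} yields the equivalent formulation
$$ u_t = F(u), \qquad F(u) := -\,(1-\partial_x^2)^{-1}\partial_x\bigl(u + u^{p+1}\bigr),$$
equivalently the integral equation $u(t) = u_0 + \int_0^t F(u(s))\,\diff s$. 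The feature that makes this work is that $(1-\partial_x^2)^{-1}$ is convolution with $\tfrac12\,e^{-|x|}\in L^1(\mathbb{R})$, so $(1-\partial_x^2)^{-1}\partial_x$ is convolution with the $L^1$ kernel $-\tfrac12\,\sgn(x)\,e^{-|x|}$: it is bounded on all of the usual function spaces and, in sharp contrast to the $\partial_x^3$ appearing in gKdV, it loses no derivatives.

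\textbf{Step 1: local well-posedness.} I would work in the Banach space $X := H^1_x(\mathbb{R})\cap C^2_b(\mathbb{R})$. In one spatial dimension $H^1_x\hookrightarrow L^\infty_x$, so products of functions in $X$ again lie in $X$ with $\|fg\|_X\lesssim\|f\|_X\|g\|_X$; hence $u\mapsto u^{p+1}$ is locally Lipschitz on $X$. Combined with the boundedness on $X$ of the convolution operator above, this shows $F\colon X\to X$ is locally Lipschitz. Then for any $R > \|u_0\|_X$ there is a time $T = T(R) > 0$ such that $(\Phi v)(t) := u_0 + \int_0^t F(v(s))\,\diff s$ defines a contraction on the closed ball of radius $R$ in $C\bigl([0,T];X\bigr)$; its unique fixed point is the unique solution on $[0,T]$, and either the uniform contraction principle or a Gr\"onwall estimate for the difference of two solutions gives Lipschitz — in particular continuous — dependence of $u$ on $u_0$ over $[0,T]$. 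Differentiating the integral equation in time shows $u\in C^1\bigl([0,T];X\bigr)$, so $u$ is a genuine classical solution of \eqref{eqn:gbbm_INTRO}.

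\textbf{Step 2: global existence.} The local theory continues the solution as long as $\|u(t)\|_X$ stays bounded, so it is enough to bound this on each finite time interval. The $H^1_x$ norm is constant by \eqref{eqn:energy_conservation}, which in particular forces $\|u(t)\|_{L^\infty_x}\lesssim\|u_0\|_{H^1_x}$. For the $C^2_b$ component, I would use $-(1-\partial_x^2)^{-1}\partial_x^2 = 1 - (1-\partial_x^2)^{-1}$ to differentiate the evolution equation:
$$\partial_t u_x = (u+u^{p+1}) - (1-\partial_x^2)^{-1}(u+u^{p+1}), \qquad \partial_t u_{xx} = \partial_x(u+u^{p+1}) - (1-\partial_x^2)^{-1}\partial_x(u+u^{p+1}).$$
Because $\|(1-\partial_x^2)^{-1}\|_{L^\infty_x\to L^\infty_x}\le 1$ and $\|(1-\partial_x^2)^{-1}\partial_x\|_{L^\infty_x\to L^\infty_x}\le 1$, the right-hand sides are bounded in $L^\infty_x$ by $C\bigl(\|u\|_{L^\infty_x}+\|u\|_{L^\infty_x}^{p+1}\bigr)$ and by $C\bigl(1+\|u\|_{L^\infty_x}^{p}\bigr)\|u_x\|_{L^\infty_x}+C\bigl(\|u\|_{L^\infty_x}+\|u\|_{L^\infty_x}^{p+1}\bigr)$ respectively. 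Since $\|u(t)\|_{L^\infty_x}$ is already controlled, integrating in $t$ gives $\|u_x(t)\|_{L^\infty_x}\lesssim 1+t$ and then $\|u_{xx}(t)\|_{L^\infty_x}\lesssim 1+t^2$. Hence $\|u(t)\|_X<\infty$ for all $t\ge 0$ and the local solution extends to a global one, $u\in C\bigl([0,\infty);X\bigr)\cap C^1\bigl([0,\infty);X\bigr)$.

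\textbf{Step 3: conclusion.} Global uniqueness follows from local uniqueness by the usual connectedness argument on the maximal interval of agreement of two solutions, and global continuous dependence follows by chaining the local Lipschitz/Gr\"onwall estimate of Step 1 across a finite partition of any compact time interval (on which both solutions have $X$-norms bounded by a common constant). I expect the only genuine difficulty to be Step 2: the conservation law \eqref{eqn:energy_conservation} controls only the $H^1_x$, hence $L^\infty_x$, norm, so propagating the $C^2_b$ bound forces one to differentiate the equation and exploit the smoothing ($L^1$-kernel) structure of $(1-\partial_x^2)^{-1}\partial_x$ in order to close the estimates for $u_x$ and $u_{xx}$ with no loss of regularity; everything else is a routine Banach-space fixed-point argument.
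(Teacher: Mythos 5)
The paper offers no proof of its own for this theorem; it refers the reader to \cite{BBM1972}, where the argument is exactly the one you give: recast the equation as a Banach-space ODE by inverting $1-\partial_x^2$ (so that $(1-\partial_x^2)^{-1}\partial_x$ is convolution against the $L^1$ kernel $-\tfrac12\sgn(x)e^{-|x|}$ and loses no derivatives), run a Picard contraction for local existence, uniqueness, and continuous dependence, and pass to global time using the $H^1_x$ conservation law \eqref{eqn:energy_conservation} together with the differentiated equation to keep $\|u_x\|_{L^\infty_x}$ and $\|u_{xx}\|_{L^\infty_x}$ finite (with at worst polynomial growth) on bounded time intervals. Your proof is correct; the only cosmetic difference from \cite{BBM1972} is that you set up the contraction directly in $H^1_x\cap C^2_b$, whereas Benjamin--Bona--Mahony contract first in $C_b(\mathbb{R})$ and then bootstrap the extra regularity, but the key estimates and the blow-up criterion are the same.
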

\noindent See \cite{BBM1972} for a  proof of this theorem. Also, there are analogues of the above result valid for rougher initial data \cite{BT2009}. 
\subsection{The Scattering Problem. Existing Results.}
\noindent With a global solution at our disposal, we may ask more complex questions about the dynamics of gBBM, including
\begin{question} [Long-Time Asymptotics of Small Solutions to gBBM] \label{question:main_INTRO}
Suppose the initial state $u_{0}(x)$ is small with respect to a suitable norm. Does the amplitude of $u(t,x)$ have the same dispersive decay as a solution of linBBM? More strongly, does $u(t,x)$ \textbf{scatter} (become indistinguishable from a solution of linBBM as $t\rightarrow \infty$)? 
\end{question}
\noindent Question \ref{question:main_INTRO} was answered for $p>4$ in the 1980s and 1990s by Albert \cite{Albert1986, Albert1989} and Dziuba\'{n}ski \& Karch \cite{DK1996} (see also Souganidis and Strauss \cite{SS1990}): for such $p$, solutions to gBBM indeed scatter. This makes intuitive sense, as when $p$ is large we expect the nonlinear term $\partial_{x}\left(u^{p+1}\right)$ to be extremely small and therefore its contribution to the dynamics ought to be insignificant. 
\par Now, to my knowledge there has been little progress since the mid-1990s in obtaining rigorous asymptotics at or below the ``quintic'' case $p=4$. Several authors have obtained partial results valid for a dissipative variant of gBBM,
\begin{equation}
\label{eqn:disp_gbbm}
u_t-u_{xxt} +u_{x} - \nu u_{xx} +\partial_{x}\left(u^{p+1}\right) =0, \quad \nu>0 \ \text{fixed}. 
\end{equation}
For example, Bona and Luo \cite{BonaLuo1993, BonaLuo1995} were able to prove pointwise decay of small solutions to \eqref{eqn:disp_gbbm} given any $p\geq 2$, but the time decay obtained was that associated to solutions of the heat equation and not that of solutions to linBBM. Other works on dissipative perturbations of gBBM (see for example Amick et al. \cite{ABS1989}, Karch \cite{Karch1997, Karch1999a, Karch2000}, Mei \cite{Mei1998}, and Zhang \cite{Zhang1994}) present similar conclusions, so we can confidently say that dissipation outcompetes dispersion over long time scales. Though interesting, this discovery says little about how to resolve question \ref{question:main_INTRO}. In 2019, Kwak and Mu\~{n}oz \cite{KM2019} established a nontraditional notion of scattering for $p\geq 1$: they showed that, in the long-time limit, initially small solutions do not leak energy outside of the region allowed by linBBM's finite speed of propagation property. The primary downside to the Kwak \& Mu\~{n}oz approach is that it does not provide pointwise bounds on $u(t,x)$, which we need in order to compare $u(t,x)$ to solutions of linBBM and therefore resolve question \ref{question:main_INTRO}. Finally, in 2021 Banquet \& Villamizar-Roa \cite{BVR2021} established some novel time decay estimates for the linBBM flow in modulation spaces, although the only application to the (nonlinear) BBM they presented concerns existence of solutions with rough Cauchy data and there is no discussion of long-time asymptotics. 
\par Despite the lack of a fully-developed scattering theory for gBBM and the similarities between gBBM and gKdV, the long-time asymptotics for gKdV are quite well-understood. Scattering for gKdV with $p\geq 4$ has been known since Strauss' 1974 paper on the subject \cite{Strauss1974}. By 1998, Hayashi \& Naumkin  \cite{HN1998_gkdv} were able to prove scattering all the way down to $p>2$; important intermediary contributions on shrinking the scattering threshold were made by Ponce \& Vega \cite{PV1990}, Christ \& Weinstein \cite{CW1991}, and others. For gKdV with $p=2$, one can establish a mildly modified notion of scattering for small initial data. From the late 1990s to the present, there has been plenty of work in this regard, see for instance Hayashi \& Naumkin \cite{HN1999, HN2001}, Harrop-Griffiths \cite{H-G2016}, and Germain et al. \cite{GPR2016}. 
\par There are three main reasons why progress on scattering for gBBM with $p\leq 4$ has nearly stagnated for almost three decades. 
\begin{enumerate}
\item For $p>4$, establishing scattering is easy using a basic combination of Duhamel's formula, bootstrapped linear dispersive decay, and rudimentary Sobolev space theory: this is the approach taken in Albert \cite{Albert1986, Albert1989} and Dziuba\'{n}ski \& Karch \cite{DK1996}. For $p\leq 4$ this approach is insufficient. 
    \item Second, the lack of any scaling symmetry implies that the method of vector fields applied by Hayashi and Naumkin to establish scattering for gKdV with $p>2$ \cite{HN1998_gkdv} cannot be used. 
    \item Finally, the gBBM dispersion relation 
    $$
    \omega(\xi) = \frac{\xi}{\langle\xi\rangle^2}
    $$
    has nonzero inflection points at $\xi=\pm \sqrt{3}$. This causes a slowdown in $u(t,x)$'s dispersive decay that cannot be counteracted by the $\partial_{x}$ in the nonlinear term $\partial_{x}\left(u^{p+1}\right)$, so propagating smallness of $u$ may be very difficult. Indeed, as far as I am aware the only existing result on detailed long-time asymptotics for dispersive waves with such inflection points (and a small nonlinear power) is Deng et al.'s work on the $3$D gravity-capillary water waves system \cite{DIPP2017}. 
\end{enumerate}
\begin{remark}
     Global theory for the $2$D gravity-capillary water waves problem (which, like the $3$D problem, features a dispersion relation with nonzero inflection points) is an open question. This provides some additional motivation for studying gBBM; that is, we can think of understanding gBBM as the gateway to understanding more complicated problems featuring dispersion relations with nonzero inflection points. 
\end{remark}
\subsection{Main Result}
\noindent The purpose of this article is to establish scattering of small solutions to gBBM in the ``quartic'' case $p=3$: 
\begin{thm}[Main Theorem]
\label{thm:main_thm}
Pick $s\geq 100$. Consider the Cauchy problem
\begin{equation}\label{eqn:3bbm_cauchy_problem_asympt}
\left\{
 \begin{aligned}
     u_t - u_{xxt} +\partial_{x}\left(u+u^{4}\right) &= 0
     \\
     u(t=1,x)&= u_0(x).
 \end{aligned}
\right. 
\end{equation}
There exists $\eps_0\in\left(0,1\right)$ such that
$$
\left\|\widehat{u_0}\right\|_{L^{\infty}_{\xi}} + \left\|xu_0(x)\right\|_{L^2_x} + \left\|u_0(x)\right\|_{H^{s}_x} < \eps_0
$$
implies the following: 
\begin{itemize}
    \item there exists a unique global-in-time solution $u(t,x)$ to \eqref{eqn:3bbm_cauchy_problem_asympt};
    \item if we denote $\omega(\xi)\doteq \xi\langle\xi\rangle^{-2}$ and $\widehat{f}\doteq e^{i\omega(\xi)t}\widehat{u}$, then there exist $p_0, p_1\in \left(0,\frac16\right)$ such that $f(t,x)$ satisfies the mild growth estimate
    $$
\sup_{t\in [1,\infty)}\left[\left\|\widehat{f}\right\|_{L^{\infty}_{\xi}}+ t^{-p_0}\left\|xf\right\|_{L^{2}_{x}} + t^{-p_1}\left\|f\right\|_{H^{s}_{x}}\right]\lesssim \eps_0;
    $$
    \item $u(t,x)$ obeys the dispersive decay bound 
$$
\left\|u(t,x)\right\|_{L^{\infty}_{x}} \lesssim \eps_0 t^{-\frac13} \quad \forall \ t\geq 1;
$$
\item $u(t,x)$ scatters in $L^2_{x}$: there exists $F(x)\in L^2_{x}$ such that
$$
\lim_{t\rightarrow\infty}\left\|u(t,x)- e^{-i\omega\left(\frac{1}{i}\partial_{x} \right) t}F(x)\right\|_{L^2_x} =0.
$$
\end{itemize}

\end{thm}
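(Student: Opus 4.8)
\emph{Overview of the strategy.} The plan is to run a bootstrap argument in the weighted space-time-resonance framework, using the weighted profile norm $\|xf\|_{L^2_x}$ as a substitute for the scaling vector field that gBBM lacks. Write $\omega(\xi)\doteq\xi\langle\xi\rangle^{-2}$ and pass to the profile $\hat f(t,\xi)\doteq e^{i\omega(\xi)t}\hat u(t,\xi)$; since $\hat u(s,\eta)=e^{-i\omega(\eta)s}\hat f(s,\eta)$, the Cauchy problem \eqref{eqn:3bbm_cauchy_problem_asympt} is equivalent to the Duhamel identity
\begin{equation*}
\hat f(t,\xi)=\hat f(1,\xi)-i\int_1^t\int_{\xi=\xi_1+\xi_2+\xi_3+\xi_4}\omega(\xi)\,e^{is\Phi}\prod_{j=1}^4\hat f(s,\xi_j)\,d\xi_1\,d\xi_2\,d\xi_3\,ds,\qquad \Phi\doteq\omega(\xi)-\sum_{j=1}^4\omega(\xi_j).
\end{equation*}
I bundle the three quantities to be controlled into
\[
M(T)\doteq\sup_{1\le t\le T}\Big(\|\hat f(t)\|_{L^\infty_\xi}+t^{-p_0}\|xf(t)\|_{L^2_x}+t^{-p_1}\|f(t)\|_{H^s_x}\Big),
\]
which, by the well-posedness theory recalled above (via $H^s\hookrightarrow C^2$) together with a routine persistence/continuity argument, is finite and continuous in $T$ with $M(1)\lesssim\eps_0$. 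Assuming $M(T)\le\eps_0^{3/4}$ on a maximal interval, I will show $M(T)\lesssim\eps_0$; this closes the bootstrap, forces the interval to be $[1,\infty)$, and yields the mild-growth estimate, from which the $L^\infty_x$ decay and the $L^2_x$ scattering statement are then read off. Global existence and uniqueness themselves are inherited from the quoted well-posedness theorem.

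\emph{Dispersive decay and the energy and weighted estimates.} Given the bootstrap hypotheses, $\|u(t)\|_{L^\infty_x}\lesssim\eps_0 t^{-1/3}$ follows from stationary phase applied to $u(t,x)=\tfrac1{2\pi}\int e^{i(x\xi-t\omega(\xi))}\hat f(t,\xi)\,d\xi$: the stationary points solve $\omega'(\xi)=x/t$ with $\omega'(\xi)=(1-\xi^2)\langle\xi\rangle^{-4}$, and the generic $t^{-1/2}$ rate degenerates to the Airy rate $t^{-1/3}$ precisely at the inflection points $\xi=\pm\sqrt3$ (where $\omega''=0\ne\omega'''$) and at the group-velocity maximum $\xi=0$. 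The remainder in the expansion is controlled by $\|\partial_\xi\hat f\|_{L^2_\xi}=\|xf\|_{L^2_x}$ and $\|f\|_{H^s_x}$ against an extra negative power of $t$; since $p_0,p_1<\tfrac16$ this extra decay beats the allowed growth, which is the source of the constraint $p_0,p_1\in(0,\tfrac16)$. For the Sobolev norm, boundedness of $\omega$ makes $i\omega(\tfrac1i\partial_x)$ skew-adjoint and bounded (indeed smoothing), so a Moser-type estimate gives $\tfrac{d}{dt}\|f(t)\|_{H^s_x}^2\lesssim\|u(t)\|_{L^\infty_x}^3\|f(t)\|_{H^s_x}^2\lesssim\eps_0^3 t^{-1}\|f(t)\|_{H^s_x}^2$, and Gr\"onwall yields $\|f(t)\|_{H^s_x}\lesssim\eps_0\,t^{C\eps_0^3}\le\eps_0 t^{p_1}$ for $\eps_0$ small: a borderline logarithm absorbed by the slack $p_1>0$. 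The weighted bound is obtained by applying $\partial_\xi$ to the Duhamel identity; the only dangerous contribution is the one in which $\partial_\xi$ hits $e^{is\Phi}$ and produces the factor $is\,\partial_\xi\Phi=is(\omega'(\xi)-\omega'(\xi_4))$, and this explicit power of $s$ is removed by integrating by parts in a frequency variable off the space-resonant set $\{\omega'(\xi_1)=\dots=\omega'(\xi_4)\}$ (paying $1/\partial_{\xi_j}\Phi$) and, where needed, in $s$ off the time-resonant set $\{\Phi=0\}$ (paying $1/\Phi$); combined with three powers of $t^{-1/3}$ from the quartic nonlinearity, this leaves an at worst logarithmically divergent $s$-integral, hence $\lesssim\eps_0 t^{p_0}$.

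\emph{The uniform bound on $\|\hat f\|_{L^\infty_\xi}$ and the resonances.} This is the quantitative core of scattering. After a Littlewood--Paley decomposition in $\xi,\xi_1,\dots,\xi_4$ and a dyadic decomposition $s\sim 2^m$ in time, every frequency configuration is handled by some combination of: the vanishing of the symbol $\omega(\xi)$ at low output frequency, integration by parts in $s$ off $\{\Phi=0\}$, integration by parts in the $\xi_j$ off the space-resonant set, and the $t^{-1/3}$ pointwise decay (three of whose powers already make the nonlinearity borderline integrable). The obstruction advertised in the abstract appears here: the space-time-resonant set $\{\Phi=0\}\cap\{\omega'(\xi_1)=\dots=\omega'(\xi_4)\}$ consists of isolated points — since $\omega'(\eta)=c$ is quadratic in $\eta^2$ and so has at most four roots, the $\xi_j$ are pinned to a finite set, which pins $\xi=\sum\xi_j$ and then the scalar equation $\Phi=0$ — and at those with $\xi\ne0$ (e.g. configurations built from $\xi_j\in\{\pm\sqrt3,\dots\}$) one has $\omega(\xi)\ne0$, so there is no null form to exploit. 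The resolution is that $\Phi$ is nondegenerate transverse to this finite set, so a neighbourhood of radius $2^{-\rho m}$ has measure $\lesssim 2^{-\rho' m}$ with $\rho'>0$; estimating the multilinear integral crudely on that neighbourhood, and on its complement using the two integrations by parts (which there cost only a $2^{\rho m}$ power), one checks that $\rho$ can be tuned so that the resonant contribution, summed over $m$, is $\lesssim\eps_0^4$. The isolated null-structure-free resonances are, in this precise sense, simply too small to obstruct the estimate.

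\emph{Scattering and the main difficulty.} Finally, running the same multilinear estimates on the time tail $\int_t^\infty$ in place of $\int_1^t$ shows that $\widehat F(\xi)\doteq\hat f(1,\xi)-i\int_1^\infty\omega(\xi)e^{is\omega(\xi)}\widehat{u^4}(s,\xi)\,ds$ converges in $L^2_\xi$ and that $\|\hat f(t)-\widehat F\|_{L^2_\xi}\to0$ as $t\to\infty$, which is exactly $\|u(t)-e^{-i\omega(\frac1i\partial_x)t}F\|_{L^2_x}\to0$. I expect the third step — proving that the isolated space-time resonances without null structure are harmless — to be the main obstacle; the rest is by-now-standard space-time-resonance technology, complicated here only by the need to accommodate the bounded symbol $\omega(\xi)=\xi\langle\xi\rangle^{-2}$, whose group velocity has interior critical points and nonzero inflection points, and the resulting $t^{-1/3}$ (rather than $t^{-1/2}$) decay.
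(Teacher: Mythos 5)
Your overall framework (profile, Duhamel in Fourier space, bootstrap in $\|\hat f\|_{L^\infty_\xi} + t^{-p_0}\|xf\|_{L^2_x} + t^{-p_1}\|f\|_{H^s_x}$, Littlewood--Paley localization plus dyadic time decomposition, and closing via integrations by parts away from the resonant sets) matches the paper's. The $H^s$ estimate and the scattering argument at the end are also essentially the same. But there is a serious error in your characterization of the space-time resonant set, and it is not a small one: it touches exactly the part of the problem that the paper says is hardest.

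You assert that $\{\Phi=0\}\cap\{\omega'(\xi_1)=\cdots=\omega'(\xi_4)\}$ is a finite set of isolated points, arguing that ``$\omega'(\eta)=c$ has at most four roots, so the $\xi_j$ are pinned.'' That reasoning forgets that $c$ itself is a free parameter: the equation $\omega'(\eta_1)=\omega'(\eta_2)=\omega'(\eta_3)=\omega'(\eta_4)$ (three equations in the three free $\eta_j$, with $\xi$ a parameter) cuts out one-dimensional \emph{curves} of space resonances, and it turns out that entire one-parameter families of these are simultaneously time resonant. Concretely, the line
\[
L=\{(\eta_1,\eta_2,\eta_3;\xi)=(-\eta,\eta,\eta;0):\eta\in\mathbb R\}
\]
satisfies $\varphi\equiv 0$ and $\nabla_\eta\varphi\equiv 0$ identically (because $\omega$ is odd and $\omega'$ even), and so does the curve $\Gamma=\{(\eta,r(\eta),-r(\eta);0):|\eta|>1\}$ with $r(\eta)=\operatorname{sgn}(\eta)\sqrt{(\eta^2+3)/(\eta^2-1)}$. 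The only genuinely isolated space-time resonance is the ``anomalous'' point $(\eta_0,\eta_0,\eta_0;\xi_0)$ with $\eta_0\approx 5.1$, $\xi_0\approx 14.2$, where $\xi\ne 0$ and hence $\omega(\xi)\ne 0$.

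This matters because the ``shrink a neighborhood and use the volume trick plus two integrations by parts'' recipe you propose is calibrated for isolated resonances. On $L$ and $\Gamma$ the resonant set has codimension three in $(\eta_1,\eta_2,\eta_3;\xi)$-space, not four, so the measure of a tubular neighborhood is larger by a power, and the phase is not transversally nondegenerate in the directions you need. What actually rescues the argument there is the null structure from $\xi=0$ (the prefactor $\omega(\xi)\sim\xi$ and, near $(0,0,0;0)$, also $\partial_\xi\varphi=\mathcal O(\xi^2,\eta_4^2)$); the paper exploits this in detail, and also has to handle the ``hidden null'' near $(-\sqrt3,\sqrt3,\sqrt3;0)$ where $\partial_\xi\varphi=\mathcal O(1)$ and only the linear vanishing $\varphi\approx-\tfrac98\xi$ is available. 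None of this appears in your sketch, and without it the weighted and $L^\infty_\xi$ bounds will not close along $L$ and $\Gamma$. You should correct the resonance computation (it is not just four roots of a scalar equation; one must permit sign flips and the reflection $r(\cdot)$, and then check the time-resonance condition along each resulting curve), and then supply the null-structure arguments separately for the resonant line, the resonant curve, and the anomalous point, as the sizes of the various localizations must be coupled to the dyadic time scale $2^m$ in a way that depends on how much null is available in each region.

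A smaller point: the constraint $p_0,p_1\in(0,\tfrac16)$ does not come directly from ``beating the worst $t^{-1/3}$ decay''; in the paper it is imposed so that, after the dyadic-in-time estimates, the different sources of growth (the volume-trick cost near resonances, the Sobolev-tail cost at high frequency, and the loss from the weighted norm $\|xf\|_{L^2}\lesssim t^{p_0}$) can all be summed. Your Gr\"onwall argument for $H^s$ is fine, but the choice of $p_0$ is tied to the resonance analysis, not to the dispersive rate alone.
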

\noindent Before outlining the proof of this result, I'd like to make some comments. First, I do not claim that $100$ is the \emph{smallest} $s$ for which the theorem holds. That being said, the particular method of proof I employ makes dealing with high-frequency components of $u(t,x)$ much easier if $s$ is ``very large'', and tracking the minimum admissible $s$-value through each relevant case would be unwieldy. Identifying a sharp regularity threshold for scattering would be an interesting problem, but I do not think the tools I use here would be helpful for this purpose. Additionally, one can control $\left\|\widehat{u_0}\right\|_{L^{\infty}_{\xi}}$ with a weighted $L^2_{x}$-norm, so it appears that the smallness constraint on $u_0$ is slightly more complicated than it needs to be. The reason I keep the $\left\|\widehat{u_0}\right\|_{L^{\infty}_{\xi}}$ piece separate from the weighted-$L^2_{x}$ piece is pedagogical: by keeping these two norms distinct in the smallness condition, I am mentally preparing the reader to appreciate their very different behaviour in the claimed mild growth estimate in the second item of the conclusion. Finally, note that the main theorem only covers $p=3$, which is strictly below the open quintic case $p=4$. While it appears more natural to first handle the quintic case, choosing $p=3$ makes the computations in appendix \ref{section:res_comp} a bit simpler. However, I am confident that my $p=3$ arguments will work for $p=4$ as well.

\subsection{Strategy of the Proof: the Method of Space-Time Resonances}
\noindent  \noindent The core idea underlying the proof of theorem \ref{thm:main_thm}
 is the \textbf{method of space-time resonances} (STR). In applying STR, one identifies what pairs (or triples, \textit{et cetera} depending on the nonlinearity) of wavepacket-like solutions interact ``resonantly''. These resonant interactions \emph{may} cause the nonlinear flow to substantially deviate from the linear flow. To establish scattering, then, one must identify all resonant interactions and prove that they have little effect on the dynamics over long time scales. Resonant interactions may be enumerated using a computation relying on simple stationary phase ideas, outlined in the next subsection. 
 \par STR is a relatively young technique, introduced by Germain, Masmoudi, and Shatah in 2009 \cite{GMS2009} to understand asymptotics for a quadratic-nonlinear Schr\"{o}dinger equation in three space dimensions (see also Gustafson, Nakanishi, and Tsai \cite{GNT2007, GNT2009}). Since its introduction, the method has also been applied to many difficult problems including long-time regularity for the $3$D gravity-capillary water waves system \cite{DIPP2017}. Good introductory sources on STR include Germain \cite{Germain2011} and Kato and Pusateri \cite{KP2011}. For more history on STR and its connections to other approaches, see again \cite{GMS2009} and the references therein. 
 \par To understand why an STR approach is a promising way to deal with the three issues outlined earlier, it's helpful to recall some results from existing literature. First, Germain et al. \cite{GPR2016}  were able to establish modified scattering for the $p=2$ gKdV with the help of STR techniques. That is, an STR approach can handle situations where $p$ is relatively small. Additionally, in Kato and Pusateri's work on modified scattering for nonlinear Schr\"{o}dinger equations \cite{KP2011}, an STR framework was applied to estimate weighted norms without using the method of vector fields as in Hayashi and Naumkin's approach to the problem \cite{HN1998_nls}. Finally, the analysis of the two-horizontal-dimensional gravity-capillary water waves system by Deng et al. \cite{DIPP2017} demonstrates that STR can, at least in two space dimensions, handle nonzero inflection points in the governing dispersion relation. Therefore, STR has, in various different circumstances, been able to individually handle all three of the issues we discussed earlier. Accordingly, it's not unreasonable to apply the method to a problem where \emph{all three issues appear simultaneously}. 
 \par I emphasize that STR provides a framework, rather than an algorithm, for understanding the asymptotics of dispersive waves. Each PDE has its own particular resonances that need to be handled on a case-by-case basis. In particular, for the quartic gBBM one can't simply cut-and-paste from existing STR-based papers to produce a proof of theorem \ref{thm:main_thm}: below, we'll find that the quartic gBBM has ``anomalous'' non-null isolated resonances that, as far as I know, haven't been dealt with in the literature before. Also, the particularities of an STR approach are the main reason why my proof of theorem \ref{thm:main_thm} does not immediately carry over to the quintic case (in brief, changing the nonlinear power $p$ gives rise to different resonances).  
 \subsection{Space-Time Resonance Framework for gBBM}
\noindent
Now, we discuss the STR approach in more precise terms. For convenience, we define
\begin{equation}
    \label{eqn:eta4_defn}
    \eta_4 = \xi-\sum_{j=1}^{3}\eta_j.
\end{equation} 
We also define the \textbf{profile} of $u(t,x)$ (the solution to $p=3$ gBBM) by
$$
f(t,x) = e^{i\omega\left(\frac{1}{i}\partial_{x} \right) t}u(t,x)
$$
where $\omega(\xi)=\xi\langle\xi\rangle^{-2}$. Note that, if $u(t,x)$ was a solution to linBBM instead of gBBM, then $f = u(t=0,x)$, hence in particular $\partial_{t}f=0$. In other words, time-variation of our profile is the signature of nonlinearity. Next, we transform gBBM into an ODE for $\widehat{f}$. Using the PDE in \eqref{eqn:3bbm_cauchy_problem_asympt}, we know that 
\begin{equation*}
\partial_{t}\hat{f} = -i\omega(\xi)e^{it\omega(\xi)} \left(u^4\right)^{\wedge}.
\end{equation*}
Through repeated application of the convolution theorem for Fourier transforms, we find
\begin{align}\label{eqn:ODE_for_fhat_framework}
\partial_{t}\hat{f} = -i \left(\frac{1}{2\pi}\right)^{3/2}\omega(\xi) \int  \diff \eta_1 \diff \eta_2 \diff\eta_3 \ e^{-it\varphi} \hat{f}(\eta_1)  \hat{f}(\eta_2)   \hat{f}(\eta_3)   \hat{f}\left(\eta_4\right)
\end{align}
where the \textbf{nonlinear phase function} is 
\begin{equation}\label{eqn:nonlinear_phase_fnc}
\varphi(\eta_1,\eta_2,\eta_3;\xi) = -\omega(\xi)+ \sum_{j=1}^{4}\omega(\eta_j). 
\end{equation}
Note that we view $\xi$ as a parameter in this phase function. Also, notice that that this ODE for $\widehat{f}$ is really just the Fourier-space version of Duhamel's formula, if we treat the nonlinear piece of our PDE as a forcing term. By integrating \eqref{eqn:ODE_for_fhat_framework} over $[1,t]$, we obtain 
\begin{equation}\label{eqn:fhat_integrated_framework}
\hspace{-0.5cm}
    \widehat{f}(t,\xi) = \widehat{f}(1,\xi) -i  \frac{  \omega(\xi)}{\sqrt{(2\pi)^3}} \int_{1}^{t} \diff \tau  \int_{\Rthree}  \diff \eta_{1} \ \diff \eta_{2}\  \diff \eta_{3} \ e^{-i\tau\varphi} \  \widehat{f}(\eta_1) \ \widehat{f}(\eta_2)\ \widehat{f}(\eta_3)\  \widehat{f}\left(\eta_4\right).
\end{equation}
The above expression forms the bedrock for the analysis in section \ref{s:the_technical_proof}. 
\par  In physical terms, we can think of the integral in \eqref{eqn:fhat_integrated_framework} as representing the interaction between normal modes. As mentioned earlier, we want to study the long-time behaviour of $\hat{f}$ by determining whether or not any of these normal mode interactions are resonant. Mathematically, this is accomplished by viewing the right-hand side of \eqref{eqn:fhat_integrated_framework} as an \textbf{oscillatory integral}. Thinking in terms of the classical stationary phase estimate, we make the following definition: 
\begin{defn}
We call a quadruple $(\eta_{1}^{*},\eta_{2}^{*},\eta_{3}^{*};\xi^{*})$ a \textbf{space resonance} if 
$$\nabla_{\eta_1, \eta_2, \eta_3}\varphi\left(\eta_{1}^{*},\eta_{2}^{*},\eta_{3}^{*};\xi^{*}\right) =0 $$
or a \textbf{time resonance} if 
$$
\varphi\left(\eta_{1}^{*},\eta_{2}^{*},\eta_{3}^{*};\xi^{*}\right)  =0 . 
$$
If $(\eta_{1}^{*},\eta_{2}^{*},\eta_{3}^{*};\xi^{*})$ is both a space resonance and a time resonance, we call it a \textbf{space-time resonance}. 
\end{defn}
\noindent Appealing to stationary phase ideas, the dominant contributions to $\widehat{f}$ come from those regions of $\eta_1\eta_2\eta_3$-space that touch one or more space-time resonances: these are the regions where the oscillatory term $e^{-i\tau\varphi}$ is approximately constant, so no oscillatory cancellations in the integrand can occur. Therefore, once we control the near-resonant part of the integral in \eqref{eqn:fhat_integrated_framework}, the hardest part of our job is finished! This is the main punchline of the STR framework.
\subsection{Steps of the Proof}
\label{ss:steps_of_proof}
\noindent Now that we have developed a heuristic understanding of the STR approach to gBBM, I can meaningfully outline the major steps in my proof of theorem \ref{thm:main_thm}. 
\begin{enumerate}
    \item Obtain a dispersive estimate controlling the amplitude loss of linear waves over time. This estimate, stated precisely in corollary \ref{cor:ultimate_disp_est}, tells us that $\left\|u\right\|_{L^{\infty}_{x}}$ (where $u$ is our \emph{nonlinear} wave) can be controlled by various norms of the Fourier-transformed profile $\widehat{f}$.
    \item Based on this new linear dispersive estimate, define a Banach space $X_{T}$ of functions depending on $t\in \left[1,T\right]$ and $x\in \mathbb{R}$ in such a way that 
    \begin{equation}\label{eqn:smallness_in_XT_implies}
    \left\|u\right\|_{X_{T}}\lesssim \eps_0 \Rightarrow \left\|u\right\|_{L^{\infty}_{x}} \lesssim \eps_0 t^{-\frac13} \quad \forall \ t\in \left[1,T\right]. 
    \end{equation}
    Thus, to establish linear dispersive decay of $u(t,x)$ it suffices to show that there exists a global-in-time solution $u\in X_{\infty}$.
    \item Show that there is some $T\in \left(1,2\right)$ such that \eqref{eqn:fhat_integrated_framework} has a unique solution living in $X_{T}$. Further, show that the map $\tau\mapsto \left\|u\right\|_{X_{\tau}}$ is continuous on $\left[1,T\right]$. This is the content of proposition \ref{prop:local_theory}.
    \item Make the \textit{a priori} assumption that there is some $\eps_1\in \left(\eps_0, 1\right)$ such that $\left\|u\right\|_{X_{T}}\leq \eps_1$. Then, use stationary phase-type arguments and harmonic analysis tools (especially \textbf{multilinear estimates}) to control the Duhamel term on the right-hand side of \eqref{eqn:fhat_integrated_framework} in terms of $\eps_1$. Along the way, one needs to decompose $\eta_1\eta_2\eta_3\xi$-space into different pieces to properly separate non-resonant regions from resonant ones. This is accomplished via \textbf{Littlewood-Paley (LP) decomposition}. Handling all the cases resulting from LP decomposition is the longest and most technical part of the argument. 
    \item After controlling the oscillatory integral, one is left with an estimate of the form 
    $$
     \left\|u\right\|_{X_{T}} \leq \eps_0 + C\eps_1^4
    $$
    for some absolute constant $C>0$. This is established carefully in proposition \ref{prop:bootstrap_close}. By shrinking $\eps_0$ (based on $C$) and choosing $\eps_1=2\eps_0$, this implies the stronger bound $\left\|u\right\|_{X_{T}} \leq \frac32 \eps_0$. 
    \item Iterate steps (iii)-(v) to show that $u(t,x)$ can be extended out to exist on $t\in\left[1,\infty\right)$ in such a way that $\left\|u\right\|_{X_{\infty}} \lesssim \eps_0$, which by design ensures us that $u$ has the dispersive decay of a solution to linBBM. The iteration is guaranteed to converge by the \textbf{bootstrap principle}.  
    \item Once the global solution $u\in X_{\infty}$ is obtained, a straightforward modification of the arguments in item (iv) show that $u$ scatters in $L^2_{x}$ (see proposition \ref{prop:scattering_in_L^2} and section \ref{s:scattering_proof}). 
\end{enumerate} \ \par \noindent 
 
\subsection{Notation}
 \label{s:notation}
\noindent Below, $d$ always denotes a positive integer greater than or equal to $1$. 
\begin{itemize}
\item If $a,b\in \mathbb{R}$, we say that $a\lesssim b$ if there exists an absolute constant $c>0$ such that $a\leq bc$.  We say that $a\simeq b$ or $a\sim b$ if $a\lesssim b$ and $b\lesssim a$. We say $a=\mathcal{O}\left(b\right)$ if $|a|\lesssim b$. 
\item If $a,b\in \mathbb{R}$ and $\lambda$ is a parameter or a function, we say that $a\lesssim_{\lambda} b$ if there exists  $c=c\left(\lambda\right)>0$ such that $a\leq bc$. Similarly, if we include multiple subscripts on the ``$\lesssim$'', then the bounding constant depends on all the subscripts. 
\item Given a real number $a$, we define its \textbf{Japanese bracket} by $\langle a\rangle =\left(1+a^2\right)^{\frac12}$. 
\item Given a real number $a$, $\left(a\right)^{-}$ denotes a number that is close to, but strictly less than, $a$ (for concreteness, we can say, $a-\left(a\right)^{-}<\frac{1}{2}$). 
\item If $(a,b)\subseteq \mathbb{R}$, then $(a,b)^{\mathsf{c}}$ denotes the complement of this interval. 
    \item For $q\in \left[1,\infty\right]$, $L^{q}_{x}\left(\mathbb{R}^{d}\right)$ denotes the usual Lebesgue space.
   \item For $s\geq 0$, $H^{s}_{x}\left(\mathbb{R}^d\right)$ denotes the usual Sobolev space.
   \item $\mathcal{S}_{x}\left(\mathbb{R}^d\right)$ denotes the Schwartz space. 
   \item Given a Banach space $X$, we let $C\left([0,\infty); X\right)$ denote the space of continuous curves in $X$.
   \item If $f\in L^1_{x}\left(\mathbb{R}^d\right)$, we denote its Fourier transform by $\widehat{f}$. 
    \item If $f\in L^2_{\xi}\left(\mathbb{R}^d\right)$, we denote its inverse Fourier transform by $\overset{\vee}{f}$.
\end{itemize}
We also need to set up some conventions for Littlewood-Paley (LP) decomposition. First, we define our bump functions. 
\begin{defn}\label{defn:bumps}
\noindent 
\begin{enumerate}
    \item $\phi(\xi)$ denotes a fixed smooth bump function supported in $[-2,2]$ with $\phi\equiv 1$ on $\left[-1, 1\right]$. 
\item $\psi(\xi) \doteq \phi(\xi)-\phi\left(2\xi\right)$. 
\item Given $k\in \mathbb{Z}$, $\phi_{\leq k}(\xi) \doteq \phi\left(\frac{\xi}{2^{k}}\right)$ and $\psi_{k}(\xi) \doteq \psi\left(\frac{\xi}{2^{k}}\right)$. 
\end{enumerate} \ \par\noindent 
\end{defn}

\noindent Note that each $\psi_k$ is supported inside a one-dimensional annulus. For this reason, we say each $\psi_k$ is an \textbf{annular bump function}. We sometimes call the $\phi_{\leq k}$ \textbf{vanilla bump functions} when we want to contrast them with the annularly supported $\psi_k$.
\begin{defn}[LP Projections]
For any $f\in L^2_{x}\left(\mathbb{R}\right)$ and any $k\in \mathbb{Z}$, define
    \begin{align*}
        f_{k} \doteq \left(\psi_{k}\widehat{f}\right)^{\vee} \quad \text{and} \quad f_{\leq k} \doteq \left(\phi_{\leq k}\widehat{f}\right)^{\vee}.
    \end{align*} 
    \noindent $f_{k}$ is called the $k^{\text{th}}$ \textbf{LP piece of} $f(x)$. The multiplier operators $f\mapsto f_{k}$, $f\mapsto f_{\leq k}$ are called \textbf{LP projections}. 
\end{defn}
\noindent 
In practice, if $u(t,x)$ is a wavepacket solving gBBM, we think of $u_{k}(t,x)$ as a localized version of a normal mode with frequency $\xi\sim 2^{k}$. The entire wavepacket can be reconstructed as a sum of LP pieces: 
\begin{lemma}[LP Decomposition]
For any $f\in L^2_{x}$ and any fixed $k_{\text{lo}}\in \mathbb{Z}$, the following identities hold:
\begin{align*}
    f(x) &= \sum_{k\in \mathbb{Z}} f_{k}(x) \phantom{f_{\leq 0}(x) +} \quad \left(\text{\textbf{homogeneous LP decomposition}}\right)
    \\
    f(x) &= f_{\leq k_{\text{lo}}}(x) + \sum_{k> k_{\text{lo}}} f_{k}(x)  \quad \left(\text{\textbf{inhomogeneous LP decomposition}}\right). 
\end{align*}
\end{lemma}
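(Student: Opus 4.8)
The plan is to reduce both identities to a single telescoping formula on the Fourier side and then to upgrade the resulting pointwise statement to convergence in $L^2_x$ via Plancherel's theorem together with the dominated convergence theorem. First I would record the elementary consequence of Definition \ref{defn:bumps} that
\[
\psi_k(\xi) \;=\; \psi\!\left(\tfrac{\xi}{2^k}\right) \;=\; \phi\!\left(\tfrac{\xi}{2^k}\right) - \phi\!\left(\tfrac{\xi}{2^{k-1}}\right) \;=\; \phi_{\leq k}(\xi) - \phi_{\leq k-1}(\xi),
\]
so that for any integers $A \leq B$ the partial sums telescope: $\sum_{k=A}^{B}\psi_k(\xi) = \phi_{\leq B}(\xi) - \phi_{\leq A-1}(\xi)$. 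Since $\phi \equiv 1$ on $[-1,1]$ and $\operatorname{supp}\phi \subseteq [-2,2]$, for each fixed $\xi$ one has $\phi_{\leq B}(\xi) = 1$ as soon as $2^B \geq |\xi|$, and for each fixed $\xi \neq 0$ one has $\phi_{\leq A-1}(\xi) = 0$ as soon as $2^A < |\xi|$ (which holds for all sufficiently negative $A$). Hence, letting $B \to +\infty$ and $A \to -\infty$, the series $\sum_{k\in\mathbb{Z}}\psi_k(\xi)$ equals $1$ for every $\xi \neq 0$; and because $\operatorname{supp}\psi_k \subseteq \{2^{k-1} \leq |\xi| \leq 2^{k+1}\}$, at most three terms of the series are nonzero at any given $\xi$, so there are no convergence subtleties in the sum itself, only in the way it is attached to $\widehat f$.

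For the homogeneous decomposition, fix $f \in L^2_x$ and set $S_N \doteq \sum_{|k|\leq N} f_k$, whose Fourier transform is $\widehat{S_N} = m_N\,\widehat f$ with $m_N \doteq \sum_{|k|\leq N}\psi_k = \phi_{\leq N} - \phi_{\leq -N-1}$ by the telescoping identity. From the previous step $m_N(\xi) \to 1$ for every $\xi \neq 0$, hence for a.e.\ $\xi$; and $|m_N(\xi)| \leq 2\|\phi\|_{L^\infty_\xi}$ uniformly in $N$ and $\xi$, so that $|\widehat{S_N} - \widehat f|^2 \leq \bigl(1 + 2\|\phi\|_{L^\infty_\xi}\bigr)^2 |\widehat f|^2 \in L^1_\xi$. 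Dominated convergence then yields $\widehat{S_N} \to \widehat f$ in $L^2_\xi$, and Plancherel's theorem gives $S_N \to f$ in $L^2_x$, which is precisely the identity $f = \sum_{k\in\mathbb{Z}} f_k$ (interpreted, as it must be, as an $L^2_x$-limit of partial sums). The inhomogeneous decomposition is identical: for fixed $k_{\text{lo}}$ the partial sum $f_{\leq k_{\text{lo}}} + \sum_{k_{\text{lo}} < k \leq M} f_k$ has multiplier $\phi_{\leq k_{\text{lo}}} + \sum_{k_{\text{lo}} < k \leq M}\psi_k = \phi_{\leq M}$, which again tends to $1$ pointwise and is uniformly bounded, so the same dominated-convergence-plus-Plancherel argument applies.

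I do not anticipate a real obstacle; the lemma is standard and the only genuinely non-cosmetic point is bookkeeping about the mode of convergence. Concretely, the homogeneous series does \emph{not} converge pointwise at $\xi = 0$ (there $m_N(0) = \phi_{\leq N}(0) - \phi_{\leq -N-1}(0) = 1 - 1 = 0$ for all $N$), so one cannot assert a pointwise Fourier-side identity everywhere; but $\{0\}$ is Lebesgue-null, hence invisible to the $L^2$/Plancherel framework, and this is exactly why the statement is phrased as an identity between elements of $L^2_x$. Everything else follows mechanically from the explicit formula $\sum_{k=A}^{B}\psi_k = \phi_{\leq B} - \phi_{\leq A-1}$.
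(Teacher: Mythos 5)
Your proof is correct: the telescoping identity $\psi_k = \phi_{\leq k} - \phi_{\leq k-1}$, the pointwise limit $m_N \to 1$ away from the origin, and the dominated convergence/Plancherel upgrade to $L^2_x$ convergence are exactly the standard argument, and your remark about $m_N(0)=0$ is the correct reason the identity must be read as an $L^2_x$ equality rather than a pointwise Fourier-side one. The paper itself offers no proof of this lemma (it is stated and immediately followed by a \qed as a standard fact), so there is no alternative approach to compare against.
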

\qed 

\section{Linear Estimates}
\noindent In this section, I establish a dispersive decay estimate for the linBBM flow. Variants of this dispersive estimate have been obtained much earlier by Albert \cite{Albert1986, Albert1989}, Souganidis \& Strauss \cite{SS1990}, and Dziuba\'{n}ski \& Karch \cite{DK1996}. The new estimate here involves a weighted norm, thereby providing greater flexibility with regards to possible directions for integration by parts in the Duhamel term of \eqref{eqn:fhat_integrated_framework}. Crucially, don't need $u(t,x)$ to solve linBBM for these estimates to apply.
\begin{thm}[Dispersive Estimates on LP Pieces] \label{thm:LP_disp_est}
 Let $C_{\mathrm{hi}} \geq 2^{4}, C_{\mathrm{lo}}\leq 2^{-2}$ and pick $s\geq \frac{11}{2}$. For any sufficiently nice function $u(t,x)$ with profile $f(t,x)$, we have the bounds 
\begin{equation}
    \label{disp_est_on_LP_pieces}
    \left\| u_{k}(t,x)\right\|_{L^{\infty}_{x}} \lesssim \begin{cases}
    2^{-k(s-1)}\left\|f\right\|_{H^{s}_{x}} \quad & \text{if} \quad 2^{k} \geq C_{\mathrm{hi}}t^{\frac19}
    \\ 
    t^{-\frac12}2^{\frac{3k}{2}}\left\|\widehat{f}\right\|_{L^{\infty}_{\xi}} + t^{-\frac34}2^{\frac{9k}{4}} \left\|\partial_{\xi}\widehat{f}_{k}\right\|_{L^2_{\xi}} \quad & \text{if} \quad 2^{3} \leq 2^{k} < C_{\mathrm{hi}}t^{\frac19}
    \\ 
    t^{-\frac13}\left\|\widehat{f}\right\|_{L^{\infty}_{\xi}} + t^{-\frac12} \left\|\partial_{\xi}\widehat{f}_{k}\right\|_{L^2_{\xi}}  \quad & \text{if} \quad 2^{-1} \leq 2^{k} < 2^{3}
    \\ 
      t^{-\frac12}2^{-\frac{k}{2}}\left\|\widehat{f}\right\|_{L^{\infty}_{\xi}} + t^{-\frac34}2^{-\frac{3k}{4}}\left\|\partial_{\xi}\widehat{f}_{k}\right\|_{L^2_{\xi}}  \quad & \text{if} \quad C_{\mathrm{lo}}t^{-\frac13} \leq 2^{k} < 2^{-1}
      \\ 
     2^{k} \left\|\widehat{f}\right\|_{L^{\infty}_{\xi}} \quad & \text{if} \quad 2^{k}< C_{\mathrm{lo}}t^{-\frac13}. 
    \end{cases}
\end{equation}
\end{thm}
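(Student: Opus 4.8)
The plan is to prove each of the five cases via a stationary-phase analysis of the linear evolution operator $e^{-it\omega(\frac1i\partial_x)}$ acting on the LP-localized profile $f_k$. Writing
$$
u_k(t,x) = \frac{1}{\sqrt{2\pi}}\int e^{i(x\xi - t\omega(\xi))}\,\psi_k(\xi)\,\widehat{f}(\xi)\,d\xi,
$$
everything hinges on the behaviour of the phase $\Phi(\xi) = x\xi - t\omega(\xi)$, whose critical points satisfy $\omega'(\xi) = x/t$. Since $\omega(\xi) = \xi\langle\xi\rangle^{-2}$, one computes $\omega'(\xi) = (1-\xi^2)\langle\xi\rangle^{-4}$ and $\omega''(\xi) = 2\xi(\xi^2-3)\langle\xi\rangle^{-6}$, so $\omega''$ vanishes precisely at $\xi = 0, \pm\sqrt{3}$ — the inflection points flagged in the introduction. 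The frequency-regime breakdown in the theorem is exactly the decomposition into (a) very high frequencies where no stationary point exists and one integrates by parts in $\xi$ to exploit smoothness/decay ($H^s$ control), (b) an intermediate high band $2^3 \le 2^k < C_{\mathrm{hi}}t^{1/9}$ where $|\omega'(\xi)|\sim 2^{-2k}$ is tiny and $|\omega''(\xi)|\sim 2^{-3k}$, giving a degenerate but nonvanishing Hessian, (c) the ``bulk'' band $2^{-1}\le 2^k < 2^3$ containing the inflection points $\pm\sqrt3$ where cubic degeneracy of the phase forces the worst $t^{-1/3}$ rate, (d) a low band $C_{\mathrm{lo}}t^{-1/3}\le 2^k < 2^{-1}$ where $|\omega'|\sim 1$ and $|\omega''|\sim 2^k$, and (e) very low frequencies where $2^k < C_{\mathrm{lo}}t^{-1/3}$ and the trivial volume bound $\|u_k\|_{L^\infty_x}\lesssim 2^k\|\widehat f\|_{L^\infty_\xi}$ beats any dispersive gain.

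I would first dispatch the high-frequency case (a): on $\mathrm{supp}\,\psi_k$ with $2^k\ge C_{\mathrm{hi}}t^{1/9}$ one has $|\omega'(\xi)| = |1-\xi^2|\langle\xi\rangle^{-4}\sim 2^{-2k}$, which for $x$ in the relevant range makes $|\partial_\xi\Phi|\gtrsim 2^{-2k}$ away from $|x|\sim t2^{-2k}$; rather than chase this, the cleaner route is to simply use Bernstein: $\|u_k\|_{L^\infty_x}\lesssim 2^{k/2}\|\widehat{u_k}\|_{L^2_\xi} = 2^{k/2}\|\widehat{f_k}\|_{L^2_\xi}\lesssim 2^{k/2}2^{-ks}\|f\|_{H^s_x} = 2^{-k(s-1/2)}\|f\|_{H^s_x}$, and since $2^{-k(s-1/2)}\le 2^{-k(s-1)}$ this gives the stated bound with room to spare (this is where ``$s$ large'' is comfortably exploited). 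For the four genuinely dispersive bands I would use a single unified lemma: a stationary-phase estimate for $\int e^{-it\Phi}a(\xi)g(\xi)\,d\xi$ where $\Phi$ has a nondegenerate-or-cubically-degenerate critical point, controlling the output by $t^{-1/2}$ (nondegenerate) or $t^{-1/3}$ (cubic, via the van der Corput $k=3$ estimate / Airy-type scaling) times $\|g\|_{L^\infty}$, plus a remainder term that after integration by parts lands on $\partial_\xi g$ and contributes the $\|\partial_\xi\widehat{f_k}\|_{L^2_\xi}$ pieces with the second listed power of $t$. The frequency-dependent prefactors $2^{3k/2}, 2^{9k/4}$ etc. come from tracking how $|\omega''|$ (resp. $|\omega'''|$) scales with $2^k$ in each band: e.g. in band (b), $|\omega''|^{-1/2}\sim 2^{3k/2}$ produces the $t^{-1/2}2^{3k/2}$ term, and a rescaling $\xi = 2^k\zeta$ converts the $L^2_\xi$ remainder into the $2^{9k/4}$ weight; in band (d), $|\omega''|^{-1/2}\sim 2^{-k/2}$ gives $t^{-1/2}2^{-k/2}$.

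The main obstacle is band (c), the bulk frequencies $2^{-1}\le 2^k < 2^3$, because this is the only regime where the inflection points $\xi = \pm\sqrt3$ actually sit inside $\mathrm{supp}\,\psi_k$, and there the phase can be simultaneously stationary ($\omega'(\xi) = x/t$) and have vanishing second derivative ($\omega''(\xi) = 0$), so only the cubic $\omega'''(\pm\sqrt3)\ne 0$ saves us — this is precisely the mechanism behind the degraded $t^{-1/3}$ decay advertised in the main theorem. I would handle it by a further dyadic-in-distance decomposition around $\pm\sqrt3$: on the piece where $|\xi\mp\sqrt3|\gtrsim\delta$ the second derivative is bounded below and ordinary $t^{-1/2}$ stationary phase applies with constants uniform on the compact frequency range, while on the piece $|\xi\mp\sqrt3|\lesssim\delta$ one uses the van der Corput estimate with third derivative bounded below to get $t^{-1/3}$; optimizing $\delta\sim t^{-1/3}$ balances the two and yields exactly $t^{-1/3}\|\widehat f\|_{L^\infty_\xi} + t^{-1/2}\|\partial_\xi\widehat{f_k}\|_{L^2_\xi}$, the $L^2$ remainder again arising from the integration-by-parts/localization error and being harmless since the frequency support has size $\sim 1$. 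Away from the inflection points the only other critical point of $\omega'$ is the simple zero at $\xi = 0$, which only enters band (c) and the low band; there $\omega''(0) = 0$ as well but $\omega'(0) = 1\ne 0$ so $0$ is never a stationary point for $|x/t| < 1$, and for $|x/t|\ge 1$ there is no stationary point at all and integration by parts closes it — so $\xi = 0$ causes no trouble. Assembling the bands and summing the error terms against the $\psi_k$ supports gives \eqref{disp_est_on_LP_pieces}.
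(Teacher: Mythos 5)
Your proposal takes essentially the same route as the paper: LP piece-by-piece stationary phase, dyadic localization around the degenerate frequencies $\xi=0,\pm\sqrt{3}$, integration by parts on nonstationary shells, van der Corput/Airy scaling near $\pm\sqrt{3}$, the trivial volume bound for ultra-low frequencies, and Bernstein for very high frequencies. The overall band structure and the mechanism behind each power of $t$ and $2^k$ are correctly identified.

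Two places where your sketch glosses over something the paper has to do real work for. First, the $t^{-1/2}\|\partial_\xi\widehat{f}_k\|_{L^2_\xi}$ remainder in the bulk band $2^{-1}\le 2^k<2^3$ does not come for free from ``the frequency support has size $\sim 1$''. After the first integration by parts, the derivative falls on $\widehat{f}_k$ and the weight $|\partial_\xi\Phi|^{-1}$ blows up both as $\xi\to\xi_0$ (the stationary point) and as $|\omega''(\xi_0)|\to 0$; to extract the stated gain of $t^{1/6}$ over the $L^\infty$ power one must track both small parameters simultaneously, which is exactly why the paper does a \emph{nested} dyadic decomposition (first around $\pm\sqrt{3}$ at scale $2^m$, then within each shell around the stationary points at scale $2^\ell$, with $2^\ell\gtrsim 2^{m_0}\sim t^{-1/3}$) and only then sums; your single $\delta\sim t^{-1/3}$ cutoff recovers the correct $L^\infty$ rate, but the Cauchy--Schwarz bookkeeping for the $L^2$ term needs a second scale. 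Second, your discussion of band (c)/(d) does not mention $\xi=\pm 1$, where $\omega'(\pm1)=0$. The stationary points there are nondegenerate, so no slowdown in the main term, but the naive IBP lower bound $|\partial_\xi\Phi|\gtrsim t\inf|\omega'|$ is $0$ across any frequency interval containing $\pm 1$; the paper handles this via an extra localization around $|\xi|=1$ (its $u_k^{II}$ piece), and your argument as written would need the same fix. Your $\xi=0$ discussion conflates the question ``is $0$ a stationary point?'' (no, since $\omega'(0)=1$) with the actually relevant question ``does $\omega''$ degenerate at nearby stationary points when $x/t\to 1$?'' (yes, and this is precisely what the $2^{-k/2}$ factor in band (d) encodes, which you do track correctly elsewhere). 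None of these are fatal, but they are the places where the argument is genuinely delicate.
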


\begin{proof}
\
\par \noindent 
\textbf{Case 1:  $2^{k} \geq C_{\mathrm{hi}}t^{\frac19}$}
\par \noindent Applying Sobolev embedding and then Bernstein's inequalities gives
\begin{align*}
    \left\|u_{k}\right\|_{L^{\infty}_{x}} &\lesssim  2^{-k(s-1)}\left[ 2^{k(s-1)}\left\|f_{k}\right\|_{L^{2}_{x}} +   2^{k(s-1)}\left\|f_{k}\right\|_{\dot{H}^{1}_{x}}\right] \lesssim 2^{-k(s-1)} \left\|f_{k}\right\|_{H^{s}_{x}}.
    \end{align*}
\par \noindent 
\textbf{Case 2:  $2^{3}\leq 2^{k} < C_{\mathrm{hi}}t^{\frac19}$}
\par \noindent
We start from the oscillatory integral representation of $u_{k}$: 
$$
u_{k}(t,x) =\frac{1}{\sqrt{2\pi}}  \int_{-\infty}^{\infty}  e^{it\Phi(\xi)} \ \widehat{f}_{k}(t,\xi) \ \diff \xi,
$$
where the phase $\Phi(\xi)$ is defined by $\Phi(\xi) = \xi x - \omega(\xi) t$. The support of the integrand may include the points $\pm\xi_0$ satisfying
\begin{equation}
    \label{eqn:xi0_defn}
      \xi_0>0 \quad \text{and} \quad \partial_{\xi}\Phi = 0 \iff x= \omega'(\xi_0)t, 
\end{equation}
so obtaining time decay via a \emph{global} integration by parts is not allowed. We instead split into two subcases based on when integration by parts is permitted. To cleanly identify appropriate subcases based on $x/t$, first observe that for $\xi\in[2^3,\infty)$ we have
\begin{equation}
    \label{eqn:far_field_cg_asymptotics}
    \frac19 |\xi|^{-2} \leq |\omega'(\xi)| \leq 2|\xi|^{-2},
\end{equation}
so $|\omega'(\xi)|\sim |\xi|^{-2}$ on the region of interest. Accordingly, the stationary frequencies $\pm\xi_0$ satisfy 
\begin{equation}
    \label{eqn:case2_subdefns}
   \frac13 \Big|\frac{t}{x}\Big|^{\frac12} \leq |\xi_0| \leq 2^{\frac12} \Big|\frac{t}{x}\Big|^{\frac12}. 
\end{equation}
Consequently, integration by parts is perfectly fine for $2^{k} \in \left[2^3, C_{\text{hi}}t^{\frac19}\right) \cap \left[2^{-4}\Big|\frac{t}{x}\Big|^{\frac12}, 2^2\Big|\frac{t}{x}\Big|^{\frac12}\right]^{\mathsf{c}}$ but a different strategy must be developed if we drop the complement symbol $\mathsf{c}$. 
\ 
\par \noindent 
\textbf{Subcase 2.1:  $2^{k} \in \left[2^3, C_{\mathrm{hi}}t^{\frac19}\right) \cap \left[2^{-4}\Big|\frac{t}{x}\Big|^{\frac12}, 2^2\Big|\frac{t}{x}\Big|^{\frac12}\right]^{\mathsf{c}}$}
\par \noindent
First, we must bound $\left|\partial_{\xi}\Phi\right|$ from below. To begin, consider $2^{k}<2^{-4}\left|\frac{t}{x}\right|^{\frac12}$. Then, we have $\left|\frac{x}{t}\right| < 2^{-8}2^{-2k}$, so using \eqref{eqn:far_field_cg_asymptotics} we have $\left|\partial_{\xi}\Phi\right| \gtrsim t2^{-2k}$. Alternatively, if $2^{k}>2^{2}\left|\frac{t}{x}\right|^{\frac12}$ then $\left|\frac{x}{t}\right| > 2^{4}2^{-2k}$, hence \eqref{eqn:far_field_cg_asymptotics} gives $\left|\partial_{\xi}\Phi\right| \gtrsim t2^{-2k}$. So, in this subcase we always have 
\begin{align}
    \left|\partial_{\xi}\Phi\right| &\gtrsim t2^{-2k}   \label{eqn:2.1_der_bound}.
\end{align}
Additionally, notice that
\begin{equation} \label{eqn:2.1_acc_bound}
    |\partial^2_{\xi}\Phi| \lesssim t|\xi|^{-3} \lesssim t2^{-3k}. 
\end{equation}
Integrating by parts in $u_{k}$ then gives 
\begin{align*}
    u_{k} = -\frac{i}{\sqrt{2\pi}} \int_{-\infty}^{\infty} e^{it\Phi(\xi)} \left(-\frac{\partial^2_{\xi}\Phi}{\left(\partial_{\xi}\Phi\right)^2} \widehat{f}_{k} + \frac{1}{\partial_{\xi}\Phi} \partial_{\xi}\widehat{f}_{k}\right)\ \diff \xi. 
\end{align*}
Applying \eqref{eqn:2.1_der_bound}, \eqref{eqn:2.1_acc_bound}, and Cauchy-Schwarz yields
\begin{align*}
    |u_k| \lesssim \int_{-\infty}^{\infty} \left\|\widehat{f}_{k}\right\|_{L^{\infty}_{\xi}} \frac{t2^{-3k}}{\left(t2^{-2k}\right)^2} \mathbf{1}_{|\xi|\sim 2^{k}} +\left|\partial_{\xi}\widehat{f}_{k}\right| \frac{1}{t2^{-2k}} \mathbf{1}_{|\xi|\sim 2^{k}} \ \diff \xi \lesssim t^{-1} 2^{2k} \left\|\widehat{f}_{k}\right\|_{L^{\infty}_{\xi}} + t^{-1} 2^{\frac{5k}{2}} \left\|\partial_{\xi}\widehat{f}_{k}\right\|_{L^2_{\xi}}.
\end{align*}
To complete this subcase, observe that $2^{k} \lesssim t^{\frac19}$ implies $t^{-1}2^{2k} \leq t^{-\frac12}2^{\frac{3k}{2}}$ and $t^{-1}2^{\frac{5k}{2}} \leq t^{-\frac34}2^{\frac{9k}{4}}$.
\
\par \noindent 
\textbf{Subcase 2.2:  $2^{k} \in \left[2^3, C_{\mathrm{hi}}t^{\frac19}\right) \cap \left[2^{-4}\Big|\frac{t}{x}\Big|^{\frac12}, 2^2\Big|\frac{t}{x}\Big|^{\frac12}\right]$}
\par \noindent
We perform a second inhomogeneous LP localization centred around $\xi_0$ (the nonnegative frequency of stationary phase) at a dyadic scale $2^{\ell}$. Let's take $\ell$ ranging among the integers $[\ell_0, k+32]$, where $\ell_0$ is the \emph{smallest} integer satisfying $2^{\ell_0} \geq t^{-\frac12}2^{\frac{3k}{2}}$. Note that, since $\ell_0$ is the smallest integer with the above property, we have
\begin{equation}\label{eqn:thomas}
2^{\ell_0} \lesssim t^{-\frac12}2^{\frac{3k}{2}}.
\end{equation}
Additionally, the ``32'' appearing in $\ell\in [\ell_0, k+32]$ is just to make sure the $2^{\ell}$ patches cover the support of our integrand; any other large number would also work. Also, without loss of generality, assume $\mathrm{supp}\ \widehat{u}\subseteq [0,\infty)$. This ensures we only have one stationary point $\xi_0>0$ to worry about. 
We decompose $u_{k}$ according to $u_{k} = \sum_{\ell=\ell_0}^{k+32} u_{k,\ell}$ with 
\begin{equation}
    u_{k,\ell} = \begin{cases} \frac{1}{\sqrt{2\pi}}  \int_{-\infty}^{\infty}  e^{it\Phi(\xi)} \ \widehat{f}_{k}(t,\xi) \ \phi_{\ell_0}(\xi-\xi_0) \ \diff \xi \quad &\text{if} \quad \ell=\ell_0,
    \\ 
    \frac{1}{\sqrt{2\pi}}  \int_{-\infty}^{\infty}  e^{it\Phi(\xi)} \ \widehat{f}_{k}(t,\xi) \ \psi_{\ell}(\xi-\xi_0) \ \diff \xi  \quad &\text{if} \quad \ell>\ell_0,
    \end{cases} 
\end{equation}
then estimate each $|u_{k,\ell}|$ and check that $\sum_{\ell}|u_{k,\ell}|$ is always finite. Controlling $|u_{k,\ell_0}|$ is trivial: using \eqref{eqn:thomas}, 
\begin{align}
    |u_{k,\ell_0}| &\lesssim t^{-\frac12}2^{\frac{3k}{2}} \left\|\widehat{f}_{k}\right\|_{L^{\infty}_{\xi}}. \label{eqn:case2.2_ell0_bound}
\end{align}
To estimate the other $u_{k,\ell}$'s, we need to bound $|\partial_{\xi}\Phi|$ and $|\partial^2_{\xi}\Phi|$ below when $|\xi-\xi_0|\sim 2^{\ell}$. Using the easily verified bound
\begin{equation}
    \label{eqn:case2.2_omega''}
    2^{-6}|\xi|^{-3} \leq |\omega''(\xi)| \leq 2^{3}|\xi|^{-3}
\end{equation}
we immediately obtain 
\begin{equation}
    \label{eqn:2.2_acc_bound}
    |\partial^2_{\xi}\Phi| \lesssim t2^{-3k}. 
\end{equation}
Then, we use Taylor expansion about $\xi_0$ to discover
\begin{align*}
    |\partial_{\xi}\Phi|
    &\geq t \ |\xi-\xi_0|\ \inf_{|\xi-\xi_0|\sim 2^{\ell}} |\omega''(\xi)|.
\end{align*}
Combining the above with  \eqref{eqn:case2.2_omega''} gives
\begin{align}
\label{eqn:2.2_der_bound}
    |\partial_{\xi}\Phi| &\gtrsim t 2^{-3k} 2^{\ell}
\end{align}
Now we can integrate by parts and use \eqref{eqn:2.2_acc_bound}, \eqref{eqn:2.2_der_bound}, and Cauchy-Schwarz to find
\begin{align}
    |u_{k,\ell}| &\lesssim  t^{-1}2^{3k-\ell} \left\|\widehat{f}_{k}\right\|_{L^{\infty}_{\xi}}+ t^{-1}2^{3k-\frac{\ell}{2}}\left\|\partial_{\xi}\widehat{f}_{k}\right\|_{L^{2}_{\xi}}.  \label{eqn:case2.2_higher_ells}
\end{align}
Putting \eqref{eqn:case2.2_ell0_bound}, \eqref{eqn:case2.2_higher_ells}, and \eqref{eqn:thomas} together gives what we want:
\begin{align*}
    |u_{k}| &\lesssim t^{-\frac12}2^{\frac{3k}{2}} \left\|\widehat{f}_{k}\right\|_{L^{\infty}_{\xi}} +  t^{-\frac34}2^{\frac{9k}{4}}\left\|\partial_{\xi}\widehat{f}_{k}\right\|_{L^{2}_{\xi}}.
\end{align*}
\par \noindent 
\textbf{Case 3:  $2^{-1} \leq 2^{k} < 2^{3}$} 
\par \noindent
This case is noteworthy because it allows for $2^{k}\approx \sqrt{3}$, so the support of our integrand may include a \emph{degenerate} stationary point. We start by splitting the oscillatory integral $u_{k}$ into three pieces: 
\begin{enumerate}
    \item $|\xi| \in [2, 2^{4})$: since $2^{k}$ is absolutely bounded this is easier than case 2, so we skip over the details. 
        \item $|\xi| \in \left[\frac32, 2\right)$: here, we are near $|\xi|=\sqrt{3}$, which is a degenerate point of stationary phase when $\frac{x}{t}=-\frac18$. Our approach is to first localize about $\xi=\sqrt{3}$ at a scale $2^{m}$. From there, we can bound the non-stationary contributions easily. For the stationary contributions, however, we must localize again about all stationary points at a new scale $2^{\ell}$. The interplay between the different scales $2^{m}$ and $2^{\ell}$ becomes important when bounding $|\partial_{\xi}\Phi|$ from below. 
    \item $|\xi| \in \left[2^{-2}, \frac32\right)$: here, we have no degenerate stationary points, so at first glance it seems like we can just copy the arguments of case 2. However, since the linBBM group velocity vanishes at $|\xi|=1$, this scenario is actually more like a hybrid between case 2 and $|\xi|\in \left[\frac23, 2\right)$. 
\end{enumerate} \  \par \noindent
So, we are splitting the oscillatory integral with indicator functions: 
\begin{align*}
u_{k}(t,x) &=\frac{1}{\sqrt{2\pi}} \int_{-\infty}^{\infty}  e^{it\Phi(\xi)} \ \widehat{f}_{k}(t,\xi) \ \left(\mathbf{1}_{2\leq|\xi|\leq 2^{4}} +\mathbf{1}_{2^{-2}\leq|\xi|\leq \frac32}+ \mathbf{1}_{\frac32\leq|\xi|\leq 2}\right) \ \diff \xi
\\
&\doteq u^{I}_k + u^{II}_k + u^{III}_k. 
\end{align*}
The use of such indicator functions gives rise to boundary terms when we integrate by parts. Thus $|\xi| \in [2, 2^{4})$ is technically a bit different from case 2. However, as we'll see below in the most difficult situation $|\xi| \in \left[\frac32, 2\right)$, the boundary terms cause no trouble. I believe it is possible to obtain the correct result using carefully defined ``soft'' cutoff functions instead (and this approach would avoid picking up boundary terms), but for myself using indicator functions just seems simpler. Additionally, I remark that we have already explained why controlling $u^{I}_{k}$ is trivial. 
\par \noindent 
\textbf{Bounds on $u^{III}_{k}$: Setup}
\par \noindent 
We localize about $|\xi|=\sqrt{3}$ at the scale $2^{m}$, even when $\frac{x}{t}$ is such that there are no stationary points in the support of the integrand. Let $m_0$ be the smallest integer satisfying $2^{m_0} \geq t^{-\frac13}$. Without loss of generality suppose again that $\mathrm{supp}\left\{\widehat{u}\right\}\subseteq [0,\infty)$ so we don't have to carry around absolute values. We break $u_{k}^{III}$ into pieces $u_{k,m}$ via $u^{III}_{k} = \sum_{m\geq m_0}^{k+32} u_{k,m}$ where we have defined 
\begin{equation}
    u_{k,m} = \begin{cases} \frac{1}{\sqrt{2\pi}}  \int_{-\infty}^{\infty}  e^{it\Phi(\xi)} \ \widehat{f}_{k}(t,\xi) \ \phi_{m_0}\left(\xi-\sqrt{3}\right) \ \mathbf{1}_{\left[\frac32,2\right)}  \ \diff \xi \quad &\text{if} \quad m=m_0,
    \\ 
    \frac{1}{\sqrt{2\pi}}  \int_{-\infty}^{\infty}  e^{it\Phi(\xi)} \ \widehat{f}_{k}(t,\xi) \ \psi_{m}\left(\xi-\sqrt{3}\right) \ \mathbf{1}_{\left[\frac32,2\right)}  \ \diff \xi  \quad &\text{if} \quad m>m_0.
    \end{cases} 
\end{equation}
Notice that for $m=m_0$ we trivially have the bound
\begin{equation}
    \label{eqn:bnd_near_sqrt{3}}
    \left\|u_{k,m_0}\right\|_{L^{\infty}_{x}} \lesssim t^{-\frac13} \left\|\widehat{f}\right\|_{L^{\infty}_{\xi}},
\end{equation}
so it remains to control $|u_{k,m}|$ for $m>m_0$. For certain $m>m_0$, we encounter no stationary points, and for all other $m>m_0$ we have to deal with \emph{two} stationary points: $\xi_0 \in \left[\frac32, \sqrt{3}\right]$ and $r(\xi_0) \in \left[\sqrt{3},2\right]$. Subcase 3.1 tackles the non-stationary case, and Subcase 3.2 tackles the stationary case (which involves another localization at a scale $2^{\ell}$). Once both subcases are done, we'll sum over $m$ and thereby control $\left|u^{III}_{k}\right|$. 
\ 
\par \noindent 
\textbf{Subcase 3.1:  $|\xi|  \in \left[\frac32, 2\right)$ and $2^{m}\in \left[2^{-1}\left|\frac{x}{t}+\frac18\right|^{\frac12}, 2^{5}\left|\frac{x}{t}+\frac18\right|^{\frac12}\right]^{\mathsf{c}}$} 
\par \noindent
Taylor expansion shows 
\begin{equation}
\label{eqn:case3.3_omega'_expansion}
2^{-5} \left||\xi|-\sqrt{3}\right|^2 \leq \left|\omega'(\xi)+\frac18\right| \leq 2^{-2} \left||\xi|-\sqrt{3}\right|^2, 
\end{equation}
so $\left|\omega'(\xi)+\frac18\right|\sim 2^{2m}$. Using \eqref{eqn:case3.3_omega'_expansion}, we see that there are no stationary points in this subcase. Therefore, it seems like we can na\"{i}vely copy Subcase 2.1. However, unlike Subcase 2.1, here our best lower bound is
\begin{equation}
    \label{eqn:case3.3_phi'_bnd_ell}
    \left|\partial_{\xi}\Phi\right| \gtrsim t 2^{2m}. 
\end{equation}
Additionally, Taylor expansion gives the upper bound.
\begin{equation}
\label{eqn:case3.3_phi''_bnd_ell}
\left|\partial_{\xi}^2\Phi\right| \lesssim t2^{m}. 
\end{equation}
Integration by parts in $u_{k,m}$, \eqref{eqn:case3.3_phi'_bnd_ell}, \eqref{eqn:case3.3_phi''_bnd_ell}, and earlier arguments all together give 
\begin{align}
\left|u_{k,m}\right|&\lesssim t^{-1}2^{-2m} \left\|\widehat{f}\right\|_{L^{\infty}_{\xi}} + t^{-1}2^{-\frac{3m}{2}} \left\|\partial_{\xi}\widehat{f}_{k}\right\|_{L^2_{\xi}}. \label{eqn:subcase3.3_result}
 \end{align}
 We discuss summation over $m$ a bit later. 
\ 
\par \noindent 
\textbf{Subcase 3.2:  $|\xi| \in \left[\frac32, 2\right)$ and $2^{-1}\left|\frac{x}{t}+\frac18\right|^{\frac12}\leq 2^{m}\leq 2^{5}\left|\frac{x}{t}+\frac18\right|^{\frac12} $}
\par \noindent
This case includes two positive stationary points, namely $\xi_0 \in \left[\frac32, \sqrt{3}\right]$ and $r(\xi_0) \in \left[\sqrt{3}, 2\right)$, which coincide when $\xi_0=\sqrt{3}$. Accordingly, we have to choose a new localization strategy that takes into account both $\xi_0$ and $r(\xi_0)$. First, we have to localize each $u_{k,m}$ again at a scale $2^{\ell}$. We choose our scale so that $\ell\geq m_0$ where $m_0$ is the same $m_0$ defined in earlier. We then split $u_{k,m}$ again according to $u_{k,m} = \sum_{\ell=m_0}^{k+32} u_{k,\ell,m}$ with 
\begin{equation}
\hspace{-1cm}
    u_{k,\ell,m} \simeq \begin{cases}  \int_{-\infty}^{\infty} e^{it\Phi}  \widehat{f}_{k}   \psi_{m}(\xi-\sqrt{3}) \left[ \phi_{m_0}(\xi-\xi_0) + \phi_{m_0}(\xi-r(\xi_0)) \right]  \diff \xi \quad &\text{if} \quad \ell=m_0,
    \\ 
     \int_{-\infty}^{\infty}  e^{it\Phi} \widehat{f}_{k}  \psi_{m}(\xi-\sqrt{3})  \left[\psi_{\ell}(\xi-\xi_0)\mathbf{1}_{\left[\frac32,\sqrt{3}\right]}  +  \psi_{\ell}(\xi-r(\xi_0))\mathbf{1}_{\left[\sqrt{3},2\right]} \right]  \diff \xi  \quad &\text{if} \quad \ell>m_0.
    \end{cases} 
\end{equation}
Notice that a point which is in the support of the integrand of $u_{k,\ell,m}$ with $\ell>m_0$ \emph{does not necessarily satisfy} $|\xi-\xi_0|\sim 2^{\ell}$. However, if $|\xi-\xi_0|\sim 2^{\ell}$ doesn't hold for a particular $\xi$, then $|\xi-r(\xi_0)|\sim 2^{\ell}$ definitely does. For $\ell=m_0$, we immediately have
\begin{equation}
\label{eqn:case3.4_ell0}
   \sum_{m=m_0}^{k+32} \left| u_{k,m_0,m} \right| \lesssim t^{-\frac13} \left\|\widehat{f}\right\|_{L^{\infty}_{\xi}}
\end{equation}
since the $\psi_{m}$ are nonnegative and form a partition of unity. For $\ell>m_0$ we start by using careful Taylor expansions to establish the bound
\begin{equation}\label{eqn:3.4_parabola_stronger}
    |\partial_{\xi}\Phi(\xi)|\gtrsim t2^{\ell}2^{m}
\end{equation}
for all $\xi$ in the support of the integrand of $u_{k,\ell,m}$. Taylor expansion about $\xi_0$ or $r(\xi_0)$ also shows that
\begin{equation}
    \label{eqn:3.4:Phi''_bnd}
    \left|\partial_{\xi}^2\Phi\right| \lesssim t2^{\ell}
\end{equation}
in the support of the integrand. We are now ready to integrate by parts in $u_{k,\ell,m}$. Doing so and using \eqref{eqn:3.4_parabola_stronger} and \eqref{eqn:3.4:Phi''_bnd} in the vein of subcase 3.1, we find
\begin{align}
    |u_{k,\ell,m}| 
    &\lesssim  t^{-1} 2^{-\ell}2^{-m} \left\|\widehat{f}\right\|_{L^{\infty}_{\xi}} + t^{-1}2^{-\frac{3\ell}{4}}2^{-\frac{3m}{4}} \left\|\partial_{\xi}\widehat{f}_{k}\right\|_{L^{2}_{\xi}}.  \label{eqn:case3.4_ell>ell0}
\end{align}
Taking the sum over $\ell> m_0$ using  \eqref{eqn:case3.4_ell>ell0} gives
\begin{equation}\label{eqn:subcase3.4_result}
 \left| u_{k,m} \right| \lesssim  t^{-\frac23} 2^{-m} \left\|\widehat{f}\right\|_{L^{\infty}_{\xi}} + t^{-\frac34} 2^{-\frac{3m}{4}} \left\|\partial_{\xi}\widehat{f}_{k}\right\|_{L^{2}_{\xi}}. 
\end{equation}
\ 
\par \noindent 
\textbf{Bounds on $u^{III}_{k}$: Endgame} 
\par \noindent
We now estimate $u^{III}_{k}$ by summing up the $u_{k,m}$. Putting  \eqref{eqn:bnd_near_sqrt{3}}, \eqref{eqn:subcase3.3_result}, \eqref{eqn:case3.4_ell0},  and \eqref{eqn:subcase3.4_result} together, we obtain the estimate we want: 
\begin{align*}
    \left|u_{k}^{III}\right|
    &\lesssim t^{-\frac13}\left\|\widehat{f}\right\|_{L^{\infty}_{\xi}} + t^{-\frac12}\left\|\partial_{\xi}\widehat{f}_{k}\right\|_{L^2_{\xi}}.
\end{align*}
\par \noindent 
\textbf{Bounds on $u^{II}_{k}$}
\par \noindent 
Recall from subcase 2.1 that the philosophy for bounding nonstationary contributions is to properly balance terms so that we can prove a bound like 
    $$
    |\partial_{\xi}\Phi| \gtrsim t \  \min_{\substack{|\xi| \in \left[2^{-2}, \frac32\right) \\ \xi \ \text{not stationary}}}|\omega'(\xi)|
    $$
    However, $\omega'(\pm 1) =0$, so even when there is no stationary point in the interval of interest, the best lower bound we can get is $|\partial_{\xi}\Phi| \gtrsim t \ \left|1-|\xi|\right|$. This means we must perform another LP localization around $|\xi|=1$. Since we have already worked through the details of such a splitting in a more difficult situation, I omit the details. 

\par \noindent 
\textbf{Case 4:  $C_{\mathrm{lo}}t^{-\frac13} \leq 2^{k} < 2^{-1}$} 
\par \noindent
While the temporal frequency $\omega$ satisfies $|\omega'| \sim |\xi|^{-2}$ in case 2, here we find that $|1-\omega'(\xi)| \sim |\xi|^2$. However, the actual mechanics of the proof are identical in both cases.
\
\par \noindent
\textbf{Case 5: $2^{k}\leq C_{\mathrm{lo}}t^{-\frac13}$}
\par \noindent
 Using the size of the Fourier region we integrate over, we immediately have $\left| u_{k}(t,x)\right| \lesssim  2^{k} \left\|\widehat{f}(t,\xi)\right\|_{L^{\infty}_{\xi}}$. 
\end{proof}
\begin{cor}[Dispersive Estimate for linBBM Flow]
\label{cor:ultimate_disp_est}
For $s\geq \frac{11}{2}$, we have 
\begin{equation}
   \left\|u\right\|_{L^{\infty}_{x}} \leq \sum_{k\in \mathbb{Z}} \left\|u_{k}\right\|_{L^{\infty}_{x}} \lesssim t^{-\frac13}  \left\|\widehat{f}\right\|_{L^{\infty}_{\xi}}  + t^{-\frac12}\left(\left\|xf(t,x)\right\|_{L^{2}_{x}}+ \left\|f(t,x)\right\|_{H^{s}_{x}}\right). 
\end{equation}
\end{cor}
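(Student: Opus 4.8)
The plan is to sum the dyadic bounds from Theorem \ref{thm:LP_disp_est} over $k\in\mathbb{Z}$ and check that all five geometric series converge with the claimed powers of $t$. The triangle inequality $\left\|u\right\|_{L^{\infty}_{x}} \leq \sum_{k}\left\|u_{k}\right\|_{L^{\infty}_{x}}$ is immediate from homogeneous LP decomposition, so the work is entirely in organizing the sum. I would split $\mathbb{Z}$ into the five regimes appearing in \eqref{disp_est_on_LP_pieces}: very high frequencies $2^{k}\geq C_{\mathrm{hi}}t^{1/9}$, moderately high $2^{3}\leq 2^{k}<C_{\mathrm{hi}}t^{1/9}$, bounded $2^{-1}\leq 2^{k}<2^{3}$, moderately low $C_{\mathrm{lo}}t^{-1/3}\leq 2^{k}<2^{-1}$, and very low $2^{k}<C_{\mathrm{lo}}t^{-1/3}$.

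For the very high regime, $\sum_{2^{k}\geq C_{\mathrm{hi}}t^{1/9}} 2^{-k(s-1)}\left\|f\right\|_{H^{s}_{x}}$ is a convergent geometric series (since $s\geq \tfrac{11}{2}>1$) dominated by its first term, which is $\lesssim t^{-(s-1)/9}\left\|f\right\|_{H^{s}_{x}}$; as $s-1\geq \tfrac92$ this is far better than $t^{-1/2}$, so it is absorbed into $t^{-1/2}\left\|f\right\|_{H^{s}_{x}}$. For the moderately high regime I would sum the two terms separately: $\sum t^{-1/2}2^{3k/2}\left\|\widehat{f}\right\|_{L^{\infty}_{\xi}}$ is geometric increasing in $k$, so it is controlled by its top term at $2^{k}\sim t^{1/9}$, giving $\lesssim t^{-1/2}t^{1/6}\left\|\widehat{f}\right\|_{L^{\infty}_{\xi}} = t^{-1/3}\left\|\widehat{f}\right\|_{L^{\infty}_{\xi}}$, exactly the advertised rate; similarly $\sum t^{-3/4}2^{9k/4}\left\|\partial_{\xi}\widehat{f}_{k}\right\|_{L^{2}_{\xi}}$, using Cauchy–Schwarz in $k$ to convert $\sum_{k}\left\|\partial_{\xi}\widehat{f}_{k}\right\|_{L^{2}_{\xi}}$ into $\left\|\partial_{\xi}\widehat{f}\right\|_{L^{2}_{\xi}}\sim\left\|xf\right\|_{L^{2}_{x}}$ at the cost of finitely many $k$'s (there are only $O(\log t)$ of them, but on this dyadic range the $2^{9k/4}$ weight is summable-in-$k$ up to its endpoint so no log loss appears), is dominated by its endpoint $2^{k}\sim t^{1/9}$, yielding $t^{-3/4}t^{1/4}\left\|xf\right\|_{L^{2}_{x}} = t^{-1/2}\left\|xf\right\|_{L^{2}_{x}}$. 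The bounded regime $2^{-1}\leq 2^{k}<2^{3}$ contributes only finitely many terms, each $\lesssim t^{-1/3}\left\|\widehat{f}\right\|_{L^{\infty}_{\xi}} + t^{-1/2}\left\|\partial_{\xi}\widehat{f}_{k}\right\|_{L^{2}_{\xi}}$, which sum (again by Cauchy–Schwarz in the finitely many $k$) to $\lesssim t^{-1/3}\left\|\widehat{f}\right\|_{L^{\infty}_{\xi}} + t^{-1/2}\left\|xf\right\|_{L^{2}_{x}}$. In the moderately low regime the bound $t^{-1/2}2^{-k/2}\left\|\widehat{f}\right\|_{L^{\infty}_{\xi}} + t^{-3/4}2^{-3k/4}\left\|\partial_{\xi}\widehat{f}_{k}\right\|_{L^{2}_{\xi}}$ has weights $2^{-k/2}, 2^{-3k/4}$ that grow as $k\to-\infty$, so the series is dominated by its bottom term at $2^{k}\sim C_{\mathrm{lo}}t^{-1/3}$, giving $t^{-1/2}t^{1/6}\left\|\widehat{f}\right\|_{L^{\infty}_{\xi}} = t^{-1/3}\left\|\widehat{f}\right\|_{L^{\infty}_{\xi}}$ and $t^{-3/4}t^{1/4}\left\|xf\right\|_{L^{2}_{x}} = t^{-1/2}\left\|xf\right\|_{L^{2}_{x}}$ (after Cauchy–Schwarz in $k$). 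Finally $\sum_{2^{k}<C_{\mathrm{lo}}t^{-1/3}} 2^{k}\left\|\widehat{f}\right\|_{L^{\infty}_{\xi}}$ is geometric decreasing as $k\to-\infty$, dominated by its top term $\lesssim t^{-1/3}\left\|\widehat{f}\right\|_{L^{\infty}_{\xi}}$.

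The only mildly delicate point — the "main obstacle", such as it is — is bookkeeping the $L^{2}_{\xi}$ pieces: each dyadic estimate carries $\left\|\partial_{\xi}\widehat{f}_{k}\right\|_{L^{2}_{\xi}}$ rather than $\left\|\partial_{\xi}\widehat{f}\right\|_{L^{2}_{\xi}}$, and one must pass from $\sum_{k} a_{k}\left\|\partial_{\xi}\widehat{f}_{k}\right\|_{L^{2}_{\xi}}$ to a constant times $\left\|\partial_{\xi}\widehat{f}\right\|_{L^{2}_{\xi}}$ without a logarithmic loss. On each of the ranges above the coefficients $a_{k}$ form a geometric sequence, so $\left(\sum_{k} a_{k}^{2}\right)^{1/2}\lesssim \max_{k} a_{k}$ with an absolute constant, and then Cauchy–Schwarz together with the almost-orthogonality $\sum_{k}\left\|\partial_{\xi}\widehat{f}_{k}\right\|_{L^{2}_{\xi}}^{2}\lesssim \left\|\partial_{\xi}\widehat{f}\right\|_{L^{2}_{\xi}}^{2}$ closes the estimate; note $\left\|\partial_{\xi}\widehat{f}\right\|_{L^{2}_{\xi}} = \left\|xf\right\|_{L^{2}_{x}}$ by Plancherel. (Strictly, $\partial_{\xi}(\psi_{k}\widehat f) = \psi_{k}\partial_{\xi}\widehat f + 2^{-k}\psi'(2^{-k}\xi)\widehat f$, and the second commutator term is handled identically after noting $\left\|2^{-k}\psi'(2^{-k}\cdot)\widehat f\right\|_{L^{2}_{\xi}}\lesssim\left\|\widehat f\right\|_{L^{2}_{\xi}}$; since the extra factor $2^{-k}$ is harmless on every range, I would suppress this in the writeup.) Assembling the five regime estimates and taking the worst rate in each slot gives $\left\|u\right\|_{L^{\infty}_{x}}\lesssim t^{-1/3}\left\|\widehat{f}\right\|_{L^{\infty}_{\xi}} + t^{-1/2}\big(\left\|xf\right\|_{L^{2}_{x}} + \left\|f\right\|_{H^{s}_{x}}\big)$, which is the claim.
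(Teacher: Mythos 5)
Your overall strategy---triangle inequality plus summing each of the five dyadic regimes of theorem~\ref{thm:LP_disp_est}---is the only sensible one, and it is evidently what the paper has in mind (the corollary is stated with no proof at all). The main-term bookkeeping is correct: the series are geometric on each range, the worst endpoints land at $2^{k}\sim t^{1/9}$ and $2^{k}\sim t^{-1/3}$, and both deliver exactly $t^{-1/3}\|\widehat f\|_{L^{\infty}_{\xi}}$ and $t^{-1/2}\|xf\|_{L^{2}_x}$, while the very-high regime is absorbed into $t^{-1/2}\|f\|_{H^s_x}$ using $s-1\ge 9/2$.

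The parenthetical handling of the commutator term is not correct, and the moderately-low regime is exactly where it fails. Two specific problems. First, the claimed almost-orthogonality $\sum_k\|\partial_\xi\widehat f_k\|_{L^2_\xi}^2\lesssim\|\partial_\xi\widehat f\|_{L^2_\xi}^2$ is false: since $\partial_\xi\widehat f_k=\psi_k\partial_\xi\widehat f+2^{-k}\psi'(2^{-k}\xi)\widehat f$, the commutator piece contributes $\sum_k 2^{-2k}\|\psi'(2^{-k}\cdot)\widehat f\|_{L^2_\xi}^2\sim\int|\xi|^{-2}|\widehat f(\xi)|^2\,d\xi$, which is a Hardy-type quantity controlled by $\|\partial_\xi\widehat f\|_{L^2_\xi}$ only if $\widehat f(0)=0$. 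Second, even tracking $2^{-k}$ explicitly, the route through $\|\widehat f\|_{L^2_\xi}$ underperforms on $C_{\mathrm{lo}}t^{-1/3}\le 2^k<2^{-1}$: with $a_k=t^{-3/4}2^{-3k/4}$ one gets
\begin{equation*}
\sum_{C_{\mathrm{lo}}t^{-1/3}\le 2^k<2^{-1}} a_k\,2^{-k}\bigl\|\psi'(2^{-k}\cdot)\widehat f\bigr\|_{L^2_\xi}
\lesssim \|\widehat f\|_{L^2_\xi}\,t^{-3/4}\!\!\sum_{2^k\gtrsim t^{-1/3}}\!\!2^{-7k/4}
\lesssim t^{-1/6}\,\|\widehat f\|_{L^2_\xi},
\end{equation*}
and $t^{-1/6}\|f\|_{H^s_x}$ is strictly weaker than the advertised $t^{-1/2}\|f\|_{H^s_x}$. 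The fix is to route the commutator through the $L^\infty_\xi$ norm instead, using the support of $\psi'$ and a volume count: $\|\psi'(2^{-k}\cdot)\widehat f\|_{L^2_\xi}\lesssim 2^{k/2}\|\widehat f\|_{L^\infty_\xi}$, whence
\begin{equation*}
\sum_{C_{\mathrm{lo}}t^{-1/3}\le 2^k<2^{-1}} a_k\,2^{-k}\cdot 2^{k/2}\|\widehat f\|_{L^\infty_\xi}
\lesssim t^{-3/4}\cdot t^{5/12}\,\|\widehat f\|_{L^\infty_\xi}=t^{-1/3}\|\widehat f\|_{L^\infty_\xi},
\end{equation*}
so the commutator contribution lands in the $t^{-1/3}\|\widehat f\|_{L^\infty_\xi}$ slot rather than the weighted-$L^2$ slot. (The same $L^\infty_\xi$ route also works, with room to spare, in the moderately-high and bounded regimes.) Once the parenthetical is replaced by this argument, the proof is complete and matches the paper's intent.
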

\qed
\section{Proof of Theorem \ref{thm:main_thm}}
\noindent With our linear dispersive estimate in hand, we are ready to prove theorem \ref{thm:main_thm}. First, recall that our initial state $u_0(x)$ satisfies, for $s\geq 100$ and some positive $\eps_0\ll 1$,
\begin{equation}
\label{eqn:smallness_condition_on_ICs}
\left\|u_0\right\|_{X_1} \doteq \left\|\widehat{u_0}(\xi)\right\|_{L^{\infty}_{\xi}} + \left\|xu_0(x)\right\|_{L^2_{x}} + \left\|u_0(x)\right\|_{H^{s}_{x}} < \eps_0. 
\end{equation}
The Banach space $X_{T}$ is defined by 
    $$
    X_{T} \doteq \left\{u \ \text{such that} \ f=e^{it\omega\left(\frac{1}{i}\partial_{x}\right)}u\in C\left(\left[1,T\right]; X_{1}\right) \ \text{with} \ \left\|u\right\|_{X_{T}} < \infty  \right\},
    $$
 where our \textbf{bootstrap norm} is 
 \begin{equation}\label{eqn:bootstrap_norm_defn}
        \left\|u\right\|_{X_{T}} \doteq \sup_{t\in [1,T]} \left\{\left\|\widehat{f}\right\|_{L^{\infty}_{\xi}} + t^{-p_0}\left\|xf\right\|_{L^2_{x}} +  t^{-p_1}\left\|f\right\|_{H^{s}_{x}}\right\}
    \end{equation}
    and $p_0= \left(\frac{1}{6}\right)^{-}$, $p_1 = 10^{-3}$. Observe that corollary \ref{cor:ultimate_disp_est} implies any function in $X_{T}$ undergoes linear dispersive decay at leading order.  
\par Before proving any bounds on a solution $u(t,x)$ we should first show that, when our initial conditions obeys \eqref{eqn:smallness_condition_on_ICs}, there exists a unique solution to \eqref{eqn:3bbm_cauchy_problem_asympt} that also remains of size $\mathcal{O}\left(\eps_0\right)$ for $T-1$ small enough. 
\begin{prop}\label{prop:local_theory}
Suppose $u_0(x)$ satisfies \eqref{eqn:smallness_condition_on_ICs}. There exists a strictly positive $T>1$ such that the Cauchy problem \eqref{eqn:3bbm_cauchy_problem_asympt} admits a unique solution $u(t,x)\in X_{T}$. Further, the map $\tau\in \left[1,T\right]\mapsto \left\|u\right\|_{X_\tau} \in \left[0,\infty\right)$ is continuous. 
\end{prop}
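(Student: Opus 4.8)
The plan is to set up a fixed-point argument for the Duhamel formulation \eqref{eqn:fhat_integrated_framework} directly in the Banach space $X_T$. Define the map $\Lambda$ by
$$
\left(\Lambda u\right)^{\wedge}(t,\xi) = \widehat{u_0}(\xi) -i\frac{\omega(\xi)}{\sqrt{(2\pi)^3}} \int_1^t \diff\tau \int_{\Rthree} \diff\eta_1\,\diff\eta_2\,\diff\eta_3 \ e^{-i\tau\varphi}\,\widehat{f}(\eta_1)\widehat{f}(\eta_2)\widehat{f}(\eta_3)\widehat{f}(\eta_4),
$$
where $\widehat{f}(\tau,\cdot) = e^{i\omega\tau}\widehat{u}(\tau,\cdot)$ is the profile associated to $u$. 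I would show that for $T-1$ sufficiently small, $\Lambda$ maps the ball $B = \{u \in X_T : \|u\|_{X_T} \le 2\eps_0\}$ into itself and is a contraction there; the unique fixed point is then the desired solution. The three components of the $X_T$-norm — the $L^\infty_\xi$ norm of $\widehat{f}$, the weighted $L^2_x$ norm of $f$ (equivalently $\|\partial_\xi \widehat f\|_{L^2_\xi}$), and the $H^s_x$ norm of $f$ — must each be estimated for $\Lambda u$. Since $\omega(\xi) = \xi\langle\xi\rangle^{-2}$ is bounded and smooth with bounded derivatives, and the time integration runs over the short interval $[1,T]$, each of these reduces to a quadrilinear estimate on the spatial profile at a fixed time, multiplied by the factor $(T-1)$.

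The key steps, in order: (i) For the $L^\infty_\xi$ piece, bound $\|\omega(\xi)\int e^{-i\tau\varphi}\prod \widehat f(\eta_j)\|_{L^\infty_\xi}$ by moving to physical space — the convolution structure means the $\eta$-integral is $(u^4)^{\wedge}$, so by Hausdorff–Young and Hölder it is controlled by $\|u\|_{L^\infty_x}^3\|u\|_{L^2_x}$ or similar, and then by corollary \ref{cor:ultimate_disp_est} and the Sobolev embedding $H^s_x \hookrightarrow L^\infty_x$ this is $\lesssim \|u\|_{X_T}^4$ uniformly on $[1,T]$, picking up $(T-1)$ from $\int_1^t \diff\tau$. (ii) For the $H^s_x$ piece, differentiate: $e^{it\omega}\partial_x^s(u^4)$ has $H^0_x$ norm controlled by $\|u\|_{H^s_x}\|u\|_{L^\infty_x}^3$ via the fractional Leibniz/Moser estimate, again giving $\lesssim (T-1)\|u\|_{X_T}^4$ after using that $e^{it\omega}$ is an $L^2$-isometry and $s \ge 100$ is an integer $\ge$ something small. (iii) For the weighted piece, note $x f = x e^{it\omega}u$; the commutator $[x, e^{it\omega\left(\frac1i\partial_x\right)}]$ produces a bounded-symbol multiplier $t\,\omega'\left(\frac1i\partial_x\right)$ acting on $u$, which is harmless, so it suffices to bound $\|x(u^4)\|_{L^2_x}$ and $\|t\omega'(D)(u^4)\|_{L^2_x}$ over the short time interval; the former is $\lesssim \|xu\|_{L^2_x}\|u\|_{L^\infty_x}^3$ by Hölder (distributing $x$ onto one factor), the latter is even easier since $\omega'$ is bounded, and both carry the $(T-1)$ gain. (iv) The contraction estimate is identical after telescoping $u^4 - v^4 = (u-v)(u^3+u^2v+uv^2+v^3)$. (v) Continuity of $\tau \mapsto \|u\|_{X_\tau}$ on $[1,T]$ follows from the integral representation: each norm appearing in \eqref{eqn:bootstrap_norm_defn} is, after the fixed point is obtained, a continuous function of the upper limit $t$ because the Duhamel integrand is integrable in $\tau$ and the map $t \mapsto \widehat f(t,\cdot)$ is continuous into $X_1$ by construction (this is built into the definition of $X_T$ via $f \in C([1,T];X_1)$), and the supremum of a continuous function over $[1,\tau]$ depends continuously on $\tau$.

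The main obstacle — though it is a mild one at this stage, since we are not yet chasing time decay — is step (iii), controlling the weighted norm: the commutator $[x, e^{it\omega(D)}] = -t\,\omega'(D)e^{it\omega(D)}$ introduces an explicit factor of $t$, so one must be slightly careful that the $(T-1)$ gain from the $\diff\tau$ integration is not swamped; but since $t \in [1,T] \subseteq [1,2)$ here, $t$ is simply an absolute constant and this is a non-issue. A secondary point requiring minor care is verifying that $\Lambda u$ actually lands in $C([1,T];X_1)$ and not merely in $L^\infty_t X_1$ — this follows from dominated convergence applied to the $\tau$-integral together with strong continuity of the free group $e^{-it\omega(D)}$ on each of the three spaces comprising $X_1$. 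All estimates are standard once the quadrilinear bounds are in place; the genuinely hard analysis — extracting time decay and handling the space-time resonances — is deferred to propositions \ref{prop:bootstrap_close} and the case analysis of section \ref{s:the_technical_proof}, not to this short-time step.
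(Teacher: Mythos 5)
Your proposal is a correct instantiation of the standard fixed-point argument that the paper invokes and deliberately leaves unexpanded in its one-line proof of Proposition~\ref{prop:local_theory}; in particular, your observation that the factor $\tau$ (and the weights $t^{-p_0}, t^{-p_1}$) are harmless constants on $[1,T]\subset[1,2)$ is exactly why the local theory is ``routine'' while the global theory is not. One small slip in step (i): to control $\|\widehat{(u^4)}\|_{L^\infty_\xi}$ via Hausdorff--Young you want $\|u^4\|_{L^1_x}\lesssim\|u\|_{L^\infty_x}^{2}\|u\|_{L^2_x}^{2}$ rather than $\|u\|_{L^\infty_x}^{3}\|u\|_{L^2_x}$, but since both are dominated by $\|u\|_{X_T}^4$ on the short interval this does not affect the argument.
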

\begin{proof}
    The proof is a routine fixed-point argument. 
\end{proof}
\par Now, we go from local to global. All of our global theory relies on the following bootstrapped estimate, the proof of which makes up section \ref{s:the_technical_proof}: 
\begin{prop}
    \label{prop:bootstrap_close}
    Suppose $\eps_1\in \left(\eps_0, 1\right)$. Additionally, suppose that $\left\|u\right\|_{X_T} \leq \eps_1$. Then, there is a constant $C>0$ such that 
    \begin{equation}\label{eqn:bootstrap_close}
      \left\|u\right\|_{X_{T}} \leq \eps_0 + C\eps_1^4. 
 \end{equation}
\end{prop}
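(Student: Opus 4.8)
The plan is to estimate the right-hand side of the Fourier-space Duhamel formula \eqref{eqn:fhat_integrated_framework} in each of the three norms that make up $\left\|\cdot\right\|_{X_T}$, namely $\left\|\widehat{f}\right\|_{L^\infty_\xi}$, $t^{-p_0}\left\|xf\right\|_{L^2_x}$, and $t^{-p_1}\left\|f\right\|_{H^s_x}$, under the a priori bound $\left\|u\right\|_{X_T}\leq\eps_1$. For each norm one writes $\widehat{f}(t,\xi)=\widehat{u_0}(\xi)+(\text{Duhamel term})$; the first piece contributes $\leq\eps_0$ by \eqref{eqn:smallness_condition_on_ICs}, so everything reduces to showing the quadrilinear Duhamel expression is $\lesssim\eps_1^4$ in each norm. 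The unifying tool is the space-time resonance philosophy: decompose $\eta_1\eta_2\eta_3\xi$-space with a Littlewood-Paley partition, and on each patch either integrate by parts in $\tau$ (exploiting $\varphi\neq0$, i.e.\ distance from the time-resonant set) or integrate by parts in some combination of the $\eta_j$ (exploiting $\nabla_{\eta}\varphi\neq0$, i.e.\ distance from the space-resonant set), trading each integration by parts for a gain in $\tau$ that is summable after multiplying by the $t^{-p_0}$ or $t^{-p_1}$ weights. The linear dispersive estimate, corollary \ref{cor:ultimate_disp_est} (equivalently theorem \ref{thm:LP_disp_est} applied LP-piece by LP-piece), converts the $\widehat{f}$-norms appearing after these manipulations into the $t^{-1/3}$ pointwise decay that makes the $\diff\tau$ integral converge; roughly, four copies of $u$ each decaying like $\tau^{-1/3}$ give $\tau^{-4/3}$, which is integrable, and the $\omega(\xi)$ prefactor plus the $\partial_x$ structure supply the derivative needed at low frequencies.

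Concretely I would organize the argument as follows. (i) The $L^\infty_\xi$ estimate: bound $\left\|\widehat{f}(t)\right\|_{L^\infty_\xi}\leq\eps_0+\int_1^t\left\|\partial_\tau\widehat{f}(\tau)\right\|_{L^\infty_\xi}\diff\tau$ and estimate the integrand pointwise in $\xi$ by Hölder/Young on the $\eta$-integral, using the dispersive decay of three of the four factors in $L^\infty_x$ and the remaining one in $L^2$ together with $\left\|\widehat{f}\right\|_{L^\infty_\xi}\lesssim\eps_1$; near the space-time resonances one cannot simply bound by absolute values, so there one inserts the LP decomposition and integrates by parts, the anomalous non-null isolated resonances flagged in the introduction being the delicate sub-case. (ii) The weighted $L^2$ estimate: note $xf$ corresponds to $\partial_\xi\widehat{f}$, so differentiate \eqref{eqn:fhat_integrated_framework} in $\xi$; the $\partial_\xi$ can hit $\omega(\xi)$, the phase $e^{-i\tau\varphi}$ (producing a factor $\tau\,\partial_\xi\varphi$ that must be absorbed by an integration by parts in $\eta$ — this is exactly where the weighted dispersive estimate in theorem \ref{thm:LP_disp_est} buys flexibility in the choice of IBP direction), or $\widehat{f}(\eta_4)$; then take $L^2_\xi$ and use a multilinear (e.g.\ fractional Leibniz / Coifman–Meyer-type) estimate, landing three factors in $L^\infty_x$ via dispersion and one weighted factor in $L^2$, producing growth no worse than $\tau^{p_0}$ so that after the $t^{-p_0}$ weight the bound closes with a constant. (iii) The $H^s$ estimate: here one commutes $\langle\partial_x\rangle^s$ through the nonlinearity; the worst term has all derivatives on one factor, estimated in $H^s$ against three $L^\infty_x$ factors, and the slow $p_1$ growth rate is chosen precisely to absorb the logarithmic/small-power losses from high-frequency interactions, which is why $s\geq100$ is convenient.

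The main obstacle, as the introduction itself advertises, is the set of isolated space-time resonances with no null structure coming from the inflection points $\xi=\pm\sqrt3$ of $\omega$. At such a resonance neither integration by parts in time nor in frequency gains anything, and there is no vanishing of the symbol $\omega(\xi)$ to compensate; the degeneracy of $\Phi''$ there already forced the more elaborate triple localization ($2^m$ about $\sqrt3$, then $2^\ell$ about the stationary points) in the proof of theorem \ref{thm:LP_disp_est}, and an analogous multi-scale localization of the $\eta$-integral around the resonant quadruple will be needed here. The plan is to localize at a scale chosen to balance the measure of the near-resonant region against the size of the integrand, show that the contribution is $\lesssim\eps_1^4\tau^{-1-\delta}$ for some $\delta>0$ after summing the localization scales, and thereby prove these resonances are harmless — the claim already asserted informally in the abstract. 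Everything else is bookkeeping: a finite (if large) number of LP cases, in each of which one of the two integration-by-parts mechanisms applies cleanly, and the decay exponents have been pre-tuned ($p_0$ just below $1/6$, $p_1=10^{-3}$, $s\geq100$) so that all the resulting $\diff\tau$ integrals converge and reassemble into \eqref{eqn:bootstrap_close}.
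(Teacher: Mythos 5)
Your overall blueprint --- Duhamel in Fourier space, three norm estimates closed under the bootstrap assumption, LP decomposition with a time-IBP versus frequency-IBP dichotomy keyed to the space-time resonance set, the $I_{\text{easy}}/I_{\text{easier}}/I$ splitting after a $\partial_\xi$, and multilinear estimates fed by the dispersive decay of corollary \ref{cor:ultimate_disp_est} --- is exactly the paper's. The $H^s$ piece is handled just as you say, by Plancherel and worst-case decay of three $L^\infty_x$ factors against one $H^s$ factor with a small growth rate $p_1>0$.

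Your paragraph on the main obstacle, however, contains a confusion that would derail the execution. You locate the ``non-null isolated'' resonance at the inflection points $\pm\sqrt{3}$ and propose to attack it with a multi-scale stationary-phase localization analogous to the triple $2^m/2^\ell$ splitting in theorem \ref{thm:LP_disp_est}. Neither is right. The resonance involving $\pm\sqrt{3}$ is $(-\sqrt{3},\sqrt{3},\sqrt{3};0)$, whose output frequency is $\xi=0$, so it \emph{does} enjoy the partial null $\omega(0)=0$; subsection \ref{ss:type3} is built precisely around squeezing out that null (and a hidden extra one from $\varphi\approx -\frac{9}{8}\xi$). The genuinely non-null anomalous resonance is the isolated point $(\eta_0,\eta_0,\eta_0;\xi_0)$ from \eqref{eqn:fam_1_badpoints}, with $\eta_0\approx 5.1$ and $\xi_0\approx 14.2$, far from every degenerate frequency; its existence traces to the non-injectivity of $\omega'$ and hence indirectly to the inflection points, but it is nowhere near them. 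Consequently the mechanism is not a stationary-phase cascade but a time-dependent ``volume trick'': shrink a box of side $\sim 2^{m(\alpha-\frac12)}$ around $(\eta_0,\eta_0,\eta_0;\xi_0)$ so that the near-resonant contribution is controlled by the box's small measure, and on the complementary annuli integrate by parts in $\tau$ (using $|\varphi|\gtrsim 2^k$) or in $\partial_{\eta_1-\eta_2}$ (when one $|\eta_j-\eta_0|$ dominates). Two further points you elide: after IBP in $\tau$ you pick up $\partial_\tau\widehat{f}$ near the resonance, which must itself decay like $2^{-m}$ --- this is not automatic and is the content of lemmas \ref{lemma:time_derivative_bnd}--\ref{lemma:time_derivative_bnd_eta4}, exploiting that among four frequencies summing to $\approx\eta_0$ at least one must avoid $0,\pm\sqrt{3}$; and the target in the weighted $L^2_x$ estimate is not $\tau^{-1-\delta}$ integrability but the softer bound $\lesssim\eps_1^4 t^{p_0}$, since mild polynomial growth of $\|xf\|_{L^2_x}$ is both allowed and unavoidable, which is exactly why the bootstrap norm weights that piece by $t^{-p_0}$.
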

\noindent Using the above, global well-posedness and decay follow easily: 
\begin{thm}\label{thm:big_result}
    If $\eps_0\in (0,1)$ is sufficiently small and $\left\|u_0\right\|_{X_1}<\eps_0$, there exists a unique global-in-time solution $u(t,x)$ to \eqref{eqn:3bbm_cauchy_problem_asympt} satisfying $\left\|u\right\|_{X_{\infty}} \lesssim \eps_0$. In particular, $u(t,x)$ obeys
    \begin{equation}\label{eqn:pointwise_decay_NL}
    \left\|u(t,x)\right\|_{L^{\infty}_{x}}\lesssim \eps_0t^{-\frac13} \quad \forall \ t>1. 
    \end{equation}
\end{thm}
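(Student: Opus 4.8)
The proof of Theorem~\ref{thm:big_result} is a standard continuous-induction (bootstrap) argument assembling the three ingredients already in hand: local well-posedness in $X_T$ (Proposition~\ref{prop:local_theory}), the a priori nonlinear bound $\|u\|_{X_T}\le\eps_0+C\eps_1^4$ (Proposition~\ref{prop:bootstrap_close}), and the dispersive estimate (Corollary~\ref{cor:ultimate_disp_est}). One runs the local theory, uses Proposition~\ref{prop:bootstrap_close} to trap the $X_T$-norm strictly below the level at which that proposition was invoked, and iterates; the bootstrap principle then propagates the bound to all of $[1,\infty)$.

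\textbf{Setting constants and the continuity set.} First I would fix $\eps_1=2\eps_0$ and shrink $\eps_0$ so that $2\eps_0<1$ and $C(2\eps_0)^4\le\tfrac12\eps_0$ (that is, $\eps_0^3\le(32C)^{-1}$), with $C$ the constant from Proposition~\ref{prop:bootstrap_close}; then that proposition upgrades any bound $\|u\|_{X_T}\le2\eps_0$ to $\|u\|_{X_T}\le\eps_0+C(2\eps_0)^4\le\tfrac32\eps_0$. Since the profile at the initial time is $f(1,x)=e^{i\omega(\frac{1}{i}\partial_x)}u_0(x)$, a short computation using $\|\omega'\|_{L^\infty_\xi}=1$ shows $\|u\|_{X_1}\le 2\|u_0\|_{X_1}<2\eps_0$. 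Together with Proposition~\ref{prop:local_theory} and the continuity of $\tau\mapsto\|u\|_{X_\tau}$, this shows that the set $A$ of all $T\ge1$ for which \eqref{eqn:3bbm_cauchy_problem_asympt} has a solution $u\in X_T$ with $\|u\|_{X_T}\le2\eps_0$ contains an interval $[1,1+\delta_0]$, so $A\ne\emptyset$.

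\textbf{Opening $A$ up to infinity.} I claim $\sup A=\infty$. If not, set $T^*=\sup A<\infty$. For every $T\in[1,T^*)$ we have $T\in A$, so Proposition~\ref{prop:bootstrap_close} in fact yields $\|u\|_{X_T}\le\tfrac32\eps_0$; in particular $\|\widehat f(t)\|_{L^\infty_\xi}$, $\|xf(t)\|_{L^2_x}$ and $\|f(t)\|_{H^s_x}$ remain uniformly bounded as $t\uparrow T^*$. Bounding the increments of $\widehat f$ over short intervals via the integral identity \eqref{eqn:fhat_integrated_framework}, one sees that $f(t,\cdot)$ converges in $X_1$ as $t\to T^*$, so $u$ extends continuously to $[1,T^*]$ with $\|u\|_{X_{T^*}}\le\tfrac32\eps_0$. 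Applying a version of Proposition~\ref{prop:local_theory} with initial time $T^*$ (the equation is autonomous, so the same fixed-point argument works) extends $u$ to some $[1,T^*+\delta']$ with $u\in X_{T^*+\delta'}$; the continuity of $\tau\mapsto\|u\|_{X_\tau}$ and $\|u\|_{X_{T^*}}\le\tfrac32\eps_0<2\eps_0$ then give $\|u\|_{X_{T^*+\delta''}}\le2\eps_0$ for some $0<\delta''\le\delta'$, i.e.\ $T^*+\delta''\in A$, contradicting the definition of $T^*$. Hence $A=[1,\infty)$: the solution is global and $\|u\|_{X_\infty}=\sup_{T\ge1}\|u\|_{X_T}\le\tfrac32\eps_0\lesssim\eps_0$. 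Uniqueness follows from the local uniqueness in Proposition~\ref{prop:local_theory}: the set of times at which two solutions with the same data agree is nonempty, closed by continuity, and open by local uniqueness, hence all of $[1,\infty)$. Finally, \eqref{eqn:pointwise_decay_NL} is read off from Corollary~\ref{cor:ultimate_disp_est}: using $\|\widehat f\|_{L^\infty_\xi}\lesssim\eps_0$, $\|xf(t)\|_{L^2_x}\lesssim\eps_0 t^{p_0}$ and $\|f(t)\|_{H^s_x}\lesssim\eps_0 t^{p_1}$, the corollary gives $\|u(t)\|_{L^\infty_x}\lesssim\eps_0 t^{-1/3}+\eps_0 t^{p_0-1/2}+\eps_0 t^{p_1-1/2}$, and since $p_0=(\tfrac16)^-<\tfrac16$ and $p_1=10^{-3}$ both $p_0-\tfrac12$ and $p_1-\tfrac12$ are $\le-\tfrac13$, so for $t\ge1$ the first term dominates.

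\textbf{Main obstacle.} All the genuine analytic difficulty is packed into Proposition~\ref{prop:bootstrap_close}, established in section~\ref{s:the_technical_proof}; within the present argument the only delicate point is the continuation step, that is, verifying that the a priori bound $\|u\|_{X_T}\le\tfrac32\eps_0$ for $T<T^*$ really forces the profile to have a limit at $T^*$ in the space on which the fixed-point scheme of Proposition~\ref{prop:local_theory} operates, so that the scheme can be restarted there. This is routine provided the local existence time depends only on the $X_1$-size of the data, but it is the one spot in this proof that needs care.
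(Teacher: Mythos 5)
Your proof is correct and takes essentially the same route as the paper: fix constants so that $\|u\|_{X_T}\le 2\eps_0$ combined with Proposition~\ref{prop:bootstrap_close} forces $\|u\|_{X_T}\le\tfrac32\eps_0$, seed the iteration with the local theory, and run a continuous-induction/bootstrap argument, then read off the pointwise decay from Corollary~\ref{cor:ultimate_disp_est} using $p_0,p_1<\tfrac16$. You are a touch more careful than the paper in two places (relating $\|u\|_{X_1}$ to $\|u_0\|_{X_1}$ via $\|\omega'\|_{L^\infty_\xi}=1$, and requiring $\eps_0^3\le (32C)^{-1}$ rather than the paper's slightly weaker $\eps_0^3<3/(32C)$, which the arithmetic actually needs to land below $\tfrac32\eps_0$), but the substance is identical.
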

\begin{proof}
Let $C$ be the constant from proposition \ref{prop:bootstrap_close}. Pick $\eps_0$ such that 
\begin{equation}
    \label{eqn:smallness_of_eps0}
    \eps_0 < \left(\frac{3}{32C}\right)^{\frac13}. 
\end{equation}
Now, using proposition \ref{prop:local_theory}, we can find $T_1\in(1,2)$ and a local-in-time solution $u(t,x)$ valid on the time interval $[1,T_1]$. By shrinking $T_1$ if necessary, we can be sure that $\left\|u\right\|_{X_{T_1}} \leq 2\eps_0$ using the continuity of $\tau\mapsto \left\|u\right\|_{X_\tau}$ guaranteed by proposition \ref{prop:local_theory}. Using proposition \ref{prop:bootstrap_close} and \eqref{eqn:smallness_of_eps0}, this means that we actually have the stronger bound $\left\|u\right\|_{X_{T_1}} \leq \frac32\eps_0$. By applying proposition \ref{prop:local_theory} again with initial state $u\left(T_1\right)$, we can find $T_2>T_1$ such that $u(t,x)$ is still a solution on $[1,T_2]$. Using continuity as above, we have $\left\|u\right\|_{X_{T_{2}}}\leq 2\eps_0$. Iterating this argument and using the bootstrap principle gives the claim. To show that the global solution satisfies \eqref{eqn:pointwise_decay_NL}, we use corollary \ref{cor:ultimate_disp_est} and $\left\|u\right\|_{X_{\infty}}\lesssim \eps_0$ to obtain 
\begin{align*}
    \left\|u\right\|_{L^{\infty}_{x}} \lesssim \eps_0 t^{-\frac13} + \eps_0 t^{-\frac12+p_0} + \eps_0 t^{-\frac12 +p_1}. 
\end{align*}
Since $p_0, p_1<\frac16$, this gives \eqref{eqn:pointwise_decay_NL}. 
\end{proof}
\noindent Scattering is established using the same tools applied to get proposition \ref{prop:bootstrap_close}; a complete proof is postponed to section \ref{s:scattering_proof}. 
\begin{prop}\label{prop:scattering_in_L^2}
Consider the same notational setup as theorem \ref{thm:big_result}. There exists a unique $F(x)\in L^2_{x}$ with $\widehat{F}\in L^2_{\xi} \cap L^{\infty}_{\xi}$ such that the profile $f(t,x)$ of $u(t,x)$ satisfies
$$
\lim_{t\rightarrow\infty}\left\|f(t,x)-F(x)\right\|_{L^2_x} =\lim_{t\rightarrow\infty}\left\|\widehat{f}(t,\xi)-\widehat{F}(\xi)\right\|_{L^\infty_\xi} =0 .
$$
In particular, $u(t,x)$ scatters in $L^2_{x}$: 
$$
\lim_{t\rightarrow\infty}\left\|u(t,x)- e^{-it\omega\left(\frac{1}{i}\partial_{x}\right)}F(x)\right\|_{L^2_x} =0.
$$
\end{prop}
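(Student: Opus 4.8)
The plan is to read scattering directly off the Duhamel identity \eqref{eqn:fhat_integrated_framework}, reusing the multilinear and stationary-phase estimates that already go into Proposition \ref{prop:bootstrap_close}. Write
\[
N(\tau,\xi) \;=\; -\,\frac{i\,\omega(\xi)}{\sqrt{(2\pi)^3}} \int_{\mathbb{R}^3} e^{-i\tau\varphi}\, \widehat{f}(\eta_1)\,\widehat{f}(\eta_2)\,\widehat{f}(\eta_3)\,\widehat{f}(\eta_4)\, d\eta_1\, d\eta_2\, d\eta_3
\]
for the $\tau$-integrand, so that $N(\tau,\cdot)=\partial_\tau\widehat{f}(\tau,\cdot)$ and $\widehat{f}(t,\xi)=\widehat{f}(1,\xi)+\int_1^t N(\tau,\xi)\,d\tau$. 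For the global solution $u\in X_\infty$ furnished by Theorem \ref{thm:big_result} we have $\|u\|_{X_\infty}\lesssim\eps_0$, and the goal is to show that $\int_1^\infty N(\tau,\cdot)\,d\tau$ converges absolutely — first in $L^\infty_\xi$, then in $L^2_\xi$ — after which I set
\[
\widehat{F}(\xi) \;\doteq\; \widehat{f}(1,\xi) + \int_1^\infty N(\tau,\xi)\, d\tau
\]
and every assertion of the proposition follows.

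\textbf{Step 1 (convergence in $L^\infty_\xi$).} I would revisit section \ref{s:the_technical_proof} and observe that, in each region of the Littlewood--Paley decomposition used there, the contribution to $\|N(\tau)\|_{L^\infty_\xi}$ is bounded by $\eps_0^4$ times a function of $\tau$ that is integrable on $[1,\infty)$; concretely one gets $\|N(\tau)\|_{L^\infty_\xi}\lesssim\eps_0^4\,\tau^{-1-\delta_0}$ for some $\delta_0>0$, which is precisely what keeps $\|\widehat f\|_{L^\infty_\xi}$ bounded inside the bootstrap norm. The only subtle regions are those meeting the non-null degenerate space-time resonances at the inflection frequencies $\xi=\pm\sqrt3$, but by hypothesis section \ref{s:the_technical_proof} already dispatches these without sacrificing integrability in $\tau$. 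Hence $\int_1^\infty\|N(\tau)\|_{L^\infty_\xi}\,d\tau\lesssim\eps_0^4$, so $\widehat F\in L^\infty_\xi$ (as $\widehat f(1)\in L^\infty_\xi$ by the smallness hypothesis on $\widehat{u_0}$) and $\|\widehat f(t)-\widehat F\|_{L^\infty_\xi} \le \int_t^\infty \|N(\tau)\|_{L^\infty_\xi}\, d\tau \to 0$ as $t\to\infty$.

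\textbf{Step 2 (upgrade to $L^2_\xi$).} Since the bootstrap norm does not control $\|\widehat f\|_{L^2_\xi}$ directly, I would interpolate against the $H^s$-information: splitting frequency at $|\xi|=R$ and optimising gives $\|g\|_{L^2_\xi}\lesssim\|g\|_{L^\infty_\xi}^{1-\theta}\,\|\langle\xi\rangle^s g\|_{L^2_\xi}^{\theta}$ with $\theta=(2s+1)^{-1}$. Applying this to $N(\tau)$, using $\|\langle\xi\rangle^s N(\tau)\|_{L^2_\xi}=\|\partial_\tau f(\tau)\|_{H^s_x}$ together with the bound $\|\partial_\tau f(\tau)\|_{H^s_x}\lesssim\eps_0^4\,\tau^{p_1-1}$ produced by the $H^s$-estimate of section \ref{s:the_technical_proof}, and feeding in the Step 1 bound on $\|N(\tau)\|_{L^\infty_\xi}$, yields $\|N(\tau)\|_{L^2_\xi}\lesssim\eps_0^4\,\tau^{-1-\delta_0(1-\theta)+\theta p_1}$. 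Because $s\ge100$ is large, $\theta$ is tiny and $\delta_0(1-\theta)-\theta p_1>0$, so $\|N(\tau)\|_{L^2_\xi}$ is again integrable in $\tau$; thus $\int_1^\infty\|N(\tau)\|_{L^2_\xi}\,d\tau\lesssim\eps_0^4$, the same $\widehat F$ lies in $L^2_\xi$, and $\|\widehat f(t)-\widehat F\|_{L^2_\xi}\le\int_t^\infty\|N(\tau)\|_{L^2_\xi}\,d\tau\to0$ as $t\to\infty$. As a consistency check, \eqref{eqn:energy_conservation} gives $\|\widehat f(t)\|_{L^2_\xi}=\|u(t)\|_{L^2_x}\le\|u_0\|_{H^1_x}<\eps_0$ uniformly, in line with $\widehat F\in L^2_\xi$.

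\textbf{Step 3 (conclusions, and the hard part).} Let $F$ be the function with Fourier transform $\widehat F$. By Plancherel, $\|\widehat f(t)-\widehat F\|_{L^2_\xi}=\|f(t)-F\|_{L^2_x}$, so $f(t)\to F$ in $L^2_x$; uniqueness of $F$ in $L^2_x$ and of $\widehat F$ in $L^\infty_\xi$ is immediate from the triangle inequality. Since $u(t)=e^{-it\omega(\tfrac1i\partial_x)}f(t)$ and $e^{-it\omega(\tfrac1i\partial_x)}$ is unitary on $L^2_x$, we get $\big\|u(t)-e^{-it\omega(\tfrac1i\partial_x)}F\big\|_{L^2_x} = \|f(t)-F\|_{L^2_x} \to 0$ as $t\to\infty$, which is the claimed $L^2_x$-scattering. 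The main obstacle lies entirely in Step 1: one must re-audit the case analysis of section \ref{s:the_technical_proof} and confirm that every oscillatory integral there truly yields a time-integrable bound on $\|N(\tau)\|_{L^\infty_\xi}$ — most delicately near the anomalous non-null resonances at $\xi=\pm\sqrt3$, where there is no null structure to supply the missing decay. The $L^2_\xi$ upgrade in Step 2 is comparatively soft; the only thing to watch is that the small power of the slowly growing $H^s$-norm spent in the interpolation does not spoil integrability, which is exactly the room that taking $s$ large provides.
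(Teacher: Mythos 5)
Your overall plan (read scattering directly off the Duhamel identity, reuse the estimates from Proposition \ref{prop:bootstrap_close}, then conclude by Plancherel and unitarity) is the same as the paper's, and Step 3 is correct. But Step 1 contains a genuine gap that also infects Step 2.

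You assert a \emph{pointwise-in-$\tau$} bound $\|N(\tau)\|_{L^{\infty}_\xi}\lesssim\eps_0^4\,\tau^{-1-\delta_0}$ and claim this is ``precisely what keeps $\|\widehat f\|_{L^{\infty}_\xi}$ bounded inside the bootstrap norm.'' That is not what section \ref{s:the_technical_proof} proves. What the bootstrap argument actually establishes is the \emph{dyadic-in-time} bound $\|J_m\|_{L^{\infty}_\xi}\lesssim\eps_0^4\,2^{-am}$, where $J_m$ is the $\tau$-integral of $N(\tau)$ over the block $\{\tau\sim 2^m\}$. Many of the cases in section \ref{s:the_technical_proof} obtain that bound by integrating by parts in $\tau$ (e.g.\ Case 2 of the anomalous-resonance analysis, Cases 2 and 3 near $(0,0,0;0)$, Cases 2--3 near $(\mp\sqrt3,\pm\sqrt3,\pm\sqrt3;0)$, and parts of the resonant-line analysis). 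Integration by parts in $\tau$ exploits oscillation across the time block and cannot produce a pointwise bound on $\partial_\tau\widehat f(\tau)$; the naive pointwise estimate available is only $\|N(\tau)\|_{L^\infty_\xi}\lesssim\|u^4\|_{L^1_x}\lesssim\eps_0^4\,\tau^{-1}$, which is \emph{not} integrable. Your Step 2 interpolation then feeds on this nonexistent $\tau^{-1-\delta_0}$; with the bound that is actually available, the interpolated exponent would be $\tau^{-1+\theta p_1}$, again not integrable.

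The fix is the paper's: show that $\widehat f(t,\cdot)$ is Cauchy as $t\to\infty$ by summing the dyadic estimates rather than by absolute integrability of the integrand. Concretely, for $1<t_1<t_2$ write $\widehat f(t_2)-\widehat f(t_1)=\sum_{m=m_1}^{m_2}J_m$ over the dyadic blocks covering $[t_1,t_2]$, then $\|\widehat f(t_2)-\widehat f(t_1)\|_{L^\infty_\xi}\lesssim\eps_0^4\sum_{m\ge m_1}2^{-am}\lesssim\eps_0^4\,t_1^{-a}\to0$. For $L^2_\xi$, either run the $L^2_\xi$ multilinear estimate (Proposition \ref{prop:multilinear_estimates}) directly on $J_m$, which gains an extra factor of $\tau^{-\frac13}$ by placing one more function in $L^\infty_x$ and yields $\|J_m\|_{L^2_\xi}\lesssim\eps_0^4\,2^{-(a+\frac13)m}$ (this is the paper's route), or apply your interpolation at the level of $J_m$ rather than $N(\tau)$: the $H^s$ estimate of section \ref{s:the_technical_proof} gives $\|\langle\xi\rangle^s J_m\|_{L^2_\xi}\lesssim\eps_0^4\,2^{mp_1}$, so interpolating against $\|J_m\|_{L^\infty_\xi}\lesssim\eps_0^4\,2^{-am}$ with $\theta=(2s+1)^{-1}$ gives $\|J_m\|_{L^2_\xi}\lesssim\eps_0^4\,2^{m(-a(1-\theta)+\theta p_1)}$, which is summable in $m$ for $s\ge100$. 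Either way, $\widehat f(t)$ is $L^2_\xi$-Cauchy and your Step 3 conclusions go through unchanged.
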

\noindent To prove theorem \ref{thm:main_thm}, simply combine theorem \ref{thm:big_result} with proposition \ref{prop:scattering_in_L^2}. 
\section{Proof of Proposition \ref{prop:bootstrap_close}}
\label{s:the_technical_proof}
\subsection{Part 0: Setup}
\noindent Suppose we already have chosen some smallness threshold $\eps_0$. Pick $\eps_1\in \left(\eps_0, 1\right)$. For the remainder of this chapter, we assume \textit{a priori} that the weak bound 
\begin{equation}\label{eqn:bootstrap_open}
      \left\|u\right\|_{X_{T}} \leq \eps_1
 \end{equation}
 holds. Theorem \ref{thm:LP_disp_est} and \eqref{eqn:bootstrap_open} together imply 
\begin{equation}
    \label{eqn:bootstrap_on_LP_pieces}
    \left\| u_{k}(t,x)\right\|_{L^{\infty}_{x}} \lesssim \eps_1
    \begin{cases}
    t^{p_1}2^{-k(s-1)} \quad & \text{if} \quad 2^{k} \geq C_{\mathrm{hi}}t^{\frac19}
    \\ 
    t^{-\frac12}2^{\frac{3k}{2}} \quad & \text{if} \quad 2^{3} \leq 2^{k} < C_{\mathrm{hi}}t^{\frac19}
    \\ 
    t^{-\frac13} \quad & \text{if} \quad 2^{-1} \leq 2^{k} < 2^{3}
    \\ 
      t^{-\frac12}2^{-\frac{k}{2}}  \quad & \text{if} \quad C_{\mathrm{lo}}t^{-\frac13} \leq 2^{k} < 2^{-1}
      \\ 
     2^{k} \quad & \text{if} \quad 2^{k}< C_{\mathrm{lo}}t^{-\frac13}. 
    \end{cases}
\end{equation}
Additionally, we know that the worst-case dispersive decay bound holds:
\begin{equation}
\label{eqn:bootstrap_amplitude}
\left\|u\right\|_{L^{\infty}_{x}} \lesssim \eps_1 t^{-\frac13}. 
\end{equation}

\subsection{Part I: $H^s_{x}$ Bounds}
\noindent We need to show that \eqref{eqn:bootstrap_open} implies
\begin{equation}
\label{eqn:Sobolev_bootstrap_close}
\left\|f\right\|_{H^{s}_{x}} \lesssim \eps_0+\eps_1^4 t^{p_1} \quad \forall \ t\in [1,T]
\end{equation}
where $p_1=10^{-3}$ (in the course of our argument, we'll actually find that any $p_1>0$ works). To begin, we use \eqref{eqn:fhat_integrated_framework} and smallness of the Cauchy data to write
\begin{align}
\left\|f\right\|_{H^s_{x}}
&\lesssim \eps_0 + \int_{1}^{t} \ \diff \tau \left\|  \langle\xi\rangle^{s} \int \diff \eta_1 \diff \eta_2 \diff \eta_3 \ e^{-i\tau\varphi} \ \widehat{f}\left(\eta_1\right) \ \widehat{f}\left(\eta_2\right) \ \widehat{f}\left(\eta_3\right)\ \widehat{f}\left(\eta_4\right)\right\|_{L^2_{\xi}} \label{eqn:sobolev_bnd_step1}
\end{align}
(remember that $\eta_4$ is defined by \eqref{eqn:eta4_defn}). We now ``untangle'' the convolutions over $\eta_1, \eta_2, \eta_3$ and then apply Plancherel's theorem to bring us back over $x$-space: 
\begin{align*}
\left\|f\right\|_{H^s_{x}} \lesssim \eps_0 + \int_{1}^{t} \diff \tau \left\|u^4\right\|_{H^s_{x}} \lesssim \eps_0 + \int_{1}^{t} \diff \tau \left\|u\right\|_{L^\infty_{x}}^{3} \ \left\|u\right\|_{H^s_{x}}.
\end{align*}
Using \eqref{eqn:bootstrap_open} and \eqref{eqn:bootstrap_amplitude}, we secure plenty of time decay inside the $\tau$-integral and finish up the proof:
$$
\left\|f\right\|_{H^s_{x}} \lesssim \eps_0 + \eps_1^4\int_{1}^{t} \diff \tau \ \tau^{-1+p_{1}} \lesssim \eps_0+ \eps_1^4 t^{p_{1}}.
$$

\subsection{Part II: Determination of Space-time Resonances}
A detailed computation of all space-time resonances in presented in appendix \ref{section:res_comp}. Here, I report the final results of these calculations. If we define $r(\xi)$ by 
\begin{equation}\label{eqn:refl_denf}
r(\xi) = \sgn\left(\xi\right)\sqrt{\frac{\xi^2+3}{\xi^2-1}}
\end{equation}
then up to permuting and negating variables, the only space-time resonances for $p=3$ gBBM are
\begin{itemize}
    \item on the line $L$ defined by
    \begin{equation}
        L = \left\{\left(\eta_1,\eta_2,\eta_3;\xi\right) = \left(-\eta,\eta,\eta; 0\right) \ | \ \eta\in \mathbb{R} \right\}\subseteq \mathbb{R}^4,
        \label{eqn:resonant_line_discussion}
    \end{equation}
    including the points $(0,0,0;0)$ and $\left(-\sqrt{3},\sqrt{3},\sqrt{3};0\right)$; 
    \item on the curve $\Gamma$ defined by 
      \begin{equation}
    \Gamma =  \left\{\left(\eta_1,\eta_2,\eta_3;\xi\right) = \left(\eta,r(\eta),-r(\eta); 0\right) \ | \ \left|\eta\right|>1 \right\}\subseteq \mathbb{R}^4;
      \label{eqn:resonant_curve_discussion}
    \end{equation}
    \item at the point $\left(\eta_0, \eta_0, \eta_0;\xi_0\right)$ where $\eta_0\approx 5.1$ and $\xi_0\approx 14.2$ are the unique positive solutions to the system 
\begin{subequations}
\label{eqn:fam_1_badpoints}
\begin{align}
3\eta_{1} - r\left(\eta_{1}\right) &=\xi,
\\
3\omega(\eta_{1}) - \omega(r\left(\eta_{1}\right))-\omega(3\eta_{1}-r\left(\eta_{1}\right))&=0.
 \end{align}
 \end{subequations}
\end{itemize} 
Since the isolated resonance $\left(\eta_0, \eta_0, \eta_0;\xi_0\right)$ involves numbers that do not appear in the linear theory, I refer to it as the \textbf{anomalous resonance} in the sequel. 

 \subsection{Part III: Weighted $L^2_x$ and $L^{\infty}_{\xi}$ Bounds Near Resonances}
 \noindent To estimate weighted norms of our solution, we start by differentiating both sides of \eqref{eqn:ODE_for_fhat_framework} with respect to $\xi$:
\begin{align*}
\partial_{\xi} \widehat{f}(t,\xi) &= \partial_{\xi} \widehat{f}(1,\xi) 
\\
&\phantom{=} -i\left(2\pi\right)^{-\frac32}\omega(\xi)
    \int_{1}^{t} \diff s \int\diff\eta_{1}\ \diff\eta_{2}\ \diff\eta_{3}\ e^{-i\tau\varphi} \left\{(-i\tau) \ \partial_{\xi}\varphi \ \hat{f}(\eta_{1})\ \hat{f}(\eta_{2})\ \hat{f}(\eta_{3})  \ \hat{f}\left(\eta_{4} \right)\right.
    \\
    &\phantom{=} + \left.
    \hat{f}(\eta_{1})\ \hat{f}(\eta_{2})\ \hat{f}(\eta_{3})  \ \partial_{\xi}\hat{f}\left(\eta_{4}\right)
    \right\}
    \\
    &\phantom{=} -i\left(2\pi\right)^{-\frac32}\omega'(\xi)
    \int_{1}^{t} \diff s \int\diff\eta_{1}\ \diff\eta_{2}\ \diff\eta_{3}\ e^{-i\tau\varphi} \hat{f}(\eta_{1})\ \hat{f}(\eta_{2})\ \hat{f}(\eta_{3})  \ \hat{f}\left(\eta_{4} \right). 
\end{align*}
Define 
\begin{subequations}
    \begin{align}
    I_{\text{easy}} &\doteq  \omega(\xi)
    \int_{1}^{t} \diff \tau \int\diff\eta_{1}\ \diff\eta_{2}\ \diff\eta_{3}\ e^{-i\tau\varphi}  \hat{f}(\eta_{1})\ \hat{f}(\eta_{2})\ \hat{f}(\eta_{3})  \ \partial_{\xi}\hat{f}\left(\eta_4\right)
    \\
     I_{\text{easier}} &\doteq  \omega'(\xi)
    \int_{1}^{t} \diff \tau \int\diff\eta_{1}\ \diff\eta_{2}\ \diff\eta_{3}\ e^{-i\tau\varphi}  \hat{f}(\eta_{1})\ \hat{f}(\eta_{2})\ \hat{f}(\eta_{3})  \ \hat{f}\left(\eta_{4}\right)
    \\
    I &\doteq \omega(\xi)
    \int_{1}^{t}  \diff \tau \ \tau \int\diff\eta_{1}\ \diff\eta_{2}\ \diff\eta_{3}\ e^{-i\tau\varphi} \partial_{\xi}\varphi \ \hat{f}(\eta_{1})\ \hat{f}(\eta_{2})\ \hat{f}(\eta_{3})  \ \hat{f}\left(\eta_4 \right)
    \end{align}
\end{subequations}
so that 
\begin{equation}
\label{eqn:basic_expression_for_d_xi_fhat}
\partial_{\xi} \widehat{f}(t,\xi) = \partial_{\xi} \widehat{f}(1,\xi) - \left(2\pi\right)^{-\frac32} \left[iI_{\text{easy}} +i I_{\text{easier}}+ I\right].
\end{equation}
In particular, we have 
\begin{equation}\label{eqn:basic_weighted_bound} 
\left\|xf(t,x)\right\|_{L^2_x} \lesssim \eps_0 + \left\|I_{\text{easy}}(t,\xi)\right\|_{L^2_{\xi}} + \left\|I_{\text{easier}}(t,\xi)\right\|_{L^2_{\xi}} + \left\|I(t,\xi)\right\|_{L^2_{\xi}}
\end{equation}
As the notation suggests, bounding $I_{\text{easy}}$ is very simple. We follow the convolution-untangling procedure that was used to estimate $\left\|f\right\|_{H^s_{x}}$, giving
\begin{align*}
\left\|I_{\text{easy}}(t,\xi)\right\|_{L^2_{\xi}} &\lesssim \int_{1}^{t} \diff \tau \left\|u^3 \ \exp\left(-it\omega\left(\frac{1}{i}\partial_{x}\right)\right)\left(xf\right)\left(\tau, x\right)\right\|_{L^2_x}.
\end{align*}
Combining the above with \eqref{eqn:bootstrap_amplitude} and \eqref{eqn:bootstrap_open} implies 
\begin{align*}
\left\|I_{\text{easy}}(t,\xi)\right\|_{L^2_{\xi}} &\lesssim \eps_1^4\int_{1}^{t} \diff \tau \ \tau^{-1+p_0} \lesssim \eps_1^4 t^{p_0}. 
\end{align*}
The same ideas give $\left\|I_{\text{easier}}(t,\xi)\right\|_{L^2_{\xi}} \lesssim \eps_1^4 t^{p_0}$. Together with \eqref{eqn:basic_weighted_bound}, these estimates imply
\begin{equation}\label{eqn:pupal_weighted_bound} 
\left\|xf(t,x)\right\|_{L^2_x} \lesssim \eps_0 +\eps_1^4 t^{p_0} + \left\|I(t,\xi)\right\|_{L^2_{\xi}}. 
\end{equation}
It then remains to prove 
\begin{equation}
    \label{eqn:bootstrap_weighted_hard_claim}
    \left\|I(t,\xi)\right\|_{L^2_{\xi}} \lesssim \eps_1^4 t^{p_0} . 
\end{equation}
Owing to the extra $\tau$ appearing in the integrand of $I$, the arguments are much more difficult than those used to control $I_{\text{easy}}$ and $I_{\text{easier}}$. This is where LP techniques come to the rescue, and where additional constraints on the growth rate $p_0$ appear. Additionally, to finish obtaining the desired control on $\left\|u\right\|_{X_T}$,  we need to bound $\left\|\widehat{f}\right\|_{L^{\infty}_{\xi}}$. Using \eqref{eqn:ODE_for_fhat_framework}, this reduces to showing that the integral \begin{equation}
    \label{eqn:J_defn}
    J(\xi) = \omega(\xi)\int_{1}^{t} \diff \tau \int \diff\eta_1\diff\eta_2\diff\eta_3 \ e^{-i\tau\varphi} \  \widehat{f}\left(\eta_1\right) \ \widehat{f}\left(\eta_2\right) \ \widehat{f}\left(\eta_3\right) \ \widehat{f}\left(\eta_{4}\right). 
\end{equation}
obeys 
\begin{equation}
    \label{eqn:Linfty_bnd_wts}   \left\|J\right\|_{L^{\infty}_{\xi}}\lesssim \eps_1^4
\end{equation}
under the assumption \eqref{eqn:bootstrap_open}.
\par We begin by performing a dyadic decomposition of $[1,t]$: if $m_{*}$ is the smallest integer satisfying $2^{m_{*}}\geq t$, then we write
$$
[1,t] = \bigcup_{m=0}^{m_{*}} \left\{\tau \sim 2^{m}\right\}\cap [1,t]. 
$$
Note that each dyadic piece has length $\sim 2^{m}$, and there are a total of $\mathcal{O}\left(\log t\right)$ dyadic pieces. Then, we fix some $m=0,...,m_{*}$ and define $t_1, t_2$ by $[t_1, t_2] = \left\{\tau \sim 2^{m}\right\}\cap [1,t]$ and
\begin{subequations}
\begin{align}
\label{eqn:I_defn}
I_{m} &\doteq\omega(\xi)
    \int_{t_1}^{t_2}  \diff \tau \ \tau \int\diff\eta_{1}\ \diff\eta_{2}\ \diff\eta_{3}\ e^{-i\tau\varphi}\  \partial_{\xi}\varphi \ \hat{f}(\eta_{1})\ \hat{f}(\eta_{2})\ \hat{f}(\eta_{3})  \ \hat{f}\left(\eta_{4} \right)
    \\ \label{eqn:Jdefn}
    J_{m}(t) &\doteq \omega(\xi)\int_{t_1}^{t_2} \diff \tau \int \diff\eta_1\diff\eta_2\diff\eta_3 \ e^{-i\tau\varphi} \  \widehat{f}\left(\eta_1\right) \ \widehat{f}\left(\eta_2\right) \ \widehat{f}\left(\eta_3\right) \ \widehat{f}\left(\eta_{4}\right).
\end{align}
\end{subequations}
We want to prove that, under the bootstrap hypothesis \eqref{eqn:bootstrap_open}, we have
\begin{subequations}
    \begin{align}
        \label{eqn:claimed_bound}
\sum_{m=0}^{m_{*}}\left\|I_{m} \right\|_{L^2_{\xi}} &\lesssim \eps_1^4 t^{p_0}
\\
  \label{eqn:claimed_bound_Linfty}
\sum_{m=0}^{m_{*}}\left\|J_{m} \right\|_{L^2_{\xi}} &\lesssim \eps_1^4 . 
    \end{align}
\end{subequations}
This is accomplished by localizing the integrands of $I_{m}$ and $J_m$ in $\eta_1\eta_2\eta_3\xi$-space.   
\subsubsection{Anomalous Resonance}
\label{ss:anomalous}
\noindent We begin by working with the anomalous space-time resonance $(\eta_{1}, \eta_{2}, \eta_{3}; \xi)\ = \left(\eta_{0}, \eta_{0}, \eta_{0}; \xi_{0}\right)$ defined in \eqref{eqn:fam_1_badpoints}. Throughout this proof, $\alpha>0$ denotes a small number that will be constrained over the course of our discussion.  For each fixed $m$, let $k_{*}$ be the largest integer such that $2^{k_{*}} \leq 2^{-20}2^{m\left(\alpha-\frac12\right)}$. We perform an LP decomposition of the integrands of $I_{m}, J_m$ with respect to $\xi-\xi_0$ and each $\eta_j-\eta_0$.  
\par \noindent 
\textbf{Case 1}
\par \noindent
First, we use vanilla bumps in $\eta_1, \eta_2, \eta_3, \xi$ to localize: 
\begin{align*}
\left|\eta_{j}-\eta_0\right| \leq 2^{k_{*}} \quad \forall \ j=1,2,3, \quad \left|\xi-\xi_0\right| \leq 2^{3}2^{2k_{*}}. 
\end{align*}
That is, we are bounding 
\begin{equation}
    \hspace{-1cm} I_{mk_{*}} \doteq \omega(\xi)\int_{t_1}^{t_2}\diff\tau \ \tau \phi_{2k_{*}+3}\left(\xi-\xi_0\right) \int\diff\eta_1  \diff \eta_{2}  \diff \eta_{3}  \ e^{-i\varphi \tau} \partial_{\xi}\varphi \widehat{f}_{\lesssim 2^{k_{*}}} \left(\eta_{1}\right) \widehat{f}_{\lesssim 2^{k_{*}}} \left(\eta_{2}\right) \widehat{f}_{\lesssim 2^{k_{*}}} \left(\eta_{3}\right) 
    \widehat{f}\left(\eta_{4}\right). 
\end{equation}
Our only option is to take advantage of the size of integration domain in Fourier space: I call this the ``volume trick''. Since $\omega$ and $\partial_{\xi}\varphi$ are bounded, such an approach gives 
\begin{align*}
\left\|I_{k*} \right\|_{L^2_{\xi}} 
&\lesssim 
\int_{t_1}^{t_2}\diff\tau \ \tau \left\| \phi_{2k_{*}+3}\left(\xi-\xi_0\right)\int\diff\eta_1  \diff \eta_{2}  \diff \eta_{3} \ e^{-i\varphi \tau} \partial_{\xi}\varphi \ \widehat{f}_{\lesssim 2^{k_{*}}} \left(\eta_{1}\right) \widehat{f}_{\lesssim 2^{k_{*}}} \left(\eta_{2}\right) \widehat{f}_{\lesssim 2^{k_{*}}} \left(\eta_{3}\right) 
    \widehat{f}\left(\eta_{4}\right) \right\|_{L^2_{\xi}}
\\
&\lesssim \eps_1^4 \int_{t_1}^{t_2} \diff\tau \ \tau \ 2^{3k_{*}}\  \sqrt{\text{vol}\left(|\xi-\xi_0|\lesssim 2^{2mk_{*}}\right)} \lesssim \eps_1^4 2^{4\alpha m}. 
\end{align*}
Summing over $m$ yields
\begin{align*}
    \sum_{m=0}^{m_{*}}\left\|I_{k*} \right\|_{L^2_{\xi}} \lesssim \eps_1^4 t^{4\alpha}.
\end{align*}
Then, we must have $4\alpha \leq p_0$. The same ideas applied to the $L^{\infty}_{\xi}$ piece give
$$
\sum_{m}\left\|J_{mk_{*}}\right\|_{L^{\infty}_{\xi}} \lesssim \eps_1^4 t^{-\frac12 +3\alpha}. 
$$
Since $4\alpha \leq p_0<\frac16$, the above is $\lesssim \eps_1^4$ as desired.  
\par \noindent 
\textbf{Case 2}
\par \noindent
Now, localize using annular bumps about $\xi_0$ and vanilla bumps for each $\eta_j-\eta_0$: 
\begin{align*}
\left|\eta_{j}-\eta_0\right| \leq 2^{k_{*}} \quad \forall \ j=1,2,3 , \quad \left|\xi-\xi_0\right| \sim 2^{k}, \quad \text{where} \quad 
2^{3}2^{2k_{*}} \leq 2^{k} \leq 2^{-10}.
\end{align*}
For such $\eta_j$ and $\xi$, we use Taylor expansion to discover
\begin{align}
\varphi &= \left[\omega'(r(\eta_0))-\omega'(\xi_0)\right]\eta_4-\frac12\omega''(\xi_{*})\left(\xi-\xi_0\right)^2 \nonumber
\\
&\phantom{=} + 
       \frac12 \omega''(\eta_{**})\left(\eta_4+r(\eta_0)\right)^2 + \frac12\sum_{j=1}^{3}\omega''(\eta_{*,j})\left(\eta_j-\eta_0\right)^2,
       \label{eqn:case2:taylor_expansion_phi}
\end{align}
where $\xi_*\approx \xi_0$, $\eta_{*,j}$ near $\eta_0$ for $j=1,2,3$, and $\eta_{**}\approx -r(\eta_0)$. This implies (perhaps up to massaging the constants used to define this case)
\begin{equation}\label{eqn:case2_phase_bnd_below}
|\varphi| \gtrsim 2^{k},
\end{equation}
meaning we can integrate by parts in $\tau$. Denote the part of $I_{m}$ we're bounding by 
\begin{equation}
\hspace{-0.5cm}
    I_{mk} \doteq \omega(\xi) \int_{t_1}^{t_2}\diff\tau \ \tau \psi_{k}\left(\xi-\xi_0\right) \int\diff\eta_1  \diff \eta_{2}  \diff \eta_{3} \ e^{-i\varphi \tau} \partial_{\xi}\varphi \widehat{f}_{\lesssim 2^{k_{*}}} \left(\eta_{1}\right) \widehat{f}_{\lesssim 2^{k_{*}}} \left(\eta_{2}\right) \widehat{f}_{\lesssim 2^{k_{*}}} \left(\eta_{3}\right) 
    \widehat{f}\left(\eta_{4}\right). 
\end{equation}
Using \eqref{eqn:case2_phase_bnd_below}, we are clear to integrate by parts in $\tau$ within $I_{mk}$, yielding 
\begin{align*}
    I_{mk} &= \omega(\xi) \psi_{k}(\xi-\xi_0) \int_{t_1}^{t_2} \diff \tau \int\diff \eta_{1} \diff \eta_{2} \diff \eta_{3}  \ e^{-i\tau\varphi} \frac{\partial_{\xi}\varphi}{\varphi}  \tau  \partial_{\tau}\left(  \widehat{f}_{\lesssim 2^{k_{*}}} \left(\eta_{1}\right) \widehat{f}_{\lesssim 2^{k_{*}}} \left(\eta_{2}\right) \widehat{f}_{\lesssim 2^{k_{*}}} \left(\eta_{3}\right) 
    \widehat{f}\left(\eta_{4}\right)  \right) 
    \\  
    &\phantom{=} + e^{-i\tau\varphi} \ \frac{\partial_{\xi}\varphi}{\varphi} \  \widehat{f}_{\lesssim 2^{k_{*}}} \left(\eta_{1}\right) \widehat{f}_{\lesssim 2^{k_{*}}} \left(\eta_{2}\right) \widehat{f}_{\lesssim 2^{k_{*}}} \left(\eta_{3}\right) 
    \widehat{f}\left(\eta_{4}\right) 
    \\
    &\phantom{=} + \left[\omega(\xi) \psi_{k}(\xi-\xi_0) \ \tau \int\diff \eta_{1} \diff \eta_{2}  \diff \eta_{3}  \ e^{-i\tau\varphi} \frac{\partial_{\xi}\varphi}{\varphi} \  \widehat{f}_{\lesssim 2^{k_{*}}} \left(\eta_{1}\right) \widehat{f}_{\lesssim 2^{k_{*}}} \left(\eta_{2}\right) \widehat{f}_{\lesssim 2^{k_{*}}} \left(\eta_{3}\right) 
    \widehat{f}\left(\eta_{4}\right) \right]_{\tau=t_1}^{t_2}. 
\end{align*}
Thus, we need to estimate $\left\|\partial_{\tau}\hat{f}_{\lesssim 2^{k_*}}\right\|_{L^{\infty}_{\xi}}$ (when the time derivative hits the $\eta_j$ term for $j=1,2,3$) and 
$$
\left\|\psi_{k}\left(\xi-\xi_0\right)\phi_{\lesssim 2^{k_{*}}}\left(\eta_1\right)\phi_{\lesssim 2^{k_{*}}}\left(\eta_2\right)\phi_{\lesssim 2^{k_{*}}}\left(\eta_3\right)\partial_{\tau}\hat{f}\left(\eta_4\right)\right\|_{L^{\infty}_{\xi}} 
$$
(when the time derivative hits the $\eta_4$ term) before going further. 
\begin{lemma}\label{lemma:time_derivative_bnd}
\begin{equation}\label{eqn:time_derivative_bnd}
\left\|\partial_{\tau}\widehat{f}_{\lesssim 2^{k_{*}}}\right\|_{L^{\infty}_{\xi}} \lesssim \eps_1^4 2^{-m}. 
\end{equation}
\end{lemma}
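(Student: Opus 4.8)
The starting point is the Fourier-space PDE \eqref{eqn:ODE_for_fhat_framework}, which can be rewritten as $\partial_{\tau}\widehat{f}(\tau,\xi) = -i\omega(\xi)\,\widehat{u^{4}}(\tau,\xi)$. Since the Littlewood--Paley cutoff localizing to $|\xi-\eta_{0}|\lesssim 2^{k_{*}}$ is independent of $\tau$, and since $2^{k_{*}}\leq 2^{-20}$ keeps us in a fixed tiny neighbourhood of $\eta_{0}\approx 5.1$ (in particular bounded away from $0$ and from $\pm\xi_{0}\approx\pm 14.2$), the lemma is equivalent to the pointwise bound $|\widehat{u^{4}}(\tau,\xi)|\lesssim\eps_1^{4}\tau^{-1}$ for all such $\xi$. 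The naive estimate $|\widehat{u^{4}}(\tau,\xi)|\leq(2\pi)^{-1/2}\|u^{4}\|_{L^{1}_{x}}\lesssim\|u\|_{L^{\infty}_{x}}^{2}\|u\|_{L^{2}_{x}}^{2}\lesssim\eps_1^{4}\tau^{-2/3}$ (via \eqref{eqn:bootstrap_amplitude}, \eqref{eqn:bootstrap_open} and the conserved $H^{1}_{x}$-norm) only produces $\tau^{-2/3}$; and while Plancherel does give $\|\widehat{u^{4}}\|_{L^{2}_{\xi}}=\|u^{4}\|_{L^{2}_{x}}\leq\|u\|_{L^{\infty}_{x}}^{3}\|u\|_{L^{2}_{x}}\lesssim\eps_1^{4}\tau^{-1}$ with the right rate, upgrading this to $L^{\infty}_{\xi}$ on the short interval $|\xi-\eta_{0}|\lesssim 2^{k_{*}}$ via a Sobolev estimate fails, because $\partial_{\xi}\partial_{\tau}\widehat f$ contains $\widehat{xu^{4}}$ and $\|xu\|_{L^{2}_{x}}$ grows like $\tau$. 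So one must exploit the oscillation of $e^{-i\tau\varphi}$.

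The plan is to use the representation $\widehat{u^{4}}(\tau,\xi)=(2\pi)^{-3/2}\int e^{-i\tau\varphi(\eta_{1},\eta_{2},\eta_{3};\xi)}\,\widehat f(\eta_{1})\widehat f(\eta_{2})\widehat f(\eta_{3})\widehat f(\eta_{4})\,\diff\eta_{1}\diff\eta_{2}\diff\eta_{3}$, with $\eta_{4}=\xi-\eta_{1}-\eta_{2}-\eta_{3}$ and $\varphi$ as in \eqref{eqn:nonlinear_phase_fnc}, and to estimate it by stationary phase / integration by parts in $(\eta_{1},\eta_{2},\eta_{3})$, after an LP decomposition of each input frequency. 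On dyadic blocks bounded away from the space-resonant set $\{\nabla_{\eta_{1},\eta_{2},\eta_{3}}\varphi=0\}$ — equivalently, where $\omega'(\eta_{1}),\omega'(\eta_{2}),\omega'(\eta_{3}),\omega'(\eta_{4})$ are not all equal — two integrations by parts in $\eta$ give a factor $\tau^{-2}$, each derivative falling on a profile costing at most $\|\partial_{\xi}\widehat f\|_{L^{2}_{\xi}}\lesssim\eps_1\tau^{p_{0}}$, and the high-frequency blocks are summable thanks to $s\geq 100$ (cf. \eqref{eqn:bootstrap_on_LP_pieces}); these blocks thus contribute $\lesssim\eps_1^{4}\tau^{-2+2p_{0}}\ll\eps_1^{4}\tau^{-1}$. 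On the blocks meeting $\{\nabla_{\eta}\varphi=0\}$ the key input is Part II: for output frequency $\xi$ near $\eta_{0}$ there are \emph{no} space-time resonances at all. What actually matters for this instantaneous integral, however, is not the non-vanishing of $\varphi$ but the nondegeneracy of its $\eta$-Hessian on $\{\nabla_{\eta}\varphi=0\}$; a short computation with $\omega$ in the spirit of appendix \ref{section:res_comp} — the Hessian has the form $\mathrm{diag}\big(\omega''(\eta_{1}),\omega''(\eta_{2}),\omega''(\eta_{3})\big)+\omega''(\eta_{4})\,\mathbf{1}\mathbf{1}^{\mathsf{T}}$, which on the diagonal branch $\eta_{1}=\eta_{2}=\eta_{3}=\eta_{4}=\xi/4$ equals $\omega''(\xi/4)\,(I+\mathbf{1}\mathbf{1}^{\mathsf{T}})$ and is invertible since $\xi/4\approx 1.28\neq\pm\sqrt{3}$ — shows the Hessian stays nondegenerate over output frequencies near $\eta_{0}$, so stationary phase in $\mathbb{R}^{3}$ yields a $\tau^{-3/2}$ bound on each near-resonant block. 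Summing over the (geometrically controlled) dyadic blocks and using $\|\widehat f\|_{L^{\infty}_{\xi}}\lesssim\eps_1$ throughout gives $|\widehat{u^{4}}(\tau,\xi)|\lesssim\eps_1^{4}\tau^{-1}$ (indeed $\lesssim\eps_1^{4}\tau^{-3/2}$, so the requested rate has room to spare), which is the lemma.

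The hard part is the near-resonant analysis: one must check that, over output frequencies in a fixed neighbourhood of $\eta_{0}$, the $\eta$-Hessian of $\varphi$ really does not degenerate on the \emph{entire} space-resonant set — not merely on the diagonal, but also on the branches where, say, three of the $\eta_{j}$ equal some $b>1$ and the fourth equals $-r(b)$ (or $-b$, etc.) — and, should it degenerate at isolated configurations, to replace the $\tau^{-3/2}$ estimate there by a van der Corput / normal-form bound still delivering at least $\tau^{-1}$. The remaining ingredients (the $\eta$-integration by parts off the resonant set and the summation of the LP blocks, where $s\geq 100$ tames high input frequencies) are routine and parallel the arguments already carried out for Theorem \ref{thm:LP_disp_est}.
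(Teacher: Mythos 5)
The reduction to $|\widehat{u^{4}}(\tau,\xi)|\lesssim\eps_1^{4}\tau^{-1}$ for $\xi$ near $\eta_0$, and the diagnosis that a naive $L^1_x$ bound falls short, are both right. But the main mechanism you propose --- two integrations by parts in $\eta$ off the space-resonant set, nondegenerate stationary phase on it --- has several gaps. Off-resonance: after one integration by parts the derivative lands on a profile and produces $\partial_\xi\widehat f$, which the bootstrap controls in $L^2_\xi$; a second integration by parts in the same direction (and in the worst configuration $\sigma_2,\sigma_3,\sigma_4\approx\sqrt3$, $\sigma_1\approx\eta_0-3\sqrt3$, the $\sigma_1$ direction is the \emph{only} one with a uniform lower bound on $|\partial_\eta\varphi|$) eventually produces $\partial_\xi^2\widehat f$, for which the norm \eqref{eqn:bootstrap_norm_defn} gives no estimate. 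So the advertised $\tau^{-2}$ gain is not available. Near-resonance: the $\eta$-Hessian $\mathrm{diag}\bigl(\omega''(\eta_j)\bigr)+\omega''(\eta_4)\mathbf{1}\mathbf{1}^{\mathsf T}$ is nondegenerate on the diagonal branch as you check, but it degenerates on codimension-one sets whenever any $\eta_j$ or $\eta_4$ approaches $\pm\sqrt3$ (where $\omega''$ vanishes). These are not ``isolated configurations'', and they are precisely where the profiles have the slowest dispersive decay; relegating them to an unspecified van der Corput / normal-form estimate leaves the hardest part open. Finally, classical stationary phase in $\mathbb{R}^3$ requires more amplitude smoothness than $\widehat f\in L^\infty_\xi$, $\partial_\xi\widehat f\in L^2_\xi$ provides.

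The paper's proof is organized around a different fact. Under the constraint $\sum_{j}\sigma_j=\xi\approx\eta_0$, at least one $\sigma_j$ must lie a fixed, $\tau$-independent distance from $\{0,\pm\sqrt3\}$, because $\eta_0$ is not close to any $K\sqrt3$, $K=0,\dots,4$. This gives a first-derivative lower bound $|\partial_{\sigma_j}\varphi|\gtrsim 1$ --- a gradient statement, not a Hessian one, and valid regardless of proximity to the space-resonant set --- so a \emph{single} integration by parts in that one direction already gains the $\tau^{-1}$. This is then closed with Proposition \ref{prop:multilinear_estimates_Linfty} (which tolerates the available profile regularity) and $\|\partial_\xi\widehat f\|_{L^2_\xi}\lesssim\eps_1\tau^{p_0}$, $p_0<\frac16$. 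The paper also separately handles the slowdown of the non-degenerate factor's decay to $\tau^{-1/2}$ once one accounts for the asymptotic degeneracy of $\omega'$ at infinity, which your sketch does not address.
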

\begin{lemma}\label{lemma:time_derivative_bnd_eta4}
\begin{equation}\label{eqn:time_derivative_bnd_eta4}
\left\|\psi_{k}\left(\xi-\xi_0\right)\phi_{\lesssim 2^{k_{*}}}\left(\eta_1\right)\phi_{\lesssim 2^{k_{*}}}\left(\eta_2\right)\phi_{\lesssim 2^{k_{*}}}\left(\eta_3\right)\partial_{\tau}\hat{f}\left(\eta_4\right)\right\|_{L^{\infty}_{\xi}} \lesssim \eps_1^4 2^{-m}. 
\end{equation}
\end{lemma}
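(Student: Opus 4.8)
The starting point is the identity $\partial_{\tau}\widehat{f}(\tau,\zeta) = -i\,\omega(\zeta)\,e^{i\tau\omega(\zeta)}\,\widehat{u^{4}}(\tau,\zeta)$, which is \eqref{eqn:ODE_for_fhat_framework} read backwards through the convolution theorem; equivalently $\partial_{\tau}\widehat{f}(\tau,\zeta) = -i(2\pi)^{-3/2}\omega(\zeta)\int \diff\sigma_{1}\diff\sigma_{2}\diff\sigma_{3}\, e^{-i\tau\varphi}\,\widehat{f}(\sigma_{1})\widehat{f}(\sigma_{2})\widehat{f}(\sigma_{3})\widehat{f}(\sigma_{4})$ with $\sigma_{4}=\zeta-\sigma_{1}-\sigma_{2}-\sigma_{3}$ and $\varphi=\varphi(\sigma_{1},\sigma_{2},\sigma_{3};\zeta)$ as in \eqref{eqn:nonlinear_phase_fnc}. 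First I would locate the relevant output frequency: on the support of the cutoffs in \eqref{eqn:time_derivative_bnd_eta4} one has $|\eta_{j}-\eta_{0}|\lesssim 2^{k_{*}}$ for $j=1,2,3$ and $|\xi-\xi_{0}|\le 2^{k}\le 2^{-10}$, so the first equation in \eqref{eqn:fam_1_badpoints} gives $\eta_{4}=\xi-\eta_{1}-\eta_{2}-\eta_{3}=(\xi_{0}-3\eta_{0})+\mathcal{O}(2^{-10})=-r(\eta_{0})+\mathcal{O}(2^{-10})$. Since $|\eta_{0}|>1$ forces $r(\eta_{0})$ finite and nonzero, $\eta_{4}$ ranges in a fixed compact interval $K$ containing none of $0,\xi_{0},-\xi_{0}$, and $|\omega(\eta_{4})|\sim 1$ there. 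Hence the lemma reduces to the uniform bound $\sup_{\zeta\in K}|\widehat{u^{4}}(\tau,\zeta)|\lesssim\eps_{1}^{4}\,\tau^{-1}$ for $\tau\sim 2^{m}$.

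I view this as a lower-order instance of the same oscillatory-integral problem treated throughout this section, but now only at output frequencies $\zeta$ that are $\mathcal{O}(1)$ and bounded away from every resonant output frequency. The naive bound $|\widehat{u^{4}}(\tau,\zeta)|\le\|u^{4}\|_{L^{1}_{x}}\le\|u\|_{L^{\infty}_{x}}^{2}\|u\|_{L^{2}_{x}}^{2}\lesssim\eps_{1}^{4}\tau^{-2/3}$ (using \eqref{eqn:bootstrap_amplitude} and the conserved $\|u\|_{L^{2}_{x}}$ from \eqref{eqn:energy_conservation}) is too weak, so one must exploit the oscillation in $\sigma$. The structural input, read off from the resonance analysis of Part II and appendix \ref{section:res_comp}, is that because $\zeta\in K$ avoids $\{0,\pm\xi_{0}\}$ there is \emph{no} $\sigma$ at which $\varphi(\,\cdot\,;\zeta)$ and $\nabla_{\sigma}\varphi(\,\cdot\,;\zeta)$ vanish simultaneously; moreover the space resonances $\{\nabla_{\sigma}\varphi(\,\cdot\,;\zeta)=0\}$ with output $\zeta\in K$ form a finite set of nondegenerate isolated points, bounded uniformly in $\zeta$, depending continuously on $\zeta$, at none of which $\varphi$ vanishes. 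Granting this, I would split $\int\diff\sigma_{1}\diff\sigma_{2}\diff\sigma_{3}$ into: (i) the region where some $|\sigma_{j}|\gtrsim\tau^{\beta}$ for a small power $\beta$, which is $\mathcal{O}(\tau^{-10})$ by $\|f\|_{H^{s}_{x}}\lesssim\eps_{1}\tau^{p_{1}}$ and $s\ge 100$; (ii) a $\tau^{-1/2}$-ball about each space resonance $\sigma_{c}$, estimated by its volume as $\lesssim\eps_{1}^{4}\tau^{-3/2}$ (using continuity of the amplitude at $\sigma_{c}$, and the nondegeneracy of the Hessian of $\varphi$ there if one wants the sharp stationary-phase constant); and (iii) the complement, decomposed dyadically into shells $|\sigma-\sigma_{c}|\sim 2^{j}\tau^{-1/2}$, on which $|\nabla_{\sigma}\varphi|\gtrsim 2^{j}\tau^{-1/2}$, so that one integration by parts in $\sigma$ together with a telescoping of the boundary terms contributes $\lesssim\eps_{1}^{4}\tau^{-1}\log\tau$. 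Summing, $\sup_{\zeta\in K}|\widehat{u^{4}}(\tau,\zeta)|\lesssim\eps_{1}^{4}\tau^{-1}\log\tau$, which in view of $\tau\sim 2^{m}$ delivers \eqref{eqn:time_derivative_bnd_eta4} once the logarithm is absorbed (exactly as in the summation over the $\mathcal{O}(\log t)$ dyadic blocks); uniformity in $\zeta\in K$ follows from compactness and the continuous dependence of all constants on $\zeta$.

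The hard part is item (iii). Differentiating the amplitude $\widehat{f}(\sigma_{1})\widehat{f}(\sigma_{2})\widehat{f}(\sigma_{3})\widehat{f}(\sigma_{4})$ in $\sigma$ during the integration by parts brings in $\partial_{\sigma}\widehat{f}=\widehat{xf}$, which costs a weight $\|xf\|_{L^{2}_{x}}\lesssim\eps_{1}\tau^{p_{0}}$; a careless execution therefore only yields $\tau^{-1+p_{0}}$, which is just barely too lossy for the application to $I_{m}$, where the budget admits a loss strictly below $p_{0}$. The remedy is to extract first the clean leading stationary-phase term of size $\lesssim\eps_{1}^{4}\tau^{-3/2}$ at each $\sigma_{c}$, and to pair the weighted factor, via Cauchy--Schwarz, only against the small cross-sectional measure $\sim(2^{j}\tau^{-1/2})^{2}$ of each shell, which after summation reinstates enough powers of $\tau$ to beat $\tau^{-1+p_{0}}$ with a small margin --- the same margin that is then absorbed by taking $\alpha$ small in \S\ref{ss:anomalous}. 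A secondary, purely bookkeeping, difficulty is to verify that the finitely many space resonances with output in $K$ stay bounded and nondegenerate as $\zeta$ varies over $K$; this rests on the explicit behaviour of $\omega'$ and of the reflection map $r$ recorded in appendix \ref{section:res_comp}. Finally, Lemma \ref{lemma:time_derivative_bnd} is the degenerate limit $\zeta\to 0$ of this argument, where the extra vanishing $|\omega(\zeta)|\lesssim 2^{k_{*}}$ of the prefactor makes even the crude $L^{1}_{x}$ bound enough; that cushion is unavailable here, which is precisely why the oscillatory analysis above is forced upon us.
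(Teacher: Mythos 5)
You propose a genuinely different route from the paper's. The paper proves Lemma \ref{lemma:time_derivative_bnd_eta4} by pointing out that the argument for Lemma \ref{lemma:time_derivative_bnd} carries over verbatim once one notices $\eta_4 \approx -r(\eta_0)\neq 0,\pm\sqrt{3}$. That argument is structural: since $-r(\eta_0)$ is not of the form $K\sqrt{3}$ with $|K|\le 4$, in the expansion $\partial_\tau\widehat{f}(\eta_4)\simeq \omega(\eta_4)\int e^{-i\tau\varphi}\widehat{f}(\sigma_1)\widehat{f}(\sigma_2)\widehat{f}(\sigma_3)\widehat{f}(\sigma_4)\,d\sigma$ at least one $\sigma_j$ is forced an absolute distance from $\{0,\pm\sqrt{3}\}$; in the worst case (three near-degenerate inputs) this gives $|\partial_{\sigma_1}\varphi|\gtrsim 1$ \emph{uniformly}, one integrates by parts once in $\sigma_1$ and applies the $L^\infty_\xi$ multilinear estimate from Proposition \ref{prop:multilinear_estimates_Linfty}, while the non-worst case is handled by direct untangling with the extra $\tau^{-1/2}$ decay of the second nondegenerate input. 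You instead set up a global stationary-phase decomposition of $\sigma$-space into $\tau^{-1/2}$-balls and dyadic shells around the finitely many space resonances. Your core geometric observation (that $\eta_4\approx -r(\eta_0)$ avoids both the degenerate frequencies and the STR outputs $\{0,\pm\xi_0\}$) is correct, and is essentially the paper's key input restated in different language.

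There are, however, gaps in your execution. Your claim that the space resonances with output near $-r(\eta_0)$ are \emph{nondegenerate} isolated points requires checking that none of the $\sigma_j$ can lie at $0$ or $\pm\sqrt{3}$ at any such resonance (otherwise the Hessian of $\varphi$ degenerates); you flag this but do not do it. More seriously, the arithmetic in your step (iii) does not close as sketched: after one integration by parts on a shell $|\sigma-\sigma_c|\sim 2^j\tau^{-1/2}$, when the derivative hits $\widehat{f}$ you pay $\|xf\|_{L^2}\lesssim\eps_1\tau^{p_0}$, and tracking powers of $(2^j\tau^{-1/2})$ through Cauchy--Schwarz and summing over $j$ still leaves a total of order $\tau^{-1+p_0}$; the ``cross-sectional measure'' fix you propose does not obviously recover the lost $\tau^{-p_0}$. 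The paper avoids this because its IBP direction has $|\partial_{\sigma_1}\varphi|\gtrsim 1$ on an absolute scale, so the loss $\tau^{p_0}$ is absorbed by the factors $\tau^{-1/3}$, $\tau^{-1/2}$ coming from placing the remaining inputs in $L^\infty_x$. Finally, your closing remark mischaracterizes Lemma \ref{lemma:time_derivative_bnd}: that lemma's output frequency is localized near $\eta_0\approx 5.1$, not near $0$, so $|\omega(\eta_1)|\sim 1$ there and no ``extra vanishing cushion'' exists; both lemmas are proved by the same oscillatory argument, not by a crude $L^1_x$ bound in the earlier case.
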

\noindent Proofs of these two lemmas are provided in subsection \ref{ss:proofs_of_lemmas}. Combining \eqref{eqn:case2_phase_bnd_below}, \eqref{eqn:time_derivative_bnd}, \eqref{eqn:time_derivative_bnd_eta4}, the volume trick, and $4\alpha \leq p_0$ from case 1, we get
$$
\sum_{\substack{1\leq 2^{m} \leq 2^{m_{*}} \\ 2^{2k_{*}} \lesssim 2^{k}\ll 1}} \left\|I_{mk}\right\|_{L^2_{\xi}} \lesssim \eps_1^4 t^{p_0}.
$$
Now, we turn to bounding $J_{m}$. Applying the same tactics we used to handle the weighted norm, we find
\begin{align*}
     \sum_{2^{2k_{*}}\lesssim 2^{k}\ll 1}\left\|J_{mk}\right\|_{L^{\infty}_{\xi}} &\lesssim \eps_1^4 2^{m\left(-\frac12+\alpha\right)}. 
     \end{align*}
Since $\alpha$ is small, summing over $m$ gives the desired result.
\par \noindent \textbf{Case 3}
\par\noindent
We now localize according to 
\begin{align*}
\left|\eta_{j}-\eta_0\right| &\sim 2^{k_{j}}, \quad 2^{k_{*}}\leq 2^{k_j}\leq 2^{-10} \quad \forall \ j=1,2,3, \quad k_1 \geq k_2\geq k_3,
\\
k_{1}&\gg k_{2} \ \text{or} \ k_{1}\gg k_{3} \ \text{or} \ k_{2}\gg k_{3},
\\
\left|\xi-\xi_0\right| &\leq 2^{3}2^{2k_{*}},  \quad \text{or} \quad \left|\xi-\xi_0\right| \sim 2^{k}, \quad 2^{2k_{*}}\lesssim 2^{k}\leq 2^{-10}.
\end{align*}
Without loss of generality assume $|\xi-\xi_0|\sim 2^{k}$: nothing major changes if $|\xi-\xi_0|$ is allowed to be small. First, we may use the mean value theorem and the non-vanishing of $\omega''(\eta)$ near $\eta_0$ to find that 
\begin{equation}\label{eqn:one_kj_dominates}
    \left|\partial_{\eta_1-\eta_2} \varphi \right| \gtrsim 2^{k_{1}}.
\end{equation}
We now integrate by parts in the direction $\partial_{\eta_1-\eta_2}$:
\begin{align*}
\hspace{-1.5cm}
I_{mk_1k_2k_3k} &\doteq \psi_{k}(\xi-\xi_0) \ \omega(\xi)
    \int_{t_1}^{t_2}  \diff \tau \int\diff\eta_{1}\ \diff\eta_{2}\ \diff\eta_{3}\ \frac{\partial_{\eta_1-\eta_2}\left(e^{-i\tau\varphi}\right)}{-i \partial_{\eta_1-\eta_2}\varphi}\  \partial_{\xi}\varphi \ \hat{f}_{k_{1}}(\eta_{1})\ \hat{f}_{k_2}(\eta_{2})\ \hat{f}_{k_3}(\eta_{3})  \ \hat{f}\left(\eta_{4} \right)
    \\
    &=  -i \psi_{k}(\xi-\xi_0) \ \omega(\xi) \int_{t_1}^{t_2}  \diff \tau \int\diff\eta_{1}\diff\eta_{2}\diff\eta_{3}\ e^{-i\tau\varphi} \frac{\partial_{\xi}\varphi}{ \partial_{\eta_1-\eta_2}\varphi} \ \partial_{\eta_{1}}\hat{f}_{k_{1}}(\eta_{1})\ \hat{f}_{k_2}(\eta_{2})\ \hat{f}_{k_3}(\eta_{3})  \ \hat{f}\left(\eta_4 \right)
      \\
     &\phantom{=}  +i \psi_{k}(\xi-\xi_0) \ \omega(\xi) \int_{t_1}^{t_2}  \diff \tau \int\diff\eta_{1}\diff\eta_{2}\diff\eta_{3}\ e^{-i\tau\varphi} \frac{\partial_{\xi}\varphi}{ \partial_{\eta_1-\eta_2}\varphi} \ \hat{f}_{k_{1}}(\eta_{1})\ \left(\partial_{\eta_{2}}\hat{f}_{k_2}(\eta_{2})\right)\ \hat{f}_{k_3}(\eta_{3})  \ \hat{f}\left(\eta_{4} \right)
     \\
      &\phantom{=} -i \psi_{k}(\xi-\xi_0) \ \omega(\xi) \int_{t_1}^{t_2}  \diff \tau \int\diff\eta_{1}\diff\eta_{2}\diff\eta_{3}\ e^{-i\tau\varphi} \partial_{\eta_1-\eta_2}\left(\frac{\partial_{\xi}\varphi}{ \partial_{\eta_1-\eta_2}\varphi} \right) \ \hat{f}_{k_{1}}(\eta_{1})\ \hat{f}_{k_2}(\eta_{2})\ \hat{f}_{k_3}(\eta_{3})  \ \hat{f}\left(\eta_{4} \right)
     \\
     &\doteq i\left(\tilde{I}_{1} + \tilde{I}_{2} + \tilde{I}_{3}\right). 
\end{align*}
Note in particular how all terms involving derivatives of $\hat{f}\left(\eta_{4} \right)$ nicely cancel one another out. Before estimating each of the $\tilde{I}_{j}$'s, we need a few preliminary bounds. First, the product rule and our bootstrap assumptions give
\begin{subequations}\label{eqn:derivative_hits_cutoff}
    \begin{align}
        \left\|\partial_{\eta_{1}}\hat{f}_{k_{1}}(\tau, \eta_{1})\right\|_{L^2_{\eta_{1}}} &\lesssim \eps_1\left(2^{mp_0} + 2^{\frac{k_1}{2}}\right),
        \\
        \left\|\left(\partial_{\eta_{2}}\hat{f}_{k_2}(\tau, \eta_{2})\right)\right\|_{L^2_{\eta_{2}}} &\lesssim \eps_1\left(2^{mp_0} + 2^{\frac{k_2}{2}}\right). 
    \end{align}
\end{subequations}
Additionally, we have $|\omega(\xi)|, |\partial_{\xi}\varphi|\lesssim 1$ in the region of interest. Finally, by \eqref{eqn:one_kj_dominates} we find 
    \begin{align}
    \label{eqn:J1_symbol_bound}
     \left|\frac{\partial_{\xi}\varphi}{ \partial_{\eta_1-\eta_2}\varphi} \right| \lesssim 2^{-k_{1}}, \quad \left|\partial_{\eta_1-\eta_2}\left(\frac{\partial_{\xi}\varphi}{ \partial_{\eta_1-\eta_2}\varphi} \right) \right| \lesssim 2^{-2k_{1}}. 
    \end{align}
Now, we control $\tilde{I}_{1}, \tilde{I}_{2}$, and $\tilde{I}_{3}$ using our $L^2_{\xi}$-multilinear estimate. Owing to \eqref{eqn:derivative_hits_cutoff}, $\tilde{I}_{1}$ and $\tilde{I}_{2}$ are basically identical, so we only show details for $\tilde{I}_{1}$. 
\par First, we handle $\tilde{I}_{1}$. Using \eqref{eqn:J1_symbol_bound}, we can easily verify that the hypothesis of lemma
\ref{lemma:multiplier_cheat_code} is satisfied by the symbol 
\begin{equation}\label{eqn:anomalous_case3_symbol}
M = \frac{\partial_{\xi}\varphi}{\partial_{\eta_1-\eta_2}\varphi} \ \psi_{k}\left(\xi-\xi_0\right) \ \psi_{k_1}\left(\eta_1-\eta_0\right)\ \psi_{k_2}\left(\eta_2-\eta_0\right)\ \psi_{k_3}\left(\eta_3-\eta_0\right),
\end{equation}
with $A=2^{-k_{1}}$:  the $\partial_{\xi}$ derivatives can be controlled trivially, and each partial directional differentiation in the $\eta_j$'s introduces a factor of at most $2^{-k_{1}}$, which can be bounded above by $2^{-k_{2}}$ or $2^{-k_{3}}$ as required. Then, we may apply proposition  \ref{prop:multilinear_estimates}
 with $M$ given by \eqref{eqn:anomalous_case3_symbol} to discover
\begin{align*}
\hspace{-1cm}
    \left\|\tilde{I}_{1}\right\|_{L^2_{\xi}} &\lesssim \left\|\psi_{k}(\xi-\xi_0)\omega(\xi)\right\|_{L^{\infty}_{\xi}} \int_{t_1}^{t_2} \diff \tau \left\|\int\diff\eta_1\diff\eta_2\diff\eta_3 \ 
    e^{-i\tau\varphi}\frac{\partial_{\xi}\varphi}{\partial_{\eta_1-\eta_2}\varphi} \partial_{\eta_{1}}\widehat{f}_{k_{1}}(\eta_1) \widehat{f}_{k_2}(\eta_{2})  \widehat{f}_{k_3}(\eta_{3})  \widehat{f}\left(\eta_{4}\right)
    \right\|_{L^2_{\xi}}
    \\
    &\lesssim \eps_1^{4}\left(2^{-k_{1}}2^{m\left(p_0 -\frac12\right)} + 2^{-\frac{k_{1}}{2}}2^{-\frac{m}{2}}\right).
\end{align*}
Note how we have used \eqref{eqn:derivative_hits_cutoff} and the presumed localization away from any degenerate frequencies, which we recall secures $t^{-\frac12}$ decay. We put the $u_{k_{1}}$ term in $L^2_{x}$ all the other $u$'s into $L^{\infty}_{x}$: the $\eta_4$ term enjoys $t^{-\frac12}$ decay owing to the implicit localization of $\eta_4$ near $-r(\eta_0)$. 
\par Next is $\tilde{I}_{3}$, which requires a bit more care. Combining \eqref{eqn:J1_symbol_bound} with a little bit of computation shows that the symbol
$$
 M = \partial_{\eta_1-\eta_2}\left(\frac{\partial_{\xi}\varphi}{ \partial_{\eta_1-\eta_2}\varphi} \right)  \ \psi_{k}\left(\xi-\xi_0\right) \ \psi_{k_1}\left(\eta_1-\eta_0\right)\ \psi_{k_2}\left(\eta_2-\eta_0\right)\ \psi_{k_3}\left(\eta_3-\eta_0\right),
 $$
verifies the hypothesis of lemma \ref{lemma:multiplier_cheat_code} with $A=2^{-2k_1}$.  Na\"{i}vely, we would combine this insight the $L^2_{\xi}$ multilinear estimate as above to try and get the bound we want. However, this turns out to give a growth rate of $\left(\frac14\right)^{-}$ when we want a smaller growth rate of $\left(\frac16\right)^{-}$. To improve our bound, we must integrate by parts in $\partial_{\eta_1-\eta_2}$ again: this is acceptable since the integrand of $\tilde{I}_{3}$ contains no derivatives of $\widehat{f}$, so we won't pick up norms of $f$ for which we have no bootstrap estimates. This gives 
\begin{align*}
\hspace{-2cm}
    \widetilde{I}_{3} &=   -i\psi_{k}(\xi-\xi_0) \ \omega(\xi) \int_{t_1}^{t_2}  \diff \tau   \ \tau^{-1}\int\diff\eta_{1}\diff\eta_{2}\diff\eta_{3}
    \\
    &\phantom{=} \frac{e^{-i\tau\varphi}}{\partial_{\eta_1-\eta_2}\varphi} \partial_{\eta_1-\eta_2}\left(\frac{\partial_{\xi}\varphi}{ \partial_{\eta_1-\eta_2}\varphi} \right) \ \partial_{\eta_1}\hat{f}_{k_{1}}(\eta_{1})\ \hat{f}_{k_2}(\eta_{2})\ \hat{f}_{k_3}(\eta_{3})  \ \hat{f}\left(\eta_4 \right)
    \\
     &\phantom{=} - \frac{e^{-i\tau\varphi}}{\partial_{\eta_1-\eta_2}\varphi} \partial_{\eta_1-\eta_2}\left(\frac{\partial_{\xi}\varphi}{ \partial_{\eta_1-\eta_2}\varphi} \right) \ \hat{f}_{k_{1}}(\eta_{1})\ \partial_{\eta_2}\hat{f}_{k_2}(\eta_{2})\ \hat{f}_{k_3}(\eta_{3})  \ \hat{f}\left(\eta_4\right)
     \\
    &\phantom{=} +e^{-i\tau\varphi} \partial_{\eta_1-\eta_2}\left(\frac{1}{\partial_{\eta_1-\eta_2}\varphi} \partial_{\eta_1-\eta_2}\left(\frac{\partial_{\xi}\varphi}{ \partial_{\eta_1-\eta_2}\varphi} \right) \right)\ \hat{f}_{k_{1}}(\eta_{1})\ \hat{f}_{k_2}(\eta_{2})\ \hat{f}_{k_3}(\eta_{3})  \ \hat{f}\left(\eta_4 \right)
     \\
     &= \tilde{I}_{1}'+ \tilde{I}_{2}' + \tilde{I}_{3}'. 
\end{align*}
By symmetry, it suffices to control $\tilde{I}_{1}'$ and $\tilde{I}_{3}'$ only. We quickly find that
    \begin{align}
        \label{eqn:J1_IBP}
     \left|\frac{1}{\partial_{\eta_1-\eta_2}\varphi} \partial_{\eta_1-\eta_2}\left(\frac{\partial_{\xi}\varphi}{ \partial_{\eta_1-\eta_2}\varphi} \right) \right| \lesssim 2^{-3k_1}, \quad 
      \left| \partial_{\eta_1-\eta_2}\left(\frac{1}{\partial_{\eta_1-\eta_2}\varphi} \partial_{\eta_1-\eta_2}\left(\frac{\partial_{\xi}\varphi}{ \partial_{\eta_1-\eta_2}\varphi} \right) \right) \right| \lesssim 2^{-4k_1}.
    \end{align}
These estimates allow us to control $\tilde{I}_{1}'$ exactly as we did $\tilde{I}_{1}$. So, it remains to handle $\tilde{I}_{3}'$: after applying \eqref{eqn:J1_IBP} to meet the conditions of lemma \ref{lemma:multiplier_cheat_code}, we use proposition \ref{prop:multilinear_estimates} to discover
\begin{align*}
\left\|\tilde{I}_{3}'\right\|_{L^2_{\xi}}\lesssim \eps_1^4 \left(2^{-\frac72 k_1}2^{-\frac{3m}{2}}+  2^{-k_{1}}2^{m\left(p_{0}-\frac12\right)} + 2^{-\frac{k_{1}}{2}}2^{-\frac{m}{2}}\right).
\end{align*}
Putting our bounds on $\tilde{I}_{1}, \tilde{I}_{2}$, and $\tilde{I}_{3}$ together and summing over all frequency patches (I repress the full notation to improve readability) gives
\begin{align*}
    \sum_{\text{freqs.}}
    \left\|I_{mk_1k_2k_3}\right\|_{L^2_{\xi}}
 &\lesssim  \eps_1^4 \left[2^{m\left(p_0-\frac12\alpha\right)}+1+2^{m\left(\frac14-\frac{13}{4}\alpha\right)} \right].
\end{align*}
Summing over $m$ gives 
$$
\sum_{m=0}^{m_{*}}  \sum_{\text{freqs.}} \left\|I_{mk_1k_2k_3}\right\|_{L^2_{\xi}} \lesssim \eps_1^4\left[t^{p_0}+t^{\frac14-\frac{13}{4}\alpha}\right].
$$
 For $\alpha$ large enough so that $\frac14 -\frac{13}{4}\alpha \leq p_0$, this bound gives us something workable. Additionally, notice how this argument demonstrates that we \emph{must} take $\alpha>0$ in order for the proof to work! 
\par Turning to the corresponding LP piece of $J_m$, we integrate by parts in the direction $\partial_{\eta_1-\eta_2}$ using \eqref{eqn:one_kj_dominates}, then bound the resulting oscillatory integrals using our $L^{\infty}_{\xi}$ multilinear estimate. We show details only for $|\xi-\xi_0|\sim 2^{k}$, as the remaining situation is similar. The integration by parts gives
\begin{align*}
J_{mk_1k_2k_3k}&= i \psi_{k}(\xi-\xi_0) \ \omega(\xi) \int_{t_1}^{t_2}  \diff \tau \ \tau^{-1} \int\diff\eta_{1}\diff\eta_{2}\diff\eta_{3}\ e^{-i\tau\varphi}\frac{\partial^2_{\eta_1-\eta_2}\varphi}{\left( \partial_{\eta_1-\eta_2}\varphi\right)^2} \ \hat{f}_{k_{1}}(\eta_{1})\ \hat{f}_{k_2}(\eta_{2})\ \hat{f}_{k_3}(\eta_{3})  \ \hat{f}\left(\eta_4 \right)
     \\
     &\phantom{=}  -i \psi_{k}(\xi-\xi_0) \ \omega(\xi) \int_{t_1}^{t_2}  \diff \tau \ \tau^{-1}\int\diff\eta_{1}\diff\eta_{2}\diff\eta_{3}\ e^{-i\tau\varphi} \frac{1}{ \partial_{\eta_1-\eta_2}\varphi} \ \partial_{\eta_{1}}\hat{f}_{k_{1}}(\eta_{1})\ \hat{f}_{k_2}(\eta_{2})\ \hat{f}_{k_3}(\eta_{3})  \ \hat{f}\left(\eta_4  \right)
      \\
     &\phantom{=}  +i \psi_{k}(\xi-\xi_0) \ \omega(\xi) \int_{t_1}^{t_2}  \diff \tau  \ \tau^{-1}\int\diff\eta_{1}\diff\eta_{2}\diff\eta_{3}\ e^{-i\tau\varphi} \frac{1}{ \partial_{\eta_1-\eta_2}\varphi} \ \hat{f}_{k_{1}}(\eta_{1})\ \left(\partial_{\eta_{2}}\hat{f}_{k_2}(\eta_{2})\right)\ \hat{f}_{k_3}(\eta_{3})  \ \hat{f}\left(\eta_4 \right).
\end{align*}
After verifying that each symbol satisfies the conditions of lemma \ref{lemma:multiplier_cheat_code},  \eqref{eqn:one_kj_dominates} and proposition \ref{prop:multilinear_estimates_Linfty} imply 
\begin{align*}
    \left\|J_{mk_1k_2k_3k}\right\|_{L^{\infty}_{\xi}} &\lesssim \int_{t_1}^{t_2}\diff \tau \ 2^{-2k_{1}} \tau^{-1} \left(\eps_1 2^{\frac{k_1}{2}}\right)\left(\eps_1\tau^{-\frac12}\right)^2\left(\eps_1\right) + 2^{-k_1}\tau^{-1}\left(\eps_1\tau^{p_0}\right)\left(\eps_1\tau^{-\frac12}\right)^2\left(\eps_1\right)
    \\
    &\lesssim \eps_1^4 \left[2^{-m}2^{-\frac32 k_1} + 2^{m(p_0-1)}2^{-k_1}\right]
  \\ 
  \Rightarrow
  \sum_{m} \sum_{\text{freqs.}}\left\|J_{mk_1k_2k_3k}\right\|_{L^{\infty}_{\xi}} &\lesssim  \eps_1^4 \sum_{m}\left[2^{m\left(-\frac14-\frac32\alpha\right)}+2^{m\left(p_0-\frac12-\alpha\right)}\right] \lesssim \eps_1^4,
\end{align*}
since $\alpha$ is small and $p_0< \frac16$. 
\par \noindent \textbf{Case 4}
\par \noindent 
We now localize according to 
\begin{align*}
\left|\eta_{j}-\eta_0\right| &\sim 2^{k_{j}}, \quad 2^{k_{*}}\lesssim 2^{k_{j}}\lesssim 1, \quad \forall \ j=1,2,3, \quad 2^{k_{1}} \approx 2^{k_{2}} \approx 2^{k_{3}},
\\
\left|\xi-\xi_0\right| &\lesssim 2^{2k_{*}}, \quad \text{or} \quad \left|\xi-\xi_0\right| \sim 2^{k}, \quad 2^{2k_{*}}\lesssim 2^{k} \lesssim 1. 
\end{align*}
We begin by illustrating why a secondary localization is inevitable in this case. First, none of the $|\eta_j-\eta_0|$ dominate and we cannot integrate by parts in $\partial_{\eta_{1}-\eta_{2}}$, $\partial_{\eta_{1}-\eta_{3}}$, \textit{et cetera} and copy case 3. Additionally, since we may have $|\xi-\xi_0| \approx |\eta_j-\eta_0|^2$ for some $\eta_{j}$, we cannot bound the phase below and copy case 2. As it turns out, the phase can be bounded below \emph{sometimes} after performing our secondary localization, and we'll explain this in detail later. Finally, we turn to $\partial_{\eta_j}$, $j=1,2,3$. Can $|\partial_{\eta_j}\varphi|$ vanish in this case? Let's focus on $j=1$ for concreteness: switching to $j=2,3$ gives no substantial difference. First, note that $\partial_{\eta_{1}}\varphi = \omega'(\eta_1) - \omega'(\eta_4)$, so we can bound $|\partial_{\eta_{1}}\varphi|$ below by comparing $\eta_1$ and $\eta_4$. This seems acceptable since $\eta_1\approx \eta_0\approx 5.1$ and $\eta_4\approx -r(\eta_0)\approx -1.1,$ and we are therefore tempted to just apply the mean value theorem and get a lower bound $|\partial_{\eta_{1}}\varphi|\gtrsim 1$. However, $\omega''$ vanishes between $\eta_1$ and $\eta_4$ (at $0$ and $\sqrt{3}$), so the lower bound granted by the mean value theorem does not work, and we must be a little more careful. Notice that $\partial_{\eta_{1}}\varphi$ vanishes if and only if $\eta_4 = \pm \eta_1$ or $\pm r(\eta_1)$. This suggests that we can instead compare $r(\eta_1)$ and $-\eta_4$ to guarantee that both the frequencies we're comparing live inside a proper subinterval of $(0,\sqrt{3})$. Consequently, $\omega''(\eta)$ does not vanish when 
$$
\eta \in S \doteq \left[ \min\left\{r(\eta_1), -\eta_4\right\}, \max\left\{r(\eta_1), -\eta_4\right\}\right], 
$$
meaning that a mean value theorem argument can be saved: 
\begin{align*}
    |\partial_{\eta_1}\varphi| &= |\omega'(\eta_1)- \omega'(\eta_4)|
    =|\omega'(r(\eta_1))- \omega'(-\eta_4)|
   \geq \min_{\eta\in S} |\omega''(\eta)| |r(\eta_1)+\eta_4|
    \gtrsim |r(\eta_1)+\eta_4|.
\end{align*}
Therefore, provided $|r(\eta_1)+\eta_4|>0$, integration by parts in the direction $\partial_{\eta_1}$ is permitted. In general, integration by parts in the direction $\partial_{\eta_j}$ is okay when $|r(\eta_j)+\eta_4|>0$. This suggests we perform an inhomogeneous dyadic localization about $r(\eta_j)+\eta_4=0$: 
\begin{align*}
|r(\eta_j)+\eta_4| \lesssim 2^{k_{*}}, \quad \text{or} \quad |r(\eta_j)+\eta_4| &\sim 2^{\ell_{j}}, \quad 2^{k_{*}}\lesssim 2^{\ell_j}\ll 1. 
\end{align*}
We handle these two regions separately. 
\par \noindent
\textbf{Subcase 4-1}
\par \noindent 
In this case, we assume that there is at least one $j=1,2,3$ such that
$$
|r(\eta_j)+\eta_4| \sim 2^{\ell_{j}} \quad 2^{k_{*}}\lesssim 2^{\ell_j}\ll 1. 
$$
Without loss of generality, suppose it is $j=1$. We already know that
\begin{align}
    |\partial_{\eta_1}\varphi|\gtrsim 2^{\ell_1}. 
\end{align}
Additionally, for the particular localization at hand, we have
    \begin{align}
        \left|\frac{\partial_{\xi}\varphi}{\partial_{\eta_1}\varphi}\right|\lesssim 2^{-\ell_1},
        \quad 
         \left|\partial_{\eta_1}\left(\frac{\partial_{\xi}\varphi}{\partial_{\eta_1}\varphi}\right)\right|\lesssim 2^{-2\ell_1},
    \end{align}
and 
\begin{subequations}
    \begin{align}
         \left|\frac{1}{\partial_{\eta_1}\varphi}\partial_{\eta_1}\left(\frac{\partial_{\xi}\varphi}{\partial_{\eta_1}\varphi}\right)\right|\lesssim 2^{-3\ell_1},
        \quad
        \left|\partial_{\eta_1}\left(\frac{1}{\partial_{\eta_1}\varphi}\partial_{\eta_1}\left(\frac{\partial_{\xi}\varphi}{\partial_{\eta_1}\varphi}\right)\right)\right|\lesssim 2^{-4\ell_1}. 
    \end{align}
\end{subequations}
With these estimates at hand, we simply copy the arguments of case 3: integrate by parts in $\partial_{\eta_{1}}$, then integrate by parts again in the term where $\partial_{\eta_{1}}$ did not hit any $\widehat{f}$. Since we're performing a secondary localization, however, the hypothesis of the $L^2_{\xi}$ multilinear estimate must be checked a bit more carefully. Our secondary localization was performed with respect to $\mu_{j} = r(\eta_j)+\eta_4$, which depends \emph{nonlinearly} on $\eta_1$. Accordingly, we must apply the recipe from remark \ref{remark:nonlinear_loc}. This requires bounding $\partial_{\mu_{j}}m$ for each symbol $m$ we care about; these computations can be facilitated using the inverse function theorem to show
$$
\partial_{\mu_1}= \frac{1}{r'(\eta_1)-1}\partial_{\eta_1} - \partial_{\eta_2} - \partial_{\eta_3} +\partial_{\xi}, \quad \text{similarly for} \ \partial_{\mu_2}, \partial_{\mu_3}.  
$$
Since $|r'(\eta_j)|\neq 1$ for the $\eta_j$'s we're interested in, proving the required bounds on $\left|\partial_{\mu_j}m\right|$ is straightforward. 
\par \noindent
\textbf{Subcase 4-2}
\par \noindent 
In this subcase, we assume that 
$$
|r(\eta_j)+\eta_4|\lesssim 2^{k_{*}} \quad \forall \ j=1,2,3. 
$$
Here, no spatial integration by parts is permitted, so we want to try getting a lower bound on $|\varphi|$. To this end, we first show that
\begin{equation}
    2^{k}\approx 2^{k_1}
\label{eqn:subcase4-2_k=k_1}
\end{equation}
in this subcase: in other words, after we perform all our secondary localizations, the range of possible $k$'s becomes fixed. To prove \eqref{eqn:subcase4-2_k=k_1}, start by noticing that 
\begin{align}
\xi-\xi_0 &=\left(1-r'(\eta_0)\right)\left(\eta_1-\eta_0\right) + \sum_{j=2}^3 \left(\eta_j-\eta_0\right)+\mathcal{O}\left(2^{k_*},2^{2k_1}\right). \label{eqn:KAIBA}
\end{align}
The constraints of the present subcase imply that $r(\eta_j)=-\eta_4 +\mathcal{O}\left(2^{k_{*}}\right) \ \forall \ j$, which gives $\eta_2, \eta_3 = \eta_1 + \mathcal{O}\left(2^{k_{*}}\right)$. Plugging the above into \eqref{eqn:KAIBA} yields 
\begin{align*}
\xi-\xi_0 = \left(3-r'(\eta_0)\right)\left(\eta_1-\eta_0\right) + \mathcal{O}\left(2^{k_*},2^{2k_1}\right).
\end{align*}
Since $|r'(\eta_0)|\in (0,2^{-4})$, the above implies \eqref{eqn:subcase4-2_k=k_1}. Now, \eqref{eqn:subcase4-2_k=k_1} and \eqref{eqn:case2:taylor_expansion_phi} together yield
\begin{equation}\label{eqn:sc-4.2_saviour}
|\varphi| \gtrsim 2^{k} - \mathcal{O}\left(2^{2k}\right) \gtrsim 2^{k}. 
\end{equation}
From here, the proof amounts to copying case 2. 
\par As for $J_{m}$, we still need to perform a dyadic localization in each $r(\eta_j)+\eta_4$. In the region where at least one $|r(\eta_j)+\eta_4|$ is bounded below, we may integrate by parts in $\partial_{\eta_{j}}$ and obtain the claim by mimicking the arguments of case 3. In the region where all $|r(\eta_j)+\eta_4|$'s are very small, we may use \eqref{eqn:sc-4.2_saviour} and reduce to case 2.  

\subsubsection{$(0,0,0;0)$}
\label{ss:pure_zero}
\noindent Now we turn to the contributions near the resonance $(0,0,0;0)$. The degeneracy of $\varphi$ near this point presents some difficulties, but these can be resolved using \textbf{null structure}. Specifically, while we could effectively ignore the $\omega(\xi)$ outside of our integral when dealing with the anomalous resonance, to control the contributions from $(0,0,0;0)$ we need to use that $\omega(0)=0$ and $\partial_{\xi}\varphi(0,0,0;0)=0$. As with the anomalous resonance, we localize about the resonance at a suitable scale. Throughout this proof, $\beta>0$ denotes a small number to be constrained as we go along.  For each fixed $t$, let $k_{*}$ be the largest integer such that $2^{k_{*}} \leq 2^{-20}2^{m\left(\beta-\frac13\right)}$. Additionally, we need to use the Taylor expansions
\begin{subequations}\label{eqn:meta_null}
\begin{align}
    \omega(\xi) &= \omega'(\xi_{*}) \xi \quad \forall \ |\xi|\ll 1,
    \\
    \partial_{\xi}\varphi &= -\omega^{(3)}\left(\xi_{**}\right)\xi^2 + \omega^{(3)}\left(\xi_{***}\right) \eta_4^2, \quad \forall \ |\xi|\ll 1. 
\end{align}
\end{subequations}

\par \noindent 
\textbf{Case 1}
\par \noindent
First we localize with bumps very close to the resonance:  
\begin{align*}
\left|\eta_{j}\right| \leq 2^{k_{*}} \quad \forall \ j=1,2,3, \quad \left|\xi\right|\leq 2^{k_{*}} . 
\end{align*}
The relevant LP piece of the integral is then controlled using \eqref{eqn:meta_null} and the volume trick, and upon summing over $m$ we find 
\begin{align*}
    \sum_{m=0}^{m_{*}}  \left\|I_{mk_{*}}\right\|_{L^2_{\xi}} \lesssim  \eps_1^4 t^{\left(\frac{13}{2}\beta-\frac{1}{6}\right)}. 
\end{align*}
If we demand $\beta\leq \frac{1}{39}$ (say, $\beta=\frac{1}{100}$), then we get the bound we want. The same ideas applied to $J_{m}$ give
\begin{align*}
    \left\|J_{m}\right\|_{L^{\infty}_{\xi}} \lesssim \eps_1^4 2^{-\frac14 m} \Rightarrow  \sum_{m}  \left\|J_{m}\right\|_{L^{\infty}_{\xi}} \lesssim \eps_1^4. 
\end{align*}
\par \noindent 
\textbf{Case 2}
\par \noindent Now we let $|\xi|$ get bigger while keeping all the $\eta_j$'s small: 
\begin{align*}
\left|\eta_{j}\right| \leq 2^{k_{*}}, \quad \left|\xi\right| \sim 2^{k}, \quad k_{*}\leq k\ll 1. 
\end{align*}
We start by using Taylor expansion and the assumed sizes of the frequencies to discover
\begin{subequations}
\begin{align}
    \label{eqn:case2_phase_bnd_pure_zero}
    \left|\varphi\right| &\gtrsim 2^{3k}
    \\
    \label{eqn:case2_useful_aux_bnd}
    \left|\partial_{\xi}\varphi\right| &\lesssim 2^{2k}. 
    \end{align}
\end{subequations}
In particular, we have $\left|\frac{\omega\partial_{\xi}\varphi}{\varphi}\right|\lesssim 1$. The estimate \eqref{eqn:case2_phase_bnd_pure_zero} allows us to integrate by parts in $\tau$. To control the oscillatory integrals resulting from this integration by parts, we need to bound $\left\|\partial_{\tau}\widehat{f}_{\lesssim 2^{k_{*}}}(\eta_1)\right\|_{L^{\infty}_{\eta_1}}$. First, recall that 
$$
\partial_{\tau}\widehat{f}_{\lesssim 2^{k_{*}}} = \omega(\eta_1) \phi_{\lesssim 2^{k_*}}(\eta_1) e^{i\tau\omega} \left(u^4\right)^{\wedge}(\eta_1). 
$$
Therefore, 
\begin{align}
|\partial_{\tau}\widehat{f}_{\lesssim 2^{k_{*}}}| &\lesssim 2^{k_{*}}\phi_{\lesssim 2^{k_*}}(\eta_1) \left\|u^4\right\|_{L^1_{x}} \lesssim \eps_1^4 2^{k_{*}} 2^{-\frac{2m}{3}} \phi_{\lesssim 2^{k_*}}(\eta_1). \label{eqn:time_deriv_est_A}
\end{align}
Similarly, we find that 
\begin{equation}
\label{eqn:time_deriv_est_B}
\left\|\psi_{k}(\xi)\phi_{\lesssim 2^{k_{*}}}\left(\eta_1\right) \phi_{\lesssim 2^{k_{*}}}\left(\eta_2\right) \phi_{\lesssim 2^{k_{*}}}\left(\eta_3\right) \partial_{\tau}\widehat{f}\left(\eta_4\right)\right\|_{L^{\infty}_{\xi\eta_1\eta_2\eta_3}} \lesssim \eps_1^4 2^{k_{*}} 2^{-\frac{2m}{3}} . 
\end{equation}
Combining this estimate with the volume trick, \eqref{eqn:case2_phase_bnd_pure_zero}, and \eqref{eqn:case2_useful_aux_bnd}, we get
\begin{align*}
    \left\|I_{mk}\right\|_{L^2_{\xi}}
    \lesssim 2^{\frac12 k}\left(\eps_1^7 2^{4m\beta} + \eps_1^4 2^{3m\beta}\right)\Rightarrow
    \sum_{m=0}^{m_{*}}\sum_{k_{*}<k\ll 1}  \left\|I_{mk}\right\|_{L^2_{\xi}} \lesssim \eps_1^7  t^{4\beta} + \eps_1^4  t^{3\beta}.
\end{align*}
Therefore, we demand $\beta \leq \frac{p_0}{4}$. 
\par We turn to $J_{m}$ now. We may use \eqref{eqn:case2_phase_bnd_pure_zero} to justify an integration by parts in $\tau$. After performing this integration by parts, we use the volume trick, \eqref{eqn:time_deriv_est_A}, and \eqref{eqn:time_deriv_est_B} to find
\begin{align*}
    \left\|J_{mk}\right\|_{L^{\infty}_{\xi}}
    &\lesssim \eps_1^4 2^{-2k} 2^{m\left(-1+3\beta\right)} \Rightarrow \sum_{2^{k_{*}}\lesssim 2^{k} \ll 1} \left\|J_{mk}\right\|_{L^{\infty}_{\xi}} \lesssim \eps_1^4 2^{m\left(-\frac13 +\beta\right)} \lesssim \eps_1^4 2^{m\left(-\frac15\right)}
\end{align*}
where the last inequality follows from $\beta$ being small. Summing over $m$ then gives the desired bound. 
\par \noindent 
\par \noindent \textbf{Case 3}
\par\noindent
We now localize according to 
\begin{align*}
\left|\eta_{j}\right| &\sim 2^{k_{j}}, \quad 2^{k_{*}}\leq 2^{k_j}\leq 2^{-10}, \quad \forall \ j=1,2,3, \quad k_{1}\geq k_{2}\geq k_{3},
\\
k_{1}&\gg k_{2} \ \text{or} \ k_{1}\gg k_{3} \ \text{or} \ k_{2}\gg k_{3},
\\
\left|\xi\right| &\leq 2^{k_{*}},  \quad \text{or} \quad \left|\xi\right| \sim 2^{k}, \quad 2^{k_{*}}\leq 2^{k}\leq 2^{-10}, \quad k\lesssim k_{1}.  
\end{align*}
It suffices to establish a useful growth bound when $k_1\gg k_2$ and $|\xi|\sim 2^{k}$. To begin, use Taylor expansion to get
\begin{equation}\label{eqn:case_3_derivative_bound}
    \left|\partial_{\eta_{1}-\eta_{2}}\varphi\right| \gtrsim 2^{2k_{1}}.
\end{equation}
Additionally, we have from Taylor expansion or the mean value theorem that
\begin{equation}
\label{eqn:case3_aux_bounds}
\left|\partial_{\xi}\varphi\right| \lesssim 2^{k_1+\max\left\{k,k_1\right\}}  \quad \text{and} \quad \left|\partial_{\eta_1-\eta_2}^2\varphi\right|\lesssim 2^{k_{1}}. 
\end{equation}
Consequently, we have 
\begin{align}
\label{eqn:case3_symbol_bounds}
 \left|\frac{\partial_{\xi}\varphi}{\partial_{\eta_1-\eta_2}\varphi}\right|\lesssim 2^{-k_1}2^{\max\left\{k, k_1\right\}} \lesssim 1,
 \quad  \left|\partial_{\eta_1-\eta_2}\left(\frac{\partial_{\xi}\varphi}{ \partial_{\eta_1-\eta_2}\varphi} \right) \right| \lesssim 2^{-2k_{1}}2^{\max\left\{k, k_1\right\}}\lesssim 2^{-k_1}.
\end{align}
The above estimates allow us to verify that the symbols
\begin{align*}
    M_{j} &=  \left[\partial_{\eta_1-\eta_2}^{j-1}\left(\frac{\partial_{\xi}\varphi}{\partial_{\eta_1-\eta_2}\varphi}\right)\right] \ \psi_{k}\left(\xi\right) \ \psi_{k_1}\left(\eta_1\right)\ \psi_{k_2}\left(\eta_2\right)\ \psi_{k_3}\left(\eta_3\right) \quad j=1, 2
\end{align*}
obey the conditions of lemma \ref{lemma:multiplier_cheat_code} with $A=1, 2^{-k_1}$ respectively (handling the $\partial_{\xi}$ terms relies strongly on $2^k\lesssim 2^{k_1}$). Integrating by parts in $\partial_{\eta_1-\eta_2}$ and applying the $L^2_{\xi}$ multilinear estimate from proposition \ref{prop:multilinear_estimates} gives
\begin{align*}
\left\|I_{mk_1k_2k_3k} \right\|_{L^2_{\xi}}
&\lesssim \eps_1^4 \int_{t_1}^{t_2} \diff \tau \ 2^{k-\frac12 \left(k_1+k_2+k_3\right)} 2^{-\frac{4m}{3}} + 2^{k-\frac12 \left(k_2 + k_3\right)} 2^{m\left(p_0-\frac43\right)}
 \\
 \Rightarrow \sum_{\text{freqs.}}   \left\|I_{mk_1k_2k_3k}  \right\|_{L^2_{\xi}} & \lesssim \eps_1^4 \left(2^{m\left(-\frac32\beta+\frac16\right)} +\log t \ 2^{m\left(-\beta + p_0\right)}\right)
 \\
 \Rightarrow 
\sum_{m=0}^{m_{*}}  \sum_{\text{freqs.}} \left\|I_{mk_1k_2k_3k} \right\|_{L^2_{\xi}} &\lesssim \eps_1^4 \left(t^{-\frac32\beta+\frac16} +\ t^{-\frac12\beta + p_0}\right).
\end{align*}
Therefore, we also require that 
\begin{equation}
    \label{eqn:1/6 and p0 constraint}
-\frac32 \beta + \frac16 \leq p_0.  
\end{equation}
\par We now apply the same ideas to control the appropriate piece of $J_{m}$. We use \eqref{eqn:case_3_derivative_bound} to integrate by parts in $\partial_{\eta_1-\eta_2}$, noting that
$$
\left|\partial_{\eta_1-\eta_2}\left(\frac{1}{ \partial_{\eta_1-\eta_2}\varphi} \right)\right| \lesssim 2^{-3k_1}
$$
using linear vanishing of $\omega''$ near the origin. This allows one to establish that the appropriate symbol satisfies the conditions of lemma \ref{lemma:multiplier_cheat_code}, and in turn the multilinear estimate from proposition \ref{prop:multilinear_estimates_Linfty} gives
\begin{align*}
    \left\|J_{mk_1k_2k_3k}\right\|_{L^{\infty}_{\xi}}
    &\lesssim \eps_1^4 \left[ 2^{-\frac32 k_1 -\frac12 k_2 -\frac12 k_3} 2^{-m} + 2^{-k_1-\frac12 k_2-\frac12 k_3} 2^{m\left(p_0-1\right)} \right].
\\
\Rightarrow
\sum_{\text{freqs.}}  \left\|J_{mk_1k_2k_3k}\right\|_{L^{\infty}_{\xi}} &\lesssim \eps_1^4 \left[2^{m\left(-\frac16 -\frac56 \beta\right)} + 2^{m\left(p_0-\frac13-2\beta\right)}\right] \log t 
\\
\Rightarrow  \sum_{m}  \sum_{\text{freqs.}} \left\|J_{mk_1k_2k_3k}\right\|_{L^{\infty}_{\xi}} &\lesssim \eps_1^4 . 
    \end{align*}
\par \noindent 
\textbf{Case 4}
\begin{subequations}
\begin{align}
\left|\eta_{j}\right| &\sim 2^{k_{j}} \quad 2^{k_{*}}\lesssim 2^{k_{j}}\ll 1, \quad \forall \ j=1,2,3, \quad k_{1}\geq k_{2}\geq k_{3}, 
\\
|\xi| &\sim 2^{k}  \quad 2^{k_{*}}\lesssim 2^{k}\ll  1, \quad 2^{k} \gg 2^{k_1}.
\end{align}
\end{subequations}
Our strategy here is to perform integration by parts with respect to $\eta_1$ using the bound \begin{equation}
    \label{eqn:case4_derivative_bound}
    |\partial_{\eta_{1}}\varphi| \gtrsim 2^{2k}. 
\end{equation}
To prove this, we use that $2^k\gg 2^{k_1}$ implies $|\eta_4|\approx 2^k$. Taylor expansion and the difference of squares formula then give \eqref{eqn:case4_derivative_bound}. 
\par Using  \eqref{eqn:case4_derivative_bound}, we can integrate by parts in $\eta_{1}$. Before performing the integration by parts, we need to do a little more work. First, notice that 
\begin{align*}
    \partial_{\eta_{1}}\left(\frac{\partial_{\xi}\varphi}{\partial_{\eta_{1}}\varphi}\right) = \frac{\partial^2_{\eta_1\xi}\varphi}{\partial_{\eta_{1}}\varphi} - \frac{\partial_{\eta_1\eta_{1}}^2\varphi \partial_{\xi}\varphi}{\left(\partial_{\eta_{1}}\varphi\right)^2}.
\end{align*}
Using Taylor expansion or mean value theorem arguments, we quickly find 
    \begin{align}
      |\partial_{\xi}\varphi|\lesssim 2^{2k} \quad \text{and} \quad 
        |\partial_{\eta_1\xi}^2\varphi|, \  |\partial_{\eta_1\eta_1}^2\varphi| \  \lesssim 2^k. 
    \end{align}
The above identities imply
\begin{align}
 \left|\frac{\partial_{\xi}\varphi}{\partial_{\eta_{1}}\varphi}\right| \lesssim 1, \quad  \left| \partial_{\eta_{1}}\left(\frac{\partial_{\xi}\varphi}{\partial_{\eta_{1}}\varphi}\right) \right| \lesssim 2^{-k}. \label{eqn:case4_symbol_bound_b} 
\end{align}
Therefore, the symbols 
\begin{align*}
    M_{j} =  \left[\partial_{\eta_1}^{j}\left(\frac{\partial_{\xi}\varphi}{\partial_{\eta_{1}}\varphi}\right)\right]  \ \psi_{k}\left(\xi\right) \ \psi_{k_1}\left(\eta_1\right)\ \psi_{k_2}\left(\eta_2\right)\ \psi_{k_3}\left(\eta_3\right), \quad j=1,2
\end{align*}
obey the conditions of lemma \ref{lemma:multiplier_cheat_code} with $A= 1, 2^{-k}$, respectively. Using \eqref{eqn:case4_derivative_bound}, we may integrate by parts in $\eta_1$, then use the multilinear estimate from proposition \ref{prop:multilinear_estimates} and \eqref{eqn:bootstrap_on_LP_pieces} to get 
\begin{align*}
    \left\|I_{mk_1k_2k_3k} \right\|_{L^2_{\xi}} 
    &\lesssim \eps_1^4\int_{t_1}^{t_2}\diff\tau \ 2^{-\frac12\left(-k_1+k_2+k_3\right)} 2^{-\frac{4m}{3}} + 2^{k}2^{-\frac12\left(k_2+k_3\right)} 2^{-\frac{4m}{3}}\left(1+2^{mp_0}\right). 
\end{align*}
Note how the ``$1+\tau^{p_0}$'' term comes from when we differentiate $f_{k_{1}}$: the derivative will hit a cutoff and this is the source of the ``$1$''. Also, we have put the $\eta_4$ term in $L^\infty_{x}$ on the first and second lines using the worst-case decay estimate. Summing over all pieces gives
\begin{align*}
    \sum_{m=0}^{m_{*}} 
     \sum_{\text{freqs.}}\left\|I_{mk_1k_2k_3k} \right\|_{L^2_{\xi}} 
     &\lesssim \eps_1^4 \left(t^{-\beta +\frac16} + t^{-\frac12\beta + p_0} + 1\right). 
\end{align*}
The above is $\lesssim \eps_1^4 t^{p_0}$ provided $-\beta +\frac16 \leq p_0$, which is slightly stricter than \eqref{eqn:1/6 and p0 constraint}. 
\par As for $J_{m}$,  we may integrate by parts with respect to $\eta_1$ using \eqref{eqn:case4_derivative_bound}. We also need the  bounds
\begin{align*}
\left|\frac{1}{\partial_{\eta_1}\varphi}\right| \lesssim 2^{-2k}, \quad 
\left|\partial_{\eta_1}\left(\frac{1}{\partial_{\eta_1}\varphi}\right)\right| \lesssim 2^{-3k}.
\end{align*}
From here, we reduce to a minor variant of case 3. 
\par \noindent 
\textbf{Case 5}
\par \noindent
We now localize according to 
\begin{subequations}
\begin{align}
\left|\eta_{j}\right| &\sim 2^{k_{j}}, \quad 2^{k_{*}}\lesssim 2^{k_j}\lesssim 1, \quad \forall \ j=1,2,3, \quad 2^{k_1}\approx 2^{k_{2}}\approx 2^{k_3}
\\
\left|\xi\right| &\lesssim 2^{k_{*}} \quad \text{with} \quad 2^{k_{*}}\lesssim 2^{k_{1}} \quad \text{or}
\\
\left|\xi\right| &\sim 2^{k}, \quad 2^{k_{*}}\lesssim 2^{k}\lesssim 1, \quad 2^{k} \lesssim 2^{k_1}. 
\end{align}
\end{subequations}
In this case, integration by parts in any of the directions $\partial_{\eta_j-\eta_k}$ definitely won't work, and since we may have $|\xi|\approx |\eta_j|$ for any $j$ it also appears that obtaining a globally valid lower bound on $\varphi$ in this case is out of the question. So, we should turn our attention towards the direction $\partial_{\eta_{j}}$. We find by Taylor expansion and the difference-of-squares formula that 
\begin{align}\label{eqn:craig}
    |\partial_{\eta_j}\varphi| \gtrsim \left|\eta_j-\eta_4\right| \ \left|\eta_j+\eta_4\right| . 
\end{align}
This suggests that we perform \emph{six} dyadic localizations, one for each combination $\eta_j \pm \eta_4$. 
\par \noindent
\textbf{Subcase 5-1}
\par \noindent
In this subcase, we have $\left| \eta_j \pm \eta_4\right|\lesssim 2^{k_{*}} \quad \forall \ j=1,2,3$. These restrictions immediately imply $|\xi|, |\eta_j|\lesssim 2^{k_{*}} \quad \forall \ j$, and we're back in case 1. 
\par \noindent
\textbf{Subcase 5-2}
\par \noindent
In this subcase, we suppose there is at least one $j$ such that 
$$
\left| \eta_j \pm \eta_4\right|\sim 2^{\ell_j}, \quad 2^{k_{*}}\lesssim 2^{\ell_j} \ll 1
$$
for \emph{both} of the $\pm$ signs. Without loss of generality, suppose this particular $j$ is $j=1$. There are two additional subcases to consider.
\begin{itemize}
    \item First, suppose $2^{\ell_1}\gtrsim 2^{k_1}$. We know that $\left|\partial_{\eta_1}\varphi\right| \gtrsim 2^{2\ell_1}$.
    The arguments of case 3 and $2^{k}\lesssim 2^{k_1}$ then give
    \begin{align*}
         \left|\frac{\partial_{\xi}\varphi}{\partial_{\eta_1}\varphi}\right| \lesssim 2^{2k_1}2^{-2\ell_1} \lesssim 1, \quad  \left|\partial_{\eta_1}\left(\frac{\partial_{\xi}\varphi}{\partial_{\eta_1}\varphi}\right)\right| \lesssim 2^{2k_1}2^{-2\ell_1} \lesssim 2^{-\ell_1}.
    \end{align*}
    With these bounds at hand, the proof reduces to copying case 3. Note that, since $2^{\ell_1}\gtrsim 2^{k_1}$, we can still use lemma \ref{lemma:multiplier_cheat_code} to justify applications of the multilinear estimate in proposition \ref{prop:multilinear_estimates} even though we are performing a secondary localization (read: it suffices to check the $\eta_j,\xi$-derivatives of our symbols). 
    \item Next, suppose $2^{\ell_1}\ll 2^{k_1}$. In this case, we must have $|\eta_1\pm \eta_4| \gtrsim 2^{k_1}$ for either $+$ or $-$: that is, if $\eta_1\approx \eta_4 + \mathcal{O}\left(2^{\ell_1}\right)$ and $2^{k_1}\gg 2^{\ell_1}$, then $\eta_1$ and $\eta_4$ must lie on the same side of the origin. Accordingly, we have $\left|\partial_{\eta_1}\varphi\right|\gtrsim 2^{k_1}2^{\ell_1}$. This implies 
    \begin{align*}
         \left|\frac{\partial_{\xi}\varphi}{\partial_{\eta_1}\varphi}\right| \lesssim 2^{2k_1}2^{-2\ell_1} \lesssim 2^{k_1}2^{-\ell_1}, \quad \left|\partial_{\eta_1}\left(\frac{\partial_{\xi}\varphi}{\partial_{\eta_1}\varphi}\right)\right| \lesssim 2^{2k_1}2^{-2\ell_1} \lesssim 2^{k_1}2^{-2\ell_1}.
    \end{align*}
    Using these bounds we find that, for $j=1,2$, the symbols 
    \begin{align*}
        M_j &= \left[\partial_{\eta_1}^{j-1}\left(\frac{\partial_{\xi}\varphi}{\partial_{\eta_1}\varphi}\right)\right] \psi_{k}\left(\xi\right) \psi_{k_1}\left(\eta_1\right)\psi_{k_2}\left(\eta_2\right)\psi_{k_3}\left(\eta_3\right) \psi_{\ell_1}\left(\eta_1\pm\eta_4\right)\psi_{\ell_2}\left(\eta_2\pm\eta_4\right) \psi_{\ell_3}\left(\eta_3\pm\eta_4\right)
    \end{align*}
    satisfy the hypothesis of lemma \ref{lemma:multiplier_cheat_code_secondary_localization}  with $A=2^{k_1}2^{-\ell_1}, 2^{k_1}2^{-2\ell_1}$ respectively. We may then apply the $L^2_{\xi}$ multilinear estimate from proposition \ref{prop:multilinear_estimates} to discover 
    \begin{align*}
        \left\|I_{mk_1k_2k_3k\ell_1 \ell_2 \ell_3}\right\|_{L^2_{\xi}}
        &\lesssim \eps_1^4 2^{-\frac{m}{2}}2^{\frac{k}{2}}\left[2^{-2\ell_1}+ 2^{mp_0}2^{-\ell_1}\right].
    \end{align*}
    Note how we used $2^{k}\lesssim 2^{k_1}$ and $|\eta_4|\sim 2^{k_1}$ in the above estimate. Summing over each $\ell_j$ and $k_j$ gives
    \begin{align*}
        \sum_{\text{freqs.}} \left\|I_{mk_1k_2k_3k\ell_1 \ell_2 \ell_3}\right\|_{L^2_{\xi}} &\lesssim \eps_1^4 m^5 \left[2^{m\left(\frac16-2\beta\right)} + 2^{m\left(p_0-\beta\right)}\right]. 
    \end{align*}
For $\beta$ sufficiently large (but still very small), this bound gives the desired estimate upon summation over $m$.
\end{itemize}
The key idea in the above cases is that, when one is performing a secondary localization in a situation in a case leveraging null structure, one must make sure that the null structure persists in the secondary localization, not just the primitive one. 
\par\noindent
\textbf{Subcase 5-3}
\par \noindent 
In this subcase, we suppose that
\begin{align*}
    \left| \eta_1 - \eta_4\right| \lesssim 2^{k_{*}}, \quad \left| \eta_2 + \eta_4\right| \lesssim 2^{k_{*}}, \quad  \left| \eta_3 + \eta_4\right| \lesssim 2^{k_{*}},
\end{align*}
but the other three combinations are $\gtrsim 2^{k_{*}}$: in fact, the three localizations above indicate that we must have 
$$
    \left| \eta_1 + \eta_4\right| \sim 2^{k_{1}}, \quad  \left| \eta_2 - \eta_4\right|  \sim 2^{k_{1}}, \quad \left| \eta_3 - \eta_4\right| \sim 2^{k_{1}}. 
$$
All these localizations imply that $|\xi| \lesssim 2^{k_{*}}$, meaning we are very close to the space-time resonant line from \eqref{eqn:resonant_line_discussion}. So, the situation is much more complicated here than in the analogous situation with the anomalous resonance. We start by localizing $|\xi|$ again according to 
\begin{align*}
    |\xi| \sim 2^{k}, \quad 2^{-100m} \lesssim 2^{k} \lesssim 2^{k_{*}}, \quad \text{or} \quad  |\xi| \lesssim 2^{-100m}. 
\end{align*}
We then perform a tertiary localization according to 
\begin{align*}
    \left| \eta_1 - \eta_4\right| \sim 2^{k}, \quad \left| \eta_2 + \eta_4\right| \sim 2^{k}, \quad  \left| \eta_3 + \eta_4\right| \sim 2^{k}. 
\end{align*}
First, assume $|\xi|\gtrsim 2^{-100m}$. We may use \eqref{eqn:craig},  $\left| \eta_1 + \eta_4\right| \sim 2^{k_1}$, and $\left| \eta_1 - \eta_4\right| \sim 2^{k}$ to discover 
\begin{align}
\label{eqn:michael}
    \left|\partial_{\eta_1}\varphi\right| \gtrsim  2^{k} 2^{k_{1}}.  
\end{align}
Consequently, $|\partial_{\xi}\varphi|\lesssim 2^{2k_1}$ gives
\begin{align*}
    \left|\frac{\partial_{\xi}\varphi}{\partial_{\eta_1}\varphi}\right| \lesssim 2^{-k}2^{k_1}, \quad \left|\partial_{\eta_1}\left(\frac{\partial_{\xi}\varphi}{\partial_{\eta_1}\varphi}\right)\right| \lesssim 2^{-k}. 
\end{align*}
Since we still have $|\omega(\xi)|\lesssim 2^{k}$, integration by parts in $\partial_{\eta_1}$ and the $L^2_{\xi}$ multilinear estimate from proposition \ref{prop:multilinear_estimates} (the hypothesis of which can be checked using lemma \ref{lemma:symbol_algebra_property} and lemma \ref{lemma:multiplier_cheat_code_secondary_localization}) give
$$
\left\|I_{...}\right\|_{L^2_{\xi}}\lesssim \eps_1^4 \left(\log t\right)^3\sum_{k} 2^{m\left(p_0-\frac13\right)} + 2^{m\left(-\frac16-\beta\right)}.
$$
Since $2^{k_{*}}\gtrsim 2^{k} \gtrsim 2^{-100m}$, there are only $\mathcal{O}\left(\log t\right)$ terms in the sum over $k$, hence 
$$
\left\|I_{...}\right\|_{L^2_{\xi}}\lesssim \eps_1^4  \left(\log t\right)^4\left[2^{m\left(p_0-\frac13\right)} + 2^{m\left(-\frac16-\beta\right)}\right]. 
$$
Summing over $m$ gives a growth bound that's even stronger then the one we need, so provided  $|\xi|\gtrsim 2^{-100m}$ we're done. In the complementary case  $|\xi|\lesssim 2^{-100m}$ we have the ``mega-null'' structure $|\omega(\xi)| \lesssim 2^{-100m}$, which immediately gives the claim upon application of the volume trick. 

\par \noindent
\textbf{Subcase 5-4}
\par \noindent
In this subcase, we suppose that
$$
\left| \eta_j - \eta_4\right|\lesssim 2^{k_{*}} \quad \forall \ j =1,2,3, \quad \text{but} \quad \left| \eta_j + \eta_4\right|\gtrsim 2^{k_{*}} \quad \forall \ j =1,2,3. 
$$
Of course, the above restriction actually means $\left| \eta_j + \eta_4\right|\sim 2^{k_{1}} \quad \forall \ j=1,2,3$. This implies 
\begin{align}
    \xi = \sum_{j=1}^4\eta_j = 4\eta_1 + \mathcal{O}\left(2^{k_{*}}\right),
\end{align}
meaning that $|\xi|\sim 2^{k_1}$ in this region. In particular, the smallest $\xi$ can get is $|\xi|\sim 2^{k_*}$. In this worst-case scenario, we simply reduce to case 1. In the complementary case $2^{k}\approx 2^{k_1}\gg 2^{k_{*}}$, then the phase then takes the approximate form
\begin{align}
\label{eqn:brian}
    \varphi = -\omega\left(\xi\right) + 4\omega\left(\frac{\xi}{4}\right) + \mathcal{O}\left(2^{k}2^{2k_{*}}\right) = \frac{15}{16} \xi^3 + \mathcal{O}\left(2^{5k}, 2^{k}2^{2k_{*}}\right),
\end{align}
implying $|\varphi| \gtrsim 2^{3k}$. From here, one proceeds as in case 2: the only difference is that the $L^2_{\xi}$ multilinear estimate from proposition \ref{prop:multilinear_estimates} must be used instead of the volume trick, so one must verify that the symbol 
\begin{align*}
M &= \frac{\partial_{\xi}\varphi}{\varphi} \psi_{k}\left(\xi\right) \psi_{k_1}\left(\eta_1\right)\psi_{k_2}\left(\eta_2\right)\psi_{k_3}\left(\eta_3\right) \psi_{\ell_1}\left(\eta_1 + \eta_4\right)\psi_{\ell_2}\left(\eta_2+\eta_4\right) \psi_{\ell_3}\left(\eta_3+\eta_4\right)
\\
&\phantom{=} \times \phi_{\lesssim k_{*}}\left(\eta_1 - \eta_4\right) \phi_{\lesssim k_{*}}\left(\eta_2-\eta_4\right)  \phi_{\lesssim k_{*}}\left(\eta_3-\eta_4\right) 
\end{align*}
satisfies the conditions of lemmas \ref{lemma:multiplier_cheat_code} and \ref{lemma:multiplier_cheat_code_secondary_localization} depending on the relative sizes of the $k_{j}$'s and $\ell_{j}$'s. This is straightforward using the Taylor series approximation \eqref{eqn:brian}. 
\par\noindent
\textbf{Subcase 5-5}
\par \noindent 
Now, for any $\sigma_j=\pm 1$, we suppose that 
\begin{align*}
    \left| \eta_j + \sigma_j\eta_4\right| \lesssim 2^{k_{*}}, \quad \text{but} \quad  \left| \eta_j - \sigma_j\eta_4\right| \gtrsim 2^{k_{*}}.
\end{align*}
The case $\sigma_1=-\sigma_2=-\sigma_3=-1$ is subcase 5-3, and
the case $\sigma_1=\sigma_2=\sigma_3=-1$ is subcase 5-4. Up to permuting variables, the only subcase that remains is 
$\sigma_1=-\sigma_2=\sigma_3=-1$. In this situation, $\xi = 2\eta_1 + \mathcal{O}\left(2^{k_{*}}\right)$, which is enough to reduce the proof to a minor variant of subcase 5-4. 
\par We turn to $J_{m}$ again. We know from our weighted estimates that we must perform a secondary localization in each $\eta_j\pm \eta_4$ to properly handle this case. Let's use the same naming for subcases that we applied for the weighted estimates. The only subcase that does not immediately reduce to an earlier case is subcase 5-3. We still perform a tertiary localization, and using \eqref{eqn:michael} we find that when $|\xi|\sim 2^{k}\gtrsim 2^{-100m}$ we have
\begin{align*}
    \left|\frac{1}{\partial_{\eta_1}\varphi}\right| \lesssim 2^{-k} 2^{-k_1}, \quad \left|\partial_{\eta_1}\left(\frac{1}{\partial_{\eta_1}\varphi}\right)\right| \lesssim 2^{-k} 2^{-2k_1}.
\end{align*}
Integrating by parts and using our multilinear estimate gives 
\begin{align*}
    \left\|J_{mk_1k_2k_3k}\right\|_{L^{\infty}_{\xi}} \lesssim \eps_1^4 \left[ 2^{-\frac32 k_1-\frac12 k_2 - \frac12 k_3} 2^{-m} + 2^{m\left(p_0-1\right)}2^{-k_1-\frac12 k_2-\frac12 k_3} \right].
\end{align*}
From here, the rest of the proof follows from copying case 3. 
\begin{remark} Where do the localization choices made in subcase 5-3 come from? As already explained, subcase 5-3 forces us to tightly cling to the resonant line $L$ given by \eqref{eqn:resonant_line_discussion}. If our $\eta_{j}$'s, but not our $\xi$, lie on this line, then 
\begin{align*}
\varphi = -\omega(\xi) - \omega(\eta_1) + \omega(\xi+\eta_1) = 3\xi\eta_1\left(\xi+\eta_1\right).
\end{align*}
Consequently, if we are near $L$ and $|\xi|\ll 2^{k_1}$, then we approximately have $\left|\partial_{\eta_1}\varphi\right|\gtrsim |\xi|2^{k_1}$. This means that the gradient of $\varphi$ degenerates along $L$ \emph{only} because of $\xi=0$! Therefore, since we wanted to capture this degeneration properly in subcase 5-3, we had to demand that $|\eta_1-\eta_4|\approx |\xi|$.
\end{remark}
\subsection{$(-\sqrt{3},\sqrt{3},\sqrt{3};0)$}
\label{ss:type3}
\noindent We now focus on the space-time resonance $(-\sqrt{3},\sqrt{3},\sqrt{3};0)$. There is still some null structure to help counteract the degeneracy: $\left|\omega(\xi)\right| \sim  |\xi| \ \forall \ |\xi|\ll 1$. However, note that $\partial_{\xi}\varphi = \mathcal{O}(1)$ near this resonance, so we don't have as much null as we had around $(0,0,0;0)$. Fortunately, we'll discover that there is some hidden null owing to the asymmetry between the order of vanishing of $\varphi$ in $\xi$ versus each $\eta_j$. For the remainder of this subsection, let 
\begin{equation}
    \label{eqn:sigma_defn}
    \sigma_{j} = \left\{
    \begin{aligned}
    -1 \quad j&=1
    \\
    +1 \quad j&=2,3
    \end{aligned} \quad . 
    \right. 
\end{equation}
Additionally, let $k_{*}$ be the largest integer such that $2^{k_{*}} \leq 2^{-20}2^{m\left(\gamma-\frac13\right) }$ where $\gamma>0$ will be constrained as we work. 

\par \noindent 
\textbf{Case 1}
\begin{align*}
\left|\eta_{j}-\sigma_{j}\sqrt{3}\right| \leq 2^{k_{*}} \quad \forall \ j=1,2,3, \quad \text{and} \quad \left|\xi\right| \leq 2^{3k_{*}}. 
\end{align*}
Using the volume trick as we have seen many times before,
$$
\sum_{m}\left\|I_{mk_{*}}\right\|_{L^2_{\xi}} \lesssim \eps_1^4 t^{\frac{13}{2}\gamma-\frac16}.
$$
For this bound to be acceptable, it suffices to demand $\gamma\leq \frac{1}{39}$. As for $J_m$, the volume trick gives
\begin{align*}
    \left\|J_{mk_{*}}\right\|_{L^{\infty}_{\xi}} \lesssim \eps_1^4  2^{6k_{*}}2^{m} \lesssim \eps_1^4 2^{m\left(6\gamma-1\right)},
\end{align*}
which yields $\lesssim \eps_1^4$ when summed over $m$. 
\par \noindent 
\textbf{Case 2}
\begin{align*}
\left|\eta_{j}-\sigma_{j}\sqrt{3}\right| \leq 2^{k_{*}} \quad \forall \ j=1,2,3, \quad \text{and} \quad \left|\xi\right| \sim 2^{k}, \quad 2^{3k_{*}}\lesssim 2^{k} \ll 1.
\end{align*}
First, we use Taylor expansion to discover
\begin{equation}
    \label{eqn:case2_phase_taylor}
    \varphi = -\frac98 \xi + \mathcal{O}\left(\xi^3, \max_{j}|\eta_j-\sigma_j\sqrt{3}|^3\right).
\end{equation}
That is, $|\xi|$ should be compared to $\max_{j}|\eta_j-\sigma_j\sqrt{3}|^3$. Using the above, we find 
\begin{equation}
    \label{eqn:case2_phase_bnd}
|\varphi|\gtrsim 2^k. 
\end{equation}
Since $|\omega(\xi)|\sim 2^{k}$ and $|\partial_{\xi}\varphi|\lesssim 1$, integration by parts in $\tau$ and an application of the volume trick yields
\begin{align*}
\hspace{-1.5cm}
    \left\|I_{mk}\right\|_{L^2_{\xi}}
    &\lesssim \int_{t_1}^{t_2}\diff \tau \ \tau \left\|\omega(\xi)\psi_{k}\left(\xi\right)\int\diff\eta_1\diff\eta_2\diff\eta_3 \ e^{-i\tau\varphi} \ \frac{\partial_{\xi}\varphi}{\varphi} \ \tau\  \partial_{\tau}\left(\widehat{f}_{\lesssim 2^{k_{*}}} \left(\eta_{1}\right)\right) \widehat{f}_{\lesssim 2^{k_{*}}} \left(\eta_{2}\right) \widehat{f}_{\lesssim 2^{k_{*}}} \left(\eta_{3}\right) 
    \widehat{f}\left(\eta_4\right)  \right\|_{L^2_{\xi}}
    \\
    &\phantom{=} + \eps_1^4\int_{t_1}^{t_2} \diff \tau \ 2^{m\left(3\gamma-1\right)}2^{\frac{k}{2}} + \left\{\text{similar terms}\right\}. 
\end{align*}
To estimate the first line, first define a symbol by 
\begin{align*}
M &= \frac{1}{\varphi}\ \psi_{k}\left(\xi\right) \ \phi_{\lesssim 2^{k_*}}\left(\eta_1+\sqrt{3} \right)\ \phi_{\lesssim 2^{k_*}}\left(\eta_2 - \sqrt{3}\right) \ \phi_{\lesssim 2^{k_*}}\left(\eta_3 - \sqrt{3}\right). 
\end{align*}
Since $M$ satisfies lemma \ref{lemma:multiplier_cheat_code} with $A=2^{-k}$, we may apply proposition \ref{prop:multilinear_estimates} and 
 \eqref{eqn:bootstrap_amplitude} to find
\begin{align*}
     \left\|I_{mk}\right\|_{L^2_{\xi}}
    &\lesssim \int_{t_1}^{t_2}\diff \tau \ 2^{m} \ \left\|\partial_{\tau}f_{\lesssim 2^{k_{*}}}\right\|_{L^2_{x}} \ \left(\eps_1 2^{-\frac{m}{3}}\right)^3  + \eps_1^4\int_{t_1}^{t_2} \diff \tau \ 2^{m\left(3\gamma-1\right)}2^{\frac{k}{2}} + \left\{\text{similar terms}\right\}. 
\end{align*}
Since the PDE can be written as $\partial_{\tau}f \simeq u^4$, another application of the bootstrap decay estimate gives 
\begin{align*}
     \left\|I_{mk}\right\|_{L^2_{\xi}}
    &\lesssim\int_{t_1}^{t_2}\diff \tau \ \eps_1^7 2^{-m} +\eps_1^4 2^{m\left(3\gamma-1\right)}2^{\frac{k}{2}} 
\\ \Rightarrow \sum_{3k_{*}\lesssim k \ll 0}\left\|I_{mk}\right\|_{L^2_{\xi}}
    &\lesssim\int_{t_1}^{t_2}\diff \tau \ \eps_1^7 \ \log t \ 2^{-m} +\eps_1^4\tau^{m\left(3\gamma-1\right)} \lesssim \eps_1^7 \left(\log t\right) + \eps_1^4 2^{3m\gamma} 
\\ \Rightarrow
    \sum_{m} \sum_{3k_{*}\lesssim k \ll 0}\left\|I_{...}\right\|_{L^2_{\xi}}
    &\lesssim \eps_1^4  t^{3\gamma}. 
\end{align*}
Provided $\gamma \leq \frac{p_0}{3}$, this bound is acceptable. Since $p_0 = \left(\frac16\right)^{-}$, this is implied by $\gamma\leq \frac{1}{39}$.  
\par For $J_{m}$, we integrate by parts in $\tau$ as above. Using the volume trick on the boundary term and the multilinear estimate on everything else ($|\omega/\varphi|\lesssim 1$), this gives
\begin{align*}
    \left\|J_{mk}\right\|_{L^\infty_{\xi}}
    &\lesssim \eps_1^4 2^{m\left(-1+3\gamma\right)} + \eps_1^7 2^{-\frac{m}{3}}
\end{align*}
which is $\lesssim \eps_1^4$ after summing over $k$ and $m$. 
\par \noindent 
\textbf{Case 3}
\begin{align*}
\left|\eta_{j}-\sigma_{j}\sqrt{3}\right| &\sim 2^{k_{j}}, \quad 2^{k_*}\lesssim 2^{k_j}\ll 1 \quad \forall \ j=1,2,3, \quad k_{1}\geq k_{2} \geq k_3, 
\\
\left|\xi\right| &\sim 2^{k}, \quad 2^{3k_{*}}\lesssim 2^{k} \ll 1, \quad 2^{k} \gg 2^{3k_1}. 
\end{align*}
In this situation, \eqref{eqn:case2_phase_taylor} still tells us that 
\begin{equation}
    \label{eqn:case3_phase_bnd} 
    |\varphi| \gtrsim 2^{k},
\end{equation}
so we are clear to integrate by parts in $\tau$ as above. However, this time we use the multilinear estimate from proposition \ref{prop:multilinear_estimates} on \emph{all} terms arising from the integration by parts: the constraint $2^{k}\gg 2^{3k_1}$ implies that arguments from the previous case can be applied to show that the relevant symbol still satisfies the conditions of lemma \ref{lemma:multiplier_cheat_code}. We may also use \eqref{eqn:bootstrap_open} to obtain
$$
\left\|u_{k_j}\right\|_{L^{\infty}_{x}} \lesssim \eps_1 2^{-\frac{k_j}{2}}2^{-\frac{m}{2}}, \quad j=1,2,3
$$
since being slightly away from the degenerate stationary points gives us a little extra decay. Implementing this recipe, we get
\begin{align*}
    \left\|I_{mk_1k_2k_3k}\right\|_{L^2_{\xi}}&\lesssim \int_{t_1}^{t_2} \diff \tau \ \eps_1^7 2^{-\frac12\left(k_2+k_3\right)} 2^{-\frac{4m}{3}} + \eps_1^4 2^{-\frac12\left(k_1+k_2+k_3\right)}2^{-\frac{3m}{2}}
\\ \Rightarrow
    \sum_{\text{freqs.}}\left\|I_{mk_1k_2k_3k}\right\|_{L^2_{\xi}}
&\lesssim \eps_1^7 2^{-m\gamma}\left(\log t\right)^2 +  \eps_1^4 \log t \ 2^{-\frac{3m}{2}\gamma}.
\end{align*}
Summing over $m$ then gives the claimed bound. 
\par When bounding $J_{m}$, we still have \eqref{eqn:case3_phase_bnd}, so we can integrate by parts as above: 
\begin{align*}
   \left\|J_{mk_1k_2k_3k}\right\|_{L^{\infty}_{\xi}} 
   &\lesssim \eps_1^4 2^{-2m}2^{-\frac12k_1-\frac12k_2} + \eps_1^7 2^{-\frac{5m}{3}} 2^{-\frac12 k_2-\frac12 k_3}
\\ \Rightarrow
    \sum_{\text{freqs.}}\left\|J_{mk_1k_2k_3k}\right\|_{L^{\infty}_{\xi}} & \lesssim \eps_1^4 2^{-\frac{2m}{3}}.
\end{align*}
Summing over $m$ then gives the desired bound. 
\par \noindent 
\textbf{Case 4}
\begin{align*}
\left|\eta_{j}-\sigma_{j}\sqrt{3}\right| &\sim 2^{k_{j}}, \quad 2^{k_*}\lesssim 2^{k_j}\ll 1, \quad \forall \ j=1,2,3, \quad k_{1}\gg k_{2} \geq k_3,
\\ \left|\xi\right| &\sim 2^{k}, \quad 2^{3k_{*}}\lesssim 2^{k} \ll 1, \quad 2^{k}  \lesssim 2^{3k_1} \quad \text{or} \quad |\xi|\lesssim 2^{3k_*}. 
\end{align*}
 Here, cancellations between the leading order and remainder terms in \eqref{eqn:case2_phase_taylor} are possible. So, an integration by parts in $\tau$ is not workable, and we must integrate by parts in a spatial direction instead. Initially, this might make us worried: integrating by parts in space gives us faster growth in time, the $\eta_j$'s are near-degenerate, and we have no extra null from $\partial_{\xi}\varphi$ to help fight the degeneracy. However, the linear vanishing of the phase in $\xi$ again comes to our rescue. Why is this? Simply put, if we're in a situation like this one where we can't integrate by parts in $\tau$, then we must have $|\xi|\lesssim 2^{3k_{1}}\ll 2^{k_1}$. This means that the extra $\omega(\xi)\sim \xi$ out front gives us  
a factor of $2^{3k_{1}}$ to help fight the high-order vanishing of spatial derivatives of $\varphi$. So, even though there is no null from $\partial_{\xi}\varphi$, the expression $\varphi\approx -\frac98 \xi$ gives us some hidden extra null. 
\par Let's put the above discussion into action. First, we use Taylor expansion to find
\begin{align}\label{eqn:case4_critical}
    \left|\partial_{\eta_1-\eta_2}\varphi\right| \gtrsim 2^{2k_1}. 
\end{align}
The above estimate and further Taylor expansions yield
    \begin{align}
    \left| \frac{\partial_{\xi}\varphi}{\partial_{\eta_1-\eta_2}\varphi}\right|\lesssim 2^{-2k_1}, \quad   \left|\partial_{\eta_1-\eta_2}\left(\frac{\partial_{\xi}\varphi}{\partial_{\eta_1-\eta_2}\varphi}\right)\right|\lesssim 2^{-3k_{1}}.
    \end{align}
The hypothesis of lemma
\ref{lemma:multiplier_cheat_code} is satisfied by the symbols 
\begin{align*}
M_j &= \left[\partial_{\eta_1-\eta_2}^{j-1}\left(\frac{\partial_{\xi}\varphi}{\partial_{\eta_1-\eta_2}\varphi}\right) \right]\ \psi_{k}\left(\xi\right) \ \psi_{k_1}\left(\eta_1+\sqrt{3}\right)\ \psi_{k_2}\left(\eta_2-\sqrt{3}\right)\ \psi_{k_3}\left(\eta_3-\sqrt{3}\right), \quad j=1,2, 
\end{align*}
with $A=2^{-2k_{1}}, 2^{-3k_{1}}$ respectively. Additionally, note that $\eta_4= -\sqrt{3} + \mathcal{O}\left(2^{k_1}\right)$ and $|\omega(\xi)|\lesssim 2^{3k_{1}}$ in this situation. We may then integrate by parts in $\partial_{\eta_1-\eta_2}$ and use lemma \ref{lemma:multiplier_cheat_code} and proposition \ref{prop:multilinear_estimates} to obtain 
\begin{align*}
\left\|I_{mk_1k_2k_3k}\right\|_{L^2_{\xi}} &\lesssim \eps_1^4\int_{t_1}^{t_2} \ \diff \tau \ 2^{-\frac12\left(k_1+k_2+k_3\right)} 2^{-\frac{3m}{2}} + 2^{mp_0} 2^{-\frac12\left(-k_1+k_2+k_3\right)}2^{-\frac{3m}{2}}
\\ \Rightarrow
    \sum_{m} \sum_{\text{freqs.}} \left\|I_{mk_1k_2k_3k}\right\|_{L^2_{\xi}} &\lesssim \sum_{m} \eps_1^4\int_{t_1}^{t_2} \diff \tau \ \log t \ 2^{m\left(-\frac32\gamma -1\right)} + \log t \ 2^{m\left(-\gamma +p_0 -\frac76\right)}
\lesssim \eps_1^4 t^{p_0}.
\end{align*}
\par For $J_{m}$, we only show details for the case $|\xi|\sim 2^{k}$, as the rest of the cases are similar but easier. We use \eqref{eqn:case4_critical} to justify integrating by parts in $\partial_{\eta_1-\eta_2}$. We'll also need the  bounds
\begin{align*}
    \left|\omega(\xi) \frac{\partial_{\eta_1-\eta_2}^2\varphi}{\left(\partial_{\eta_1-\eta_2}\varphi\right)^2}\right|\lesssim 1, \quad \left|\omega(\xi) \frac{1}{\partial_{\eta_1-\eta_2}\varphi}\right|\lesssim 2^{k_1}.
\end{align*}
Integrating by parts and using the multilinear estimate gives
\begin{align*}
    \left\|J_{mk_1k_2k_3k}\right\|_{L^{\infty}_{\xi}}
    &\lesssim \eps_1^4 \left[2^{-m} 2^{\frac12 \left(k_1-k_2-k_3\right)} + 2^{m\left(p_0-1\right)} 2^{k_1-\frac12 k_2-\frac12 k_3}\right]
\\ \Rightarrow
   \sum_{m}\sum_{\text{freqs.}}\left\|J_{mk_1k_2k_3k}\right\|_{L^{\infty}_{\xi}} &\lesssim \eps_1^4 t^{-\gamma-\frac23} \log t+ t^{-\gamma +p_0-\frac23}\log t\lesssim \eps_1^4.
\end{align*}
\par \noindent 
\textbf{Case 5}
\begin{align*}
\left|\eta_{j}-\sigma_{j}\sqrt{3}\right| &\sim 2^{k_{j}}, \quad 2^{k_*}\lesssim 2^{k}\ll 1, \quad \forall \ j=1,2,3, \quad 2^{k_{1}} \approx 2^{k_{2}} \approx 2^{k_3}, 
\\
\left|\xi\right| &\sim 2^{k}, \quad 2^{3k_{*}}\lesssim 2^{k} \ll 1, \quad 2^{k} \lesssim 2^{3k_1} \quad \text{or} \quad |\xi| \lesssim 2^{3k_{*}}.
\end{align*}
Now, we cannot integrate by parts in $\partial_{\tau}$ or $\partial_{\eta_j-\eta_k}$. Accordingly, we start by examining the feasibility of integrating by parts in $\partial_{\eta_1}$. Using Taylor expansion, we discover
\begin{align*}
   | \partial_{\eta_1}\varphi| &
    \gtrsim |\eta_1-\eta_4|\left|\eta_1+\eta_4+2\sqrt{3}\right|,
\\
     | \partial_{\eta_2}\varphi| &
    \gtrsim \left|\eta_2-\eta_4-2\sqrt{3}\right||\eta_2+\eta_4|,
    \\
     | \partial_{\eta_3}\varphi| &
    \gtrsim \left|\eta_3-\eta_4-2\sqrt{3}\right||\eta_3+\eta_4|. 
\end{align*}
Conceptually, this means that the smallness of our gradient depends both on
\begin{itemize}
    \item how close the $\eta_j$'s $(j=1,2,3)$ are to $\pm\eta_4$, and
    \item the differences between the displacements $\eta_j-\sigma_j\sqrt{3}$ and $\eta_4+\sqrt{3}$. 
\end{itemize}
If $\sigma_1 = -1, \sigma_2= \sigma_3=+1$,  we must localize according to
\begin{align*}
|\eta_j+\sigma_j \eta_4| &\lesssim 2^{k_{*}}, \quad \text{or}
\\
|\eta_j+\sigma_j \eta_4| &\sim 2^{\ell_{j}} \quad 2^{k_{*}}\lesssim 2^{\ell_j}\ll 1,
\\
\left|\eta_j-\sigma_j \eta_4-\sigma_j\left(2\sqrt{3}\right)\right| &\lesssim 2^{k_{*}}, \quad \text{or}
\\
\left|\eta_j-\sigma_j \eta_4-\sigma_j\left(2\sqrt{3}\right)\right| &\sim 2^{\ell_{j}} \quad 2^{k_{*}}\lesssim 2^{\ell_j}\ll 1,
\end{align*}

\par \noindent
\textbf{Subcase 5-1}
\par \noindent 
If, for all $j$, we have 
\begin{align*}
|\eta_j+\sigma_j \eta_4| \lesssim 2^{k_{*}}, \quad \text{and} \quad \left|\eta_j-\sigma_j \eta_4-\sigma_j\left(2\sqrt{3}\right)\right| \lesssim 2^{k_{*}},
\end{align*}
then $|\xi|, \left|\eta_j+\sigma_{j}\sqrt{3}\right|\lesssim 2^{k_{*}}$ for each $j=1,2,3$ and we're back to case 1 or case 2. 
\par \noindent
\textbf{Subcase 5-2}
\par \noindent 
Now, suppose that 
\begin{align*}
|\eta_1-\eta_4|, \ |\eta_1+\eta_4+2\sqrt{3}| \sim 2^{\ell_1} \gtrsim 2^{k_{*}}. 
\end{align*}
Proving the claimed bound in this case is enough to cover all cases where $|\eta_j+\sigma_j \eta_4|$ and $
\left|\eta_j-\sigma_j \eta_4-\sigma_j\left(2\sqrt{3}\right)\right|$ are not too small. We must split into two additional subcases. 
\begin{itemize}
    \item To begin, suppose $2^{\ell_1}\gtrsim 2^{k_1}$. This means $\left|\partial_{\eta_1}\varphi\right| \gtrsim 2^{2\ell_1}$. We then have 
\begin{align*}
    \left|\frac{\partial_{\xi}\varphi}{\partial_{\eta_1}\varphi}\right| \lesssim 2^{-2\ell_1}, \quad \left|\partial_{\eta_1}\left(\frac{\partial_{\xi}\varphi}{\partial_{\eta_1}\varphi}\right)\right| \lesssim 2^{-3\ell_1}, \quad |\omega(\xi)| \lesssim 2^{3\ell_1}. 
\end{align*}
Using these bounds, the proof mostly reduces to case 4; checking derivatives of our symbols with respect to the $\eta_j$'s is fine in spite of the secondary localization only because $2^{-\ell_1}\lesssim 2^{-k_1}$.
 \item Now, suppose instead that $2^{\ell_1}\ll 2^{k_1}$. Then,
 $$
 |\eta_1-\eta_4|\sim 2^{k_{1}} \quad \text{or} \quad \left|\eta_1+\eta_4+2\sqrt{3}\right|\sim 2^{k_1} \quad \text{(but not both)}. 
 $$
 For example, if $|\eta_1-\eta_4|\sim 2^{\ell_{1}}\ll 2^{k_1}$ then we automatically have 
 $$
 \left|\eta_1+\eta_4+2\sqrt{3}\right| = \left|2\left(\eta_1+\sqrt{3}\right) + \mathcal{O}\left(2^{\ell_1}\right)\right| \sim 2^{k_1}. 
 $$
 We then have the  lower bound $\left|\partial_{\eta_1}\varphi\right| \gtrsim 2^{k_1}2^{\ell_1}$. Consequently, 
\begin{align*}
    \left|\frac{\partial_{\xi}\varphi}{\partial_{\eta_1}\varphi}\right| \lesssim 2^{-k_1}2^{-\ell_1}, \quad 
    \left|\partial_{\eta_1}\left(\frac{\partial_{\xi}\varphi}{\partial_{\eta_1}\varphi}\right)\right| \lesssim 2^{-k_1}2^{-2\ell_1}, \quad  |\omega(\xi)| \lesssim 2^{3k_1}. 
\end{align*}
From here, the proof reduces to a minor variant of case 4. The most significant change from case 4 is that we need to use  lemma \ref{lemma:multiplier_cheat_code_secondary_localization} instead of  lemma \ref{lemma:multiplier_cheat_code} to check that the relevant symbol satisfies the hypothesis of the multilinear estimate. 
    \end{itemize}

\par \noindent
\textbf{Subcase 5-3}
\par \noindent 
Now, suppose
\begin{align*}
    \left| \eta_1 - \eta_4\right|,  \left| \eta_2 + \eta_4\right|,  \left| \eta_3 + \eta_4\right| \lesssim 2^{k_{*}}.
\end{align*}
This puts us at risk of touching the resonant line from \eqref{eqn:resonant_line_discussion}. We then find that 
\begin{align*}
\eta_1+\eta_4+2\sqrt{3} &= 2\left(\eta_1+\sqrt{3}\right)+ \mathcal{O}\left(2^{k_*}\right) = \mathcal{O}\left(2^{k_1}\right)
\end{align*}
and we obtain the (possibly degenerate) lower bound 
\begin{equation}
    \label{eqn:case5_lower_derivative_bound}
    \left|\partial_{\eta_1}\varphi\right| \gtrsim 2^{k_1}\left|\eta_1-\eta_4\right|. 
\end{equation}
Next, localize $|\xi|$ again according to 
\begin{align*}
    |\xi| \sim 2^{k}, \quad 2^{-100m} \lesssim 2^{k} \lesssim 2^{k_{*}}, \quad \text{or} \quad  |\xi| \lesssim 2^{-100m}. 
\end{align*}
and then localize all input frequencies thus: 
\begin{align*}
    \left| \eta_1 - \eta_4\right|, |\eta_2+\eta_3|, |\eta_3+\eta_4| &\sim 2^{k}.
\end{align*} 
When $|\xi|\lesssim 2^{-100m}$ we obtain the claimed bound with the volume trick, and when $|\xi|$ is larger we integrate by parts using the following consequence of \eqref{eqn:case5_lower_derivative_bound}: $\left|\partial_{\eta_1}\varphi\right|\gtrsim 2^{k}2^{k_1}$. All remaining subcases of case 5 can be handled by following the arguments of subcase 5-3. For example, in the situation where 
\begin{align*}
\left|\eta_1+\eta_4+2\sqrt{3}\right|, \left|\eta_2+\eta_4\right| , \left|\eta_3+\eta_4\right|   &\lesssim 2^{k_{*}},
\end{align*}
we have 
\begin{align*}
    \left|\eta_1-\eta_4\right|,  \left|\eta_2-\eta_4-2\sqrt{3}\right|,  \left|\eta_3-\eta_4-2\sqrt{3}\right|  &\sim 2^{k_1}.
\end{align*}
Therefore, we still find $\left|\partial_{\eta_1}\varphi\right| \gtrsim 2^{k}2^{k_1}$, and we can comfortably follow subcase 5-3. The same ideas used to control $I_{m}$ in case 5 also work for $J_{m}$: to save space, we won't go through the details here. 
\begin{remark} The reader may desire some more motivation for the particular localization chosen in subcase 5-3. If we imagine that we're near the resonant line $L$ in each $\eta_j$, but not $\xi$, then we have
\begin{align*}
    \varphi &\approx -\omega(\xi) - \omega(\eta_1) + \omega(\xi+\eta_1)
    \\
    &\approx -\frac98\xi +\xi^3 +\frac{1}{32}\xi\left[\left(\xi+\eta_1+\sqrt{3}\right)^2+\left(\eta_1+\sqrt{3}\right)\left(\xi+\eta_1+\sqrt{3}\right)+\left(\eta_1+\sqrt{3}\right)^2\right].
\end{align*}
This implies that 
$$
\partial_{\eta_1}\varphi \approx \frac{3}{32}\xi \left(\xi+2\left(\eta_1+\sqrt{3}\right)\right),
$$
hence for $|\xi|\ll 2^{k_1}$ we have $\left|\partial_{\eta_1}\varphi\right|\gtrsim |\xi| 2^{k_1}$. Therefore, our lower bound on $\left|\partial_{\eta_1}\varphi\right|$ must include a piece directly proportional to $|\xi|$, and this motivates the choice $|\eta_1-\eta_4|\sim 2^{k}$.
\end{remark}

\subsection{Space-time Resonant Lines}
\label{ss:resonant_line}
\noindent 
We now obtain estimates for the contributions from the entire space-time resonant line in \eqref{eqn:resonant_line_discussion}. There are two important properties we'll leverage: 
\begin{itemize}
    \item First, since this line has $\xi=0$, the $\omega(\xi)$ out front gives us a bit of null structure. 
    \item Second, we've already dealt with the hardest cases $\eta \approx 0, \pm\sqrt{3}$. Therefore, we can specialize to the case where the line parameter $\eta$ is large and therefore nondegenerate. Of course, degeneracy at infinity needs to be carefully addressed, but this causes no big issues. 
\end{itemize}
In light of the second point, we restrict the line a little bit more. First, notice that we can re-write the resonant line as the mutual vanishing set of two independent degree 1 polynomials, $\eta_1+\eta_2$ and  $\eta_2-\eta_3$. Next, for each fixed time-interval index $m$, let $k_{\text{lo}}$ be the largest integer satisfying $2^{k_{\text{lo}}} \leq C_{\text{lo}} 2^{m\left(-\frac13 +\beta\right)}$, and let $\khi$ be the smallest integer such that $2^{\khi} \geq C_{\text{hi}} 2^{m\left(\frac19- \zeta\right)}$, where $\beta\ll 1$ is the same $\beta$ from our study of the $(0,0,0;0)$ resonance and $\zeta\approx \frac{1}{12}$ will be constrained as we go through the proof (since we've already carefully shown how to determine the ``growth perturbation'' parameters $\alpha, \beta, \gamma$ earlier on, we won't fuss over being precise about $\zeta$). Since we can basically cut out the degenerate cases $\eta=0,\pm\sqrt{3}$, to finish bounding contributions from $L$, it suffices only to bound the contributions from 
$$
L_{\text{hi}} = \left\{2^5 \leq |\eta_1|\leq 2^{\khi}, |\eta_1|\sim |\eta_2|\sim |\eta_3|, \ |\eta_1+\eta_2|\ll 1, |\eta_2-\eta_3|\ll 1, |\xi|\ll 1\right\}. 
$$
In practice, we work piece-by piece on each
$$
\hspace{-1cm}
L_{\text{hi}, kk_1\ell_1\ell_2} = \left\{|\eta_1|\sim 2^{k_{1}}\in \left[2^5, 2^{\khi}\right], \ |\eta_1+\eta_2|\leq 2^{\klo} \ \text{or} \ \sim 2^{\ell_1}, |\eta_2-\eta_3|\leq 2^{\klo} \ \text{or} \ \sim 2^{\ell_2}, |\xi|\leq 2^{2\klo} \ \text{or} \ \sim 2^{k}\right\},
$$
then sum the result over $k, k_1, \ell_1, \ell_2,$ and $m$ to get what we want. Note that we can cut off all nondegenerate frequencies of magnitude below $\approx 2^{5}$ without losing any important details: the constraint $2^{k_{1}}\in \left[2^5, 2^{\khi}\right]$ means that we may be close to the degenerate frequencies $\pm \infty$, so any argument that works for this frequency patch should carry over with trivial modifications to a frequency patch of finite extent. Additionally, we are allowed to cut off frequencies above $2^{\khi}$ using Sobolev embedding; I'll explain the details a little later. 
\par Before diving into the estimates, we perform a Taylor expansion to get some insight into the relationship between $k$ and the $\ell_j$'s. First, pick any $\overline{\eta}\in\mathbb{R}$ and expand the phase $\varphi$ about the space-time resonance $\left(\eta_1, \eta_2, \eta_3; \xi\right) = \left(-\overline{\eta}, \overline{\eta}, \overline{\eta}; 0\right)$ to obtain 
\begin{align*}
    \varphi&\approx -\left(1-\omega'\left(\overline{\eta}\right)\right)\xi + \frac12 \omega''\left(\overline{\eta}\right)\left( -\left(\eta_1+ \overline{\eta}\right)^2+  \left(\eta_2- \overline{\eta}\right)^2+  \left(\eta_3- \overline{\eta}\right)^2-\left(\eta_4+ \overline{\eta}\right)^2\right),
\end{align*}
If $\left|\overline{\eta}\right|\gtrsim 2^5$, then the above simplifies to $\varphi \approx -\xi + \mathcal{O}\left(\left(\eta_j\pm \overline{\eta}\right)^2\right)$. This tells us that we should demand the following: if $k\gtrsim k_{\text{min}}$ and $\ell_1, \ell_2 \gtrsim \ell_{\text{min}}$, then $k_{\text{min}} = 2 \ell_{\text{min}}$. This is similar to the localization choices we made when handling the anomalous resonance. Now, what should $\ell_{\text{min}}$ be? When handling the cases $\overline{\eta}=0, \pm\sqrt{3}$, we chose $\ell_{\text{min}} = \klo \approx -\frac{m}{3}$ as a reflection of weakened dispersive decay. Since the present case $|\overline{\eta}|\gg 1$ runs the risk of getting near the degenerate frequencies $\xi=\pm \infty$, we should also demand $\ell_{\text{min}} = \klo$ here. Also, without loss of generality, we always take $\ell_1\geq \ell_2$. 
\par\noindent
\textbf{Case 1} 
\begin{align*}
  \left|\eta_1+\eta_2\right|, \left|\eta_2-\eta_3\right| \lesssim 2^{\klo}, \quad |\xi| \lesssim 2^{2\klo}.
\end{align*}
Here, no integration by parts is allowed. Using the $L^2_{\xi}$ multilinear estimate right away appears to be the best option since $2^{k_1}\gtrsim 1$ means most terms in our integrand enjoy relatively fast dispersive decay. The relevant symbol here is 
$$
M = \partial_{\xi}\varphi \ \phi_{\lesssim 2\klo}(\xi) \ \phi_{\lesssim \klo}(\eta_1+\eta_2)\ \phi_{\lesssim \klo}(\eta_2-\eta_3) \ \psi_{k_{1}}(\eta_1)  \ \psi_{k_{2}}(\eta_2)  \ \psi_{k_{3}}(\eta_3). 
$$
Since $\partial_{\xi}\varphi$ and all its derivatives are bounded, we know immediately that $M$ satisfies the conditions of lemmas \ref{lemma:multiplier_cheat_code} and \ref{lemma:multiplier_cheat_code_secondary_localization} with $A=1$. Applying the multilinear estimate from proposition \ref{prop:multilinear_estimates} then gives
\begin{align*}
\left\|I_{...}\right\|_{L^2_{\xi}}\lesssim  \eps_1^4 \ 2^{2\klo} \ 2^{\frac92 k_1}\  2^{\frac{m}{2}},
\end{align*}
where to avoid cluttering the notation we've expressed the relevant LP piece of our integral simply as $I_{...}$. Summing over $m$ and the $k_j$'s gives 
$$
\sum_{m} \sum_{\text{freqs.}} \left\|I_{...}\right\|_{L^2_{\xi}}\lesssim \eps_1^4 t^{\frac13 + 2\beta-\frac92 \zeta}. 
$$
Thus, as long as $\frac13 + 2\beta-\frac92 \zeta \leq p_0$, all is well. Since $\beta=\frac{1}{50}$, we are free to choose $\zeta\approx \frac{1}{12}$ so the above bound is perfectly manageable. Note that, if we only allow the $k_j$'s to get so large (representing the non-degenerate frequencies near the resonant line), the same proof works. In fact, the argument is even easier since we don't have to keep track of the $2^{k_{1}}$'s arising from the dispersive estimate. 
\par For $J_m$, the multilinear estimate immediately gives 
\begin{align*}
\left\|J_{...}\right\|_{L^{\infty}_{\xi}} \lesssim \eps_1^4 2^{m\left(-\frac23+2\beta\right)}2^{3 k_1}  
\Rightarrow \sum_{\text{freqs.}} \left\|J_{...}\right\|_{L^{\infty}_{\xi}} \lesssim \eps_1^4 2^{m\left(-\frac13 + 2\beta - 3 \zeta\right)} .
\end{align*}
Since $\beta \ll \zeta$, the above sums over $m$ to get $\lesssim \eps_1^4. $
\par\noindent
\textbf{Case 2} 
\begin{align*}
  \left|\eta_1+\eta_2\right| &\sim 2^{\ell_1}, \quad  \ 2^{\klo} \lesssim 2^{\ell_1}<2^{-10}, \quad \text{or} \quad \left|\eta_1+\eta_2\right| \lesssim 2^{\klo},
  \\
  \left|\eta_2-\eta_3\right| &\sim 2^{\ell_2}, \quad  \ 2^{\klo} \lesssim 2^{\ell_2}<2^{-10},\quad \text{or} \quad \left|\eta_2-\eta_3\right| \lesssim 2^{\klo},
  \\
  |\xi|&\sim 2^{k}, \quad 2^{2\klo} \lesssim 2^{k}<2^{-10} \quad k \gg \ell_1.
\end{align*}
We only show details in the case $\left|\eta_2-\eta_3\right| \sim 2^{\ell_2} \gtrsim 2^{\klo}$ as the remaining case is similar but even easier. 
Performing a Taylor expansion immediately gives the lower bound
\begin{equation}
\label{eqn:case2_key}
    |\varphi| \gtrsim 2^{k}.
\end{equation}
Integrating by parts in time and using the multilinear estimate from proposition \ref{prop:multilinear_estimates} (checking the conditions of lemma \ref{lemma:multiplier_cheat_code_secondary_localization} is trivial since $k\gg \ell_1$) gives 
\begin{align*}
\left\|I_{...}\right\|_{L^2_{\xi}} \lesssim \eps_1^4 \left[ \ 2^{\frac92 k_1}2^{-\frac{m}{2}}+  2^{3k_{1}}2^{-\frac{m}{3}}\right]. 
\end{align*}
Summing over $m,k, k_1, k_2, k_3, \ell_1,$ and $\ell_2$ gives the desired bound. 
\par To cover the entire case for the relevant piece of $J_m$, it suffices to only show complete details for $\left|\eta_2-\eta_3\right| \sim 2^{\ell_2} \gtrsim 2^{\klo}$. We integrate by parts in time using \eqref{eqn:case2_key} and use the multilinear estimate to find
\begin{align*}
\left\|J_{...}\right\|_{L^\infty_{\xi}} \lesssim  \eps_1^4 2^{-m}2^{3k_1} \Rightarrow \sum_{\text{freqs.}} \left\|J_{...}\right\|_{L^\infty_{\xi}} \lesssim \eps_1^4 2^{m\left(-\frac23 -3\zeta\right)} \left(\log t\right)^3\lesssim \eps_1^4 2^{-\frac{2m}{3}},
\end{align*}
and summing over $m$ gives the claim. 
\par\noindent
\textbf{Case 3} 
\begin{align*}
  \left|\eta_1+\eta_2\right| &\sim 2^{\ell_1}, \quad  \ 2^{\klo} \lesssim 2^{\ell_1}<2^{-10}
  \\
  \left|\eta_2-\eta_3\right| &\sim 2^{\ell_2}, \quad  \ 2^{\klo} \lesssim 2^{\ell_2}<2^{-10}
  \\
  |\xi|&\sim 2^{k}, \quad \ 2^{2\klo} \lesssim 2^{k}<2^{-10}, \quad 2^k \lesssim 2^{\ell_2}, \quad \text{or} \quad |\xi|\lesssim 2^{\klo}. 
\end{align*}
As usual, we only show details for $|\xi|\sim 2^k$ since the remaining case is similar. Using the mean value theorem and $|\omega''(\eta_2)|\sim 2^{-3k_1}$, we get $| \partial_{\eta_2-\eta_3}\varphi| \gtrsim 2^{-3k_1}2^{\ell_2}$. We then have 
    \begin{align}
    \label{eqn:case3__semi_helpful_bounds}
        \left|\frac{\partial_{\xi}\varphi}{\partial_{\eta_2-\eta_3}\varphi}\right|\lesssim 2^{3k_1-\ell_2}, \quad \text{and} \quad 
         \left|\partial_{\eta_2-\eta_3}\left(\frac{\partial_{\xi}\varphi}{\partial_{\eta_2-\eta_3}\varphi}\right)\right|\lesssim 2^{3k_1-2\ell_2},
    \end{align}
which allows us to show that the relevant symbols satisfy the conditions of lemmas \ref{lemma:multiplier_cheat_code} and lemma \ref{lemma:multiplier_cheat_code_secondary_localization}. Additionally, 
    \begin{align}
\label{eqn:case3_helpful_bounds}
        \left|\omega(\xi)\frac{\partial_{\xi}\varphi}{\partial_{\eta_2-\eta_3}\varphi}\right|\lesssim 2^{3k_{1}}, \quad 
         \left|\omega(\xi)\partial_{\eta_2-\eta_3}\left(\frac{\partial_{\xi}\varphi}{\partial_{\eta_2-\eta_3}\varphi}\right)\right|\lesssim 2^{3k_{1}-\ell_2}. 
    \end{align}
Integrating by parts and using lemma \ref{lemma:multiplier_cheat_code} and proposition \ref{prop:multilinear_estimates} in conjunction with \eqref{eqn:case3_helpful_bounds} gives
\begin{align*}
   \left\|I_{...}\right\|_{L^2_{\xi}} &\lesssim \eps_1^4 \int_{t_1}^{t_2}\diff\tau \ 2^{(3+\frac92)k_1-\ell_2}2^{-\frac{3m}{2}} + 2^{3k_1}2^{m\left(p_0-\frac43\right)}
   \\
\Rightarrow
\sum_{m}\sum_{\text{freqs.}}  \left\|I_{mk_1k_2k_3k\ell_1\ell_2}\right\|_{L^2_{\xi}} &\lesssim \eps_1^4 \left(t^{\frac23 - \beta-7\zeta} 
 +t^{p_0} \right).
 \end{align*}
Accordingly, we must have $\frac23-\beta-7\zeta \leq p_0$. Since $p_0= \left(\frac16\right)^{-}$, $\beta\ll 1$, and $\zeta\approx \frac{1}{12}$, this bound is acceptable. 
\par For $J_m$, we show details just for $|\xi|\sim 2^k$. Using \eqref{eqn:case3_helpful_bounds} to integrate by parts in $\partial_{\eta_2-\eta_3}$ and applying the multilinear estimate, we find
\begin{align*}
   \left\|J_{...}\right\|_{L^2_{\xi}}
   &\lesssim \eps_1^4 \left[2^{6k_1}2^{m\left(p_0-1\right)} + 2^{6k_1-\ell_2}2^{-m}\right]
   \\
   \Rightarrow
    \sum_{\text{freqs.}}  \left\|J_{...}\right\|_{L^2_{\xi}} &\lesssim \eps_1^4 \left[2^{m\left(p_0-\frac13-6\zeta\right)}+ 2^{m\left(-\beta-6\zeta\right)}\right]\left(\log t\right)^3 \lesssim \eps_1^4 2^{-\frac{m}{6}},
\end{align*}
where we have used that $p_0< \frac16$ to obtain the last equality. Summing over $m$ then gives the claim. 
\subsection{Space-time Resonant Curves}\label{ss:res_curves}
\noindent Next, I explain how to apply the techniques from the previous subsection to control the contributions from the resonant curve $\Gamma$ from \eqref{eqn:resonant_curve_discussion}. Since we've already controlled the worst point on this curve, namely $\left(-\sqrt{3}, \sqrt{3}, \sqrt{3};0\right)$, the remaining cases should be straightforward. In keeping with the convention that $|\eta_1|$ is always larger than or equal to $|\eta_2|$ and $|\eta_3|$, it suffices to control all the pieces 
$$
\hspace{-1cm}
\Gamma_{\text{hi}, kk_1\ell_1\ell_2} = \left\{|\eta_1|\sim 2^{k_{1}}\in \left[2^5, 2^{\khi}\right], \ |r\left(\eta_1\right)-\eta_2|\leq 2^{\klo} \ \text{or} \ \sim 2^{\ell_1}, |\eta_2+\eta_3|\leq 2^{\klo} \ \text{or} \ \sim 2^{\ell_2}, |\xi|\leq 2^{2\klo} \ \text{or} \ \sim 2^{k}\right\}
$$
where $\klo, \khi$ are defined as in our investigation of the resonant line $L$. Note that this localization implicitly means that $|\eta_2|\sim2^{k_{2}}, |\eta_3|\sim 2^{k_{3}}$ only if $k_{2}, k_{3} \approx 0$. Accordingly, we can take $|\eta_4|\sim 2^{k_1}$
where we're working on each piece of the resonant curve. Additionally, note that following the discussion of the resonant line we can determine the correct relationship between the $k_j$'s and $\ell_j$'s by Taylor-expanding the phase about $\left(\overline{\eta}, r\left(\overline{\eta}\right), -r\left(\overline{\eta}\right);0\right)$. This yields
    $$
    \varphi \approx -\xi + \mathcal{O}\left(\left(\eta_1-\overline{\eta}\right)^2, \left(\eta_2-r\left(\overline{\eta}\right)\right)^2, \left(\eta_3+r\left(\overline{\eta}\right)\right)^2\right),
    $$
    so our localization choices should agree exactly with those in the previous subsection. This is excellent news, as it means that integration by parts in $\tau$ can give us a lot of analytical mileage. 
    \par Now, note that the analogues of cases 1 and 2 from the resonant line discussion can be handled easily when we move over to the resonant curve. To see why, notice that we now have the improved decay $\left\|u_{k_2}\right\|_{L^{\infty}_{x}}, \ \left\|u_{k_3}\right\|_{L^{\infty}_{x}} \lesssim \eps_1 2^{-\frac{m}{2}}$ since $|\eta_2|, |\eta_3|\approx 1$; that is, there are no factors of $2^{\frac{3k_1}{2}}$ to carry around on the right-hand side as in the resonant line. So, it seems like this case is even easier than the resonant line. However, when we flatten our coordinates according to 
\begin{equation}\label{eqn:coord_flattening}
\left(\eta_1, \eta_2, \eta_3\right) \mapsto \left(r(\eta_1), \eta_2, \eta_3\right)
\end{equation}
in order to follow the recipe of remark \ref{remark:nonlinear_loc}, we pick up a Jacobian term $|r'(\eta_1)|^{-1}\sim 2^{3k_{1}}$. Therefore, the bounds here are \emph{exactly} the same as those we saw when discussing the resonant lines.
\par It remains to investigate the analogue of case 3, given by 
\begin{align*}
  \left|r\left(\eta_1\right)-\eta_2\right| &\sim 2^{\ell_1}, \quad  \ 2^{\klo} \lesssim 2^{\ell_1}<2^{-10}
  \\
  \left|\eta_2+\eta_3\right| &\sim 2^{\ell_2}, \quad  \ 2^{\klo} \lesssim 2^{\ell_2}<2^{-10}, \quad 2^{\ell_1} \gtrsim 2^{\ell_2}, 
  \\
  |\xi|&\sim 2^{k}, \quad \ 2^{2\klo} \lesssim 2^{k}<2^{-10}, \quad 2^k  \lesssim 2^{\ell_2}, \quad \text{or} \quad  |\xi|\lesssim 2^{\klo}.
\end{align*}
In this case, we have $\left|\partial_{\eta_2}\varphi\right| \gtrsim 2^{\ell_1}$ since $|\eta_4|\sim 2^{k_1}$. This justifies an integration by parts in $\partial_{\eta_1}$. Once we perform this integration by parts and flatten our coordinates using \eqref{eqn:coord_flattening}, our symbol (including the absorbed Jacobian term) is $\lesssim 2^{3k_1}2^{-\ell_1}$. From here, we simply apply the arguments of case 3 in the resonant line case, taking advantage of improved decay in the $\eta_2$ and $\eta_3$ terms ($\ell_2\lesssim\ell_1$ is also required to check the conditions of lemma \ref{lemma:multiplier_cheat_code_secondary_localization}). Finally, I remark that the strategy we have just applied also works for $2^{\ell_1}\ll 2^{\ell_2}$ and $2^{k}\lesssim 2^{\ell_1}$, except that one integrates with respect to $\partial_{\eta_1}$ instead  of $\partial_{\eta_2}$. 
\subsection{Part IV: Weighted $L^2_{x}$ and $L^\infty_{\xi}$ Bounds in Non-Resonant Cases}
\noindent Throughout the discussion below, we set
    \begin{align}
        \left|\xi\right| \sim 2^{k}, \quad \left|\eta_{j}\right| \sim 2^{k_{j}}, 
    \end{align}
where $k, k_1, k_2, k_3 \in \mathbb{Z}$. We always take $k_1\geq k_2\geq k_3$ without loss of generality. Our goal is to show that, if we define
\begin{subequations}
\begin{align}
    I_{mk_1k_2k_3k} &\doteq \psi_{k}(\xi)\omega(\xi) \int_{t_1}^{t_2}\diff\tau \ \tau \int\diff\eta_1\diff\eta_2\diff\eta_3 \ e^{-i\tau\varphi} \partial_{\xi}\varphi \ \widehat{f}_{k_1}\left(\eta_1\right) \ \widehat{f}_{k_2}\left(\eta_2\right) \ \widehat{f}_{k_3}\left(\eta_3\right) \ \widehat{f}\left(\eta_{4}\right),
\\
\label{eqn:Jmk1k2k3defn}
    J_{mk_1k_2k_3k} &\doteq \psi_{k}(\xi)\omega(\xi) \int_{t_1}^{t_2}\diff\tau\int\diff\eta_1\diff\eta_2\diff\eta_3 \ e^{-i\tau\varphi}\ \widehat{f}_{k_1}\left(\eta_1\right) \ \widehat{f}_{k_2}\left(\eta_2\right) \ \widehat{f}_{k_3}\left(\eta_3\right) \ \widehat{f}\left(\eta_{4}\right),
\end{align}
\end{subequations}
then there are small numbers $b,c>0$ such that 
$$
\sum_{\substack{k_1, k_2, k_3\in \mathbb{Z} \\ k_{j{\geq \klo}}}}\left\| I_{mk_1k_2k_3k}\right\|_{L^2_{\xi}} \lesssim \eps_1^4 2^{-\frac12|k|} 2^{m\left(p_0-b\right)}
$$
with $p_0 = \left(\frac16\right)^{-}$ and 
$$
\sum_{\substack{k_1, k_2, k_3\in \mathbb{Z} \\ k_{j{\geq \klo}}}}\left\| J_{mk_1k_2k_3k}\right\|_{L^2_{\xi}} \lesssim \eps_1^4 2^{-\frac12|k|}2^{-mc}, 
$$
perhaps up to insignificant polylogarthmic factors. This way, we can sum over $k\in\mathbb{Z}$ and $m=0,1,..., m_{*}$ and get the claimed bounds. Let $k_{\text{lo}}$ be the largest integer satisfying $2^{k_{\text{lo}}} \leq C_{\text{lo}} 2^{m\left(-\frac13 +\beta\right)}$ and let $\khi$ be the smallest integer such that $2^{\khi} \geq C_{\text{hi}} 2^{m\left(\frac19- \zeta\right)}$,
where $\beta=\frac{1}{50}$ and $\zeta>\approx \frac{1}{12}$ (the same $\zeta$ used in subsections \ref{ss:resonant_line} and \ref{ss:res_curves}) will be constrained over the course of the problem. 
\subsubsection{Case 1: $2^{k}\geq 2^{\khi}$}
\noindent 
\textbf{Subcase 1.1: At least one $2^{k_j}\geq 2^{\khi}$}
\par\noindent 
Suppose without loss of generality that $2^{k_1}\geq 2^{\khi}$, and $2^{k_2}$, $2^{k_3}$ are $< 2^{\khi}$.  Since $\varphi$ and all its derivatives are bounded, the symbol
$$
M = \partial_{\xi}\varphi \ \psi_{k}(\xi) \psi_{k_1}(\eta_1) \psi_{k_2}(\eta_2) \psi_{k_3}(\eta_3)
$$
satisfies the hypotheses of lemma \ref{lemma:multiplier_cheat_code}. Using the $L^2_{\xi}$ multilinear estimate from proposition \ref{prop:multilinear_estimates} in conjunction with our linear dispersive estimate
$$
\left\|u_{k_j}\right\|_{L^{\infty}_{x}} \lesssim \eps_1 2^{-(s-1)k_{j}} 2^{mp_1} 
$$
and $|\omega(\xi)|\lesssim 2^{-k}$, we have (without integrating by parts!)
\begin{align*}
    \left\|I_{mk_1k_2k_3k}\right\|_{L^2_{\xi}} \lesssim 2^{-k}\eps_1^4\int_{t_1}^{t_2} \diff \tau \  2^{m} 2^{-(s-1)k_1} 2^{mp_1} \left(2^{-\frac{m}{3}}\right)^2
    \lesssim \eps_1^4  2^{-k} 2^{-(s-1)k_1} 2^{m\left(p_1+\frac43\right)}.
\end{align*}
At first glance, the $p_1+\frac43$ in the rightmost term seems like a source of trouble. However, using $s\geq 100\gg 1$ and $2^{k_1} \geq 2^{\khi} \gtrsim 2^{m\left(\frac19-\zeta\right)}$, we can find some $a,b>0$ such that 
\begin{align*}
    \left\|I_{mk_1k_2k_3k}\right\|_{L^2_{\xi}} \lesssim \eps_1^4 2^{-k} 2^{-ak_1}2^{-bm}. 
\end{align*}
If $k_2, k_3 > \klo$, then there are only $\mathcal{O}\left(m\right)\lesssim \mathcal{O}\left(\log t\right)$ terms in the sums over $k_2, k_3$, so in this case
\begin{align*}
    \sum_{k_1, k_2, k_3}\left\| I_{mk_1k_2k_3k}\right\|_{L^2_{\xi}} \lesssim \eps_1^4 2^{-k} m^2 2^{-bm} \lesssim \eps_1^4 2^{-k} 2^{-\frac{b}{2}m}, 
\end{align*}
which is stronger than what we wanted to show (but precisely what we want to show for $J_{mk_1k_2k_3k}$, which will be helpful below). If $k_2$ or $k_3$ or both are equal to $\klo$, the same argument works, with the only difference being that we don't collect as many factors of $m$ when summing. As for $J_{mk_1k_2k_3k}$, the same ideas yield
\begin{align*}
    \left\|J_{mk_1k_2k_3k}\right\|_{L^{\infty}_{\xi}} \lesssim \eps_1^4 2^{-k} 2^{m}  \left(2^{-(s-1)k_{1}} 2^{mp_1} \right) \left(2^{-\frac{m}{3}} \right) = \eps_1^4 2^{-k} 2^{-(s-1)k_1} 2^{m\left(p_1+\frac23\right)}\lesssim 2^{-k} 2^{-ak_1}2^{-bm}
    \end{align*}
 for some $a, b>0$, and summing over $k_1, k_2, k_3$ gives the required bound. 
\par \noindent 
\textbf{Subcase 1.2: $2^{k_j}\in \left(2^{\klo}, 2^{\khi}\right) \ \forall \ j$}
\par\noindent 
In this situation, $|\xi|\gg |\eta_j|$ for all $j$, hence $|\eta_4|\sim 2^{k}$. Using the same ideas applied to establish subcase 1.1, we get 
\begin{align*}
    \left\|I_{mk_1k_2k_3k}\right\|_{L^2_{\xi}} \lesssim 2^{-k}\eps_1^4\int_{t_1}^{t_2} \diff \tau \ 2^{m}\left(2^{m}\right)^{-\frac23} 2^{-k(s-1)}2^{mp_1}
    \lesssim \eps_1^4 2^{-sk}2^{m\left(p_1+\frac43\right)} 
\end{align*}
(the first two terms get the worst-case estimate, and the last term goes in $L^{\infty}_{x}$ too since its Fourier support is contained in $\left\{\gtrsim 2^{\khi}\right\}$). Summing over the $k_{j}$ only introduces a factor of $\mathcal{O}\left(m^3\right)$, so we're done using $s\gg 1$ again. The exact same argument allows us to control $\left\|J_{mk_1k_2k_3k}\right\|_{L^{\infty}_{\xi}}$. 

\noindent 
\textbf{Subcase 1.3: $2^{k_1}\in \left(2^{\klo}, 2^{\khi}\right) \ \forall \ j, \ \text{at least one of} \ 2^{k_2}, 2^{k_3} = 2^{\klo}$}
\par\noindent 
We only show details in the case $k_2 > \klo$: the main ideas are the same and the final bound is unaffected. The same ideas used in the previous two subcases together with $\left\|u_{k_3}\right\|_{L^{\infty}_x} \lesssim \eps_1 2^{k_3}$ (from the linear dispersive estimate) yield 
\begin{align*}
     \left\|I_{mk_1k_2k_3k}\right\|_{L^2_{\xi}} \lesssim \eps_1^4 2^{\frac53m}  2^{-sk}2^{k_{3}}.
\end{align*}
Summing over $k_1, k_2, k_3$ and using $s\gg 1$ gives us exactly the bound we require. Again, the same strategy works for bounding  $\left\|J_{mk_1k_2k_3k}\right\|_{L^{\infty}_{\xi}}$: just use $s\gg 1$ to overpower all the difficulties. 

\noindent 
\textbf{Subcase 1.4: $k_{j}= \klo\ \forall \ j$}
\par\noindent 
In this case we find that $|\eta_4|\sim 2^{k}$. We can then adapt subcase 1.1 to get what we need. 

\subsubsection{Case 2: $2^{k}\in \left[2^5, 2^{\khi}\right)$}
\noindent 
\textbf{Subcase 2.1: At least one $2^{k_j}\geq 2^{\khi}$}
\par\noindent 
This is a direct copy of subcase 1.1. 
\par \noindent 
\textbf{Subcase 2.2: $2^{k_1}, 2^{k_2}, 2^{k_3}\in \left[2^5, 2^{\khi}\right)$, $2^k\gg 2^{k_1}$}
\par\noindent
In this situation, we know that $|\eta_4|\approx |\xi| \sim 2^{k}$. Accordingly, Taylor expanding $\omega'(\eta_1)$ and $\omega'(\eta_4)$ about $\pm \infty$ gives $\partial_{\eta_1}\varphi \gtrsim 2^{-2k_1}$. Additionally, we have the null structure
\begin{equation}\label{eqn:null_structure_at_infty_if_xi_eta4_comparable}
|\partial_{\xi}\varphi|\lesssim 2^{-2k} \quad \text{when} \quad |\eta_4|\sim 2^{k}. 
\end{equation}
Combining these expressions gives 
\begin{align*}
    \left|\frac{\partial_{\xi}\varphi}{\partial_{\eta_1}\varphi}\right|\lesssim 1, \quad \left|\partial_{\eta_1}\left(\frac{\partial_{\xi}\varphi}{\partial_{\eta_1}\varphi}\right)\right|\lesssim 1. 
\end{align*}
Integrating by parts and using lemma \ref{lemma:multiplier_cheat_code} to trivially verify the conditions of the $L^2_{\xi}$ multilinear estimate from proposition \ref{prop:multilinear_estimates}, we find
\begin{align*}
    \left\|I_{mk_1k_2k_3k}\right\|_{L^2_{\xi}} &\lesssim \eps_1^4 2^{-k}\left[2^{m\left(p_0-\frac13\right)}2^{\frac{3k_2}{2}}2^{\frac{3k_3}{2}}+ 2^{m\left(p_0-\frac12\right)}2^{\frac{3k_1}{2}}2^{\frac{3k_2}{2}}2^{\frac{3k_3}{2}}\right]
\\ \Rightarrow 
    \left\|I_{mk_1k_2k_3k}\right\|_{L^2_{\xi}} &\lesssim  \eps_1^4 2^{-k} 2^{m\left(p_0-2\zeta\right)}
\end{align*}
as desired. Now, we deal with $J_{mk_1k_2k_3k}$. Using our existing symbol bounds and the $L^{\infty}_{\xi}$ multilinear estimate from proposition \ref{prop:multilinear_estimates_Linfty} gives
\begin{align*}
    \left\|J_{mk_1k_2k_3k}\right\|_{L^{\infty}_{\xi}} &\lesssim 
   \eps_1^4 2^{-k} 2^{\frac32 \left(k_2+k_3\right)} 2^{m\left(p_0-1\right)}
\\ \Rightarrow 
       \left\|J_{mk_1k_2k_3k}\right\|_{L^{\infty}_{\xi}} &\lesssim \eps_1^4 2^{-k} 2^{-3\zeta m},
 \end{align*}
 which is what we want. 
\par \noindent 
\textbf{Subcase 2.3: $2^{k_1}, 2^{k_2}, 2^{k_3}\in \left[2^5, 2^{\khi}\right)$, $2^{k_1}\approx 2^{k_3}\approx 2^{k_3}$, $2^{k}\lesssim 2^{k_1}$}
\par\noindent
This case is a little delicate because it includes situations that are space-resonant and nearly time-resonant owing to the vanishing of $\varphi$ at infinity. Additionally, the best we can say about $\eta_4$ is $0\leq |\eta_4| \lesssim 2^{k}$, so $\eta_4$ is not really ``localized''. To handle this, we divide $\eta_4$-space into three sections using one vanilla bump and two annular bumps: 
$$
1 \equiv \phi_{\lesssim 2^{-1}}\left(\eta_4\right) + \psi_{\approx \left(2^{-1}, 2^3\right]}\left(\eta_4\right) +  \psi_{\approx \left(2^3, 2^{\khi}\right]}\left(\eta_4\right)
$$
We now proceed region-by-region. 
\begin{itemize}
    \item In the support of $\phi_{\lesssim 2^{-1}}\left(\eta_4\right)$, we find that $|\partial_{\eta_1}\varphi|\gtrsim 1$ by Taylor expansion, and the proof reduces to an easier version of subcase 2.2 (the symbol bounds in each case are identical). 
\item In the support of $\psi_{\approx \left(2^{-1}, 2^3\right]}\left(\eta_4\right)$, we run the risk of having $|r(\eta_j)| \approx |\eta_4| \ \forall j=1,2,3$. If the above expression holds, then we are nearly space resonant. For example, we may be near a space resonance of the form $\left(\eta, \eta, \eta; 3\eta+r(\eta)\right)$. However, from our earlier resonance computations in subsection \ref{ss:resonance_computation}, we know that these are actually \emph{pure} space resonances: $|\varphi| \gtrsim 1$,
since $|\xi|>0$. We may then integrate by parts in $\tau$ and use lemma \ref{lemma:multiplier_cheat_code} and the $L^2_{\xi}$ multilinear estimate from proposition \ref{prop:multilinear_estimates} to obtain 
$$
\left\|I_{mk_1k_2k_3k}\right\|_{L^2_{\xi}} \lesssim \eps_1^4 2^{-k}.
$$
Notice how we put the $\partial_{\tau}f$'s in $L^2_x$ and always used the worst-case decay to estimate the $L^\infty_x$ norms: this choice allows us to trivialize a bunch of other cases later on.  Summing over the $k_j$'s gives
$$
\sum_{\text{freqs.}}\left\|I_{mk_1k_2k_3k}\right\|_{L^2_{\xi}}\lesssim \eps_1^4 2^{-k} 2^{mp_0}
$$
The same ideas applied to $J_{mk_1k_2k_3k}$ give
$$
\left\|J_{mk_1k_2k_3k}\right\|_{L^\infty_{\xi}} \lesssim \eps_1^4 2^{-k}2^{-\frac{2m}{3}} \Rightarrow \sum_{\text{freqs.}}\left\|J_{mk_1k_2k_3k}\right\|_{L^\infty_{\xi}} \lesssim \eps_1^4 2^{-k}2^{-\frac{m}{3}}. 
$$
So, we're done in the case where $|r(\eta_j)|\approx |\eta_4|$ for all $j$. 

\par Alternatively, if at least one $|r(\eta_j)|$ is relatively far from $|\eta_4|$, we instead integrate by parts in the direction $\partial_{\eta_j}$ to prove the claim. The full details for how to properly localize around $|r(\eta_j)|= |\eta_4|$ have already been handled in the much more difficult space-time resonant cases (see subsection \ref{ss:anomalous}). 
\item Finally, in the support of $\psi_{\approx \left(2^3, 2^{\khi}\right]}\left(\eta_4\right)$, we may have that $|\eta_j| \approx |\eta_4| \ \forall  \ j=1,2,3$. This occurs, for example, near the space resonances $\left(\eta, \eta, \eta; 2\eta\right)$ or $\left(\eta, \eta, \eta; 4\eta\right)$. Again appealing to our earlier resonance computations in subsection \ref{ss:resonance_computation}, we find that near these space resonances $\varphi \sim |\xi|^{-1}$. This gives rise to the lower bound $|\varphi|\gtrsim 2^{-k}$, which initially seems like bad news. However, we know that $|\eta_4|\sim 2^{k}$ in this worst-case space-resonant scenario, so we still have the null structure $|\partial_{\xi}\varphi|\lesssim 2^{-2k}$. Therefore, $\left|\frac{\partial_{\xi}\varphi}{\varphi}\right| \lesssim 2^{-k} \lesssim 1$ and we reduce to the arguments of the previous item. When $
|\eta_j| \gg |\eta_4|$ for some $j$ (or vice versa), we may integrate by parts in $\partial_{\eta_j}$ to get what we need. 
\end{itemize} 

\par \noindent 
\textbf{Subcase 2.4: $2^{k_1}, 2^{k_2}, 2^{k_3}\in \left[2^5, 2^{\khi}\right)$, $2^{k_j}\gg 2^{k_{j'}}$ for some $j<j'$, $2^{k}\lesssim 2^{k_1}$}
\par\noindent
Without loss of generality suppose $2^{k_{1}}\gg 2^{k_2}$. Since our localization prohibits situations like $\eta_2\approx r(\eta_1)$, in order to be near a space resonance we must have $|\eta_1|\approx |\eta_2| \approx |\eta_3|$, which is not true here. We therefore know that at least one $|\partial_{\eta_j}\varphi|$ is bounded below, and the problem reduces to identifying which component this is. This is accomplished by localizing $\eta_4$ exactly as in subcase 2.3. 
\begin{itemize}
    \item The argument covering the support of  $\phi_{\lesssim 2^{-1}}\left(\eta_4\right)$ is identical to that presented in the previous subcase. 
    \item  In the support of $\psi_{\approx \left(2^{-1}, 2^3\right]}\left(\eta_4\right)$, we may have $|r(\eta_1)| \approx |\eta_4|$. However, if this holds, then $|\eta_4| \ll |r(\eta_2)|$,
    so we are clear to integrate by parts in the direction $\partial_{\eta_2}$; a similar argument shows that integration by parts in $\partial_{\eta_1}$ is feasible when $|r(\eta_2)| \approx |\eta_4|$. 
    \item The support of $\psi_{\approx \left(2^3, 2^{\khi}\right]}\left(\eta_4\right)$ can be handled by mimicking the ideas of the previous item: just replace $r(\eta_j)$ with $\eta_j$. 
\end{itemize}

\par \noindent 
\textbf{Subcase 2.5: $2^{k_1}, 2^{k_2}\in \left[2^5, 2^{\khi}\right), 2^{k_3}\in \left(2^{\klo}, 2^{-1}\right]$ or $k_3=\klo$}
\par\noindent
By Taylor expansion, we immediately verify that $|\partial_{\eta_1-\eta_3}\varphi| \gtrsim 1.$
We then get the symbol bounds 
$$
\left|\frac{\partial_{\xi}\varphi}{\partial_{\eta_1-\eta_3}\varphi}\right| \lesssim 1, \quad \left|\partial_{\eta_1-\eta_3}\left(\frac{\partial_{\xi}\varphi}{\partial_{\eta_1-\eta_3}\varphi}\right)\right| \lesssim 2^{\max\left\{-3k_1, k_3\right\}} \lesssim 1. 
$$
Suppose first that $k_3=\klo$. We may verify the multilinear estimate hypotheses for the above symbols using lemma \ref{lemma:multiplier_cheat_code}. Additionally, recall that 
\begin{align*}
    \left\|\partial_{\eta}\left(\phi_{\klo}(\eta)\widehat{f}(\eta)\right)\right\|_{L^2_{\eta}} \leq \left\|\phi_{\klo}\partial_{\eta}\widehat{f}\right\|_{L^2_{\eta}}  + 2^{-\frac12 \klo}\left\|\widehat{f}\right\|_{L^{\infty}_{\eta}}.
\end{align*}
Under our bootstrap assumptions, this implies
$$
\left\|\partial_{\eta}\left(\phi_{\klo}(\eta)\widehat{f}(\eta)\right)\right\|_{L^2_{\eta}} \lesssim \eps_1 2^{m \max\left\{p_0, \frac16-\frac12\beta\right\}}. 
$$
 In particular, the above tells us that we must pick $\beta$ large enough to ensure $\frac16-\frac12\beta\leq p_0$. With all this in mind, integrate by parts to get
\begin{align*}
    \left\|I_{mk_1k_2\klo k}\right\|_{L^2_{\xi}} &\lesssim \eps_1^4 2^{-k} \int_{t_1}^{t_2}\diff\tau  \ 2^{\frac32\left(k_1+k_2\right)} 2^{-\frac{4m}{3}} + 2^{\frac32 k_2}2^{m\left(p_0-\frac76\right)} +  2^{\frac32\left(k_1+k_2\right)}2^{m\left(p_0-\frac43\right)}
\end{align*}
where we have used $\left\|u_{\leq 2^{\klo}}(\tau)\right\|\lesssim \eps_1 2^{-\frac{m}{3}}$. 
Summing gives
\begin{align*}
   \sum_{k_1, k_2} \left\|I_{mk_1k_2\klo k}\right\|_{L^2_{\xi}} &\lesssim \eps_1^4 2^{-k}  \left(2^{-3m\zeta} + 2^{m\left(-\frac32\zeta + p_0\right)}\log t + 2^{m\left(-3\zeta + p_0\right)}\right) \lesssim \eps_1^4 2^{-k}2^{mp_0}. 
\end{align*}
On the other hand, if $k_3>\klo$, then by the same arguments we have 
\begin{align*}
    \left\|I_{mk_1k_2k_3k}\right\|_{L^2_{\xi}} &\lesssim \eps_1^4 2^{-k} \int_{t_1}^{t_2}\diff\tau  \ 2^{\frac32\left(k_1+k_2\right)-\frac12 k_3} 2^{-\frac{3m}{2}} + 2^{\frac32 k_2-\frac12 k_3}2^{m\left(p_0-\frac43\right)} +  2^{\frac32\left(k_1+k_2\right)}2^{m\left(p_0-\frac43\right)}
\end{align*}
(notice how we used different  decay estimates for the $\eta_1$, $\eta_2$ terms versus the $\eta_3$ term), which still sums to 
$$
 \sum_{k_1, k_2, k_3} \left\|I_{mk_1k_2k_3k}\right\|_{L^2_{\xi}} \lesssim \eps_1^4 2^{-k} 2^{mp_0}, 
 $$
 for $\zeta$ sufficiently large. As for $J_{mk_1k_2k_3k}$, a similar approach gives 
 \begin{align*}
     \sum_{\text{freqs.}} \left\|J_{mk_1k_2k_3k}\right\|_{L^{\infty}_{\xi}} \lesssim \eps_1^4  2^{-k} 2^{-am}
 \end{align*}
 for some $a>0$. 
\par \noindent 
\textbf{Subcase 2.6: $2^{k_1}, 2^{k_2}\in \left[2^5, 2^{\khi}\right), 2^{k_3}\in\left[2^{-1}, 2^5\right], \ 2^k\gg 2^{k_1}$}
\par\noindent
Since $|\eta_4|\sim 2^{k}$, Taylor expansion gives $|\partial_{\eta_1}\varphi|\gtrsim 2^{-2k_1}$ and we also have the null structure $|\partial_{\xi}\varphi|\lesssim 2^{-2k_1}$. We then get the symbol bounds 
$$
\left|\frac{\partial_{\xi}\varphi}{\partial_{\eta_1}\varphi}\right| \lesssim 1, \quad \left|\partial_{\eta_1}\left(\frac{\partial_{\xi}\varphi}{\partial_{\eta_1}\varphi}\right)\right| \lesssim 2^{-k_1} \lesssim 1,
$$
and reduce to subcase 2.2. 

\par \noindent
\textbf{Subcase 2.7: $2^{k_1}\in \left[2^5, 2^{\khi}\right], \ 2^{k_2}, 2^{k_3}\in\left(2^{\klo}, 2^{-1}\right]$ or $k_2, k_3 = \klo$}
\par\noindent 
As in subcase 2.5, $|\partial_{\eta_1-\eta_3}\varphi|\gtrsim 1$. However, this time we must be more careful, either relating $k$ and $k_1$ or leveraging some extra dispersive decay in the $\eta_4$ term. 
\par First, suppose $k\gtrsim k_1$. Here, $|\eta_4|$ is difficult to control (it can be $\sim 2^{k}$ or very small), so we avoid venturing down that road. We have the same symbol bounds from subcase 2.5,
$$
\left|\frac{\partial_{\xi}\varphi}{\partial_{\eta_1-\eta_3}\varphi}\right| \lesssim 1, \quad \left|\partial_{\eta_1-\eta_3}\left(\frac{\partial_{\xi}\varphi}{\partial_{\eta_1-\eta_3}\varphi}\right)\right| \lesssim 1,
$$
and because there is no null to come to our rescue the situation seems hopeless (strictly speaking, the symbol bounds could be sharpened when $k\gg k_1$ to get enough extra decay to work, but we want a trick that also works for $k\approx k_1$). However, using the trivial observation $2^{-k} = 2^{-\frac12 k}2^{-\frac12 k} \lesssim 2^{-\frac12 k} 2^{-\frac12 k_1}$, we discover 
\begin{align*}
    \left\|I_{k_1k_2k_3k}\right\|_{L^2_{\xi}} &\lesssim \eps_1^4 2^{-\frac12 k} \int_{t_1}^{t_2}\diff\tau  \ 2^{k_1}2^{-\frac{7m}{6}} + 2^{-\frac12 k_1}2^{m\left(p_0-1\right)} +  2^{k_1}2^{m\left(p_0-\frac76\right)}
\end{align*}
which sums over $m, k_1$ to get $\lesssim \eps_1 t^{p_0} 2^{-\frac12 k}$.  Notice how tight the bound is on the term where $\partial_{\eta_1}$ hits $\widehat{f}\left(\eta_1\right)$: without our splitting $2^{-k}$ trick, our bound would be off by a power of $m$.
\par Next, suppose $k_1 \gg k$, but we still have $k_2=k_3=\klo$. This means $|\eta_4|\sim 2^{k_1}$. Consequently, we have some null structure,
$$
|\partial_{\xi}\varphi| \lesssim 2^{-2k},
$$
but it is not very useful since it's not expressed in terms of $k_1$, the index we want to sum over. In particular, the best symbol bounds we can get are
$$
\left|\frac{\partial_{\xi}\varphi}{\partial_{\eta_1-\eta_3}\varphi}\right| \lesssim 1, \quad \left|\partial_{\eta_1-\eta_3}\left(\frac{\partial_{\xi}\varphi}{\partial_{\eta_1-\eta_3}\varphi}\right)\right| \lesssim 1. 
$$
Also, the ``split $2^{-k}$'' trick we've just used will not work. However, now the $\eta_4$ term inside $I_{k_1k_2k_3k}$ undergoes faster decay:
$$
\left\|\left[\left(\text{annular bump supported on freqs. of size} \ \approx 2^{k_1}\right)\times \widehat{f}\left(\eta_4\right)\right]^{\vee}(\tau)\right\|_{L^{\infty}_{x}} \lesssim \eps_1 2^{\frac32 k_1} 2^{-\frac{m}{2}}.
$$
Using this extra decay in the term where $\partial_{\eta_1}$ hits $\widehat{f}\left(\eta_1\right)$, we find 
\begin{align*}
    \left\|I_{k_1k_2k_3k}\right\|_{L^2_{\xi}} &\lesssim \eps_1^4 2^{-k} \int_{t_1}^{t_2}\diff\tau  \ 2^{\frac32k_1} 2^{-\frac{7m}{6}} + 2^{\frac32k_1}2^{m\left(p_0-\frac76\right)} 
\end{align*}
which sums over $k_1$ to give $\lesssim \eps_1 t^{p_0} 2^{-k}$ for $\zeta$ large enough. The cases where $k_2 > \klo$ or $k_2, k_3 > \klo$ are similar but easier: for whatever variable has frequencies over $2^{\klo}$, use the  dispersive decay in $\left(2^{\klo}, 2^{-1}\right]$ to make the sums work out. For the $L^{\infty}_{\xi}$ piece $J_{mk_1k_2k_3k}$, we don't have to worry about the ``split-$2^{-k}$'' trick and can essentially just copy subcase 2.5. 

\par \noindent
\textbf{Subcase 2.8: $2^{k_1}\in\left[2^5, 2^{\khi}\right], 2^{k_2}\in \left[2^{-1}, 2^5\right], 2^{k_3}\in \left(2^{\klo}, 2^{-1}\right]$ or $k_3=\klo$}
\par\noindent 
When $k\gtrsim k_1$, this is a carbon copy of arguments we saw in subcase 2.7 (one high frequency, two degenerate frequencies). If $k_1\gg k$, note in particular that this means $k_1\gg k_2$ as well: this is because $2^{k}\in \left[2^5, 2^{\khi}\right)$, so the largest $2^{k_{2}}$ can get is $\approx 2^{k}$. Consequently, we find $|\eta_4| \approx 2^{k_1}$ and again reduce to subcase 2.7. 

\par \noindent 
\textbf{Subcase 2.9: $2^{k_1}\in \left[2^5, 2^{\khi}\right), 2^{k_2} \in \left[2^{-1}, 2^{\khi}\right), 2^{k_3}\in\left[2^{-1}, 2^5\right]$, $2^k\gg 2^{k_1}$}
\par\noindent
From the arguments of subcase 2.6, we still have $|\partial_{\eta_1}\varphi|\gtrsim 2^{-2k_1}$ and $|\partial_{\xi}\varphi|\lesssim 2^{-2k_1}$. Further, we also still have access to the ``split $2^{-k}$'' trick from subcase 2.7, so we're all done. 

\par \noindent 
\textbf{Subcase 2.10: $2^{k_1}, 2^{k_2}\in \left[2^5, 2^{\khi}\right), 2^{k_3}\in\left[2^{-1}, 2^5\right]$,  $2^k\lesssim 2^{k_1}$}
\par\noindent
This case is quite interesting because we can have situations like $\eta_3\approx \pm r(\eta_1)$ giving rise to potential space resonances. Of course, such situations also appeared in subcase 2.9, but there we could bulldoze over them using $|\eta_4|\sim 2^k\gg 2^{k_{1}}$. To add even more complexity, $\eta_3$ may be degenerate: $\eta_3\approx \pm \sqrt{3}$ is allowed here. These problems must be handled by chopping up $\left[2^{-1}, 2^5\right]$, as we chopped up $\eta_4$-space in subcases 2.3 and 2.4. First, pick $\delta<1$ such that $\delta < 2^{-10}\left|\sqrt{3}-r\left(2^4\right)\right|$. So, while $\eta_3\in \left[\sqrt{3}-\delta, \sqrt{3}+\delta\right]$, we cannot possibly have $\eta_3\approx \pm r(\eta_1)$. Then, we chop up our interval using bump functions according to
$$
1\equiv \psi_{\approx \left[2^{-1}, \sqrt{3}-\delta\right]}(\eta_3) + \phi_{\lesssim \delta} \left(|\eta_3|-\sqrt{3}\right) + \psi_{\approx \left[\sqrt{3}+\delta, 2^5\right]}(\eta_3).
$$
To keep the argument as short as possible, let's define
\begin{itemize}
    \item Region I to be the support of $\psi_{\approx \left[2^{-1}, \sqrt{3}-\delta\right]}$
    \item Region II to be the support of $\phi_{\lesssim \delta} \left(|\cdot|-\sqrt{3}\right) $, and 
    \item Region III to be the support of $\psi_{\approx \left[\sqrt{3}+\delta, 2^5\right]}$. 
\end{itemize}
We then prove the desired bound by working on each piece individually. 
\begin{itemize}
    \item When $\eta_3$ is in Region I, we may encounter space resonances that feature $|\eta_3| = |r(\eta_1)|$ or $|r(\eta_2)|$. To get around this issue, we mimic subcases 2.3 and 2.4: by going back to the computations of subsection \ref{ss:resonance_computation}, we find that these are pure space resonances, and an integration by parts in $\tau$ will do the trick. The non-space resonant cases can be handled via integration by parts in $\partial_{\eta_1-\eta_3}$ or  $\partial_{\eta_2-\eta_3}$. 
    \item Region II can also be handled with arguments described in subcases 2.3 and 2.4. 
\item When $\eta_3$ is in Region III, the reflection will not cause problems, but degeneracy will. To remedy this degeneracy, localize $\eta_3$ about $\pm \sqrt{3}$ again at a dyadic scale $2^{\ell}$, $\ell\geq\ell_0,$ with $2^{\ell_0} \approx 2^{\klo}$. Next, observe by Taylor expansion that $|\partial_{\eta_1-\eta_3}\varphi| \approx \left|-\frac18 + \mathcal{O}\left(2^{-2k}, 2^{2\ell}\right) \right|\gtrsim 1$. From here, the proof is a routine modification of earlier work. Note that the dyadic decomposition really is required because of potential issues that can arise when the derivative $\partial_{\eta_3}$ hits $\widehat{f}_{k_3}\left(\eta_3\right)$.
\end{itemize}

\par \noindent 
\textbf{Subcase 2.11: $2^{k_1}\in \left[2^5, 2^{\khi}\right), 2^{k_2}, 2^{k_3}\in\left[2^{-1}, 2^5\right]$, $2^{k}\lesssim  2^{k_1}$ }
\par\noindent
This is a straight generalization of the ideas of subcase 2.10, and we define $\delta>0$ exactly as we did there. To start, we chop up the relevant $\eta_2, \eta_3$ intervals using soft bumps:
\begin{align*}
1\equiv \psi_{\approx \left[2^{-1}, \sqrt{3}-\delta\right]}(\eta_3) + \phi_{\lesssim \delta} \left(|\eta_3|-\sqrt{3}\right) + \psi_{\approx \left[\sqrt{3}+\delta, 2^5\right]}(\eta_3).
\\ 
1\equiv \psi_{\approx \left[2^{-1}, \sqrt{3}-\delta\right]}(\eta_2) + \phi_{\lesssim \delta} \left(|\eta_2|-\sqrt{3}\right) + \psi_{\approx \left[\sqrt{3}+\delta, 2^5\right]}(\eta_2).
\end{align*}
We also use the Region I, II, III notation established in subcase 2.10. 
Using our convention that $k_2\geq k_3$, then, there are really only four cases to consider.
\begin{itemize}
    \item If $\eta_2$ is in Region III, then it is large enough for us to effectively reduce to subcase 2.10.
    \item If both $\eta_2$ and $\eta_3$ are in Region I, then roughly speaking we're in a ``$k_1\gg k_2\approx k_3$'' situation. Since $\eta_2, \eta_3$ are nondegenerate, we thus reduce to an easier version of subcase 2.4; along the way, one will have to account for space resonances with
$$
\left(\eta_1, \eta_2, \eta_3\right) = \left(\eta, r(\eta), -r(\eta)\right),
$$
but these are easily handled since our work in subsection \ref{ss:resonance_computation} says they are not time-resonant. 
    \item If $\eta_2$ is in Region II and $\eta_3$ is in Region I, then after localizing $\eta_2$ at a dyadic scale near $\pm \sqrt{3}$, we have $|\partial_{\eta_1-\eta_2}\varphi| \gtrsim 1$. With this bound, we reduce to an easier variant of subcase 2.5 (easier because we have only finitely many $k_3$'s to sum over).
    \item If $\eta_2, \eta_3$ are both in Region II, we still have $\left|\partial_{\eta_1-\eta_3}\varphi\right|\gtrsim 1$ (we could also switch $\eta_3$ for $\eta_2$!) and thereby reduce to subcase 2.7 after localizing about $\pm \sqrt{3}$ in both $\eta_2$ and $\eta_3$.  
\end{itemize}

\par \noindent
\textbf{Subcase 2.12: $2^{k_1}, 2^{k_2}, 2^{k_3}\in \left[2^{-1}, 2^5\right]$}
\par\noindent 
We again adopt the splitting philosophy used in subcases 2.10 and 2.11, keeping the exact same notation. 
\begin{itemize}
    \item If all $\eta_j$ are in Region III, then we simply copy subcase 2.2, 2.3, or 2.4 depending on their relative sizes to get what we want.
    \item If $\eta_1, \eta_2$ are in Region III, then we apply subcase 2.6 or 2.10. 
    \item If $\eta_1$ alone is in Region III, then we apply subcase 2.6 or 2.11. 
    \item If all $\eta_j$ are in Region II (that is, they are all close to the nonzero degnerate frequencies $\pm \sqrt{3}$), then we have a little more work to do. Let $\sigma_{j}=\pm 1$, then define
    \begin{equation}
        \label{eqn:bigOmega}
        \Omega_{\sigma_1\sigma_2\sigma_3}(\xi) \doteq \frac{\sqrt{3}}{4}\sum_{j=1}^{3}\sigma_j  -\omega(\xi) + \omega\left(\xi-\left(\sum_{j=1}^{3}\sigma_j\right) \sqrt{3}\right).
    \end{equation}
    Note that $\Omega_{-\sigma_1-\sigma_2-\sigma_3}(\xi) = -\Omega_{\sigma_1\sigma_2\sigma_3}(-\xi)$, so to understand the behaviour of the $\Omega$'s it suffices to consider $\sigma_1\sigma_2\sigma_3 = +++, -++$ only. See figure \ref{fig:Omega} for a plot of these two functions: note in particular that, as $|\xi|\rightarrow \infty$, $\Omega_{\sigma_1\sigma_2\sigma_3}$ converges to $\frac{\sqrt{3}}{4}\sum_{j=1}^{3}\sigma_{j} \neq 0.$ Now, observe by Taylor expansion that $\varphi = \Omega_{\sigma_1\sigma_2\sigma_3}(\xi) + \mathcal{O}\left(\delta\right)$. By shrinking $\delta$ if necessary, the above becomes $|\varphi|\gtrsim 1$ in this region. From here, the problem is covered by the arguments of subcase 2.3. 
     \begin{figure}
\centering
\begin{tikzpicture}[scale=1.1]
\begin{axis}[xmin=-10, xmax=13, ymin=-0.65, ymax=1.8, samples=300, no markers,
      axis x line = middle,
      axis y line = middle,
    ylabel=\Large{$\Omega_{\sigma_1\sigma_2\sigma_3}(\xi)$},
    xlabel=\Large{$\xi$},
    every axis x label/.style={
    at={(ticklabel* cs:1.05)},
},
every axis y label/.style={
    at={(ticklabel* cs:1.05)},
    anchor=south,
},
yticklabels={,,-0.5,},
xticklabels={,-10,-5,,,},
legend pos= south east,
]
  \addplot[indigo(web), ultra thick, domain=-10:13]  {0.25*1.73205080757*(3)-x/(1+x*x) +(x-5.19615242271
)/(1+(x-5.19615242271
)*(x-5.19615242271
))};
\addlegendentry{$+++$}
 \addplot[lightseagreen, ultra thick, domain=-10:13, dashed]  {0.25*1.73205080757*(1)-x/(1+x*x) +(x-1.73205080757
)/(1+(x-1.73205080757
)*(x-1.73205080757
))};
\addlegendentry{$-++$}
\end{axis}
\end{tikzpicture}
\caption{The two important $\Omega_{\sigma_1\sigma_2\sigma_3}$ used in subcase 2.12. The two roots of the $-++$ curve are $\xi=0, \sqrt{3}$.}
\label{fig:Omega}
\end{figure}
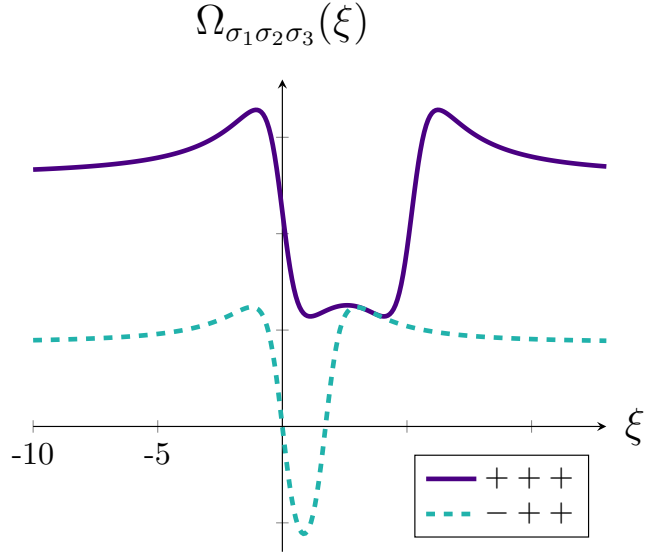
    \item Suppose $\eta_1$ is in Region II and at least one of $\eta_2,\eta_3$ are Region I. Here, we necessarily have $2^{k_1}\ll 2^{k}$, hence $|\partial_{\eta_1}\varphi| \gtrsim 1$ since $\omega'(\eta_1)\approx \omega'(\sqrt{3})\neq 0$. With this bound in hand, the rest of the proof amounts to redoing an easier version of subcase 2.6 or subcase 2.7, depending on whether or not $\eta_2$ is in Region II or Region I (localization about $\pm\sqrt{3}$ for any frequencies in Region II will be required). 
    \item If all $\eta_j$ are in Region I, we still have $|\partial_{\eta_1}\varphi|\gtrsim 1$ from the previous item. Since all frequencies $\eta_j$ are nondegenerate, we simply integrate by parts in the direction $\partial_{\eta_1}$ to get
    $$
\left\|I_{k_1k_2k_3k}\right\|_{L^2_{\xi}} \lesssim \eps_1^4 2^{-k} \int_{t_1}^{t_2}\diff\tau \ 2^{-\frac{3m}{2}} +  2^{m\left(p_0-\frac43\right)} \lesssim \eps_1^4 2^{-k} .
$$
Since we only need to sum over a finite number of $k_1, k_2, k_3$, this bound is perfectly OK. The same arguments give the desired control on the $L^{\infty}_{\xi}$ piece. 
\end{itemize}
\par \noindent
\textbf{Subcase 2.13: $2^{k_1}, 2^{k_2}\in \left[2^{-1}, 2^5\right], 2^{k_{3}}\in\left(2^{\klo}, 2^{-1}\right)$ or $k_3=\klo$}
\par\noindent 
We must split $\eta_1, \eta_3$ into three regions as in subcase 2.11 and subcase 2.12.
\begin{itemize}
    \item If $\eta_1, \eta_2$ are in Region III, the proof reduces to subcase 2.6.
    \item If $\eta_1$ is in Region III and $\eta_2$ is in Region II, we get a copy of subcase 2.8.
    \item if $\eta_1$ is in Region III and $\eta_2$ is in Region I, we get an easier version of subcase 2.6.
    \item If $\eta_1, \eta_2$ are both in Region II, localize both about $\pm \sqrt{3}$ at scales $2^{\ell_1}, 2^{\ell_2}$, respectively. We then find by Taylor expansion that $\left|\partial_{\eta_1-\eta_3}\varphi\right| = \left|-\frac18 - 1 +\mathcal{O}\left(2^{2\ell_1}, 2^{2k_3}\right)\right|\gtrsim 1$. From here on out, the arguments move along the lines of subcases 2.5 and 2.7. I'll sketch the main ideas for the worst-case scenario $\ell_1=\ell_2=k_3\klo$ (so all three frequencies are degenerate, or at least very close to degenerate), as the other cases are similar and just involve keeping track of the sums over $\ell_1, \ell_2, k_3$ as we have dealt with many times before. 
    \begin{align*}
    \hspace{-1cm}
        \left\|\left\{\text{localization in} \ \eta_1, \eta_2\right\}I_{mk_1k_2\klo k}\right\|_{L^2_{\xi}}\lesssim \eps_1^4 2^{-k} 2^{mp_0}.
    \end{align*}
    No summation is necessary for the worst-case scenario, so we're all done this case. The same ideas allow us to bound $\left\|J_{mk_1k_2k_3k}\right\|_{L^{\infty}_{\xi}}$. 
    \item If $\eta_1$ is in Region II and $\eta_2$ is in Region I, the arguments of the previous item still work. 
    \item If $\eta_1, \eta_2$ are both in Region I, we use Taylor expansion to find $|\partial_{\eta_3}\varphi| \gtrsim 1$ since $|\eta_4|\sim 2^{k}\gg 2^{k_3}$. From here, the proof becomes an easier version of subcase 2.6. 
    \end{itemize}

\par \noindent
\textbf{Subcase 2.14: $2^{k_1}\in \left[2^{-1}, 2^5\right], 2^{k_2}, 2^{k_{3}}\in\left(2^{\klo}, 2^{-1}\right)$ or $k_2, k_3=\klo$}
\par\noindent 
As is unsurprising by now, we break $\eta_1$ space into three pieces with the notation established in subcase 2.11. 
\begin{itemize}
\item If $\eta_1$ is in Region III or Region I, we reduce to subcase 2.7.
\item If $\eta_1$ is in Region II, we can easily modify the arguments of subcase 2.13 used when we had three degenerate frequencies, since $|\partial_{\eta_1-\eta_3}\varphi|\gtrsim 1$ still holds. 
\end{itemize}

\par \noindent
\textbf{Subcase 2.15: $2^{k_{j}}\in \left(2^{\klo}, 2^{-1}\right)$ or $k_j =\klo \ \forall \ j$}
\par\noindent
Here, we find that $|\eta_4|\sim 2^{k}$. By permuting variables, we reduce to subcase 2.7. 

\subsubsection{Case 3: $2^{k}\in \left[2^{-1}, 2^{5}\right]$}
\noindent Note that, in this case, we need only prove that each norm piece is 
$\lesssim \eps_1^4 t^{p_0}$ since there are only finitely many $k$'s to sum over. Additionally, this case includes the anomalous space-time resonance $\left(\eta_0, \eta_0, \eta_0; \xi_0\right)$ with $\eta_0\approx 5.2, \xi_0\approx 14.2$. Finally, we can reduce this case down to six subcases using the following observations: 
\begin{itemize}
    \item first, we cannot have $2^{k}\gg 2^{k_1}$ if $k_1\geq 5$;
    \item second, a careful examination of case 2 shows that only subcases 2.3, 2.7, 2.8, 2.12, 2.13, and 2.15 strongly depend on the size of $|\xi|$, so we only have to worry about the analogues of these cases here. 
\end{itemize}

\par \noindent 
\textbf{Subcase 3.1: $2^{k_1}, 2^{k_2}, 2^{k_3}\in \left[2^5, 2^{\khi}\right)$, $2^{k_1}\approx 2^{k_2}\approx 2^{k_3} $}
\par\noindent
This can be handled using the arguments of subcase 2.3, with the only proviso being that the case $|\eta_4|\in \left(2^3, 2^{\khi}\right]$ must be slightly modified. The worst-case scenario here is $|\eta_j|\approx |\eta_4| \quad \forall  \ j=1,2,3. $
However, since $|\xi|$ is bounded (and far from zero here), we have $|\varphi| \gtrsim 1$, and reduce to an easier version of subcase 2.3. 

\par \noindent
\textbf{Subcase 3.2: $2^{k_1}\in \left[2^5, 2^{\khi}\right], \ 2^{k_2}, 2^{k_3}\in\left(2^{\klo}, 2^{-1}\right]$ or $k_2, k_3 = \klo$}
\par\noindent 
If $k_1\gg k$, we reduce to subcase 2.7. Then, it remains to obtain a useful bound when $k\approx k_1$. Following subcase 2.7, we still have $|\partial_{\eta_1-\eta_3}\varphi|\gtrsim 1$. However, here, there is no $2^{-k}$ to split and get extra decay. A bit more thought shows this is no problem: since we only care about $k_1\approx k$ and $k$ can only get so large, $k_{1}\lesssim 2^{5}$ and we therefore sum over only finitely many $k_1$'s. Consequently, in the worst-case $k_2=k_3=\klo$, integration by parts and the multilinear estimate give
\begin{align*}
   \sum_{k_1\approx k} \left\|I_{mk_1\klo \klo k}\right\|_{L^2_{\xi}}  \lesssim \eps_1^4 2^{mp_0},
\end{align*}
which is exactly the bound we want. The same ideas work for getting control on $\left\|J_{mk_1k_2k_3k}\right\|_{L^{\infty}_{\xi}}$. 

\par \noindent
\textbf{Subcase 3.3: $2^{k_1}\in\left[2^5, 2^{\khi}\right], 2^{k_2}\in \left[2^{-1}, 2^5\right], 2^{k_3}\in \left(2^{\klo}, 2^{-1}\right]$ or $k_3=\klo$}
\par\noindent 
This can be handled exactly like subcase 3.2 (itself basically a copy of subcase 2.7).

\par \noindent 
\textbf{Subcase 3.4: $2^{k_1}, 2^{k_3}, 2^{k_3}\in\left[2^{-1}, 2^5\right]$}
\par\noindent
We may encounter the anomalous space-time resonance in this range of frequencies, so we need to be a little bit careful. We split into Region I, II, III as before, but to properly accommodate all space-time resonances we must also treat $|\eta_j|\approx \eta_0 \approx 5.2$ specially. 

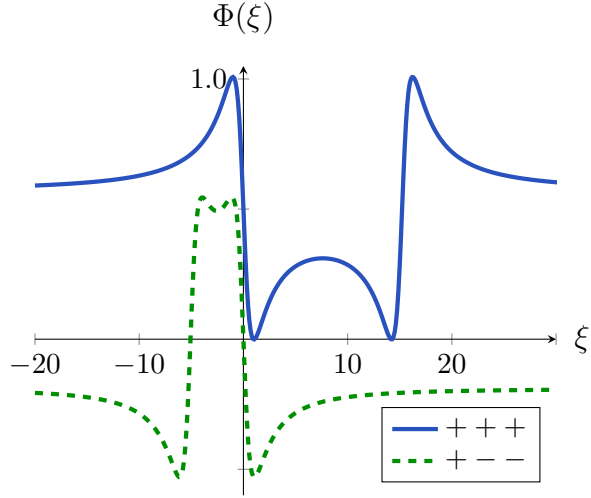
\begin{figure}
\centering
\begin{tikzpicture}[scale=1.]
\begin{axis}[xmin=-20, xmax=30, ymin=-0.6, ymax=1.05, samples=500, no markers,
      axis x line = middle,
      axis y line = middle,
    ylabel=\large{$\Phi(\xi)$},
    xlabel=\large{$\xi$},
    every axis x label/.style={
    at={(ticklabel* cs:1.05)},
},
every axis y label/.style={
    at={(ticklabel* cs:1.05)},
    anchor=south,
},
legend pos = south east,
yticklabels={,,$-0.5$,,$1.0$},
xticklabels={,$-20$,$-10$,0,$10$,$20$},
]
    \addplot[ceruleanblue, ultra thick, domain=-20:40]  {(x-3*5.0762)/(1+(x-3*5.0762)^2)  - x/(1+x^2) + (3)*(5.0762/(1+5.0762*5.0762))};
  \addlegendentry{$+++$}
   \addplot[islamicgreen, ultra thick, domain=-20:40, dashed]  {(x+5.0762)/(1+(x+5.0762)^2)  - x/(1+x^2) + (-1)*(5.0762/(1+5.0762*5.0762))};
  \addlegendentry{$+--$}
\end{axis}
\end{tikzpicture}
\caption{A sketch of $\Phi(\xi)$. The $+--$ curve has roots at $-\eta_0$ and $0$. CAUTION : While the thickness of the lines makes it look like $+++$ has only two second-order roots, zooming in shows that there are in fact four first-order roots at $\approx 0.9, 1.1, 14.1$, and $14.3$.} 
\label{fig:phi_region3}
\end{figure}

 



\begin{itemize}
\item First, suppose that for each $j=1,2,3$ we have $|\eta_j \pm \eta_0| \lesssim 1$. In this case, Taylor expansion shows that the phase is described by the function
$$
\Phi_{\sigma_1\sigma_2\sigma_3}(\xi) = -\omega(\xi) + \omega\left(\xi-\left(\sum_{j=1}^3 \sigma_{j}\right) \eta_0\right) + \left(\sum_{j=1}^3 \sigma_{j} \right)\omega(\eta_0). 
$$
See figure \ref{fig:phi_region3} for a plot in the only important cases $+++, +--$. Now, we can definitely hit the space-time resonances $\mp \left(\eta_0, \eta_0, \eta_0;\xi_0\right)$. We now bound the different $|\xi|$'s directly. 
\begin{itemize}
    \item If all the $\eta_j$'s are near $\eta_0$ and $|\xi-\xi_0|\lesssim 1$, we have already handled this case in our discussion of the anomalous resonances. The same goes for the situation where all $\eta_j$'s are near $-\eta_0$ and $|\xi+\xi_0|\lesssim 1$. 
    \item  If all the $\eta_j$'s are near $\pm\eta_0$ but $|\xi\pm\xi_0|\lesssim 1$, then figure \ref{fig:phi_region3} implies that $|\varphi|\gtrsim 1$. We then appeal to the arguments of subcase 2.3 to conclude. The same goes for $\eta_j$'s in the $+--$ case: figure \ref{fig:phi_region3} tells us that $\pm \xi_0$ are not time resonances here. 
    \item Suppose $|\xi|<2^{-1}$. In the $+++$ case, figure \ref{fig:phi_region3} implies that $|\varphi|\gtrsim 1,$ so again we can finish by a simple appeal to subcase 2.3. The $+--$ case is also not time-resonant because $|\xi|>0$. 
    \item If $\xi\approx 14.3, 0.9, 1.1$, or $-\eta_0\approx -5.2$, we can manually verify by Taylor expansion that $\left|\partial_{\eta_1}\varphi\right|\gtrsim 1$ (ie. these $\xi$ correspond to pure time resonances) and we reduce to an easier variant of subcase 2.2: since all $|\eta_j|$'s are near $\eta_0\neq 0, \sqrt{3}$,  only the $\eta_4$ term in our oscillatory integral can experience the worst-case decay. 
    \item In every other case, figure \ref{fig:phi_region3} tells us that we have $|\varphi|\gtrsim 1$ and we may again apply arguments from subcase 2.3. 
\end{itemize}
\item Next, suppose that 
$$
|\eta_1\pm \eta_0|\lesssim 1, \quad |\eta_2\pm \eta_0|\lesssim 1, \quad |\eta_3\pm r(\eta_0)|\lesssim 1,
$$
which means we're near the STRs $\pm \left(\eta_0, \eta_0, -r(\eta_0); \xi_0\right)$. This can be handled via the techniques of the previous item after making a linear change of variables. 
  \item Now, suppose that at least one $\eta_j$ is in Region III, but we are still in the complement of the two cases considered above. We split up $\eta_4$-space into several pieces using bump functions, as described in many of the subcases already covered. 
  \begin{itemize}
      \item If $|\eta_4|\approx |\eta_1|$ or $|\eta_4|\approx |r(\eta_1)|$, our earlier resonance computations from subsection \ref{ss:resonance_computation} show that $|\varphi|\gtrsim 1$ and we conclude by following subcase 2.3. 
      \item In every other case, we have $|\partial_{\eta_1}\varphi| \gtrsim 1$ and we reduce to subcase 2.2: even though some of our input frequencies may be degenerate, this doesn't cause problems since we sum over only a finite number of $k_{j}$'s. 
  \end{itemize}
  \item Now, suppose that all $\eta_j$ are in Region II. We split $\eta_4$-space again, but this time life is a bit easier:
  \begin{itemize}
      \item If $|\eta_4|\approx \sqrt{3}$, then we may be space resonant. We proceed by mimicking the arguments of subcase 2.12 (see especially figure \ref{fig:Omega}). In the $+++$ case, the leading order phase is $\gtrsim 1$ unconditionally, so we're fine. In the $-++$ case, the phase can only vanish if $\xi=0,\sqrt{3}$. Since we assume $2^{k}\in \left[2^{-1}, 2^{5}\right]$, $\xi=0$ is not allowed. Additionally, $\xi=\sqrt{3}$ and $|\eta_j|\approx \sqrt{3}$ in the $-++$ case implies $\eta_4\approx 0$ which is not allowed by our localization in $\eta_4$. Therefore, as long as $|\eta_4|\approx \sqrt{3}$, we have $|\varphi|\gtrsim 1$ and we reduce to subcase 2.3. 
     \item In every leftover situation, $|\partial_{\eta_1}\varphi|
    \gtrsim 1$ and we copy subcase 2.2 (again, here degenerate frequencies cause no substantial problems). 
  \end{itemize}
  \item If $\eta_1$ is in Region II and $\eta_3$ is in Region I, we mostly repeat the techniques of the above item, with the following change: if $|\eta_4|\approx \sqrt{3}$ and $\eta_3$ is far from Region II, we can't use figure \ref{fig:Omega} anymore. However, since $\eta_3$ is not near Region II, observe that $|\partial_{\eta_3}\varphi|\gtrsim 1$, so we reduce to subcase 2.2 again. 
  \item If all $\eta_j$ are in Region I, we mostly follow the same approach we used when all $\eta_j$ were in Region III and we were non-STR. However, the reflection of $\eta_1$ is not defined throughout the entirety of Region I, so we need to be a tiny bit more careful. First, notice that 
  $$
  |\eta_4| \leq |\xi| +\sum_{j=1}^{3}|\eta_j|\leq 2^{6}+ 3\left(\sqrt{3}-\delta\right) \lesssim 80. 
  $$
  Therefore, if $|\eta_1|<r\left(80\right)$, then we \emph{cannot} have $|r(\eta_4)| = |\eta_1|$. With this in mind, we can form a nice splitting of $\eta_1$-space: if $|\eta_1|\geq 0.9r(80)$, we localize exactly as we did when all $\eta_j$'s were in Region III, and if $|\eta_1|<0.9r(80)$ then we can do the same but ignore localization about $|\eta_4|\approx |r(\eta_1)|$ (since, by construction, this is impossible). This splitting of $\eta_1$-space can be accomplished by using two smooth bump functions supported in $\left[0.8*r(80), 80\right]$ and $\left[2^{-1}, 0.9*r(80)\right]$ and summing to $1$ everywhere on $\left[2^{-1}, 80\right]$. 
\end{itemize}

\par \noindent 
\textbf{Subcase 3.5: $2^{k_1}, 2^{k_3},\in\left[2^{-1}, 2^5\right], 2^{k_3}\in\left(2^{\klo}, 2^{-1}\right)$}
\par\noindent
This case proceeds almost identically to subcase 2.13, except for when $\eta_1, \eta_2$ are in Region I. First, if $k\gg k_1$ then $|\partial_{\eta_3}\varphi| \gtrsim 1$. Similarly, if $k_1\gg k_3$ then
$|\partial_{\eta_1-\eta_3}\varphi|\gtrsim 1$. This leaves the case $k\approx k_1\approx k_3\approx k_3$. Here, we have to split up into $|\eta_4|\approx 2^{k}$ and $|\eta_4|\ll 2^{k}$. In the latter case we simply use $|\partial_{\eta_1}\varphi|\gtrsim 1$, and in the former case we use our earlier resonance computations to find that $|\varphi| \gtrsim 1$, since $|\xi|$ is not small enough to give rise to a time resonance. 

\par \noindent
\textbf{Subcase 3.6: $2^{k_{j}}\in \left(2^{\klo}, 2^{-1}\right)$ or $k_j =\klo \ \forall \ j$}
\par\noindent
The case $k\approx k_1\approx k_2\approx k_3 > \klo$ is essentially contained in subcase 2.13. If $k\gg k_1$, then $|\eta_4|\sim 2^{k}$. By Taylor expansion, we discover $|\partial_{\eta_1}\varphi| \gtrsim 1$. From now on we only consider the worst-case scenario $k_1=k_2=k_3=\klo$. Integrating by parts and using the mutlilinear estimate gives
$$
\left\|I_{m\klo\klo\klo k}\right\|_{L^2{_\xi}} \lesssim \eps_1^4  2^{mp_0}
$$
as desired. Similar ideas likewise show that 
$$
\left\|J_{m\klo\klo\klo k}\right\|_{L^\infty{_\xi}}\lesssim \eps_1^4 2^{-am}
$$
for some small $a>0$. 
\subsubsection{Case 4: $2^{k} \in \left(2^{\klo}, 2^{-1}\right)$}
\noindent 
We again use the work already done in case 2 to make sure we only have to deal with a handful of subcases here. 

\par \noindent 
\textbf{Subcase 4.1: $2^{k_j}\in \left[2^5, 2^{\khi}\right) \quad \forall \ j, \ 2^{k_1}\approx 2^{k_2}\approx 2^{k_3}$ }
\par\noindent 
This can be handled similarly to subcase 2.3. The only part of the argument in subcase 2.3 that breaks down here is when $|\eta_4|\in \left(2^3, 2^{\khi}\right]$. Since $|\xi|\sim 2^{k}$ is now allowed to be very small, we may be close to space-time resonances of the form $\left(-\eta, \eta, \eta; 0\right)$ with $|\eta|\geq 2^4.$ These worst-case situations have already been handled in our discussion of the resonant lines, so we're all done this subcase. Note in particular that smallness of $|\xi|$ prohibits space resonances like $\left(\eta, \eta, \eta; 2\eta\right)$ with $|\eta|\geq 2^4$. 
\par \noindent
\textbf{Subcase 4.2: $2^{k_1}\in \left[2^5, 2^{\khi}\right], \ 2^{k_2}\in\left(2^{\klo}, 2^{\khi}\right] \ \text{or} \ k_2= \klo, \ 2^{k_3}\in\left(2^{\klo}, 2^{\khi}\right] \ \text{or} \ k_3= \klo$}
\par\noindent 
Here, the worst-case scenario is that we're close to the resonant curve discussed in subsection \ref{ss:res_curves}; this includes resonances like $\left(\eta, -\eta, r(\eta);0\right)$ or $\left(\eta, r\left(\eta\right), -r(\eta);0\right)$. These troublesome cases have already been handled in subsection \ref{ss:res_curves}. In the complementary cases, we can use $k_1\gg k$ to reduce to variants of the arguments in subcases 2.5, 2.7, and 2.8. 

\par \noindent 
\textbf{Subcase 4.3: $2^{k_1}, 2^{k_2}, 2^{k_3}\in\left[2^{-1}, 2^5\right]$}
\par\noindent
Here we are a $t$-dependent distance away from the STR $\left(-\sqrt{3}, \sqrt{3}, \sqrt{3};0\right)$ and must exercise a bit of caution. 
\begin{itemize}
    \item If $\eta_1$ is in Region III, we effectively reduce to one of the earlier subcases of case 4. 
    \item If all $\eta_j$ are in Region II, the graphical methods from subcase 2.12 (see figure \ref{fig:Omega}) tell us that, as long as we're not in the $-++$ situation, we have a lower bound of the form $|\varphi| \gtrsim 1$, so we're all done. This does not hold for the $-++$ situation, but we have already handled that scenario in subsection \ref{ss:type3}. 
\item If $\eta_1$ and at least one of $\eta_2, \eta_3$ are in Region I, then $k\ll k_1$ implies $|\partial_{\eta_1}\varphi| \gtrsim 1$ by Taylor expansion. 
\item If all $\eta_j$ are in Region I, we may have $|\eta_1|\approx |\xi|$ so the bound from the previous item may not work. If any of the $k_{j}\gg k$, however, then the bounds persist. Thus, we need only worry about $k_1\approx k_2\approx k_3\approx k$. This is covered by our analysis of the $(0,0,0;0)$ resonance in subsection \ref{ss:pure_zero}. 
\end{itemize}

\par \noindent 
\textbf{Subcase 4.4: $2^{k_1}, 2^{k_2}\in\left[2^{-1}, 2^5\right], \ 2^{k_3}\in \left(2^{\klo}, 2^{-1}\right)$ or $k_3=\klo$}
\par\noindent
This mostly follows from subcase 2.13, except for the situation where both $\eta_1$ and  $\eta_2$ are in Region I. Then, either $k_{1}\gg k_3$ (in which case we clearly have $|\partial_{\eta_1-\eta_3}\varphi|\gtrsim 1$), $k_1\gg k$ (in which case we use Taylor expansion to bound $|\partial_{\eta_1}\varphi|\gtrsim 1$), or $k_1\approx k_2\approx k_3\approx k$ and we're in a situation already discussed in subsection \ref{ss:pure_zero}. 

\par \noindent
\textbf{Subcase 4.5: $2^{k_{j}}\in \left(2^{\klo}, 2^{-1}\right)$ or $k_j =\klo \ \forall \ j$}
\par\noindent
This was handled in subsection \ref{ss:pure_zero}. 

\subsubsection{Case 5: $k = \klo$}
\noindent
This case can be handled by mostly copying the arguments of case 4, though these arguments occasionally need to be supplemented with appeals to subsections \ref{ss:pure_zero}, \ref{ss:type3}, \ref{ss:resonant_line}, and \ref{ss:res_curves}. 
\subsection{Part V: Proofs of Lemmas}
\label{ss:proofs_of_lemmas}
\subsubsection{Proof of Lemma \ref{lemma:time_derivative_bnd}}
\begin{figure}
\centering
\begin{tikzpicture}[scale=0.9]
\begin{axis}[xmin=5.05, xmax=5.22, ymin=-.014, ymax=.004, samples=200, no markers,
      axis x line = middle,
      axis y line = middle,
    xlabel=\Large{$\eta$},
    ylabel=\Large{$\Xi(\eta)$},
    every axis x label/.style={
    at={(ticklabel* cs:1.1)},
},
every axis y label/.style={
    at={(ticklabel* cs:1.05)},
    anchor=south,
},
yticklabels={,,},
xticklabels={,,},
scaled y ticks = false 
]
  \addplot[alizarin, ultra thick, domain=4.9:5.3]  {3*x/(1+x*x) - sqrt((x*x+3)/(x*x-1))/(1+(x*x+3)/(x*x-1)) - (3*x- sqrt((x*x+3)/(x*x-1)))/(1+( 3*x-sqrt((x*x+3)/(x*x-1)))*(3*x- sqrt((x*x+3)/(x*x-1))))};
    \addplot +[mark=none, denim, very thick, dashed, samples=2] coordinates {(5.19615242271
, -0.02) (5.19615242271
, 0.02)};
\draw[alizarin, fill] (5.0762,0) circle (4pt) node [above right] {$\eta=\eta_{0}$};
\draw[denim, fill] ((5.19615242271,0.) circle (4pt) node [above left] {$\eta=3\sqrt{3}$};
\end{axis}
\end{tikzpicture}
\caption{The red solid line denotes $\Xi(\eta)=3\omega(\eta) - \omega(r(\eta))-\omega(3\eta-r(\eta))$ near $\eta_{0}\approx 5.0762$. The blue dashed line emphasizes that this function does not vanish at $3\sqrt{3}$.} 
\label{fig:time_derivative_helper}
\end{figure}
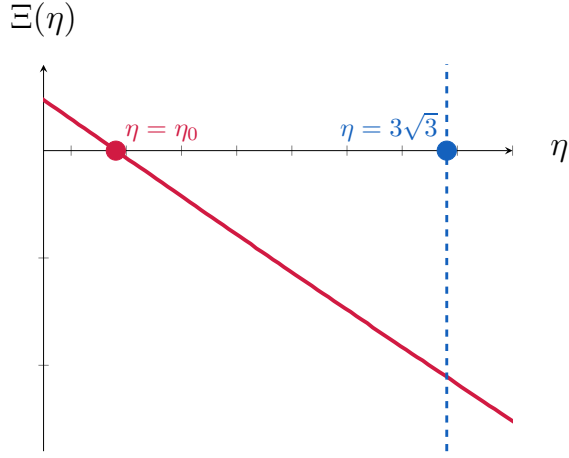
\par First, recall that
\begin{align*}
    \partial_{\tau}\left(\widehat{f}_{\lesssim2^{k_{*}}}\right)(\eta_{1}) \simeq \omega(\eta_{1}) \phi_{\lesssim 2^{k_{*}}}(\eta_1-\eta_0) e^{i\tau\omega(\eta_1)}\int \diff \sigma_1 \ \diff\sigma_2 \ \diff\sigma_3 \ \widehat{u}(\sigma_1) \ \widehat{u}(\sigma_2) \ \widehat{u}(\sigma_3) \ \widehat{u}\left(\eta_1-\sum_{j=1}^{3}\sigma_{j}\right). 
\end{align*}
Let us denote $\sigma_{4}\doteq \eta_1-\sum_{j=1}^{3}\sigma_{j}$. Then, owing to the bump function outside the integral in the above expression, the only sub-region of our integration domain that actually contributes to $ \partial_{\tau}\widehat{f}_{\lesssim2^{k_{*}}}$ is $\left|\sum_{j=1}^{4}\sigma_{j} - \eta_0\right| \lesssim 2^{k_{*}}$. We then claim that, if $(\sigma_1,...,\sigma_4)$ lives in the above sub-region, then at least one $\sigma_{j}$ is not equal to $0$ or $\pm\sqrt{3}$. This is a simple proof by contradiction. Assume we're given $\sigma_{j}$'s in this subregion that are all $0$ or $\pm\sqrt{3}$, then we must have $\sum_{j=1}^{4}\sigma_{4} = \pm K\sqrt{3}$ for $K=0,1,2,3,4$. 
Since $\eta_{0}\approx 5.1$, the only one of these options that is anywhere near possible is $3\sqrt{3}$. Using figure \ref{fig:time_derivative_helper}, however, we find that $|\eta_0-3\sqrt{3}|>0$. 
\par So, at least one $\sigma_{j}$ must be localized away from the degenerate stationary points. Without loss of generality, suppose it is $\sigma_{1}$. Then, for an appropriately (and time-independently) scaled compactly supported triple-bump function $\Phi(\sigma_1)$ localized near both $0$ and $\pm\sqrt{3}$, define $\widehat{v}(\sigma_1) = \left(1-\Phi(\sigma_1)\right) \widehat{u}\left(\sigma_1\right)$. Na\"{i}vely, since the Fourier support of $v$ includes no degenerate stationary points, we expect that $v$ has a leading-order decay ray of $\tau^{-\frac12}$. However, owing to the asymptotic degeneracy of the BBM dispersion relation, things are a little more complicated. For this reason, we first prove the lemma by assuming that $\widehat{v}(\sigma_1)$ has compact support. Consequently, $\left\|v\right\|_{L^{\infty}_{x}}\lesssim \eps_1 \tau^{-\frac12}$ for now. 
\par In the worst case where $\sigma_2, \sigma_3, \sigma_4$ are still near $0, \pm\sqrt{3}$ (which, by the above discussion, is allowed in spite of $\eta_1\approx \eta_0)$, a na\"{i}ve estimate of $\partial_{\tau}\widehat{f}_{\lesssim 2^{k_*}}$ using untangling would then give a decay rate of $\tau^{-\frac16}$, which is weaker than \eqref{eqn:time_derivative_bnd}. To fix this problem, we use a different approach. Suppose that for $j=2,3,4$ the distance between $\sigma_{j}$ and the nearest degenerate frequency is $\leq 2^{\ell}\ll 1$: without loss of generality, assume they are all near $\sqrt{3}$. Consequently, in this worst-case scenario, $\sigma_{1} = \eta_0-3\sqrt{3} + \mathcal{O}\left(2^{\max\left\{\ell,k_{*}\right\}}\right)$, so $\sigma_{1}$ is an absolute distance away from $0, \pm\sqrt{3}$. This yields $\left|\partial_{\sigma_{1}}\varphi\right|\gtrsim 1$, which in turn implies
\begin{align*}
    \left|\frac{1}{\partial_{\sigma_1}\varphi}\right|\lesssim 1,  \quad  \left|\partial_{\sigma_1}\left(\frac{1}{\partial_{\sigma_1}\varphi}\right)\right|\lesssim 1. 
\end{align*}
Using the above bounds and a bit of computation, we find that for $j=1,2$ the symbols
\begin{align*}
M_{j}&=\left[\partial_{\sigma_1}^{j-1}\left(\frac{1}{\partial_{\sigma_1}\varphi}\right)\right] \phi_{\lesssim 2^{k_{*}}}(\eta_1-\eta_0) \ \phi_{\lesssim 2^{\max\left\{\ell,k_{*}\right\}}}\left(\sigma_1-\left(\eta_0-3\sqrt{3}\right) \right)\ \phi_{\lesssim 2^\ell}\left(\sigma_2 - \sqrt{3}\right) \ \phi_{\lesssim 2^\ell}\left(\sigma_3 - \sqrt{3}\right)
\end{align*}
obey the conditions of lemma \ref{lemma:multiplier_cheat_code}. We may then apply the $L^{\infty}_{\xi}$ multilinear estimate from proposition \ref{prop:multilinear_estimates_Linfty} to discover 
\begin{align*}
    \left\|\left(\partial_{\tau}\widehat{f}_{\lesssim 2^{k_{*}}}\right)_{\text{worst}}\right\|_{L^{\infty}_{\eta_1}}\lesssim \eps_1^4\tau^{-1}\left(\tau^{-\frac56}+ \tau^{p_0-\frac23}+\tau^{p_0-\frac56}\right).
\end{align*}
Since $p_0<\frac16$, the terms in parentheses above are $\leq 1$ for all $\tau\geq 1$, so \eqref{eqn:time_derivative_bnd} holds for the worst-case piece of $\partial_{\tau}\widehat{f}_{\lesssim 2^{k_{*}}}$. 
\par In the remaining portion of frequency space, another $\sigma_{j}$ is far from $0$ and $\pm \sqrt{3}$. Without loss of generality, suppose this additional non-degenerate frequency is $\sigma_2$. Call the inverse Fourier transforms of the $\sigma_1$, $\sigma_{2}$ terms (including spectral localization) by $v$ and $w$ respectively. Then, we untangle the convolution and use the ODE for $\widehat{f}_{\lesssim 2^{k_*}}$ to discover 
\begin{align*}
    \left\|\left(\partial_{\tau} \hat{f}_{\lesssim_{2^{k_{*}}}}\right)_{\text{non-worst}}\right\|_{L^{\infty}_{\xi}} &\lesssim \left\|\left(vwu^2\right)^{\wedge}\right\|_{L^{\infty}_{\xi}}
    \lesssim \eps_1^2 \tau^{-1} \left\|u\right\|_{L^2_{x}}^2. 
\end{align*}
Using $H^{1}_{x}$-norm conservation \eqref{eqn:energy_conservation}, the above tells us that 
\eqref{eqn:time_derivative_bnd} also holds for the ``non-worst'' piece, so we're all done when $\widehat{v}$ has compact support. 
\par If $\widehat{v}$ does not have compact support, our integration region may contain points like 
$$
\left(\sigma_1, \sigma_2, \sigma_3, \sigma_4\right) = \left(\eta_0+M, -M, 0, 0\right), \quad M\gg 1,
$$
where the decay of the $v$ term slows down slightly, and we have a bit more work to do. So, now suppose $|\sigma_1|, |\sigma_2|\gg 1$ (absolutely). If just one of $\sigma_3, \sigma_4$ are $\approx 0, \pm \sqrt{3}$, then we can integrate by parts and simply follow the procedure applied in the worst case, but this time we place the $\sigma_1, \sigma_2$ terms in $L^2$. For example, if $|\eta_1| = M \gg 1$ and $\eta_3\approx 0$, we would integrate by parts in the direction $\partial_{\eta_1-\eta_3}$ using
$$
|\partial_{\eta_1-\eta_3}\varphi \left(\eta_0+M, -M, 0, 0\right) |\approx  |\omega'(M)-\omega'(0)| \approx 1. 
$$
In the remaining situation where both $\sigma_3$ and $\sigma_4$ are nondegenerate we simply untangle as before, but leave the $\sigma_1, \sigma_2$ terms inside $L^2$. 
\subsubsection{Proof of Lemma \ref{lemma:time_derivative_bnd_eta4}}
    This follows the exact same method used to prove the previous lemma: the key step is recognizing that, under the prescribed localization, $\eta_4 \approx -r(\eta_0) \neq 0, \pm \sqrt{3}$.

\section{Proof of Proposition \ref{prop:scattering_in_L^2}}
 \label{s:scattering_proof}
From theorem \ref{thm:big_result}, we have a global-in-time solution to \eqref{eqn:3bbm_cauchy_problem_asympt}, $u(t,x)$. Denote its profile by $f(t,x)$. Pick any $t_1, t_2$ such that $t_2>t_1>1$. Let $m_1$ be the largest integer such that $2^{m_1}\leq t_1$ and let $m_2$ be the smallest integer such that $2^{m_2}\geq t_2$. Then, we can write
$$
[t_1, t_2] = \bigcup_{m=m_1}^{m_2}\left\{\tau\sim 2^{m}\right\}\cap [t_1, t_2]. 
$$
Using \eqref{eqn:ODE_for_fhat_framework}, we have 
\begin{align*}
    \widehat{f}\left(t_2\right) -    \widehat{f}\left(t_1\right) &= \omega(\xi)\int_{t_1}^{t_2}\diff \tau \int \diff\eta_1\diff\eta_2\diff\eta_3 \ e^{-i\tau\varphi} \  \widehat{f}\left(\eta_1\right) \ \widehat{f}\left(\eta_2\right) \ \widehat{f}\left(\eta_3\right) \ \widehat{f}\left(\eta_{4}\right)
    \\
    &= \sum_{m=m_1}^{m_2}\int_{\left\{\tau\sim 2^{m}\right\}\cap [t_1, t_2]}\diff \tau \int \diff\eta_1\diff\eta_2\diff\eta_3 \ e^{-i\tau\varphi} \  \widehat{f}\left(\eta_1\right) \ \widehat{f}\left(\eta_2\right) \ \widehat{f}\left(\eta_3\right) \ \widehat{f}\left(\eta_{4}\right)= \sum_{m=m_1}^{m_2} J_{m},
\end{align*}
where the oscillatory integral $J_{m}$ is defined in \eqref{eqn:Jdefn}, up to massaging the endpoints of the temporal integration. We already showed how to bound each $\left\|J_m\right\|_{L^{\infty}_{\xi}}$ during the proof of proposition \ref{prop:bootstrap_close}: in particular, we know that there exists a small $a>0$ such that $\left\|J_m\right\|_{L^{\infty}_{\xi}} \lesssim \eps_0^4 2^{-am}$. Accordingly, 
\begin{align*}
    \left\|  \widehat{f}\left(t_2\right) -    \widehat{f}\left(t_1\right)\right\|_{L^{\infty}_{\xi}} = \left\|\sum_{m=m_1}^{m_2} J_{m}\right\|_{L^{\infty}_{\xi}} \lesssim \eps_0^4 \sum_{m=m_1}^{m_2}  2^{-am} \lesssim \eps_0^4 2^{-am_1} \simeq \eps_0^4 t_1^{-a}. 
\end{align*}
We then find that $\widehat{f}(t,\xi)$ is $L^{\infty}_{\xi}$-Cauchy in time. By completeness of $L^{\infty}_{\xi}$, there exists a unique asymptotic profile $\widehat{F}\in L^{\infty}_{\xi}$ such that $\widehat{f}(t,\xi) \rightarrow \widehat{F}\left(\xi\right)$ in $L^{\infty}_{\xi}$. In fact, we can also say that $\widehat{f}(t,\xi)\rightarrow \widehat{F}(\xi)$ in $L^{2}_{\xi}$: since the $L^{2}_{\xi}$ multilinear estimate from proposition \ref{prop:multilinear_estimates} allows us to put three functions in $L^{\infty}_{x}$ rather than just two (as in the $L^{\infty}_{\xi}$ multilinear estimate from proposition \ref{prop:multilinear_estimates_Linfty}), computations similar to those performed in the proof of proposition \ref{prop:bootstrap_close} would give $\left\|J_m\right\|_{L^{2}_{\xi}} \lesssim \eps_0^4 2^{-\left(a+\frac13\right)m}$, and we argue as before. 
\begin{remark}
    One can use the above arguments and Gagliardo-Nirenberg to recover scattering in $H^{r}_{x}$ for $r<100$.
\end{remark}
\section{Acknowledgements}
The material presented here formed the core of my PhD thesis. I would like to thank my advisor Fabio Pusateri for his patience and invaluable support. Additionally, I am grateful to the readers of my thesis (Adrian Nachman, Dmitry Pelinovsky, Adam Stinchcombe, and Catherine Sulem) for their helpful comments on this material. This work was partially supported by an NSERC CGS-D award. 

\appendix
\begin{appendices}
\section{Computation of Space-Time Resonances}
\label{section:res_comp}
\noindent In this section, we compute all space-time resonances associated to the oscillatory integral in \eqref{eqn:fhat_integrated_framework}. 
\subsection{What modes have the same group velocity?}
\par First, we discuss solutions to the equation 
\begin{equation}
\label{eqn:modes_with_groupvel_c}
\omega'(\xi) = c
\end{equation}
for some fixed $c\in \mathbb{R}$, where $\omega(\xi)=\xi\langle\xi\rangle^{-2}$ is the linBBM dispersion relation. 
\begin{itemize} 
\item Using the global maximum and minimum of $\omega'$, we know \eqref{eqn:modes_with_groupvel_c} has no solutions for $c<-1/8$ or $c>1$. 
\item If $c=1$, the only solution is $\xi=0$. 
\item If $c=0$, then $\xi=\pm1$ are the only solutions.
\item If $c=-1/8$, then $\xi=\pm\sqrt{3}$ are the only solutions. 
\item  If $c\in (0,1)$, then there are two solutions, given explicitly by 
\begin{equation}
 \pm\xi_{*} = \pm\sqrt{\frac{-(2c+1)+\sqrt{8c+1}}{2c}} \label{eqn:invert_from_c}.
\end{equation}
\item For $|\xi|>1$ and $c\in (-1/8, 0)$, \eqref{eqn:modes_with_groupvel_c} has four solutions.  One can find two solutions $\pm\xi_{*}$ using \eqref{eqn:invert_from_c}.  Further, a direct calculation shows that, if $\xi$ satisfies $\omega'(\xi)=c$ then so does $r\left(\xi\right)$ defined by \eqref{eqn:refl_denf}. Thus the four solutions to $\omega'(\xi) = c$ are $\pm\xi_{*}, \ \pm r\left(\xi_{*}\right)$: if we know one solution, we know all four. 
\end{itemize} 
Note that we can interpret $\xi\mapsto r\left(\xi\right)$ for $\xi >1$ as reflection about $\xi=\sqrt{3}$ on the graph of $\omega'(\xi)$. For $\xi<-1$, we can likewise interpret $\xi\mapsto r\left(\xi\right)$ as reflection about $\xi=-\sqrt{3}$ on the graph. In particular, $r\left(r\left(\xi\right)\right) = \xi$. For a plot of $r\left(\xi\right)$, see Figure \ref{fig:tilde}. 

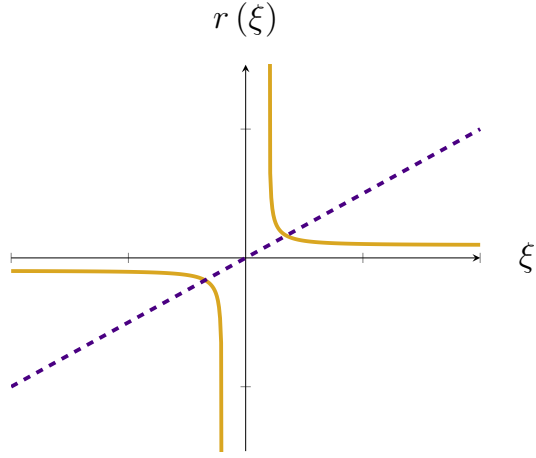
\begin{figure}
\centering
\begin{tikzpicture}[scale=0.9]
\begin{axis}[xmin=-10, xmax=10, ymin=-15., ymax=15., samples=200, no markers,
      axis x line = middle,
      axis y line = middle,
    ylabel=\Large{$r\left(\xi\right)$},
    xlabel=\Large{$\xi$},
    every axis x label/.style={
    at={(ticklabel* cs:1.1)},
},
every axis y label/.style={
    at={(ticklabel* cs:1.05)},
    anchor=south,
},
yticklabels={,,},
xticklabels={,,},
]
  \addplot[goldenrod, ultra thick, domain=1.001:10]  {sqrt((x*x + 3)/(x*x - 1))};
    \addplot[goldenrod, ultra thick, domain=-10:-1.001]  {-sqrt((x*x + 3)/(x*x - 1))};
      \addplot[indigo(web), dashed, ultra thick, domain=-10:10]  {x};

\end{axis}
\end{tikzpicture}
\caption{Plot of $r\left(\xi\right)$. There are vertical asymptotes at $|\xi|=1$. The purple dashed line intersects the graph twice, at $\pm\sqrt{3}$.} 
\label{fig:tilde}
\end{figure}

\subsection{Determining Resonances}
\label{ss:resonance_computation}
We begin by finding the critical points of the nonlinear phase function $\varphi(\eta_{1},\eta_{2},\eta_{3};\xi) .$ A critical point $(\eta_{1}, \eta_{2},\eta_{3})$ must satisfy
\begin{align*}
\partial_{\eta_{1}}\varphi &= \omega'(\eta_{1})-\omega'(\xi-\eta_{1}-\eta_{2}-\eta_{3}) = 0,
\\
\partial_{\eta_{2}}\varphi &= \omega'(\eta_{2})-\omega'(\xi-\eta_{1}-\eta_{2}-\eta_{3}) = 0,
\\
\partial_{\eta_{3}}\varphi &= \omega'(\eta_{3})-\omega'(\xi-\eta_{1}-\eta_{2}-\eta_{3}) = 0.
\end{align*}
In particular,
$$
\omega'(\eta_{1}) = \omega'(\eta_{2}) = \omega'(\eta_{3}) = \omega'(\xi-\eta_{1}-\eta_{2}-\eta_{3}) . 
$$
Using our earlier discussion on solutions of \eqref{eqn:modes_with_groupvel_c}, we know that there are four families of solutions to the above system:
\begin{itemize}
\item Family 1: $|\eta_{1}|=|\eta_{2}|=|\eta_{3}|$,
\item Family 2: $|\eta_{1}|=|\eta_{2}|=|r\left(\eta_{3}\right)|$,
\item Family 3: $|\eta_{1}|=|r\left(\eta_{2}\right)|=|\eta_{3}|$,
\item Family 4: $|r\left(\eta_{1}\right)|=|\eta_{2}|=|\eta_{3}|$.
\end{itemize}
Note that we can only have a solution in families 2, 3, or 4 when $|\eta_{1}|, |\eta_{2}|, |\eta_{3}| >1$. Additionally, in the special case $\xi=0$, we can pick out two breeds of resonant manifolds. 
\begin{enumerate}
    \item First, the line $L$ from \eqref{eqn:resonant_line_discussion} consists entirely of space-time resonances. We can get other resonant lines by changing the place of the single minus sign (for instance, we can have it fall on $\eta_2$ instead of $\eta_1$). 
    \item Additionally, the curve $\Gamma$ from \eqref{eqn:resonant_curve_discussion} is space-time resonant too. Note that $\eta_4$ in this case is $-\eta_1$. We can generate related space-time resonant curves by permuting the $\eta_j$'s for $j=1,...,4$: for instance, we can switch $\eta_2$ and $\eta_4$ to get the new resonant curve
    $$
    \Gamma' =  \left\{\left(\eta_1,\eta_2,\eta_3;\xi\right) = \left(\eta,-\eta,-r(\eta); 0\right) \ | \ \left|\eta\right|>1 \right\}\subseteq \mathbb{R}^4.
    $$
\end{enumerate}
 \ \par \noindent \textbf{Family 1}
 \par \noindent
 In this case we must have 
 \begin{align}
 |\eta_{1}|=|\eta_{2}|=|\eta_{3}| =\begin{cases} 
  |\xi-\eta_{1}-\eta_{2}-\eta_{3}| 
  \\
   \quad\quad\quad \text{or}
 \\
\left|r\left(\xi-\eta_{1}-\eta_{2}-\eta_{3}\right)\right|
 \end{cases}
 \label{eqn:fam_1_main_eqn}
 \end{align}
 There are several different subfamilies,
 \begin{subequations}
 \label{eqn:fam_1_subfam}
 \begin{align}
 \eta_{1}&=\phantom{-}\eta_{2}=\phantom{-}\eta_{3},\label{eqn:fam_1_a}
 \\
 -\eta_{1} &=\phantom{-} \eta_{2} = \phantom{-}\eta_{3},\label{eqn:fam_1_b}
 \\
 \eta_{1} &= -\eta_{2} = \phantom{-}\eta_{3},\label{eqn:fam_1_c}
 \\
 \eta_{1} &= \phantom{-}\eta_{2} =- \eta_{3},\label{eqn:fam_1_d}
 \end{align}
 \end{subequations}
each corresponding to four equations for a critical point by \eqref{eqn:fam_1_main_eqn}. 
\begin{itemize}
\item For \eqref{eqn:fam_1_a}, we have $\xi-\eta_{1}-\eta_{2}-\eta_{3}=\xi-3\eta_{1}$. Then, we must solve 
\begin{align*}
\eta_{1}  = \begin{cases} 
  \xi-3\eta_{1}
  \\
 3\eta_{1} - \xi
 \\
 r\left(\xi-3\eta_{1}\right)
 \\
-r\left(\xi-3\eta_{1}\right). 
 \end{cases}
\end{align*}
The first two cases yield the critical points $(\eta_{1},\eta_{2},\eta_{3}) = (\xi/4,\xi/4,\xi/4), \  (\xi/2,\xi/2,\xi/2)$. Now, the phase only vanishes at these space resonances when $\xi=0$. We have therefore found that $\left(\eta_1,\eta_2, \eta_3, ;\xi\right) = (0,0,0;0)$ is a space-time resonance. This lies on the resonant line $L$ defined in \eqref{eqn:resonant_line_discussion}. 
\par For the third case, we need to solve the nonlinear equation 
$$
3\eta_{1} + r\left(\eta_{1}\right) = \xi,
$$
since $r\left(r\left(\eta_{1}\right)\right)=\eta_{1}$.  From figure \ref{fig:tilde_system_1}, this equation has between $0$ and $2$ solutions for any given $\xi$. To see if these solutions give rise to a time resonance we need to compute
\begin{align*}
\varphi(\eta_{1},\eta_{1},\eta_{1}) &= -\omega(\xi)+3\omega(\eta_{1}) + \omega(\xi-3\eta_{1})
\\
&= -\omega(3\eta_{1}+r\left(\eta_{1}\right))+3\omega(\eta_{1}) + \omega(r\left(\eta_{1}\right)).
\end{align*}
The function in the last equality is plotted in figure \ref{fig:vanishing_phase_1}. We see that there are no roots, and that the function asymptotes to a nonzero number in either spatial direction. Therefore, the phase never vanishes at this critical point, so we're dealing with a pure space resonance. 

\begin{figure}
\centering
\begin{tikzpicture}[scale=0.9]
\begin{axis}[xmin=-10, xmax=10, ymin=-40., ymax=40., samples=200, no markers,
      axis x line = middle,
      axis y line = middle,
    xlabel=\Large{$\eta_{1}$},
    every axis x label/.style={
    at={(ticklabel* cs:1.1)},
},
every axis y label/.style={
    at={(ticklabel* cs:1.05)},
    anchor=south,
},
yticklabels={,,},
xticklabels={,,},
]
  \addplot[ballblue, ultra thick, domain=1.001:10]  {3*x+sqrt((x*x + 3)/(x*x - 1))};
    \addplot[ballblue, ultra thick, domain=-10:-1.001]  {3*x-sqrt((x*x + 3)/(x*x - 1))};
  
\end{axis}
\end{tikzpicture}
\caption{Plot of $3\eta_{1} + r\left(\eta_{1}\right)$. There are vertical asymptotes at $|\eta_{1}|=1$.} 
\label{fig:tilde_system_1}
\end{figure}

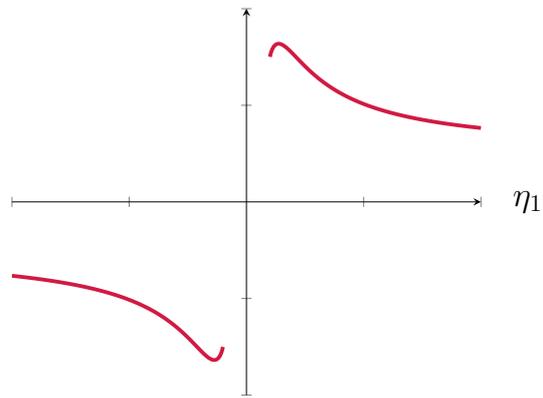
\begin{figure}
\centering
\begin{tikzpicture}[scale=0.9]
\begin{axis}[xmin=-10, xmax=10, ymin=-2., ymax=2., samples=200, no markers,
      axis x line = middle,
      axis y line = middle,
    xlabel=\Large{$\eta_{1}$},
    every axis x label/.style={
    at={(ticklabel* cs:1.1)},
},
every axis y label/.style={
    at={(ticklabel* cs:1.05)},
    anchor=south,
},
yticklabels={,,},
xticklabels={,,},
]
  \addplot[alizarin, ultra thick, domain=1.001:10]  {3*x/(1+x*x) + sqrt((x*x+3)/(x*x-1))/(1+(x*x+3)/(x*x-1)) - (3*x+ sqrt((x*x+3)/(x*x-1)))/(1+( 3*x+sqrt((x*x+3)/(x*x-1)))*(3*x+ sqrt((x*x+3)/(x*x-1))))};
    \addplot[alizarin, ultra thick, domain=-10:-1.001]  {3*x/(1+x*x) - sqrt((x*x+3)/(x*x-1))/(1+(x*x+3)/(x*x-1)) - (3*x- sqrt((x*x+3)/(x*x-1)))/(1+( 3*x-sqrt((x*x+3)/(x*x-1)))*(3*x- sqrt((x*x+3)/(x*x-1))))};
  
\end{axis}
\end{tikzpicture}
\caption{Plot of $3\omega(\eta_{1}) + \omega(r\left(\eta_{1}\right))-\omega(3\eta_{1}+r\left(\eta_{1}\right))$ on $\left\{|\eta_{1}|>1\right\}.$} 
\label{fig:vanishing_phase_1}
\end{figure}

For the fourth case, we need to solve the nonlinear equation 
$$
3\eta_{1} - r\left(\eta_{1}\right) = \xi. 
$$
From figure \ref{fig:tilde_system_2}, this equation always has $2$ solutions for any given $\xi$. To check if the phase vanishes here, we must find if the equation 
$$
3\omega(\eta_{1}) - \omega(r\left(\eta_{1}\right))-\omega(3\eta_{1}-r\left(\eta_{1}\right)) =0.
$$
has any solutions. From figure \ref{fig:vanishing_phase_2} we see there are $2$ solutions. A root-finding program may be used to verify that the roots are approximately $|\eta_{1}|\approx 5.1$. From figure \ref{fig:tilde_system_2}, there is indeed a value of $\xi$ that will yield such $\eta_{1}$ as stationary points, and we conclude that there are two special $\xi$ for which the phase vanishes at this critical point. Since $\xi=3\eta_{1}-r\left(\eta_{1}\right)$, we know the $\xi$-values giving rise to these roots are $\xi\approx \pm14.2$, which are quite far from the degenerate frequencies $|\xi|=0,\sqrt{3}$. 

\begin{figure}
\centering
\begin{tikzpicture}[scale=0.9]
\begin{axis}[xmin=-10, xmax=10, ymin=-40., ymax=40., samples=200, no markers,
      axis x line = middle,
      axis y line = middle,
    xlabel=\Large{$\eta_{1}$},
    every axis x label/.style={
    at={(ticklabel* cs:1.1)},
},
every axis y label/.style={
    at={(ticklabel* cs:1.05)},
    anchor=south,
},
yticklabels={,,},
xticklabels={,,},
]
  \addplot[ballblue, ultra thick, domain=1.001:10]  {3*x-sqrt((x*x + 3)/(x*x - 1))};
    \addplot[ballblue, ultra thick, domain=-10:-1.001]  {3*x+sqrt((x*x + 3)/(x*x - 1))};
  
\end{axis}
\end{tikzpicture}
\caption{Plot of $3\eta_{1} - r\left(\eta_{1}\right)$. There are vertical asymptotes at $|\eta_{1}|=1$.} 
\label{fig:tilde_system_2}
\end{figure}

\begin{figure}
\centering
\begin{tikzpicture}[scale=0.9]
\begin{axis}[xmin=-10, xmax=10, ymin=-1., ymax=1., samples=200, no markers,
      axis x line = middle,
      axis y line = middle,
    xlabel=\Large{$\eta_{1}$},
    every axis x label/.style={
    at={(ticklabel* cs:1.1)},
},
every axis y label/.style={
    at={(ticklabel* cs:1.05)},
    anchor=south,
},
yticklabels={,,},
xticklabels={,,},
]
  \addplot[alizarin, ultra thick, domain=1.25:10]  {3*x/(1+x*x) - sqrt((x*x+3)/(x*x-1))/(1+(x*x+3)/(x*x-1)) - (3*x- sqrt((x*x+3)/(x*x-1)))/(1+( 3*x-sqrt((x*x+3)/(x*x-1)))*(3*x- sqrt((x*x+3)/(x*x-1))))};
    \addplot[alizarin, ultra thick, domain=-10:-1.25]  {3*x/(1+x*x) + sqrt((x*x+3)/(x*x-1))/(1+(x*x+3)/(x*x-1)) - (3*x+ sqrt((x*x+3)/(x*x-1)))/(1+( 3*x+sqrt((x*x+3)/(x*x-1)))*(3*x+sqrt((x*x+3)/(x*x-1))))};
  
\end{axis}
\end{tikzpicture}
\caption{Plot of $3\omega(\eta_{1}) - \omega(r\left(\eta_{1}\right))-\omega(3\eta_{1}-r\left(\eta_{1}\right))$ with values from $\left\{|\eta_{1}|>1.2\right\}$. There are two roots at $|\eta_{1}|\approx 5.1$.} 
\label{fig:vanishing_phase_2}
\end{figure}
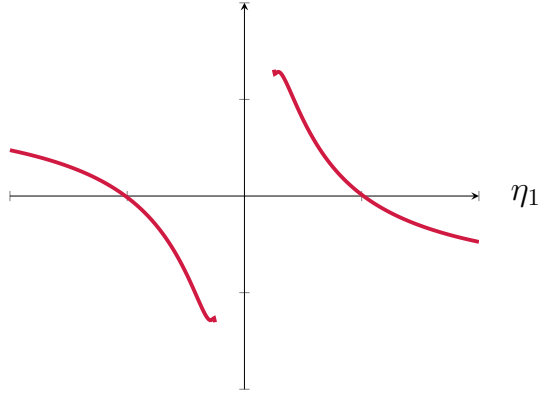

\item For \eqref{eqn:fam_1_b} we have $\xi-\eta_{1}-\eta_{2}-\eta_{3} = \xi+\eta_{1}$. Then, we must solve 

\begin{align*}
\eta_{1}  = \begin{cases} 
  \xi+\eta_{1}
  \\
 -\eta_{1} - \xi
 \\
 r\left(\xi+\eta_{1}\right)
 \\
- r\left(\xi+\eta_{1}\right). 
 \end{cases}
\end{align*}
The first equation can only be satisfied if $\xi=0$, giving rise to a resonance on a transformed version of the line \eqref{eqn:resonant_line_discussion} (the transformation just involves juggling negative signs). The second equation yields $\eta_{1}=-\xi/2$. Thus the second equation gives rise to the stationary point $(\eta_{1},\eta_{2},\eta_{3}) = (-\xi/2, \xi/2, \xi/2)$, and it's easy to see that the phase vanishes at this space resonance if any only if $\xi=0$, reproducing a case already encountered. 
\par The third equation requires us to solve 
$$
-\eta_{1} + r\left(\eta_{1}\right) = \xi.
$$
From figure \ref{fig:tilde_system_3}, this equation always has two solutions. To see if these solutions give rise to time resonances as well, we need to determine if
$$
-\omega(\eta_{1}) + \omega(r\left(\eta_{1}\right))-\omega(-\eta_{1}+r\left(\eta_{1}\right))=0
$$
has any solutions. Using figure \ref{fig:vanishing_phase_3}, we find there are two roots. Since $r\left(\sqrt{3}\right) = \sqrt{3}$, it is trivial to check that these roots occur at $|\eta_{1}|=\sqrt{3}$. Now, $\xi=-\eta_{1}+r\left(\eta_{1}\right)=0$, so we have a space-time resonance at $\left(\eta_1, \eta_2, \eta_3;\xi\right) = \left(-\sqrt{3}, \sqrt{3}, \sqrt{3}; 0\right)$. This lies on the resonant line given in \eqref{eqn:resonant_line_discussion}, and on a transformed version of the resonant curve in \eqref{eqn:resonant_curve_discussion}. 
\begin{figure}
\centering
\begin{tikzpicture}[scale=0.9]
\begin{axis}[xmin=-10, xmax=10, ymin=-40., ymax=40., samples=200, no markers,
      axis x line = middle,
      axis y line = middle,
    xlabel=\Large{$\eta_{1}$},
    every axis x label/.style={
    at={(ticklabel* cs:1.1)},
},
every axis y label/.style={
    at={(ticklabel* cs:1.05)},
    anchor=south,
},
yticklabels={,,},
xticklabels={,,},
]
  \addplot[ballblue, ultra thick, domain=1.001:10]  {-x+sqrt((x*x + 3)/(x*x - 1))};
    \addplot[ballblue, ultra thick, domain=-10:-1.001]  {-x-sqrt((x*x + 3)/(x*x - 1))};
  
\end{axis}
\end{tikzpicture}
\caption{Plot of $-\eta_{1} + r\left(\eta_{1}\right)$. There are vertical asymptotes at $|\eta_{1}|=1$.} 
\label{fig:tilde_system_3}
\end{figure}

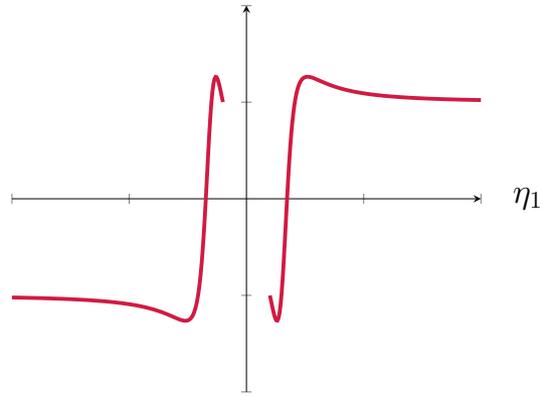
\begin{figure}
\centering
\begin{tikzpicture}[scale=0.9]
\begin{axis}[xmin=-10, xmax=10, ymin=-1., ymax=1., samples=200, no markers,
      axis x line = middle,
      axis y line = middle,
    xlabel=\Large{$\eta_{1}$},
    every axis x label/.style={
    at={(ticklabel* cs:1.1)},
},
every axis y label/.style={
    at={(ticklabel* cs:1.05)},
    anchor=south,
},
yticklabels={,,},
xticklabels={,,},
]
  \addplot[alizarin, ultra thick, domain=1.001:10]  {-x/(1+x*x) + sqrt((x*x+3)/(x*x-1))/(1+(x*x+3)/(x*x-1)) + (x- sqrt((x*x+3)/(x*x-1)))/(1+( x-sqrt((x*x+3)/(x*x-1)))*(x- sqrt((x*x+3)/(x*x-1))))};
    \addplot[alizarin, ultra thick, domain=-10:-1.001]  {-x/(1+x*x) - sqrt((x*x+3)/(x*x-1))/(1+(x*x+3)/(x*x-1)) + (x+sqrt((x*x+3)/(x*x-1)))/(1+( x+sqrt((x*x+3)/(x*x-1)))*(x+ sqrt((x*x+3)/(x*x-1))))};
  
\end{axis}
\end{tikzpicture}
\caption{Plot of $-\omega(\eta_{1}) + \omega(r\left(\eta_{1}\right))-\omega(-\eta_{1}+r\left(\eta_{1}\right))$ on $\left\{|\eta_{1}|>1\right\}.$ There are two roots at $|\eta_{1}|=\sqrt{3}$.} 
\label{fig:vanishing_phase_3}
\end{figure}

The fourth equation requires us to solve 
$$
-\eta_{1} - r\left(\eta_{1}\right) = \xi.
$$
From figure \ref{fig:tilde_system_4}, this equation has between zero and two solutions. The phase vanishes at this space resonance if and only if 
$$
-\omega(\eta_{1}) -\omega(r\left(\eta_{1}\right))+\omega(\eta_{1}+r\left(\eta_{1}\right)) =0
$$
From figure \ref{fig:vanishing_phase_4}, there are no solutions to the above equation.

\begin{figure}
\centering
\begin{tikzpicture}[scale=0.9]
\begin{axis}[xmin=-10, xmax=10, ymin=-40., ymax=40., samples=200, no markers,
      axis x line = middle,
      axis y line = middle,
    xlabel=\Large{$\eta_{1}$},
    every axis x label/.style={
    at={(ticklabel* cs:1.1)},
},
every axis y label/.style={
    at={(ticklabel* cs:1.05)},
    anchor=south,
},
yticklabels={,,},
xticklabels={,,},
]
  \addplot[ballblue, ultra thick, domain=1.001:10]  {-x-sqrt((x*x + 3)/(x*x - 1))};
    \addplot[ballblue, ultra thick, domain=-10:-1.001]  {-x+sqrt((x*x + 3)/(x*x - 1))};
  
\end{axis}
\end{tikzpicture}
\caption{Plot of $-\eta_{1} - r\left(\eta_{1}\right)$. There are vertical asymptotes at $|\eta_{1}|=1$.} 
\label{fig:tilde_system_4}
\end{figure}

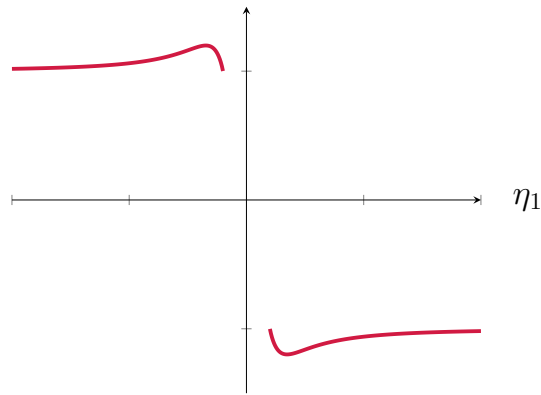
\begin{figure}
\centering
\begin{tikzpicture}[scale=0.9]
\begin{axis}[xmin=-10, xmax=10, ymin=-.75, ymax=.75, samples=200, no markers,
      axis x line = middle,
      axis y line = middle,
    xlabel=\Large{$\eta_{1}$},
    every axis x label/.style={
    at={(ticklabel* cs:1.1)},
},
every axis y label/.style={
    at={(ticklabel* cs:1.05)},
    anchor=south,
},
yticklabels={,,},
xticklabels={,,},
]
  \addplot[alizarin, ultra thick, domain=1.001:10]  {-x/(1+x*x) - sqrt((x*x+3)/(x*x-1))/(1+(x*x+3)/(x*x-1)) + (x+sqrt((x*x+3)/(x*x-1)))/(1+( x+sqrt((x*x+3)/(x*x-1)))*(x+ sqrt((x*x+3)/(x*x-1))))};
    \addplot[alizarin, ultra thick, domain=-10:-1.001]  {-x/(1+x*x) + sqrt((x*x+3)/(x*x-1))/(1+(x*x+3)/(x*x-1)) + (x-sqrt((x*x+3)/(x*x-1)))/(1+( x-sqrt((x*x+3)/(x*x-1)))*(x- sqrt((x*x+3)/(x*x-1))))};
  
\end{axis}
\end{tikzpicture}
\caption{Plot of $-\omega(\eta_{1}) -\omega(r\left(\eta_{1}\right))+\omega(\eta_{1}+r\left(\eta_{1}\right))$ on $\left\{|\eta_{1}|>1\right\}.$ There are no roots.} 
\label{fig:vanishing_phase_4}
\end{figure}

\item 
The cases \eqref{eqn:fam_1_c}, \eqref{eqn:fam_1_d} can be handled exactly like the case \eqref{eqn:fam_1_b}. 

\end{itemize}
 \ \par \noindent \textbf{Family 2}
 \par \noindent
 In this case we must have 
 \begin{align}
 |\eta_{1}|=|\eta_{2}|=|r\left(\eta_3\right)| =\begin{cases} 
  |\xi-\eta_{1}-\eta_{2}-\eta_{3}| 
  \\
   \quad\quad\quad \text{or}
  \\
\left|r\left(\xi-\eta_{1}-\eta_{2}-\eta_{3}\right)\right|
 \end{cases}
 \label{eqn:fam_2_main_eqn}
 \end{align}
 We can break this expression into four subfamilies following \eqref{eqn:fam_1_subfam}:
  \begin{subequations}
 \label{eqn:fam_2_subfam}
 \begin{align}
 \eta_{1}&=\phantom{-}\eta_{2}=\phantom{-}r\left(\eta_3\right),\label{eqn:fam_2_a}
 \\
 -\eta_{1} &=\phantom{-} \eta_{2} = \phantom{-}r\left(\eta_3\right),\label{eqn:fam_2_b}
 \\
 \eta_{1} &= -\eta_{2} = \phantom{-}r\left(\eta_3\right),\label{eqn:fam_2_c}
 \\
 \eta_{1} &= \phantom{-}\eta_{2} =- r\left(\eta_3\right),\label{eqn:fam_2_d}
 \end{align}
 \end{subequations}
 \begin{itemize}
 \item For \eqref{eqn:fam_2_a}, we have $\xi-\eta_{1}-\eta_{2}-\eta_{3} =\xi-2\eta_{1}-r\left(\eta_{1}\right)$. Then, we need to solve
\begin{align*}
\eta_{1}  = \begin{cases} 
\xi-2\eta_{1}-r\left(\eta_{1}\right)
  \\ 
-\xi+2\eta_{1}+r\left(\eta_{1}\right)
 \\
 r\left(\xi-2\eta_{1}-r\left(\eta_{1}\right)\right)
 \\
- r\left(\xi-2\eta_{1}-r\left(\eta_{1}\right)\right). 
 \end{cases}
\end{align*}
These equations reduce to 
\begin{align*}
\xi = \begin{cases}
3\eta_{1}+r\left(\eta_{1}\right)
\\ 
\eta_{1} +r\left(\eta_{1}\right)
\\ 
2(\eta_{1}+r\left(\eta_{1}\right))
\\
2\eta_{1}. 
\end{cases}
\end{align*}
The fourth equation immediately gives us the stationary point $(\eta_{1},\eta_{2},\eta_{3})= \left(\xi/2,\xi/2, r\left(\xi/2\right) \right)$. Note that this only makes sense when $\xi>2$. A quick computation shows that the phase cannot vanish in this case. 
\par For the other three equations, we have solutions of the form $\left(\eta_{1},\eta_{1},r\left(\eta_{1}\right)\right)$ where $\eta_{1}$ solves one of 
\begin{align}
3\eta_{1}+r\left(\eta_{1}\right)&=\xi \label{eqn:fam_2a_rough_1}
\\
\eta_{1}+r\left(\eta_{1}\right)&=\xi \label{eqn:fam_2a_rough_2}
\\
\eta_{1}+r\left(\eta_{1}\right)&=\frac{\xi}{2} \label{eqn:fam_2a_rough_3}.
\end{align}
To check if the phase vanishes at any of these stationary points, notice that the cases \eqref{eqn:fam_2a_rough_1} and \eqref{eqn:fam_2a_rough_2} can be handled using figure \ref{fig:vanishing_phase_1} and figure \ref{fig:vanishing_phase_2}: the phase does not vanish at any such points. For \eqref{eqn:fam_2a_rough_3}, it is enough to find if there are roots to 
$$
2\omega(\eta_{1}) +2\omega(r\left(\eta_{1}\right))-\omega(2\eta_{1}+2r\left(\eta_{1}\right))=0.
$$
Using figure \ref{fig:vanishing_phase_5}, we find this is impossible.

\begin{figure}
\centering
\begin{tikzpicture}[scale=0.9]
\begin{axis}[xmin=-10, xmax=10, ymin=-2., ymax=2., samples=200, no markers,
      axis x line = middle,
      axis y line = middle,
    xlabel=\Large{$\eta_{1}$},
    every axis x label/.style={
    at={(ticklabel* cs:1.1)},
},
every axis y label/.style={
    at={(ticklabel* cs:1.05)},
    anchor=south,
},
yticklabels={,,},
xticklabels={,,},
]
  \addplot[alizarin, ultra thick, domain=1.001:10]  {2*x/(1+x*x) +2* sqrt((x*x+3)/(x*x-1))/(1+(x*x+3)/(x*x-1)) - (2*x+2*sqrt((x*x+3)/(x*x-1)))/(1+( 2*x+2*sqrt((x*x+3)/(x*x-1)))*(2*x+ 2*sqrt((x*x+3)/(x*x-1))))};
    \addplot[alizarin, ultra thick, domain=-10:-1.001]  {2*x/(1+x*x) -2* sqrt((x*x+3)/(x*x-1))/(1+(x*x+3)/(x*x-1)) - (2*x-2*sqrt((x*x+3)/(x*x-1)))/(1+( 2*x-2*sqrt((x*x+3)/(x*x-1)))*(2*x-2*sqrt((x*x+3)/(x*x-1))))};
  
\end{axis}
\end{tikzpicture}
\caption{Plot of $2\omega(\eta_{1}) +2\omega(r\left(\eta_{1}\right))-\omega(2\eta_{1}+2r\left(\eta_{1}\right))$ on $\left\{|\eta_{1}|>1\right\}.$ There are no roots. } 
\label{fig:vanishing_phase_5}
\end{figure}
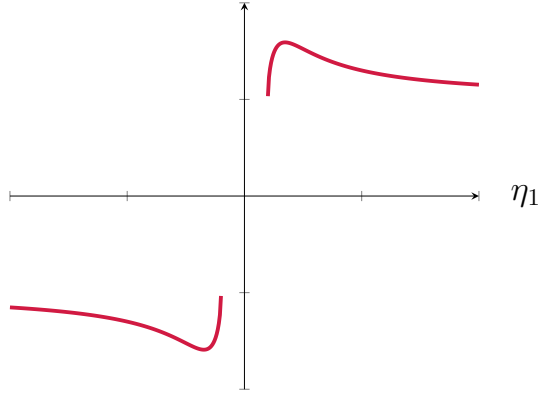

\item   For \eqref{eqn:fam_2_b}, we have $\xi-\eta_{1}-\eta_{2}-\eta_{3} =\xi+r\left(\eta_{1}\right)$. Then, we need to solve
\begin{align*}
\eta_{1}  = \begin{cases} 
\xi+r\left(\eta_{1}\right)
  \\
-\xi-r\left(\eta_{1}\right)
 \\
 r\left(\xi+r\left(\eta_{1}\right)\right)
 \\
- r\left(\xi+r\left(\eta_{1}\right)\right). 
 \end{cases}
\end{align*}
These equations reduce to
 \begin{align*}
\xi  = \begin{cases} 
\eta_{1} - r\left(\eta_{1}\right)
  \\ 
-\eta_{1}-r\left(\eta_{1}\right)
 \\
0
 \\
 -2r\left(\eta_{1}\right).
 \end{cases}
\end{align*}
The fourth equation gives us $(\eta_{1},\eta_{2},\eta_{3}) = \left(-r\left(\xi/2\right), r\left(\xi/2\right), \xi/2\right)$. The phase does not vanish at the point since $|\xi|>2$. Also, the third equation gives rise to a space-time resonance on a transformed version of the curve $\Gamma$ given in \eqref{eqn:resonant_curve_discussion}. The first two equations have between two and four solutions, by figure \ref{fig:tilde_system_4}. Thus we may use figures \ref{fig:vanishing_phase_3} and \ref{fig:vanishing_phase_4} and the relevant discussion from previous cases to find that the only STR contributed by these equations is $\left(-\sqrt{3}, \sqrt{3}, \sqrt{3};0\right)$ (up to permuting the negative sign among the $\eta_j$'s). 

\item   For \eqref{eqn:fam_2_c}, we argue exactly as in the previous item. 

\item  For \eqref{eqn:fam_2_d}, we have $\xi-\eta_{1}-\eta_{2}-\eta_{3} =\xi-2\eta_{1}+r\left(\eta_{1}\right)$. Then, we need to solve
\begin{align*}
\eta_{1}  = \begin{cases} 
\xi-2\eta_{1}+r\left(\eta_{1}\right)
  \\ 
-\xi+2\eta_{1}-r\left(\eta_{1}\right)
 \\
 r\left(\xi-2\eta_{1}+r\left(\eta_{1}\right)\right)
 \\
- r\left(\xi-2\eta_{1}+r\left(\eta_{1}\right)\right). 
 \end{cases}
\end{align*}
These reduce to 
\begin{align*}
\xi  = \begin{cases} 
3\eta_{1} - r\left(\eta_{1}\right)
  \\ 
\eta_{1}- r\left(\eta_{1}\right)
 \\
2\eta_{1}
 \\
2\eta_{1}-2r\left(\eta_{1}\right). 
 \end{cases}
\end{align*}
The third equation is easily solved to find $(\eta_{1},\eta_{2},\eta_{3})= \left(\xi/2,\xi/2, -r\left(\xi/2\right) \right)$. The phase cannot vanish here since $|\xi|>2$
\par 
From our earlier work, we find there are 6 other stationary points of the form $(\eta_{1},\eta_{1},-r\left(\eta_{1}\right))$ with $\eta_{1}$ solving one of the equations
\begin{subequations}
\begin{align}
3\eta_{1}-r\left(\eta_{1}\right) &= \xi \label{eqn:fam_3d_rough_1}
\\
\eta_{1}-r\left(\eta_{1}\right) &= \xi \label{eqn:fam_3d_rough_2}
\\
\eta_{1}-r\left(\eta_{1}\right) &=\frac{\xi}{2} \label{eqn:fam_3d_rough_3}.
\end{align}
\end{subequations}
The phase vanishes in the case of \eqref{eqn:fam_3d_rough_1} if and only if 
$$
3\omega(\eta_{1})-\omega\left(r\left(\eta_{1}\right)\right)-\omega\left(3\eta_{1}-r\left(\eta_{1}\right)\right) =0. 
$$
From figure \ref{fig:vanishing_phase_2}, this has two solutions $|\eta_{1}|\approx 5.1$, corresponding to $|\xi|\approx 14.2$. 
\par 
The phase vanishes in the case of \eqref{eqn:fam_3d_rough_2} if and only if 
$$
\omega(\eta_{1})-\omega\left(r\left(\eta_{1}\right)\right)-\omega\left(\eta_{1}-r\left(\eta_{1}\right)\right) =0. 
$$
From figure \ref{fig:vanishing_phase_3}, this gives us an STR we've already counted. 
\par 
The phase vanishes in the case of \eqref{eqn:fam_3d_rough_3} if and only if 
$$
2\omega(\eta_{1})-2\omega\left(r\left(\eta_{1}\right)\right)-\omega\left(2\eta_{1}-2r\left(\eta_{1}\right)\right) =0. 
$$
From figure \ref{fig:vanishing_phase_6}, this equation has roots if and only if $|\eta_{1}|=\sqrt{3}$ and $\xi=0$, which is again a situation we've already counted. 

\begin{figure}
\centering
\begin{tikzpicture}[scale=0.9]
\begin{axis}[xmin=-10, xmax=10, ymin=-1.1, ymax=1.1, samples=200, no markers,
      axis x line = middle,
      axis y line = middle,
    xlabel=\Large{$\eta_{1}$},
    every axis x label/.style={
    at={(ticklabel* cs:1.1)},
},
every axis y label/.style={
    at={(ticklabel* cs:1.05)},
    anchor=south,
},
yticklabels={,,},
xticklabels={,,},
]
  \addplot[alizarin, ultra thick, domain=1.001:10]  {2*x/(1+x*x) -2* sqrt((x*x+3)/(x*x-1))/(1+(x*x+3)/(x*x-1)) - (2*x-2*sqrt((x*x+3)/(x*x-1)))/(1+( 2*x-2*sqrt((x*x+3)/(x*x-1)))*(2*x-2*sqrt((x*x+3)/(x*x-1))))};
    \addplot[alizarin, ultra thick, domain=-10:-1.001]  {2*x/(1+x*x) +2* sqrt((x*x+3)/(x*x-1))/(1+(x*x+3)/(x*x-1)) - (2*x+2*sqrt((x*x+3)/(x*x-1)))/(1+( 2*x+2*sqrt((x*x+3)/(x*x-1)))*(2*x+2*sqrt((x*x+3)/(x*x-1))))};
  
\end{axis}
\end{tikzpicture}
\caption{Plot of $2\omega(\eta_{1})-2\omega\left(r\left(\eta_{1}\right)\right)-\omega\left(2\eta_{1}-2r\left(\eta_{1}\right)\right)$ on $\left\{|\eta_{1}|>1\right\}.$ There are two roots at $|\eta_{1}|=\sqrt{3}$. } 
\label{fig:vanishing_phase_6}
\end{figure}
 \end{itemize}
 
 \par \noindent \textbf{Families 3 and 4}
 \par \noindent
By symmetry, we can find all of the resonant points from Family 3 and Family 4 by permuting the $-1$'s and $r$'s in the resonant points from Family 2: for Family 3, we switch $\eta_{2}$ and $\eta_{3}$ then apply the Family 2 results, and for Family 4 we switch $\eta_{1}$ and $\eta_{3}$ and apply the Family 2 results. 

\section{Multilinear Estimates}

\noindent The key $L^2_{\xi}$-estimate we'll need for multilinear operators is as follows: 
\begin{prop}\label{prop:multilinear_estimates}
Suppose $m(\eta_1, \eta_2, \eta_3,\xi)$ satisfies
\begin{equation}
\label{eqn:mult_bound}
\left\|\int \diff\eta_1\diff\eta_2\diff\eta_3\diff\xi \ e^{i\left(\eta_1,\eta_2,\eta_3,\xi\right)\cdot\left(x_1,x_2,x_3,y\right)} m(\eta_1, \eta_2, \eta_3,\xi)\right\|_{L^1_{x_{1},x_{2},x_{3},y}} \leq A. 
\end{equation}
For any $p_{1},...,p_{5}\in [1,\infty]$ satisfying $\sum_{k=1}^{5} \frac{1}{p_{k}} = 1$ and all sufficiently nice $v_{1}(x), ... , v_{5}(x)$, we have
\begin{equation}\label{eqn:me_all_int}
\hspace{-1cm}
   \left| \int \diff\eta_1\diff\eta_2\diff\eta_3\diff\xi \  \widehat{v_{1}}\left(\eta_{1}\right) \  \widehat{v_{2}}\left(\eta_{2}\right)\  \widehat{v_{3}}\left(\eta_{3}\right)\  \widehat{v_{4}}\left(\xi\right)   \  \widehat{v_{5}}\left(-\xi-\sum_{j=1}^{3}\eta_{j}\right) m(\eta_1, \eta_2, \eta_3,\xi) \right| \lesssim A \prod_{k=1}^{5} \left\|v_{k}\right\|_{L^{p_{k}}_{x}}. 
   \end{equation}
Additionally, for any $q_{1},...,q_{4}\in [2,\infty]$ satisfying $\sum_{k=1}^{4} \frac{1}{q_{k}} = \frac12$, we have the $L^2$ estimate 
\begin{equation}\label{eqn:me_L2}
\left\| \int \diff\eta_1\diff\eta_2\diff\eta_3 \ \widehat{u_{1}}\left(\eta_{1}\right) \  \widehat{u_{2}}\left(\eta_{2}\right)\  \widehat{u_{3}}\left(\eta_{3}\right)\  \widehat{u_{4}}\left(\xi-\sum_{j=1}^{3}\eta_{j}\right) \ m(\eta_1, \eta_2, \eta_3,\xi)\right\|_{L^2_{\xi}} \lesssim A \prod_{k=1}^{4} \left\|u_{k}\right\|_{L^{q_{k}}_{x}}. 
    \end{equation}
\end{prop}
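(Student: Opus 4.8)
\textbf{Proof Plan for Proposition \ref{prop:multilinear_estimates}.}

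The plan is to reduce everything to the single hypothesis \eqref{eqn:mult_bound} by writing the multilinear form as an integral against the kernel $K(x_1,x_2,x_3,y)$ obtained from $m$ by inverse Fourier transform, and then applying a H\"older inequality in physical space. First I would handle \eqref{eqn:me_all_int}. Writing $m = \widehat{K}$ in the sense that $K$ is the function whose Fourier transform is $m$ (so that $\|K\|_{L^1_{x_1,x_2,x_3,y}} \le A$ is exactly \eqref{eqn:mult_bound}), I would insert the Fourier inversion formula for each $\widehat{v_k}$ into the left-hand side of \eqref{eqn:me_all_int}. The key algebraic point is that the $\eta_1,\eta_2,\eta_3,\xi$ integrals, together with the extra argument $-\xi-\sum_j \eta_j$ in $\widehat{v_5}$, collapse: carrying out the $\eta_j$ and $\xi$ integrals against the exponentials produces exactly $K$ evaluated at shifted spatial variables, specifically something of the form $\int K(x_1 - z, x_2 - z, x_3 - z, x_5 - z)\, \prod_{k=1}^4 v_k(x_k)\, v_5(x_5)\, \diff z\, \diff x_1\cdots$. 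After a change of variables this is a convolution-type expression, and one bounds it by $\|K\|_{L^1} \prod_{k=1}^5 \|v_k\|_{L^{p_k}_x}$ using the generalized Young/H\"older inequality with the exponent condition $\sum 1/p_k = 1$. This is a standard manipulation (it is the coefficient-bound lemma underlying all Coifman--Meyer-type multilinear multiplier theory), so I would present it compactly rather than tracking every constant.

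For the $L^2_\xi$ estimate \eqref{eqn:me_L2}, I would deduce it from \eqref{eqn:me_all_int} by duality. The $L^2_\xi$ norm of the expression $G(\xi) \doteq \int \diff\eta_1\diff\eta_2\diff\eta_3\, \widehat{u_1}(\eta_1)\widehat{u_2}(\eta_2)\widehat{u_3}(\eta_3)\widehat{u_4}(\xi - \sum_j \eta_j)\, m$ equals the supremum over $\|h\|_{L^2_\xi} \le 1$ of $|\int G(\xi)\overline{h(\xi)}\,\diff\xi|$. Setting $\widehat{v_5} = \overline{h(-\,\cdot\,)}$ (so that $\|v_5\|_{L^2_x} = \|h\|_{L^2_\xi} \le 1$ by Plancherel) and matching the argument $\xi - \sum_j \eta_j$ of $\widehat{u_4}$ with the slot $\xi$ in \eqref{eqn:me_all_int} and the dual variable with the $-\xi - \sum_j \eta_j$ slot (this requires a harmless relabeling/sign change of the integration variables, which does not affect the $L^1$ norm of the kernel), I land exactly in the setting of \eqref{eqn:me_all_int} with $v_1 = u_1,\dots,v_4 = u_4$ and $v_5$ the dual function. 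Choosing $p_5 = 2$ and $p_k = q_k$ for $k=1,\dots,4$, the constraint $\sum_{k=1}^5 1/p_k = 1$ becomes $\sum_{k=1}^4 1/q_k = 1/2$, which is precisely the hypothesis. Taking the supremum over $h$ gives \eqref{eqn:me_L2}.

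The main obstacle I anticipate is purely bookkeeping: getting the change of variables in the exponents exactly right so that the $(2\pi)$-powers, the sign conventions in the Fourier transform, and the identification of which spatial variable is convolved against which slot of $K$ all line up, and confirming that every relabeling used is a measure-preserving affine map so that \eqref{eqn:mult_bound} is genuinely invariant under it. There is no analytic difficulty once the form is rewritten in physical space --- it is just H\"older --- so the proof is essentially a careful unwinding of definitions; I would state the kernel identity as the crux and leave the exponent arithmetic to the reader. I would also remark that the hypotheses $q_k \ge 2$ and $p_k \ge 1$ are exactly what is needed for the H\"older exponents to be admissible, and that in practice (as used throughout Section \ref{s:the_technical_proof}) one takes $A$ to be the quantity furnished by Lemma \ref{lemma:multiplier_cheat_code}, which provides a convenient sufficient condition on $m$ and its derivatives guaranteeing \eqref{eqn:mult_bound}.
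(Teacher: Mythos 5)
Your proposal is correct: the paper's ``proof'' of Proposition \ref{prop:multilinear_estimates} is just a citation to \cite{IP2014} and \cite{Pusateri2013}, and the argument you describe --- inverting $m$ to a kernel $K\in L^1$, rewriting the multilinear form as $\int K(z_1,\dots,z_4)\int v_5(w)\prod_{k=1}^4 v_k(w-z_k)\,\diff w\,\diff z$, and applying H\"older in $w$ followed by integrating $\|K\|_{L^1}$, then obtaining \eqref{eqn:me_L2} by duality after the determinant-one change of variables $\xi\mapsto \xi-\sum_j\eta_j$ (under which \eqref{eqn:mult_bound} is invariant) --- is exactly the standard Coifman--Meyer coefficient-bound argument used in those references. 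Your exponent bookkeeping ($p_5=2$, $p_k=q_k$ giving $\sum 1/q_k=1/2$) and the observation that the frequency relabeling preserves the $L^1$ kernel norm are the two points that need to be checked, and you have both of them right.
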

\begin{proof} This follows from the arguments used to prove similar results in \cite{IP2014} and \cite{Pusateri2013}.
\end{proof}
\noindent Proposition \ref{prop:multilinear_estimates} is easy to apply \emph{after} we have shown that $\overset{\vee}{m}\in L^{1}_{x_{1}x_{2}x_{3}y}$. Verifying this condition directly may be time-consuming in practice, but there are shortcuts one can sometimes take. For instance, if we've already shown two symbols $m_1, m_2$ satisfy the required integrability condition, then Young's convolution inequality implies their product satisfies this condition as well:
\begin{lemma}\label{lemma:symbol_algebra_property}
    Suppose $m_{1}, m_2\colon \mathbb{R}^{d}\rightarrow\mathbb{C}$ satisfy $\left\|\overset{\vee}{m_{j}}\right\|_{L^1_{x}} \lesssim A_j$ for $j=1,2.$ Then, $\left\|\left(m_{1}m_2\right)^{\vee}\right\|_{L^1_{x}} \lesssim A_1 A_2$. 
\end{lemma}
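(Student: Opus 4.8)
The final statement is Lemma \ref{lemma:symbol_algebra_property}, a short algebraic closure property for the class of symbols whose inverse Fourier transforms lie in $L^1$.

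\textbf{Plan of proof.} The claim is that the relevant symbol class is closed under pointwise multiplication, with the associated constant $A$ multiplying. The natural approach is to pass to the physical side via the Fourier transform, where pointwise multiplication becomes convolution, and then invoke Young's convolution inequality. First I would recall that the hypothesis $\left\|\overset{\vee}{m_j}\right\|_{L^1_x}\lesssim A_j$ says precisely that each $m_j$ is (up to a harmless normalization constant) the Fourier transform of an $L^1$ function, i.e.\ $m_j = \widehat{g_j}$ with $\left\|g_j\right\|_{L^1_x}\lesssim A_j$; indeed one takes $g_j = \overset{\vee}{m_j}$ and uses Fourier inversion. Second, I would use the convolution theorem: since $m_1 m_2 = \widehat{g_1}\,\widehat{g_2} = \widehat{\,g_1 * g_2\,}$ (again up to the fixed $(2\pi)^{d/2}$-type constant coming from the chosen Fourier normalization, which is absorbed into the implicit constant in $\lesssim$), we get $\left(m_1 m_2\right)^{\vee} = g_1 * g_2$ up to that same constant. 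Third, Young's inequality with exponents $1 * 1 \to 1$ gives $\left\|g_1 * g_2\right\|_{L^1_x}\leq \left\|g_1\right\|_{L^1_x}\left\|g_2\right\|_{L^1_x}\lesssim A_1 A_2$, which is exactly the desired bound $\left\|\left(m_1 m_2\right)^{\vee}\right\|_{L^1_x}\lesssim A_1 A_2$.

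One mild bookkeeping point worth flagging: the symbols appearing in the applications (and in the statement of Proposition \ref{prop:multilinear_estimates}) are functions of the four variables $(\eta_1,\eta_2,\eta_3,\xi)$, so $d$ here is $4$ and the convolution is in four physical variables $(x_1,x_2,x_3,y)$; nothing changes, since Young's inequality holds in any dimension. I should also make sure the Fourier-transform convention used is consistent with the one fixed in the notation section, so that the constants relating $\widehat{g_1 g_2}$, $\widehat{g_1}\widehat{g_2}$, and $g_1 * g_2$ are genuinely absolute and can be swallowed by $\lesssim$; this is purely cosmetic.

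\textbf{Main obstacle.} There is essentially no obstacle: the lemma is a one-line consequence of the convolution theorem plus Young's inequality, and the only thing requiring any care is tracking the normalization constants from the Fourier transform convention so that they remain absolute constants independent of $m_1, m_2$. The statement is included precisely because it lets one \emph{combine} previously verified symbol bounds (for instance, multiplying a cutoff symbol by a ratio-of-derivatives-of-$\varphi$ symbol) without re-deriving the $L^1$ integrability from scratch each time, as is done repeatedly in Sections \ref{ss:anomalous}--\ref{ss:res_curves} and Part IV.
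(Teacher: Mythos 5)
Your proof is correct and is exactly the argument the paper has in mind: the lemma is stated immediately after the remark that "Young's convolution inequality implies their product satisfies this condition as well," and is left with a \qed. Passing through the convolution theorem and applying Young's inequality $L^1 * L^1 \to L^1$ is the intended one-line proof; your bookkeeping about normalization constants and the dimension $d=4$ is reasonable but inessential.
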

\qed
\par \noindent The most versatile type of shortcut is simply verifying a pointwise Mikhlin-H\"{o}rmander-type bound on the derivatives of $m$. Such a shortcut requires $m$ to be spectrally localized, but since all our oscillatory integral estimates take place within an LP framework this is perfectly acceptable. 
\begin{lemma}\label{lemma:multiplier_cheat_code}
    Suppose $k_{1}, ... , k_{d}\in \mathbb{Z}$ satisfy $2^{k_{j}}<1$ for all $j$. Let $\psi_{k_{j}}(\xi_j)\colon \mathbb{R}\rightarrow [0,1]$ be the annular bump functions from definition \ref{defn:bumps}. Additionally, suppose $a(\xi)\colon \mathbb{R}^{d}\rightarrow \mathbb{C}$ is smooth on the support of $\prod_{j=1}^{d}\psi_{k_{j}}(\xi_j)$ and satisfies, for some $p>d$,
    \begin{equation}\label{eqn:cheat_code_mikhlin_condition}
    \left|\left(\partial_{\xi_{\ell}}^{q}a\right) \ \prod_{j=1}^{d}\psi_{k_{j}}(\xi_j)\right| \lesssim 2^{-k_{\ell}q}A \quad \forall \ q=0,1,...,p, \quad \forall \ \ell=1,...,d,
    \end{equation}
with a bounding constant independent of $A$. Then, the multiplier $m(\xi) \doteq a(\xi) \ \prod_{j=1}^{d}\psi_{k_{j}}(\xi_j)$
satisfies
\begin{equation}
    \label{eqn:multiplier_cheat_code}
    \left\|\overset{\vee}{m}\right\|_{L^{1}_{x}} \lesssim A 
\end{equation}
with a bounding constant independent of $A$ and the $k_j$'s. 
\end{lemma}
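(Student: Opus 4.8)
\textbf{Proof proposal for Lemma \ref{lemma:multiplier_cheat_code}.}

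The plan is to reduce the $L^1$ bound on $\overset{\vee}{m}$ to a standard weighted-$L^2$ estimate via Cauchy--Schwarz, exploiting the annular localization to track all powers of $2^{k_j}$ explicitly. First I would rescale: write $\xi_j = 2^{k_j}\zeta_j$ so that each $\psi_{k_j}(\xi_j) = \psi(\zeta_j)$ is supported in a fixed annulus $\{\tfrac12 \le |\zeta_j| \le 2\}$ independent of $k_j$. Under this change of variables, $m(\xi) = a(2^{k_1}\zeta_1, \dots, 2^{k_d}\zeta_d)\prod_j \psi(\zeta_j) =: \widetilde{m}(\zeta)$, and the hypothesis \eqref{eqn:cheat_code_mikhlin_condition} becomes $|\partial_{\zeta_\ell}^q \widetilde{m}| \lesssim A$ for all $0 \le q \le p$ and all $\ell$, with constants independent of the $k_j$'s and of $A$. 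The Jacobian of the substitution contributes a factor $\prod_j 2^{k_j}$ to $\overset{\vee}{m}$, but the corresponding rescaling of the physical variables $x_\ell \mapsto 2^{-k_\ell}x_\ell$ exactly compensates in the $L^1_x$ norm, so $\|\overset{\vee}{m}\|_{L^1_x} = \|\overset{\vee}{\widetilde{m}}\|_{L^1_z}$ and it suffices to bound the latter by $A$.

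Next I would prove the scale-invariant claim: if $\widetilde{m}$ is supported in the fixed compact annular region $\prod_j\{\tfrac12\le|\zeta_j|\le 2\}$ and satisfies $\|\widetilde{m}\|_{C^p} \lesssim A$ for some $p > d$, then $\|\overset{\vee}{\widetilde{m}}\|_{L^1_z} \lesssim A$. This is the classical argument: split $\mathbb{R}^d_z = \{|z| \le 1\} \cup \{|z| > 1\}$. On the unit ball, Cauchy--Schwarz and Plancherel give $\int_{|z|\le 1}|\overset{\vee}{\widetilde{m}}| \lesssim \|\overset{\vee}{\widetilde{m}}\|_{L^2_z} = \|\widetilde{m}\|_{L^2_\zeta} \lesssim A$, using that $\widetilde{m}$ is supported in a set of bounded measure. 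On the complement, I would use that $|z|^{2N}|\overset{\vee}{\widetilde{m}}(z)|^2$ is, up to constants, controlled by $\|\,|\nabla|^N \widetilde{m}\|_{L^2}^2 \lesssim A^2$ for any integer $N$ with $0 \le N \le p$ (distributing the Laplacian-type operator among the coordinate directions and invoking \eqref{eqn:cheat_code_mikhlin_condition}); then
\begin{align*}
\int_{|z|>1}|\overset{\vee}{\widetilde{m}}(z)|\,\diff z \le \left(\int_{|z|>1}|z|^{-2N}\diff z\right)^{1/2}\left(\int_{|z|>1}|z|^{2N}|\overset{\vee}{\widetilde{m}}(z)|^2\,\diff z\right)^{1/2} \lesssim A,
\end{align*}
where the first factor is finite precisely because we may choose $N > d/2$, which is possible since $p > d \ge d/2$. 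Combining the two regions yields the claim, and unwinding the rescaling finishes the lemma.

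The main obstacle — really the only point requiring care — is bookkeeping the $2^{k_j}$ powers correctly through the anisotropic rescaling, i.e.\ confirming that the Jacobian factor from the $\xi \mapsto \zeta$ substitution and the dual rescaling $x \mapsto 2^{-k}x$ cancel in $\|\cdot\|_{L^1_x}$ and that the derivative hypothesis \eqref{eqn:cheat_code_mikhlin_condition}, with its built-in $2^{-k_\ell q}$ weights, transforms into a clean $k$-independent $C^p$ bound on $\widetilde{m}$. Once that normalization is in place, the remainder is the textbook Bernstein/Mikhlin estimate on a fixed annulus and involves no difficulty; the hypothesis $p > d$ is used only to guarantee the convergence of $\int_{|z|>1}|z|^{-2N}\diff z$ for an admissible integer $N$.
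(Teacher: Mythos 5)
Your proposal is correct and amounts to the standard rescaling-plus-Plancherel route: anisotropically rescale so the bumps sit on a fixed annulus, observe that the $L^{1}_{x}$ norm is scale-invariant and that the hypothesis \eqref{eqn:cheat_code_mikhlin_condition} becomes a clean $k$-independent $C^{p}$ bound, then get $L^1$ of the kernel by Cauchy--Schwarz against $\langle z\rangle^{-N}$ and Plancherel. The paper instead compresses the argument to ``fractional integration by parts,'' which in this context means the direct pointwise route: integrate by parts $N$ times in the $\xi_{\ell}$ for which $|z_{\ell}|$ is largest to get $|\overset{\vee}{\widetilde m}(z)|\lesssim A(1+|z|)^{-N}$, then integrate; that version genuinely uses the full strength of $p>d$ (one needs $N>d$ for integrability), whereas your $L^2$ route only needs $N>d/2$, so the hypothesis $p>d$ is slightly more than you spend. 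Both are routine; yours has the small aesthetic advantage of separating the scaling bookkeeping from the fixed-scale analysis.

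One point worth tightening: the lemma only supplies bounds on \emph{pure} directional derivatives $\partial_{\xi_\ell}^q a$, not mixed ones, so the intermediate quantity $\||\nabla|^N\widetilde m\|_{L^2}$ is not directly controllable — expanding $(-\Delta)^N$ produces mixed partials $\partial^\alpha\widetilde m$ with $|\alpha|=N$ for which \eqref{eqn:cheat_code_mikhlin_condition} says nothing. The fix, which is what your parenthetical ``distributing among coordinate directions'' should cash out to, is to weight with $\sum_{\ell}|z_\ell|^{2N}$ rather than $|z|^{2N}$ (the two are comparable up to $d$-dependent constants). After Plancherel this produces only $\sum_{\ell}\|\partial_{\zeta_\ell}^N\widetilde m\|_{L^2}^2$, i.e.\ pure derivatives, which the rescaled hypothesis does bound by $A^2$. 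With that substitution the proof is complete.
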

\begin{proof}
    The result follows from a direct calculation using fractional integration by parts. 
\end{proof}
\noindent We'll also need a ``secondarily-localized'' version of lemma \ref{lemma:multiplier_cheat_code}: 
\begin{lemma}\label{lemma:multiplier_cheat_code_secondary_localization}
    Suppose $k_{1}, ... , k_{d}\in \mathbb{Z}$ and $\ell_{1}, ... , \ell_{d}\in \mathbb{Z}$ satisfy
    \begin{itemize}
        \item $2^{k_{j}}, 2^{\ell_{j}}<1$ for all $j$, 
        \item $2^{k_{j}}\approx 2^{k_{j'}}$ for all $j, j'$,
        \item $2^{\ell_{j}}\lesssim 2^{k_j}$ for all j. 
    \end{itemize} 
    Let $\psi_{k_j}, \psi_{\ell_j}$ be defined as in definition \ref{defn:bumps}. Let $B\colon \mathbb{R}^d\rightarrow\mathbb{R}^d$ be an invertible linear transformation, and let $\mu = \mu(\xi) \doteq B\xi \in \mathbb{R}^d$. Additionally, suppose $a(\xi)\colon \mathbb{R}^{d}\rightarrow \mathbb{C}$ is smooth on the support of $\prod_{j=1}^{d}\psi_{k_{j}}(\xi_j)$ and satisfies, for some $p>d$,
    \begin{equation}\label{eqn:cheat_code_mikhlin_condition_sec_loc}
    \left|\left(\partial_{\mu_{n}}^{q}a\right) \ \prod_{j=1}^{d}\psi_{k_{j}}(\xi_j)\right| \lesssim 2^{-\ell_{n}q}A \quad \forall \ q=0,1,...,p, \quad \forall \ n=1,...,d,
    \end{equation}
with a bounding constant independent of $A$. Then, the multiplier $m(\xi) \doteq a(\xi) \ \prod_{j=1}^{d}\psi_{k_{j}}(\xi_j)\prod_{j=1}^{d}\psi_{\ell_{j}}(\mu_j)$ satisfies
\begin{equation}
    \label{eqn:multiplier_cheat_code_secondary_localizations}
    \left\|\overset{\vee}{m}\right\|_{L^{1}_{x}} \lesssim A 
\end{equation}
with a bounding constant independent of $A$, the $k_j$'s, and the $\ell_j$'s. 
\end{lemma}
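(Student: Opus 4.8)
The plan is to deduce Lemma~\ref{lemma:multiplier_cheat_code_secondary_localization} from Lemma~\ref{lemma:multiplier_cheat_code} by a linear change of frequency variables that converts the secondary localization $\prod_{j}\psi_{\ell_j}(\mu_j)$ into a genuine tensor product of annular bumps in the new coordinates, with the primary localization $\prod_j\psi_{k_j}(\xi_j)$ absorbed into the symbol. Concretely, write $m(\xi) = b(\xi)\,\prod_{j}\psi_{\ell_j}\!\big(\mu_j(\xi)\big)$ with $b(\xi) \doteq a(\xi)\prod_j \psi_{k_j}(\xi_j)$, and substitute $\xi = B^{-1}\mu$ in the inverse Fourier integral defining $\overset{\vee}{m}$. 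Since $B$ is a fixed invertible linear map, the substitution produces a Jacobian $|\det B|^{-1}$ from $\diff\xi$ and a reparametrization $x \mapsto B^{-\mathsf T}x$ of physical space whose Jacobian $|\det B|$ exactly cancels it; hence $\big\|\overset{\vee}{m}\big\|_{L^1_x} = \big\|\overset{\vee}{\widetilde m}\big\|_{L^1_y}$, where $\widetilde m(\mu) \doteq \widetilde b(\mu)\prod_j\psi_{\ell_j}(\mu_j)$ and $\widetilde b(\mu) \doteq a(B^{-1}\mu)\,\prod_j\psi_{k_j}\!\big((B^{-1}\mu)_j\big)$. It therefore suffices to check that $\widetilde b$ meets the hypotheses of Lemma~\ref{lemma:multiplier_cheat_code} with respect to the annular scales $2^{\ell_1},\dots,2^{\ell_d}$ (all $<1$ by assumption) in $\mu$-space.

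The key computation is the Mikhlin-type bound on $\widetilde b$. Regard $\partial_{\mu_n}$ as a constant-coefficient vector field, namely a fixed linear combination of the $\partial_{\xi_\ell}$ with coefficients the (order-one) entries of $B^{-1}$. By the Leibniz rule, $\partial_{\mu_n}^{q}\widetilde b$ is a sum of terms of the form $\big(\partial_{\mu_n}^{q'}a\big)\cdot\partial_{\mu_n}^{q-q'}\!\big(\prod_j\psi_{k_j}(\xi_j)\big)$. The factor landing on $a$ is controlled on the support of $\prod_j\psi_{k_j}(\xi_j)$ by \eqref{eqn:cheat_code_mikhlin_condition_sec_loc}, which gives $\lesssim 2^{-\ell_n q'}A$. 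For the factor landing on the coarse cutoff: since $2^{k_j}\approx 2^{k_{j'}}\approx 2^{k}$ for all $j,j'$, the product $\prod_j\psi_{k_j}(\xi_j)$ and all its $\partial_{\mu_n}$-derivatives are supported in a single dyadic shell $|\xi|\sim 2^{k}$, and each derivative costs at most a factor $2^{-k}$; because $2^{\ell_n}\lesssim 2^{k_n}\approx 2^{k}$, that cost is $\lesssim 2^{-\ell_n}$, precisely what is needed to fit inside the $2^{-\ell_n q}$ budget. (As usual, one first enlarges each $\psi_{k_j}$ by a harmless fixed dilation so that the symbol $a$ — in our applications always a ratio of derivatives of $\varphi$, smooth on an open neighborhood of the relevant region — is smooth wherever the derivatives of the cutoff are supported; this is cost-free.) Summing the Leibniz terms yields $\big|\big(\partial_{\mu_n}^{q}\widetilde b\big)\prod_j\psi_{\ell_j}(\mu_j)\big|\lesssim 2^{-\ell_n q}A$ for all $q=0,\dots,p$ and all $n$, and $\widetilde b$ is smooth on the support of $\prod_j\psi_{\ell_j}(\mu_j)$ because it vanishes identically off the support of the coarse cutoff.

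With this in hand, applying Lemma~\ref{lemma:multiplier_cheat_code} in the $\mu$-variables to the symbol $\widetilde b$ with annular scales $2^{\ell_j}$ gives $\big\|\overset{\vee}{\widetilde m}\big\|_{L^1_y}\lesssim A$, with an implied constant independent of $A$, the $k_j$'s, and the $\ell_j$'s (the dependence on $B$ is absorbed since $B$ is fixed), which is exactly \eqref{eqn:multiplier_cheat_code_secondary_localizations}. I expect the only real obstacle to be the bookkeeping in the second step: one must verify that differentiating the coarse cutoff along the $\mu$-directions never produces anything worse than $2^{-\ell_n}$, and it is there that both structural hypotheses — $2^{k_j}\approx 2^{k_{j'}}$ and $2^{\ell_j}\lesssim 2^{k_j}$ — are genuinely used, together with the routine fattening of the $\psi_{k_j}$'s to reconcile the support of $a$ with the support of the cutoff's derivatives. (Alternatively, one could split $m = \big[a\prod_j\psi_{k_j}\big]\cdot\big[\prod_j\psi_{\ell_j}(\mu_j)\big]$ and combine Lemma~\ref{lemma:multiplier_cheat_code} with Lemma~\ref{lemma:symbol_algebra_property}, but the single change-of-variables argument is cleaner and keeps the two scales properly coupled.)
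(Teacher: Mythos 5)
Your argument is correct and is precisely the "suitable substitution" the paper's one-line proof alludes to: change variables $\xi=B^{-1}\mu$ to turn the secondary cutoff into a genuine product of annular bumps, absorb the primary cutoff into the symbol, verify the Mikhlin-type bounds at scales $2^{\ell_j}$ (which is exactly where the hypotheses $2^{k_j}\approx 2^{k_{j'}}$ and $2^{\ell_j}\lesssim 2^{k_j}$ enter), and invoke Lemma~\ref{lemma:multiplier_cheat_code}. The Jacobian bookkeeping and the observation that differentiating the coarse cutoff in the $\mu$-directions costs at most $2^{-k}\lesssim 2^{-\ell_n}$ are both handled correctly, so this matches the paper's intended proof.
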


\begin{proof}
Apply the same ideas from lemma \ref{lemma:multiplier_cheat_code} after performing a suitable substitution.
\end{proof}
\noindent To see why the hypotheses on the $k_j$'s and $\ell_j$'s are the way they are, it's helpful to think about the converse case $\min_{j}\ell_j \gg \max_{j}k_j$. In this situation, differentiation of $m(\xi)$ with respect to the $\xi_j$'s would still be OK: when such derivatives hit $\psi_{\ell_j}\left(\mu_j\right)$ they give rise to a $2^{\ell_j}\ll 2^{-k_{j}}$, putting us in basically the same situation as lemma \ref{lemma:multiplier_cheat_code}. 
\begin{remark}\label{remark:nonlinear_loc}
In practice, one may wish to perform a secondary localization with respect to a new coordinate system $\widetilde{\xi}(\xi)$ depending \emph{nonlinearly} on the primitive coordinate system $\xi$. The proof of the above lemma cannot be modified to successfully accommodate this added complexity. Instead, one must use the following rough recipe to justify the multilinear estimate: 
\begin{enumerate}
    \item localize the integrand of the multilinear operator with respect to $\widetilde{\xi}$;
    \item perform whatever integration by parts is allowed by your localization;
    \item flatten the coordinate system by changing variables $\xi\mapsto \widetilde{\xi}$ inside the integral so we now integrate with respect to $\diff \widetilde{\xi}$, and absorb the Jacobian determinant into the symbol $m(\xi)$;
    \item check that the conditions of lemma \ref{lemma:multiplier_cheat_code_secondary_localization} hold in the flattened coordinates. 
\end{enumerate} \ \par\noindent 
\end{remark}
\par Finally, we'll need an analogue of proposition \ref{prop:multilinear_estimates} valid for the $L^{\infty}_{\xi}$ norm: 
\begin{prop}
    \label{prop:multilinear_estimates_Linfty}
Suppose $m(\eta_1, \eta_2, \eta_3)$ satisfies
\begin{equation}
\label{eqn:mult_bound_Linfty}
\left\|\int \diff\eta_1\diff\eta_2\diff\eta_3 \ e^{i\left(\eta_1,\eta_2,\eta_3\right)\cdot\left(x_1,x_2,x_3\right)} m(\eta_1, \eta_2, \eta_3)\right\|_{L^1_{x_{1},x_{2},x_{3}}} \leq A. 
\end{equation}
For any $p_{1},...,p_{4}\in [1,\infty]$ satisfying $\sum_{k=1}^{4} \frac{1}{p_{k}} = 1$ and all sufficiently nice $v_{1}(x), ... , v_{4}(x)$, we have
\begin{equation}\label{eqn:me_all_int_Linfty}
\hspace{-1cm}
   \left| \int \diff\eta_1\diff\eta_2\diff\eta_3 \  \widehat{v_{1}}\left(\eta_{1}\right) \  \widehat{v_{2}}\left(\eta_{2}\right)\  \widehat{v_{3}}\left(\eta_{3}\right)\     \  \widehat{v_{4}}\left(-\sum_{j=1}^{3}\eta_{j}\right) m(\eta_1, \eta_2, \eta_3) \right| \lesssim A \prod_{k=1}^{4} \left\|v_{k}\right\|_{L^{p_{k}}_{x}}. 
   \end{equation}
In particular, if $m(\eta_1,\eta_2,\eta_3) = M(\eta_1,\eta_2,\eta_3, \xi)$ then 
\begin{equation}\label{eqn:me_all_int_Linfty_true}
\hspace{-1cm}
   \left\| \int \diff\eta_1\diff\eta_2\diff\eta_3 \  \widehat{v_{1}}\left(\eta_{1}\right) \  \widehat{v_{2}}\left(\eta_{2}\right)\  \widehat{v_{3}}\left(\eta_{3}\right)\     \  \widehat{v_{4}}\left(\xi-\sum_{j=1}^{3}\eta_{j}\right) M(\eta_1, \eta_2, \eta_3, \xi) \right\|_{L^{\infty}_{\xi} }\lesssim A \prod_{k=1}^{4} \left\|v_{k}\right\|_{L^{p_{k}}_{x}}. 
   \end{equation}
\end{prop}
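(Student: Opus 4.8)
The statement is a classical consequence of the ``Coifman--Meyer type'' observation that a multilinear Fourier multiplier whose symbol has an integrable inverse Fourier transform is bounded on products of Lebesgue spaces with H\"{o}lder-conjugate exponents. My plan is to first prove \eqref{eqn:me_all_int_Linfty} by writing the trilinear form as an honest integral over physical space, and then to deduce \eqref{eqn:me_all_int_Linfty_true} by treating $\xi$ as a parameter and taking a supremum. The only genuinely new content relative to proposition \ref{prop:multilinear_estimates} is bookkeeping: here we have three frequency variables $\eta_1,\eta_2,\eta_3$ (and the fourth slot is forced to be $-\sum_j \eta_j$), whereas there we had four free variables plus $\xi$.

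\textbf{Step 1: physical-space representation.} Denote by $K = \overset{\vee}{m}$ the inverse Fourier transform of $m(\eta_1,\eta_2,\eta_3)$ in all three variables, so that by hypothesis \eqref{eqn:mult_bound_Linfty} we have $\|K\|_{L^1_{x_1x_2x_3}}\leq A$. I would substitute $\widehat{v_j}(\eta_j) = (2\pi)^{-1/2}\int v_j(y_j) e^{-iy_j\eta_j}\,\diff y_j$ for $j=1,2,3$ and similarly expand $\widehat{v_4}(-\eta_1-\eta_2-\eta_3)$, then carry out the $\eta_1,\eta_2,\eta_3$ integrals against $m$. Collecting the exponential factors, the $\eta_j$-integral of $m(\eta_1,\eta_2,\eta_3)\,e^{i\eta_1(x_1 - y_1 + y_4)}e^{i\eta_2(x_2-y_2+y_4)}e^{i\eta_3(x_3-y_3+y_4)}$ (after renaming dual variables) produces exactly $K$ evaluated at shifted arguments. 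Unwinding the bookkeeping, the left-hand side of \eqref{eqn:me_all_int_Linfty} becomes, up to an absolute constant,
\begin{equation*}
\left|\int \diff y_1 \diff y_2 \diff y_3 \diff y_4 \ K(y_4 - y_1, y_4 - y_2, y_4 - y_3)\ v_1(y_1)\, v_2(y_2)\, v_3(y_3)\, v_4(y_4)\right|.
\end{equation*}
The precise form of the shifts is irrelevant; what matters is that the kernel depends on $y_1,y_2,y_3,y_4$ only through the three differences $y_4-y_j$, so that after the change of variables $z_j = y_4 - y_j$ (for fixed $y_4$) the kernel decouples from $y_4$.

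\textbf{Step 2: H\"{o}lder.} With the representation above, introduce the exponent $p_5$ implicitly via $1 = \sum_{k=1}^4 p_k^{-1}$; since each $p_k \in [1,\infty]$ this is consistent. Apply the (five-fold, or rather four-fold after the kernel is absorbed) H\"{o}lder inequality in the variables $y_1,y_2,y_3,y_4$, distributing $|K|$ as a weight: bound the integral by $\|K\|_{L^1}$ times $\prod_{k=1}^4 \|v_k\|_{L^{p_k}_x}$, using translation-invariance of the Lebesgue norms to absorb the shifts $y_4 - y_j$. Concretely one writes $|K(y_4-y_1,y_4-y_2,y_4-y_3)| = |K|^{1/p_1}\cdots$ split according to the conjugate exponents, pairs each piece with the corresponding $|v_k|$, applies generalized H\"{o}lder, and then uses Fubini together with $\int |K| = \|K\|_{L^1}$ to collapse the kernel factors. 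This is exactly the mechanism in \cite{IP2014} and \cite{Pusateri2013}; I would cite those and present only the adaptation. This yields \eqref{eqn:me_all_int_Linfty} with implicit constant independent of $m$.

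\textbf{Step 3: passing to $L^\infty_\xi$.} For \eqref{eqn:me_all_int_Linfty_true}, fix $\xi \in \mathbb{R}$ and set $m_\xi(\eta_1,\eta_2,\eta_3) \doteq M(\eta_1,\eta_2,\eta_3,\xi)$ together with $\widetilde{v_4}(y) \doteq e^{iy\xi} v_4(y)$, so that $\widehat{\widetilde{v_4}}(-\sum_j\eta_j) = \widehat{v_4}(\xi - \sum_j \eta_j)$. The hypothesis \eqref{eqn:mult_bound_Linfty} is assumed uniformly in $\xi$ (the bound is $A$ for each fixed value of the parameter), and $\|\widetilde{v_4}\|_{L^{p_4}_x} = \|v_4\|_{L^{p_4}_x}$ since multiplication by a unimodular function is an $L^{p_4}$-isometry. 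Applying \eqref{eqn:me_all_int_Linfty} to $m_\xi, v_1,v_2,v_3,\widetilde{v_4}$ gives the pointwise-in-$\xi$ bound $|\cdots| \lesssim A\prod_{k=1}^4\|v_k\|_{L^{p_k}_x}$, and taking the supremum over $\xi$ finishes the proof.

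\textbf{Main obstacle.} There is no deep obstacle; the work is entirely in getting the physical-space kernel representation and its shift structure right, and in verifying that the hypothesis $\|\overset{\vee}{m}\|_{L^1}\leq A$ is the correct normalization to make the H\"{o}lder step close with constant $A$. The one point requiring a little care is the reduction in Step 3: one must check that the $L^\infty_\xi$ norm really is controlled by the \emph{uniform-in-}$\xi$ version of hypothesis \eqref{eqn:mult_bound_Linfty}, i.e. that $\sup_\xi \|(\,M(\cdot,\cdot,\cdot,\xi))^\vee\|_{L^1_{x_1x_2x_3}} \leq A$ is what is being assumed --- which is the natural reading of the statement --- rather than an $L^1$-in-$\xi$ bound, which would be both unavailable and unnecessary.
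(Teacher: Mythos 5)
Your proof is correct and is exactly the standard Coifman--Meyer-type argument that the paper leaves implicit: this proposition is stated with no written proof, and the analogous Proposition \ref{prop:multilinear_estimates} is disposed of by citation to \cite{IP2014} and \cite{Pusateri2013}, which run the same physical-space kernel representation followed by translation-invariant H\"older in the shifted variables. The only blemish is the stray mention of an ``exponent $p_5$'' at the start of Step 2, which is a harmless leftover from the five-factor version in Proposition \ref{prop:multilinear_estimates} and plays no role; your unimodular-modulation trick in Step 3, together with the uniform-in-$\xi$ reading of hypothesis \eqref{eqn:mult_bound_Linfty}, is the right way to pass to the $L^\infty_\xi$ conclusion.
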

\qed
\end{appendices}

\bibliography{BBM.bib} 
\bibliographystyle{plain}

\end{document}